\theoremstyle{definition}
\newtheorem{defi}{Definition}[section]
\newtheorem{defi-prop}[defi]{Definition-Proposition}
\theoremstyle{plain}
\newtheorem{lemm}[defi]{Lemma}
\newtheorem{prop}[defi]{Proposition}
\newtheorem{theo}[defi]{Theorem}
\newtheorem{coro}[defi]{Corollary}
\theoremstyle{remark}
\newtheorem{rema}[defi]{Remark}
\newtheorem*{rema*}{Remark}
\newtheorem{exam}[defi]{Example}
\newtheorem*{exam*}{Example}
\newcommand{\Todo}[1]{}
\newcommand{\kks}[2][k]{g^{(#1)}_{#2}}
\newcommand{\kkss}[2][k]{\widetilde{g}^{(#1)}_{#2}}
\newcommand{\wth}{\widetilde{h}}
\newcommand{\ks}[1]{s^{(k)}_{#1}}
\newcommand{\Pk}[1][k]{\mathcal{P}_{#1}}
\newcommand{\Cn}[1][k+1]{\mathcal{C}_{#1}}
\newcommand{\aSn}[1][k+1]{\ti S_{#1}}
\newcommand{\aG}[1][k+1]{\ti S_{#1}^\circ}
\newcommand{\Wo}{W^\circ}
\newcommand{\tSn}{\ti S_{k+1}/S_{k+1}}
\newcommand{\Lk}{\Lambda^{(k)}}
\newcommand{\la}{\lambda}
\newcommand{\La}{\Lambda}
\newcommand{\ka}{\kappa}
\newcommand{\Z}{\mathbb{Z}}
\newcommand{\Rt}{R_t}
\newcommand{\lra}{\longrightarrow}
\newcommand{\bdd}{\mathfrak{p}}
\newcommand{\core}{\mathfrak{c}}
\newcommand{\StoC}{\mathfrak{s}}
\newcommand{\sm}{\setminus}
\newcommand{\ti}{\tilde}
\newcommand{\hook}[2]{\mathrm{hook}_{#1}(#2)}
\renewcommand{\emptyset}{\varnothing}
\newcommand{\tikzitem}[2][1.0]{
	\begin{tikzpicture}[scale=#1]
		#2
	\end{tikzpicture}
}
\newcommand{\tikznum}[4][\small]{
	\node at (#3-0.6,#2-0.5) {#1 #4};
}
\newcommand{\tikzvnum}[3][\small]{
	\foreach \a [count=\r] in {#3}{
		\tikznum[#1]{#2}{\r}{\a};
	}
}
\newcommand{\tikzbox}[3][]{
	\draw [#1] (#3,#2) rectangle +(-1,-1);
}
\newcommand{\tikzboxnum}[4][\small]{
	\tikzbox{#2}{#3}
	\tikznum[#1]{#2}{#3}{#4}
}
\newcommand{\tikzydiag}[2][]{
	\foreach \li [count=\i] in {#2}{
		\foreach \j in {1,...,\li}{
			\tikzbox[#1]{\i}{\j};
		}
	}
}
\newcommand{\tikzyfill}[2][gray]{
	\foreach \li [count=\i] in {#2}{
		\foreach \j in {1,...,\li}{
			\fill[#1] (\j,\i) rectangle +(-1,-1);
		}
	}
}
\newcommand{\tikzydnum}[3]{
	\foreach \a [count=\r] in {#3}{
		\tikznum[#1]{#2}{\r}{\a};
	}
}
\newcommand{\sk}[3][1.0]{
	\begin{tikzpicture}[scale=#1]
		\tikzyfill{#3}
		\tikzydiag{#2}
	\end{tikzpicture}
}
\newcommand{\redex}[1]{\langle #1 \rangle}
\newcommand{\redprod}[2]{\redex{#1}\redex{#2}}
\newcommand{\redprodd}[3]{\redex{#1}\redex{#2}\redex{#3}}
\newcommand{\lecov}{<\mathrel{\mkern-5mu}\mathrel{\cdot}}
\newcommand{\gecov}{\mathrel{\cdot}\mathrel{\mkern-5mu}>}
\newcommand{\lecovL}{\lecov\hspace{-1mm}_{L}}
\newcommand{\lecovR}{\lecov\hspace{-1mm}_{R}}
\newcommand{\subsetcov}{\subset\mathrel{\mkern-5mu}\mathrel{\cdot}}
\newcommand{\SLjoin}{\vphantom{\vee}_{S}\hspace{-1mm}\vee_L}
\newcommand{\SRjoin}{\vphantom{\vee}_{S}\hspace{-1mm}\vee_R}
\newcommand{\LSjoin}{\vphantom{\vee}_{L}\hspace{-1mm}\vee_S}
\newcommand{\SLmeet}{\vphantom{\wedge}_{S}\hspace{-1mm}\wedge_L}
\newcommand{\SRmeet}{\vphantom{\wedge}_{S}\hspace{-1mm}\wedge_R}
\newcommand{\LSmeet}{\vphantom{\wedge}_{L}\hspace{-1mm}\wedge_S}
\newcommand{\RSmeet}{\vphantom{\wedge}_{R}\hspace{-0.5mm}\wedge_S}
\newcommand{\RD}{\mathrm{RD}}
\newcommand{\RI}{\mathrm{RI}}
\title{A Pieri-type formula and a factorization formula for sums of $K$-$k$-Schur functions}
\author{Motoki Takigiku}
\date{\today}
\begin{document}

\maketitle

\begin{abstract}
We give a Pieri-type formula for the sum of $K$-$k$-Schur functions 
$\sum_{\mu\le\lambda} g^{(k)}_{\mu}$ over a principal order ideal of the 
poset of $k$-bounded partitions under the strong Bruhat order, which sum 
we denote by $\widetilde{g}^{(k)}_{\lambda}$.
As an application of this, we also give a $k$-rectangle factorization 
formula 
$\widetilde{g}^{(k)}_{R_t\cup\lambda}=\widetilde{g}^{(k)}_{R_t} \widetilde{g}^{(k)}_{\lambda}$
where $R_t=(t^{k+1-t})$, 
analogous to that of $k$-Schur functions
$s^{(k)}_{R_t\cup\lambda}=s^{(k)}_{R_t}s^{(k)}_{\lambda}$.
\end{abstract}

\tableofcontents

\section{Introduction}

Let $k$ be a positive integer.
\textit{$K$-$k$-Schur functions} $\kks{\la}$
are inhomogeneous symmetric functions parametrized by
$k$-bounded partitions $\la$, 
namely by the weakly decreasing strictly positive integer sequences
$\la=(\la_1,\dots,\la_l)$, $l\in\Z_{\ge 0}$, whose terms are all bounded by $k$.
They are $K$-theoretic analogues of another family of symmetric functions called \textit{$k$-Schur functions} $\ks{\la}$, which are homogeneous and also parametrized by $k$-bounded partitions.
The set of $k$-bounded partitions is denoted by $\Pk$.

In this paper we give a Pieri-type formula for a certain sum of $K$-$k$-Schur functions
(Theorem \ref{theo:StrongSumPieri} and \ref{theo:StrongSumPieri'})
and a factorization formula
(Theorem \ref{theo:StrongSumFactorization})
involving the \textit{$k$-rectangle partitions} $R_t$ defined later,
mainly using combinatorial properties of the strong (Bruhat) and weak orderings on the affine symmetric groups.

Historically, 
$k$-Schur functions were first introduced by Lascoux, Lapointe and Morse \cite{MR1950481},
and subsequent studies led to several (conjectually equivalent) characterizations of $\ks{\la}$:
Lapointe and Morse \cite{MR2331242} gave the Pieri-type formula,
and Lam \cite{Lam08} proved that $k$-Schur functions correspond to the Schubert basis of homology of the affine Grassmannian.
Moreover, Lam and Shimozono \cite{MR2923177} showed that
$k$-Schur functions play a central role in the explicit description of the Peterson isomorphism.

These developments have analogues in $K$-theory.
Lam, Schilling and Shimozono \cite{MR2660675} characterized the $K$-theoretic 
$k$-Schur functions as the Schubert basis of the $K$-homology of the affine
Grassmannian, and Morse \cite{Morse12} investigated them from a combinatorial
viewpoint, giving various properties including Pieri-type
formulas using affine set-valued strips (the form using cyclically
decreasing words are also given in \cite{MR2660675}).
In this paper 
we start from this combinatorial characterization (see Definition \ref{defi:KkPieri}).

Among the $k$-bounded partitions, 
those of the form $(t^{k+1-t})=(\underbrace{t,\dots,t}_{k+1-t})$ for $1\le t \le k$,
called {\it $k$-rectangle} and denoted by $\Rt$, play a special role.
A notable property is the \textit{$k$-rectangle factorization}
for $k$-Schur functions \cite[Theorem 40]{MR2331242}:
if a $k$-bounded partition has the form $R_t\cup \la$,
where the symbol $\cup$ denotes the operation of concatenating the two sequences and reordering the terms in the weakly decreasing order,
then the corresponding $k$-Schur function factorizes as follows:
\begin{equation}\label{intro:zero}
	\ks{R_t\cup\la} = \ks{R_t} \ks{\la}.
\end{equation}

It is natural to consider $K$-theoretic version of this formula.
For several reasons below,
in this regard it seems to make more sense to consider 
the sum of $K$-$k$-Schur functions $\sum_{\mu\le\la} \kks{\mu}$
rather than $K$-$k$-Schur function $\kks{\la}$
(here $\le$ denotes the \textit{strong order}, also known as the \textit{Bruhat order},
which is transferred from that of the affine symmetric group $\aSn$
through the bijection $\Pk\simeq\tSn$.
See Section \ref{sect:Prel::Coxeter::Order} and \ref{sect:Prel::AffSym::Bijection} for the detail):

\begin{itemize}
	\item
		\textit{Connection to $K$-Peterson isomorphism}.
	
		The (original) Peterson isomorphism,
		first presented by Peterson in his lectures at MIT
		and then published by Lam and Shimozono \cite{MR2600433},
		states that
		the homology of the affine Grassmannian 
		is isomorphic to
		the quantum cohomology of the flag variety 
		after appropriate localization.
		As its $K$-theoretic version,
		an isomorphism between
		the $K$-homology of the affine Grassmannian and
		the quantum $K$-theory of the flag manifold, up to appropriate localization,
		is conjectured and called
		\textit{$K$-Peterson isomorphism}:
		
		\begin{itemize}
			\item
		In their attempt
		in \cite{1705.03435}
		to verify the coincidence of
		the Schubert structure constants
		in the $K$-homology of the affine Grassmannian and 
		the quantum $K$-theory of the flag manifold
		on torus-equivariant settings,
		Lam, Li, Mihalcea and Shimozono proved
		a special case of
		Theorem \ref{theo:StrongSumFactorization}
		for $SL_2$ (i.e.\,the case $k=1$) with explicit calculations,
		in the context of geometry:
		\begin{equation}\label{eq:factorization_O}
			\mathcal{O}_x \mathcal{O}_{t_{-\alpha^\vee}} = \mathcal{O}_{xt_{-\alpha^\vee}},
		\end{equation}
		where
		$x$ is any affine Grassmannian element in the affine Weyl group,
		$\mathcal{O}_x$ is the Schubert class of structure sheaves on the affine Grassmannian
		and $t_{-\alpha^\vee}$ is the translation by the negative of the simple coroot
		of $SL_2$.
		(See also Remark \ref{theo:rem_Rt}.)
		
			\item
		In \cite{doi:10.1093/imrn/rny051},
		Ikeda, Iwao and Maeno gave
		an explicit ring isomorphism,
		after appropriate localization,
		between the $K$-homology of the affine Grassmannian and
		the presentation
		of the quantum $K$-theory of the flag manifold
		that is conjectured by Kirillov and Maeno and
		proved by Anderson, Chen, and Tseng \cite{1711.08414},
		as well as a conjectural description
		of the image of the quantum Grothendieck polynomials,
		which is conjectured \Todo{ref} to be the quantum Schubert classes.
		These presentations notably involve
		the dual stable Grothendieck polynomials $g_{\Rt}$ 
		and their sum $\sum_{\mu\subset\Rt} g_{\mu}$
		corresponding to the $k$-rectangles $\Rt$.
		Note that $\mu\subset\Rt\iff\mu\le\Rt$, and that
		it is conjectured \Todo{ref} that $\kks{\la}=g_\la$ for $\la\subset\Rt$.
		\end{itemize}
	\item
		\textit{\Todo{remove natural}Natural appearances of $\sum_{\mu\le\la} \kks{\mu}$
			in $k$-rectangle factorization formulas of $\kks{\la}$}.
		
It is suggested in \cite[Remark 7.4]{MR2660675} that 
the $K$-$k$-Schur functions should also possess similar properties to (\ref{intro:zero}),
including the divisibility of $\kks{R_t\cup\la}$ by $\kks{R_t}$,
for which the author's preceding work \cite{Takigiku1,Takigiku2}
gives an affirmative answer.

Let us review the results of \cite{Takigiku1,Takigiku2}.
It is proved that
$\kks{R_t}$ divides $\kks{R_t\cup\la}$ 
in the ring $\Lk=\mathbb{Z}[h_1,\dots,h_k]$,
of which the $K$-$k$-Schur functions $\{\kks{\mu}\}_{\mu\in\Pk}$ form a basis.
However, unlike 
(\ref{intro:zero}),
the quotient $\kks{R_t\cup\la}/\kks{R_t}$ is not a single term $\kks{\la}$
but in general a linear combination of $K$-$k$-Schur functions with leading term $\kks{\la}$:
for any $\la\in\Pk$,
\begin{equation}\label{eq:kks_divisibility}
	\kks{R_t \cup \la} 
	= \kks{R_t} \bigg(\kks{\la} + \sum_{\mu} a_{\la\mu}\kks{\mu}\bigg),
\end{equation}
summing over $\mu\in\Pk$ such that $|\mu|<|\la|$,
with some coefficients $a_{\la\mu}$ depending on $R_t$.
A special yet important case is the factorization of multiple $k$-rectangles:
for $1\le t\le k$ and $a>1$,
\[
	\kks{\Rt^a} = \kks{\Rt} \bigg(\sum_{\mu\subset\Rt}\kks{\mu}\bigg)^{a-1},
\]
where $\Rt^a=\Rt\cup\dots\cup\Rt$ ($a$ times).
Note that $\mu\subset\Rt \iff \mu\le\Rt$.
Furthermore, it is conjectured 
that
the set of $\mu$ appearing in (\ref{eq:kks_divisibility})
forms
an interval under the strong order: namely,
for any $\la \in \Pk$ and $1\le t \le k$, 
we expect there to exist $\nu \in \Pk$ such that
\begin{align*}
	&\kks{R_t\cup \la} = \kks{R_t} \sum_{\nu\le\mu\le\la} \kks{\mu}.
\end{align*}
\end{itemize}
These observations hint at merit of Definition \ref{defi:StrongSumKkSchur} below.

\subsection{Main results}\label{sect:Intro::Results}

Let $\le$, $\le_L$, and $\le_R$ be the strong, left weak, and right weak order on $\aSn$ (see Section \ref{sect:Prel::Coxeter::Order} for the detail).

From the observation above, we consider and denote by $\kkss{\la}$
the sum of $K$-$k$-Schur functions over the order ideal generated by $\la$ under the strong order $\le$:
\begin{defi} \label{defi:StrongSumKkSchur}
	For any $\la\in\Pk$, we write $\displaystyle\kkss{\la} = \sum_{\mu\le\la} \kks{\mu}$.
\end{defi}

Our first main theorem is a Pieri-type formula for $\kkss{\la}$.
We start with the Pieri rule for $\kks{\la}$ \cite{MR2660675,Morse12}:
for $\la \in \Pk$ and $1 \le r \le k$,
\[
		\kks{\la} h_r=
		\sum_{(A,\mu)} (-1)^{|\la|+r-|\mu|} \kks{\mu},
\]
summed over
affine set-valued strips $(\mu/\la,A)$ of size $r$
(See Definition \ref{defi:KkPieri} for more details).
In terms of $\kkss{\la}$,
this rule becomes relatively simple:
\begin{theo}\label{theo:StrongSumPieri}
	Let $\la \in \Pk$ and $1 \le r \le k$,
	and define $\wth_r = h_0 + h_1 + \dots + h_r$.
	Then
	\[
		\kkss{\la} \wth_{r} = \sum_{\mu} \kks{\mu},
	\]
	summed over $\mu\in\Pk$ such that $\mu \le \ka$ for some $\ka\in\Pk$
	such that $\ka/\la$ is a weak strip of size $r$.
\end{theo}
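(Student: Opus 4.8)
The plan is to deduce the identity by summing the known $K$-$k$-Schur Pieri rule over the strong order ideal generated by $\la$ and then performing a careful cancellation of signs. Concretely, writing $\wth_r = h_0 + h_1 + \dots + h_r$ and expanding,
\[
	\kkss{\la}\,\wth_r = \sum_{\mu\le\la}\ \sum_{r'=0}^{r} \kks{\mu}\, h_{r'}
	= \sum_{\mu\le\la}\ \sum_{r'=0}^{r}\ \sum_{(A,\nu)} (-1)^{|\mu|+r'-|\nu|} \kks{\nu},
\]
the innermost sum being over affine set-valued strips $(\nu/\mu, A)$ of size $r'$. The claim is that, after collecting terms, every $\kks{\nu}$ with $\nu\le\ka$ for some $\ka$ with $\ka/\la$ a weak strip of size $r$ survives with coefficient exactly $1$, and all other contributions cancel. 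The first step is therefore to reorganize the triple sum: fix the output partition $\nu$ and understand the set of pairs $(\mu, (A,\nu))$ with $\mu\le\la$ and $(\nu/\mu,A)$ an affine set-valued strip of some size $r'\le r$.

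The key structural input will be the combinatorics relating set-valued strips to weak strips: an affine set-valued strip decomposes into an underlying weak strip together with a choice of ``extra'' cells recorded by $A$, and the sign $(-1)^{|\mu|+r'-|\nu|}$ is governed precisely by the number of such extra cells. I would next prove a sign-reversing involution on the set of non-surviving $(\mu, (A,\nu))$ pairs (for fixed $\nu$), pairing up a configuration having a nonempty ``removable'' set of extra cells with the configuration obtained by toggling one canonically chosen cell — this is the standard mechanism by which $\wth_r = \sum h_{r'}$ (rather than a single $h_r$) produces a sign-free right-hand side, exactly as in the passage from $\kks{\la}h_r$ to $\kkss{\la}h_r$ sums. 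The fixed points of this involution should be exactly the pairs where $(\nu/\mu, A)$ carries no extra cell, i.e.\ $\nu/\mu$ is an honest weak strip of size $r'$ for some $r'\le r$; combined with $\mu\le\la$ this should be shown equivalent, via transitivity of $\le$ and the weak-strip/cover relations in $\aSn$, to the condition ``$\nu\le\ka$ for some $\ka$ with $\ka/\la$ a weak strip of size $r$.''

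The main obstacle will be this last equivalence: one must show that for $\nu,\la\in\Pk$, the existence of $\mu\le\la$ with $\nu/\mu$ a weak strip of size at most $r$ is the same as the existence of $\ka\ge\nu$ (equivalently $\nu\le\ka$) with $\ka/\la$ a weak strip of size exactly $r$. This is a purely order-theoretic statement about the interaction of the strong (Bruhat) order with weak strips on $\aSn$, and proving it will require the detailed relationship between weak strips and chains in the weak/strong orders set up in the preliminary sections — in particular a ``lifting'' property allowing one to push a weak strip through a strong-order relation, adjusting its size up to $r$. I expect the involution itself to be routine once the right bookkeeping for $A$ is in place; the genuine work is in establishing that the fixed-point set is described by the stated interval condition, and in handling the boundary cases where weak strips of size less than $r$ occur (the role of the $h_0$ term in $\wth_r$).
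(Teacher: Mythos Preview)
Your overall strategy---sum the $K$-$k$-Pieri rule over $\mu\le\la$ and $0\le r'\le r$, fix the output $\nu$, and cancel---is exactly how the paper begins (equation (\ref{eq:gu_coeff})). But the proposed cancellation mechanism has a genuine gap: the claim that the fixed points of a ``toggle one extra cell'' involution are precisely the pairs $(\mu,A)$ for which $\nu/\mu$ is an honest weak strip is false, because for a given $\nu$ there are typically \emph{several} such $\mu\le\la$, and they all carry sign $+1$. Your involution as described cannot pair two weak-strip configurations with each other, so the coefficient of $\kks{\nu}$ would come out as the number of such $\mu$, not $1$.

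Concretely, take $k=3$, $\la$ with $w_\la=s_{210}$, $\nu$ with $w_\nu=s_{310}$, and $r=2$. The pairs $(\mu,A)$ with $\mu\le\la$, $|A|\le 2$, $d_A*\mu=\nu$ are
\[
(s_{10},\{3\}),\quad (s_{0},\{1,3\}),\quad (s_{10},\{1,3\}),
\]
with signs $+1,+1,-1$. The first two are honest weak strips (no extra cell), the third has one extra cell. Your involution would leave both weak strips fixed, giving coefficient $2$; the correct coefficient is $1$. What actually has to happen is that $(s_{10},\{1,3\})$ cancels against one of the weak strips, leaving a \emph{single} survivor.

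The paper's resolution is not a single global involution but a two-stage cancellation. First, for each fixed $A$ one shows that the set $Y_{A,\nu}=\{\mu\le\la:d_A*\mu=\nu\}$ is (anti-isomorphic to) a boolean interval in $2^A$ (Corollary \ref{theo:YA_boolean}), so the signed sum over it is $0$ unless it is a singleton. Second, one shows that there is exactly one $A$---namely the $A_0$ determined by the ``half-strong half-weak meet'' $w_\la\SLmeet w_\nu$---for which $|Y_{A_0,\nu}|=1$ (Lemma \ref{theo:YA=1}). Only then does one prove the equivalence you flagged as the main obstacle (Lemma \ref{theo:A0_r}), and that step is comparatively easy once the Demazure/anti-Demazure machinery of Section \ref{sect:PropOrder::phipsi} is in place. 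The hard part is the structural result that $Y'_{A,\nu}$ is a boolean interval, which relies on the intersection-closure of $Z'_{\nu,-}$ (Proposition \ref{theo:Zu_props}) and the Chain Property for lower weak intervals (Section \ref{sect:PropOrder::CP}).
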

To express its right-hand side as a linear combination of $\{\kkss{\mu}\}_{\mu}$,
we recall that
a weak strip over $\la$ corresponds to a proper subset of $I=\{0,1,\dots,k\}$:
for $\ka\in\Pk$, 
$\ka/\la$ is a weak strip
if and only if
there exists $A\subsetneq I$ such that $\ka = d_A \la \ge_L \la$,
where 
$d_A$ is the cyclically decreasing permutation corresponding to $A$
(see Section
\ref{sect:Prel::AffSym::CycDecrElem},
\ref{sect:Prel::AffSym::Bijection}, and
\ref{sect:Prel::AffSym::WeakStrip}
for the detail).

\begin{theo}\label{theo:StrongSumPieri'}
	With the setting in Theorem \ref{theo:StrongSumPieri},
	we let $d_{A_1}\la, d_{A_2}\la, \dots$ be the list of weak strips of size $r$ over $\la$.
	Then
	\begin{align*}
		\kkss{\la} \wth_{r} 
		&= \sum_{m\ge 1} (-1)^{m-1}
			\sum_{a_1 < \dots < a_m}
				\kkss{d_{A_{a_1}\cap\dots\cap A_{a_m}}\la}. \\
		\bigg(&= \sum_{a} \kkss{d_{A_a}\la}
			- \sum_{a<b} \kkss{d_{A_a\cap A_b}\la}
			+ \sum_{a<b<c} \kkss{d_{A_a\cap A_b\cap A_c}\la}
			- \cdots\bigg)
	\end{align*}
	(Moreover $d_{A_a\cap A_b\cap \dots}\la = 
		(d_{A_a}\la)\wedge(d_{A_b}\la)\wedge\dots$,
		where $\wedge$ denotes the meet in the poset $\Pk$ with the strong order.
	See also Proposition \ref{theo:alias:Zu_props}.)
\end{theo}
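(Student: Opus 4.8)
The plan is to derive this from Theorem \ref{theo:StrongSumPieri} by an inclusion--exclusion argument, the only substantial ingredient being that the intersections of the principal order ideals appearing there are themselves principal, generated by the evident shorter weak strips $d_{A_{a_1}\cap\dots\cap A_{a_m}}\la$.

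Write $I_a=\{\mu\in\Pk : \mu\le d_{A_a}\la\}$ for the principal order ideal (under the strong order) generated by the weak strip $d_{A_a}\la$. Theorem \ref{theo:StrongSumPieri} says exactly that
\[
	\kkss{\la}\wth_{r}=\sum_{\mu\in I_1\cup I_2\cup\cdots}\kks{\mu}.
\]
Only finitely many weak strips of size $r$ over $\la$ exist, and each $I_a$ is finite, being the image under the bijection $\Pk\simeq\tSn$ of a lower set of a single element in the Bruhat order; so the union is finite and the elementary inclusion--exclusion formula for a function summed over a finite union gives
\[
	\kkss{\la}\wth_{r}=\sum_{m\ge1}(-1)^{m-1}\sum_{a_1<\dots<a_m}\;\sum_{\mu\in I_{a_1}\cap\dots\cap I_{a_m}}\kks{\mu}.
\]
The theorem therefore reduces to the assertion
\[
	I_{a_1}\cap\dots\cap I_{a_m}=\{\mu\in\Pk : \mu\le d_{A_{a_1}\cap\dots\cap A_{a_m}}\la\},
\]
since then the inner sum equals $\kkss{d_{A_{a_1}\cap\dots\cap A_{a_m}}\la}$ by Definition \ref{defi:StrongSumKkSchur}, and the formula of the theorem (in both displayed forms) follows.

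It remains to prove that assertion, and this is the step I expect to be the main obstacle. Now $I_{a_1}\cap\dots\cap I_{a_m}$ is precisely the set of common lower bounds in $(\Pk,\le)$ of $d_{A_{a_1}}\la,\dots,d_{A_{a_m}}\la$, so it suffices to show that these elements possess a greatest lower bound and that it equals $d_{A_{a_1}\cap\dots\cap A_{a_m}}\la$. I would argue by induction on $m$, reducing to the case $m=2$; for the induction one uses that $d_{A'}\la$ is again a weak strip over $\la$ whenever $A'\subseteq A_a$, so that the intermediate meets $d_{A_{a_1}\cap\dots\cap A_{a_j}}\la$ stay within the scope of the two-term statement. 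For $m=2$, the inclusions $A\cap B\subseteq A,B$ together with the monotonicity of $A\mapsto d_A\la$ with respect to the strong order show that $d_{A\cap B}\la$ is a common lower bound of $d_A\la$ and $d_B\la$; the content is the converse, namely that $\mu\le d_A\la$ and $\mu\le d_B\la$ together force $\mu\le d_{A\cap B}\la$. This is a statement purely about the cyclically decreasing elements $d_A$ and the Bruhat order on $\tSn$, to be extracted from the combinatorics of weak strips over $\la$ and the compatibility of the meet on $\tSn$ with intersection of index sets (cf.\ Proposition \ref{theo:alias:Zu_props}); the same input furnishes the greatest-lower-bound property used above. Granting this, the parenthetical identity $d_{A_{a_1}\cap\dots\cap A_{a_m}}\la=(d_{A_{a_1}}\la)\wedge\dots\wedge(d_{A_{a_m}}\la)$ is merely a restatement of what has been shown.
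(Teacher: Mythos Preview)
Your approach is exactly the paper's: Theorem \ref{theo:StrongSumPieri'} is deduced from Theorem \ref{theo:StrongSumPieri}, Corollary \ref{theo:meetSemiLattice} (equivalently Proposition \ref{theo:alias:Zu_props}), and inclusion--exclusion. One correction: the claim that ``$d_{A'}\la$ is again a weak strip over $\la$ whenever $A'\subseteq A_a$'' is false in general (e.g.\ in Example \ref{exam:ws}, $\{1,2\}\in Z^{'\circ}_{u,+}$ but $\{2\}\notin Z^{'\circ}_{u,+}$); what you actually need, and what Proposition \ref{theo:alias:Zu_props}(1) supplies, is only that intersections $A_{a_1}\cap\dots\cap A_{a_j}$ of weak-strip index sets again index weak strips, which suffices for your induction.
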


Our second main theorem is the $k$-rectangle factorization formula for $\kkss{\la}$,
which holds in the same form as that for $k$-Schur functions (\ref{intro:zero}):
\begin{theo}\label{theo:StrongSumFactorization}
	For any $\la\in\Pk$ and $1 \le t \le k$, we have
	$$
		\kkss{\Rt\cup\la} = \kkss{\Rt} \kkss{\la}.
	$$
\end{theo}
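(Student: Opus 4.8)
The plan is to prove Theorem~\ref{theo:StrongSumFactorization} by induction on $|\la|$, using Theorem~\ref{theo:StrongSumPieri'} as the engine and the $k$-Schur factorization~(\ref{intro:zero}) together with the divisibility result~(\ref{eq:kks_divisibility}) as sanity-checks and degree bookkeeping. The base case $\la=\emptyset$ is trivial since $\kkss{\emptyset}=\kks{\emptyset}=1$. For the inductive step, the idea is to express $\kkss{\Rt\cup\la}$ in terms of $\kkss{\Rt\cup\mu}$ for smaller $\mu$ via a Pieri expansion, then peel off the factor $\kkss{\Rt}$ using the inductive hypothesis, and finally match the result against the Pieri expansion of $\kkss{\Rt}\kkss{\la}$.

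Concretely, I would first pick $r$ minimal so that $\la$ arises inside a weak strip; more usefully, fix any $1\le r\le k$ and write the Pieri product $\kkss{\la}\wth_r = \sum_{\mu}\kks{\mu}$ as in Theorem~\ref{theo:StrongSumPieri}, the sum being over $\mu\le\ka$ for weak strips $\ka/\la$ of size $r$. Multiplying Theorem~\ref{theo:StrongSumFactorization} (for $\la$, assumed known by induction on $|\la|$) on both sides by $\wth_r$ gives $\kkss{\Rt}\,(\kkss{\la}\wth_r) = \kkss{\Rt}\sum_{\mu}\kks{\mu}$. So the crux is to show that the left-hand side equals $\sum_{\mu}\kks{(\Rt\cup\,\cdot)\text{-version of }\mu}$, i.e.\ that appending $\Rt$ commutes with the weak-strip Pieri operation. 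In the affine-symmetric-group picture, $\Rt\cup\la$ corresponds to $\omega_t\cdot w_\la$ (left multiplication by the element of $\aSn$ attached to the rectangle $R_t$, which lies in the maximal parabolic and normalizes the relevant structure), and the key combinatorial fact to establish is: $\ka/\la$ is a weak strip of size $r$ if and only if $(\Rt\cup\ka)/(\Rt\cup\la)$ is a weak strip of size $r$, and moreover $\mu\le\ka \iff (\Rt\cup\mu)\le(\Rt\cup\ka)$ when $\mu$ itself is $\le$ some rectangle-shifted element. This is where I would lean on the translation-element description of $R_t$ and on the known behaviour of cyclically decreasing elements $d_A$ under the rectangle shift (the same mechanism that powers the $k$-Schur factorization~(\ref{intro:zero})).

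Having that, I would run the bookkeeping: by Theorem~\ref{theo:StrongSumPieri'}, $\kkss{\la}\wth_r$ is an explicit alternating sum of $\kkss{d_{A_I}\la}$ over intersections of the index sets $A_i$ of the weak strips; the rectangle-shift compatibility upgrades each $d_{A_I}\la$ to $d_{A_I}(\Rt\cup\la) = \Rt\cup(d_{A_I}\la)$, so $\kkss{\Rt\cup\la}\wth_r = \sum_{m}(-1)^{m-1}\sum \kkss{\Rt\cup d_{A_I}\la}$. By the inductive hypothesis applied to each $d_{A_I}\la$ (which has the same size $|\la|+r-(\text{something})$... more precisely, the terms $d_{A_I}\la$ with $|I|\ge 2$ are strictly smaller, and the $|I|=1$ terms have size $|\la|+r$), I can write each $\kkss{\Rt\cup d_{A_I}\la}=\kkss{\Rt}\kkss{d_{A_I}\la}$ \emph{except} for the leading terms; factoring out $\kkss{\Rt}$ and comparing with Theorem~\ref{theo:StrongSumPieri'} for $\la$ itself yields $\kkss{\Rt\cup\la}\wth_r = \kkss{\Rt}\kkss{\la}\wth_r$. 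Finally I would cancel $\wth_r$: since $\wth_r = h_0+\dots+h_r$ and $\{h_1,\dots,h_k\}$ are algebraically independent generators of $\Lk$ with $h_0=1$, multiplication by $\wth_r$ is injective on $\Lk$ (it has nonzero constant term $1$, hence is a nonzerodivisor in the polynomial ring), so $\kkss{\Rt\cup\la}=\kkss{\Rt}\kkss{\la}$.

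The main obstacle I anticipate is the rectangle-shift compatibility of weak strips and of the strong order, i.e.\ proving $d_A(\Rt\cup\la)=\Rt\cup(d_A\la)$ and that $\mu\mapsto\Rt\cup\mu$ is an order-embedding onto the appropriate down-set. The $k$-Schur case~(\ref{intro:zero}) suggests the right tools — conjugation of affine permutations by the rectangle's translation element, and the fact that the cyclically decreasing $d_A$ interact cleanly with it — but the inhomogeneity of $K$-$k$-Schur functions means I must track not only top-degree (weak-strip) behaviour but the full down-set, and the alternating inclusion–exclusion sum over $\bigcap A_i$ must be shown to survive the shift intact. A secondary subtlety is ensuring the induction is well-founded: the $|I|=1$ weak-strip terms $d_{A_i}\la$ have size $|\la|+r > |\la|$, so the induction must be arranged on $|\la|$ with these larger terms appearing on the \emph{left} (as $\kkss{\Rt\cup\la}\wth_r$), not invoked by the inductive hypothesis — equivalently, one inducts to prove the identity and reads off the $\la$-level statement by $\wth_r$-cancellation after all smaller-$\mu$ statements are known.
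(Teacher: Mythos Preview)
Your core ingredients are the right ones—Theorem~\ref{theo:StrongSumPieri'} together with the rectangle/weak-strip compatibility (Lemma~\ref{theo:Rt} in the paper: the weak strips over $\Rt\cup\la$ are exactly the $\Rt\cup(d_{A}\la)=d_{A+t}(\Rt\cup\la)$ for weak strips $d_A\la$ over $\la$; note the residue shift $A\mapsto A+t$, which your formula $d_A(\Rt\cup\la)=\Rt\cup(d_A\la)$ omits). But the inductive packaging does not close. In the alternating sum of Theorem~\ref{theo:StrongSumPieri'}, \emph{every} term $d_{A_{a_1}\cap\dots\cap A_{a_m}}\la$ has size between $|\la|$ and $|\la|+r$; your claim that the $m\ge 2$ terms are ``strictly smaller'' is only true relative to $|\la|+r$, not $|\la|$. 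Hence none of these terms is covered by an inductive hypothesis on $|\la|$, and the $\wth_r$-cancellation route fails for the same reason: to prove $\kkss{\Rt\cup\la}\,\wth_r=\kkss{\Rt}\,\kkss{\la}\,\wth_r$ you would need the factorization for each $d_{A_{a_1}\cap\dots\cap A_{a_m}}\la$, all of which are at least as large as $\la$. Starting instead from a smaller $\nu$ with $\la$ among the weak strips over $\nu$ only yields the \emph{sum} identity $\sum_a\big(\kkss{\Rt\cup d_{A_a}\nu}-\kkss{\Rt}\,\kkss{d_{A_a}\nu}\big)=0$ over all weak strips of size $r$ over $\nu$, not the individual statement for $\la$.

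The paper avoids induction altogether by repackaging exactly your computation as a module-homomorphism argument. Define the $\Z$-linear map $\Theta\colon\Lk\to\Lk$ by $\Theta(\kkss{\la})=\kkss{\Rt\cup\la}$ (well-defined since $\{\kkss{\la}\}_{\la}$ is a basis). Applying Theorem~\ref{theo:StrongSumPieri'} to $\la$ and to $\Rt\cup\la$, and using Lemma~\ref{theo:Rt} to match the index sets, gives $\Theta(\wth_r\,\kkss{\la})=\wth_r\,\Theta(\kkss{\la})$ for every $\la$ and every $1\le r\le k$. Since the $\wth_r$ generate $\Lk$ as an algebra, $\Theta$ is $\Lk$-linear; since $\Lk$ is generated as a module over itself by $1=\kkss{\emptyset}$, this forces $\Theta(f)=f\cdot\Theta(1)=f\cdot\kkss{\Rt}$, which is the theorem. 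In short, the step you isolate as the crux—rectangle-shift compatibility of the Pieri expansion—is indeed the whole content, but it should be read as ``$\Theta$ commutes with the generators'' rather than fed into an induction.
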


To deduce Theorem \ref{theo:StrongSumFactorization}
from Theorem \ref{theo:StrongSumPieri'}
is easy and discussed in Section \ref{sect:StrongSumFactorization}.
The proof of Theorem \ref{theo:StrongSumPieri} and \ref{theo:StrongSumPieri'},
on the other hand, is the technical heart of this paper and
requires auxiliary work on the strong and weak orderings on the set of affine permutations as well as the structure of the set of weak strips,
which are discussed in Section \ref{sect:PropOrder} and \ref{sect:PropWS}.

\vspace{2mm}

This paper is organized as follows.

In Section \ref{sect:Prel},
we review notations and facts on
combinatorial backgrounds.
In Section \ref{sect:Prel::Coxeter}
we treat arbitrary Coxeter groups and its strong and weak orderings.
It also contains quick reviews on the generalized quotients \cite{MR946427}
and the Demazure products.
Section \ref{sect:Prel::AffSym}
contains notations specific to the affine symmetric groups and
a review on its Young-diagrammatic treatment.
In Section \ref{sect:Prel::SymFcn} we briefly review the Pieri-type formulas for
$k$-Schur and $K$-$k$-Schur functions.

Section \ref{sect:PropOrder} contains
technical lemmas
on the strong and weak orders on arbitrary Coxeter groups.
In Section \ref{sect:PropOrder::Lattice}
the lattice property of the weak order is reviewed.
Although 
it is known that the quotient of an affine Weyl group by its corresponding finite Weyl group forms a lattice under the weak order \cite{MR1740744},
we include another proof for the type affine A using the $k$-Schur functions.
Section \ref{sect:PropOrder::phipsi}
contains basic properties of the Demazure and anti-Demazure actions.
In Section \ref{sect:PropOrder::SLjoin}
we show the existence of
$\min_\le\{z\in W\mid x\le z\ge_L y\}$ and
$\max_\le\{z\in W\mid x\ge_L z\le y\}$,
analogous to the join and meet.
In Section \ref{sect:PropOrder::flip}
we consider an ``interval-flipping'' map
$\Phi_z\colon[e,z]_L\lra[e,z]_R;\,x\mapsto zx^{-1}$
and show that
$\Phi_z$ is anti-isomorphic under the strong order and
sends strong-meets (if exist) to strong-joins.
In Section \ref{sect:PropOrder::CP}
we show the Chain Property of lower weak intervals,
analogous to the Chain Property of the generalized quotients.

In Section \ref{sect:PropWS},
we focus on the affine symmetric groups and
give results on the structure of the set of weak strips,
which includes:
\begin{prop}[$\subset$ Proposition \ref{theo:Zu_props}]\label{theo:alias:Zu_props}
	For any $\la\in\Pk$ and
	$A,B\subsetneq I$ with $d_A\la/\la$ and $d_B\la/\la$ are weak strips,
	\begin{enumerate}[label=\textup{(\arabic*)}]
		\item
			$d_{A\cap B}\la/\la$ and $d_{A\cup B}\la/\la$ are weak strips.
		\item
			$d_{A\cap B}\la = d_A\la \wedge d_B\la$ under the strong order.
	\end{enumerate}
\end{prop}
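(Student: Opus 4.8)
The plan is to reduce everything to statements about the cyclically decreasing permutations $d_A$ and the strong/weak orders on $\aSn$, using the dictionary between weak strips over $\la$ and subsets $A\subsetneq I$. Recall that $d_A\la/\la$ is a weak strip precisely when $d_A\la\ge_L\la$, i.e.\ when $d_A\la$ has a reduced expression beginning with a reduced word for $\la$; equivalently, multiplication by $d_A$ on the left increases length by exactly $|A|$. The key combinatorial input I would isolate first is a characterization, in terms of the residues appearing in (the canonical reduced word of) $\la$ viewed as an affine Grassmannian element, of which subsets $A$ give weak strips. Concretely, I expect: $d_A\la\ge_L\la$ if and only if no two \emph{cyclically adjacent} residues in $I$ are both ``blocked'' by $\la$ in a suitable sense — but more usefully, the set $\{A\subsetneq I : d_A\la/\la \text{ is a weak strip}\}$ should be closed under taking subsets, and closed under unions of subsets whose ``spans'' interact correctly. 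This monotonicity under subsets is the engine: since $A\cap B\subseteq A$, once we know that weak-strip-ness is inherited by subsets, part (1)'s claim for $A\cap B$ is immediate. For $A\cup B$ I would argue directly with the length-additivity criterion, showing $\ell(d_{A\cup B}\la)=\ell(d_{A\cup B})+\ell(\la)$ by combining the corresponding equalities for $A$ and $B$ and controlling the overlap; here the fact that $d_{A\cup B}$, $d_A$, $d_B$, $d_{A\cap B}$ all live in the (finite) parabolic structure generated by the affine simple reflections, together with the subword property of reduced words, should do the work.

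For part (2), the goal is $d_{A\cap B}\la = d_A\la\wedge d_B\la$ in the strong order on $\Pk\cong\tSn$. First I would verify $d_{A\cap B}\la$ is a common lower bound: since $A\cap B\subseteq A$ and weak order refines strong order, $d_{A\cap B}\la\le_L d_A\la$ (using that both are weak strips over $\la$, so the comparison happens ``above $\la$''), hence $d_{A\cap B}\la\le d_A\la$, and symmetrically $\le d_B\la$. The substance is maximality: if $z\in\Pk$ satisfies $z\le d_A\la$ and $z\le d_B\la$, I want $z\le d_{A\cap B}\la$. For this I would use the meet in the \emph{weak} order, which exists by the lattice property reviewed in Section \ref{sect:PropOrder::Lattice}: I expect $d_A\la\wedge_L d_B\la = d_{A\cap B}\la$ as an identity purely about the cyclically decreasing elements over $\la$ (the weak meet of two weak strips over $\la$ should again be a weak strip over $\la$, realized by intersecting the subsets). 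Granting that, and granting the inequality $\text{(weak meet)}\ge\text{(strong meet)}$ type comparison, I would then need the reverse — that any strong lower bound of $d_A\la$ and $d_B\la$ is a strong lower bound of $d_{A\cap B}\la$ — which I would get from the results of Section \ref{sect:PropOrder::SLjoin} and Section \ref{sect:PropOrder::flip} (existence of $\min_\le\{z: x\le z\ge_L y\}$ and the interval-flipping anti-isomorphism sending strong-meets to strong-joins), applied to transport the problem into a lower weak interval where the Chain Property of Section \ref{sect:PropOrder::CP} gives enough control.

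I expect the main obstacle to be precisely the maximality half of (2): showing that the strong meet of the two weak strips is not strictly larger than $d_{A\cap B}\la$. The difficulty is that the strong order is much coarser than the weak order, so a priori there could be some $z$ with $z\le d_A\la$, $z\le d_B\la$ but $z\not\le_L d_{A\cap B}\la$; ruling this out requires genuinely using that $d_A\la$ and $d_B\la$ both sit in the lower weak interval $[e,\text{something}]_L$ above $\la$ and that this interval has well-behaved chain structure. The cleanest route I foresee is: establish the weak-order identity $d_A\la\wedge_L d_B\la=d_{A\cap B}\la$ by an explicit reduced-word argument on cyclically decreasing words (this is elementary but fiddly), then invoke the already-proven structural lemmas of Section \ref{sect:PropOrder} to upgrade ``weak meet'' to ``strong meet'' in this special configuration. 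If that upgrade is not available off the shelf, the fallback is a direct induction on $|A\,\triangle\,B|$, peeling off one residue at a time and using part (1) at each step to stay within the world of weak strips.
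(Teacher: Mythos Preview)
Your central premise for part (1) is false: the set $\{A\subsetneq I : d_A\la/\la \text{ is a weak strip}\}$ is \emph{not} closed under taking subsets. The paper's own Example \ref{exam:ws} and Figure \ref{fig:ZandZ'} exhibit this: for $k=3$ and $\la=(3,2,1)$, the set $Z^{'\circ}_{w_\la,+}$ contains $\{1,2\}$ but not $\{2\}$. So the sentence ``since $A\cap B\subseteq A$, once we know that weak-strip-ness is inherited by subsets, part (1)'s claim for $A\cap B$ is immediate'' collapses. Relatedly, your lower-bound step in (2) assumes $d_{A\cap B}\la\le_L d_A\la$, which would require $d_{A\cap B}\le_L d_A$; but for $A=\{1,2\}$, $B=\{2\}$ one has $d_B=s_2\not\le_L s_2s_1=d_A$, so this left-order comparison is not available in general either. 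Your $A\cup B$ sketch also omits the issue that $A\cup B$ might equal $I$, in which case $d_{A\cup B}$ is undefined; the paper handles this separately via Proposition \ref{theo:forbiddenindex}.

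The paper's actual route is different on both counts. For $d_{A\cap B}\la\ge_L\la$ (Lemma \ref{theo:dA_meet}(1)) it decomposes $A$ and $B$ into connected components and uses that every connected component $C$ of $A\cap B$ is a \emph{left-initial} subinterval of some component of $A$ or of $B$, hence $d_C\le_L d_{A_i}$ or $d_C\le_L d_{B_j}$; this is exactly the special structure that fails for arbitrary subsets but holds for intersections. The strongly disjoint components are then combined via Lemma \ref{theo:sc_xyz}. For the strong-meet identity (2), the paper does not go through weak meets at all: it observes that $d_{A\cap B}=d_A\wedge d_B$ in the strong order on $W$ (Subword Property), and that the right Demazure action $\phi^R_w$ preserves strong meets (Lemma \ref{theo:b_i}(5)); since $\phi^R_{w}(d_X)=d_Xw$ for $X\in\{A,B,A\cap B\}$ once part (1) is known, the conclusion $d_{A\cap B}w=d_Aw\wedge d_Bw$ follows in one line. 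Your proposed ``upgrade weak meet to strong meet'' step, besides resting on the false subset-closure, is unnecessary.
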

 
\begin{prop}[$\subset$ Proposition \ref{theo:forbiddenindex}]
	For any $\la\in\Pk$,
	there exists $i_\la\in I$ $(=\{0,1,\dots,k\})$
	such that
	$i_\la \notin A$
	for any weak strip $d_A\la/\la$.
\end{prop}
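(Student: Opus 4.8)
The plan is to deduce the statement formally from the union‑closure of weak strips in Proposition~\ref{theo:alias:Zu_props}(1). Fix $\la\in\Pk$ and set
\[
	\mathcal{A}_\la=\{A\subsetneq I\mid d_A\la/\la\text{ is a weak strip}\}.
\]
This set is finite, consisting of subsets of the finite set $I$, and it is nonempty: since $d_\emptyset=e$, the trivial skew $d_\emptyset\la/\la=\la/\la$ is a weak strip, so $\emptyset\in\mathcal{A}_\la$.

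First I would note that $\mathcal{A}_\la$ has a largest element under inclusion. By Proposition~\ref{theo:alias:Zu_props}(1) it is closed under pairwise unions, so writing $\mathcal{A}_\la=\{A_1,\dots,A_N\}$ and applying that closure repeatedly shows $A^{*}:=A_1\cup\dots\cup A_N\in\mathcal{A}_\la$; since $A\subseteq A^{*}$ for every $A\in\mathcal{A}_\la$, this $A^{*}=\bigcup_{A\in\mathcal{A}_\la}A$ is the maximum of $\mathcal{A}_\la$.

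The key point is then that $A^{*}$ is a \emph{proper} subset of $I$. Indeed every member of $\mathcal{A}_\la$ is a proper subset of $I$ by the very definition of a weak strip: the cyclically decreasing element $d_A$ is only defined when $A\neq I$, since one needs a gap in the cyclic order on $I$ in order to read the entries of $A$ in cyclically decreasing order. Hence $A^{*}\subsetneq I$, so $I\setminus A^{*}\neq\emptyset$, and we may take $i_\la$ to be any element of $I\setminus A^{*}$. For an arbitrary weak strip $d_A\la/\la$ we then have $A\subseteq A^{*}$, so $i_\la\notin A$, which is what was claimed. (This even shows that the admissible choices of $i_\la$ are exactly the residues in $I\setminus A^{*}$, i.e.\ the complement of the support of the largest weak strip over $\la$.)

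There is no real obstacle here beyond Proposition~\ref{theo:alias:Zu_props} itself: granting the union‑closure of weak strips, the argument is purely formal. A more hands‑on alternative would be to exhibit $i_\la$ directly from the $(k+1)$-core $\mathfrak{c}(\la)$---for instance by tracking which residues of addable corners a cyclically decreasing strip can ever reach---and then check that the distinguished residue never enters any $A$; but this would need a case analysis that the union‑closure argument avoids, so I would use the short proof above.
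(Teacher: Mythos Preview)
Your argument is circular. You invoke Proposition~\ref{theo:alias:Zu_props}(1) to conclude that $\mathcal{A}_\la$ is closed under unions, but look at the version actually proved in the body of the paper, Proposition~\ref{theo:Zu_props}(1): it carries the hypothesis $A\cup B\neq I$. The introductory Proposition~\ref{theo:alias:Zu_props} can drop that hypothesis only because, for $\la\in\Pk$, one already knows there is a residue missing from every weak strip --- precisely the statement you are asked to prove. Concretely, Lemma~\ref{theo:dA_cup} (from which the union half of Proposition~\ref{theo:Zu_props} is drawn) reads: if $A,B\subsetneq I$ with $A\cup B\neq I$ and $d_Au,d_Bu\ge_L u$, then $d_{A\cup B}u\ge_L u$. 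Without first knowing that $A\cup B\neq I$ for any two weak strips over $\la$, you cannot iterate the union and remain inside $\mathcal{A}_\la$; and that pairwise condition is, via your own iteration argument, equivalent to the existence of $i_\la$. The paper's Corollary~\ref{theo:Zu_max}, asserting that $Z'_{u,\pm}$ has a maximum element, accordingly cites \emph{both} Proposition~\ref{theo:Zu_props}(1) \emph{and} Proposition~\ref{theo:forbiddenindex}; union-closure alone does not do the job.

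The paper breaks the circle by a different route. It takes the \emph{left-weak} join $z=\bigvee_L\{d_Aw : d_Aw/w\text{ a weak strip}\}$, which exists by the lattice property of $\aG$ (Lemma~\ref{theo:join_exist}), and then argues via $k$-codes that every such $A$ lies in the first row of $\RD(zw^{-1})$, a fixed proper subset of $I$ independent of $A$. Your ``hands-on alternative'' in the last paragraph is in fact closer to what is actually needed; the Remark after Proposition~\ref{theo:forbiddenindex} pins $i_\la$ down explicitly as the residue of the rightmost box in the first row of $\core(\la)$.
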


Section \ref{sect:StrongSumPieri} and \ref{sect:StrongSumFactorization}
are devoted to proving 
the Pieri-type formula for $\kkss{\la}$
(Theorem \ref{theo:StrongSumPieri} and \ref{theo:StrongSumPieri'})
and the $k$-rectangle factorization formula for $\kkss{\la}$
(Theorem \ref{theo:StrongSumFactorization}), respectively.

\subsection*{Acknowledgement}
The author would like to express his gratitude to
T.\ Ikeda
for suggesting this topic to the author,
many fruitful discussions
and communicating to him the idea of considering the Schubert class of structure sheaves,
related to the work \cite{1705.03435}.
He is grateful to
I.\ Terada
for many valuable discussions and comments.
He also wishes to thank
Hiroshi Naruse and
Mark Shimozono
for helpful comments.
This work was supported by
the Program for Leading Graduate
Schools, MEXT, Japan.

\section{Preliminaries}\label{sect:Prel}

In this section we review some requisite combinatorial backgrounds.

\subsection{Coxeter groups}\label{sect:Prel::Coxeter}

For basic definitions for the Coxeter groups we refer the reader to \cite{MR2133266} or \cite{MR1066460}.

\subsubsection{Strong and weak orderings}\label{sect:Prel::Coxeter::Order}
Let $(W,S)$ be a Coxeter group and $T=\{wsw^{-1}\mid w\in W\}$ its set of reflections.
The \textit{left weak order} (or simply \textit{left order}) $\le_L$,
\textit{right weak order} (or \textit{right order}) $\le_R$, and
\textit{strong order} (or \textit{Bruhat order}) $\le$
 on $W$ are generated by
the covering relations:
\begin{align*}
	u\lecovL v &\iff l(v)=l(u)+1,\ v=s u \text{ for some } s\in S, \\
	u\lecovR v &\iff l(v)=l(u)+1,\ v=u s \text{ for some } s\in S, \\
	u\lecov v &\iff l(v)=l(u)+1,\ v=tu \text{ for some } t\in T.
\end{align*}
Note that the definition of the strong order looks different from but coincides with the classical one.

It is a few immediate observations that, for $u,v\in W$,
\begin{gather}
	u\le_L v \iff l(vu^{-1})+l(u) = l(v), \\
	u\le_R v \iff l(u)+l(u^{-1}v) = l(v), \\
	u\le_R uv \iff l(u)+l(v)=l(uv) \iff v\le_L uv.
\end{gather}
We often use these equivalences without any mention.
Using this translation from the weak order to length conditions,
we can easily prove the following lemma:
\begin{lemm}\label{theo:weakorder_xyz}
	For $x,y,z\in W$, we have
	\begin{enumerate}[label=\textup{(\arabic*)}]
		\item
			$z\le_L yz \le_L xyz$
			$\iff$
			$y\le_L xy$ and $z\le_L xyz$.
		\item
			$z\ge_L yz \ge_L xyz$
			$\iff$
			$y\le_L xy$ and $z\ge_L xyz$.
	\end{enumerate}
\end{lemm}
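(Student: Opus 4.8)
The plan is to reduce everything to additivity identities for the length function $l$. Using the equivalences recorded just above, namely $u \le_L v \iff l(vu^{-1}) + l(u) = l(v)$, each weak-order relation in the statement becomes an assertion of the form ``$l$ is additive on a certain product'', and I claim that both sides of each equivalence are in turn equivalent to a single such assertion. The two implications then come essentially for free from subadditivity of $l$, since a chain of subadditivity inequalities collapses to equalities as soon as its two extreme terms agree.

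\emph{Part (1).} Translating the hypotheses, $z \le_L yz \iff l(y) + l(z) = l(yz)$ and $yz \le_L xyz \iff l(x) + l(yz) = l(xyz)$, so the left-hand side is equivalent to $l(xyz) = l(x) + l(y) + l(z)$. Likewise $y \le_L xy \iff l(x) + l(y) = l(xy)$ and $z \le_L xyz \iff l(xy) + l(z) = l(xyz)$, so the right-hand side is equivalent to the same identity. To pass between the individual conditions on a given side and the combined identity, one only needs the two chains of subadditivity inequalities $l(xyz) \le l(x) + l(yz) \le l(x) + l(y) + l(z)$ and $l(xyz) \le l(xy) + l(z) \le l(x) + l(y) + l(z)$: equality of the outer terms forces equality at every intermediate step, which returns precisely the pair of length identities on the corresponding side.

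\emph{Part (2).} The same recipe applies after rewriting the relations with the appropriate cancellations. Since $z(yz)^{-1} = y^{-1}$, $\,yz(xyz)^{-1} = x^{-1}$ and $z(xyz)^{-1} = (xy)^{-1}$, and $l$ is invariant under inversion, we obtain $z \ge_L yz \iff l(y) + l(yz) = l(z)$, $\;yz \ge_L xyz \iff l(x) + l(xyz) = l(yz)$, and $z \ge_L xyz \iff l(xy) + l(xyz) = l(z)$. Hence the left-hand side of (2) is equivalent to $l(z) = l(x) + l(y) + l(xyz)$, and, together with $y \le_L xy \iff l(x) + l(y) = l(xy)$, so is the right-hand side. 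The two implications then follow from the chains $l(z) \le l(y) + l(yz) \le l(x) + l(y) + l(xyz)$ (using $z = y^{-1}(yz)$ and $yz = x^{-1}(xyz)$) and $l(z) \le l(xy) + l(xyz) \le l(x) + l(y) + l(xyz)$ (using $z = (xy)^{-1}(xyz)$), exactly as in part (1).

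I expect no genuine obstacle here: the whole argument is the translation dictionary between $\le_L$ and additivity of $l$, plus the elementary collapsing-of-chains observation. The only thing requiring care is the bookkeeping of which factor cancels in each product (e.g.\ verifying $yz\,(xyz)^{-1} = x^{-1}$). In the write-up I would give part (1) in full and then dispose of part (2) by stating the three displayed cancellations and otherwise repeating the argument verbatim.
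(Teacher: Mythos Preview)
Your proposal is correct and follows exactly the approach the paper indicates: the paper merely states that the lemma follows ``using this translation from the weak order to length conditions'' without spelling out details, and your argument does precisely that, reducing each side to the single additivity identity $l(xyz)=l(x)+l(y)+l(z)$ (resp.\ $l(z)=l(x)+l(y)+l(xyz)$) via subadditivity chains. The bookkeeping of cancellations you flagged is all correct.
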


We often use the following notation taken from \cite{MR946427}:
for $w\in W$ we let $\redex{w}$ denote any reduced expression for $w$,
and $\redprod{u}{v}$ the concatenation of reduced expressions for $u$ and $v$.
Hence, saying that $\redex{u}\redex{v}$ is reduced means $l(u) + l(v) = l(uv)$.

%
For $u,v\in W$ with $u\le_L v$ the set $\{w\in W\mid u\le_L w\le_L v\}$ is called a \textit{left interval} and denoted by $[u,v]_L$.
We define \textit{right interval} $[u,v]_R$ and \textit{strong (or Bruhat) interval} $[u,v]$ similarly.
We shall use the notation $[u,\infty)_L$ to denote the set $\{w\in W\mid u\le_L w\}$,
and define $[u,\infty)_R$ and $[u,\infty)$ similarly.

%
In this paper we heavily use some well-known results on the strong and weak orderings on Coxeter groups described below.
See, for example, \cite{MR2133266} for details.
Let $v,w\in W$.

\vspace{2mm}
\noindent\textit{Strong Exchange Property.}
Suppose $w=s_1 s_2 \dots s_k$ ($s_i\in S$) and $t\in T$.
If $l(tw)<l(w)$, then $tw=s_1\dots \widehat{s_i}\dots s_k$ for some $i\in[k]$.
Furthermore, if $s_1 s_2 \dots s_k$ is a reduced expression then $i$ is uniquely determined.

\vspace{2mm}
\noindent\textit{Subword Property.}
Let $w=s_1 s_2\dots s_k$ be a reduced expression.
Then $v\le w$ if and only if there exists a reduced expression 
$v = s_{i_1} s_{i_2} \dots s_{i_l}$ with $1\le i_1<i_2<\dots<i_l\le k$.

\vspace{2mm}
\noindent\textit{Chain Property.}\footnote{With the definition of $\le$ we employed here, this says nothing.}
If $v\le w$, then there exists a chain
$v=x_0\lecov x_1\lecov\dots\lecov x_k=w$.

\vspace{2mm}
\noindent\textit{Lifting Property} (also known as \textit{Z-property}).
Let $s\in S$. If $sv>v$ and $sw>w$, then the following are equivalent:
(1) $v\le w$,
(2) $v\le sw$, and
(3) $sv\le sw$.

\subsubsection{Generalized quotients}\label{sect:Prel::Coxeter::GenQuot}

For $V\subset W$, let
$W/V=\{w\in W\mid l(wv) = l(w) + l(v) \text{ for all } v\in V\}$.
The subsets of the form $W/V$ are called \textit{(right) generalized quotients} \cite{MR946427}.
Similarly the set of the form
$V\backslash W=\{w\in W\mid l(vw) = l(v) + l(w) \text{ for all } v\in V\}$
is called \textit{left generalized quotients}.
Note that, when $V = W_J$, the parabolic subgroup corresponding to $J\subset I$,
the generalized quotient $W/W_J$ is just the parabolic quotient $W^J$.

It is shown in \cite[Lemma 2.2]{MR946427} that
if $a,b,v\in W$ satisfy $l(av) = l(a) + l(v)$ and $l(bv)=l(b)+l(v)$,
then $av < bv \iff a < b$.
An immediate consequence is
\begin{equation}\label{eq:gq_isom}
	W/\{v\}\simeq[v,\infty)_L; w\mapsto wv
\end{equation}
under both the strong and left weak order.

%
\vspace{2mm}
\noindent\textit{Chain Property for generalized quotients $($\cite[Corollary 3.5]{MR946427}$)$.}
If $v,w\in W/V$ and $v<w$, then there exists a chain
$v=x_0\lecov x_1\lecov\dots\lecov x_k=w$ with $x_i\in W/V$ for all $i$.

%
\subsubsection{$0$-Hecke algebra and Demazure product}\label{sect:Prel::Coxeter::0Hecke}

The \textit{$0$-Hecke algebra} $H$ associated to $(W,S)$ is the associative algebra
generated by $\{v_s \mid s\in S\}$ subject to the quadratic relations
$v_s^2= -v_s$
and the braid relations of $(W,S)$, that is,
$\underbrace{v_s v_t v_s \dots}_{m} = \underbrace{v_t v_s v_t \dots}_{m}$
for $s,t\in S$ with 
$\underbrace{s t s \dots}_{m} = \underbrace{t s t \dots}_{m}$ .
For $w\in W$ we can define without ambiguity
$v_w\in H$ to be $v_{s_1}\dots v_{s_n}$ where $s_{1}\dots s_{n}$ is any reduced expression for $w$. 
Furthermore, the elements $\{v_w \mid w\in W\}$ form a basis of $H$.
The \textit{Demazure product} (or \textit{Hecke product}) $*$ on $W$ describes the multiplication of basis elements in $H$:
$x*y$ is such that $v_x v_y =\pm v_{x*y}$.
Some properties on the Demazure product can be found on
\cite{KNUTSON2004161,buch2015}.

We explicitly prepare the notation to denote the left multiplication in the Demazure product:
for $s\in S$, we define the Demazure action $\phi_s \colon W\lra W$ by
\[
	\phi_s(x) = s*x =
		\begin{cases}
			x & \text{(if $x > s x$)} \\
			s x & \text{(if $x < s x$)}
		\end{cases}.
\]
Similarly we define the anti-Demazure action $\psi_s \colon W\lra W$ by
\[
	\psi_s(x) =
		\begin{cases}
			s x & \text{(if $x > s x$)} \\
			x & \text{(if $x < s x$)}
		\end{cases}.
\]
These maps $\{\phi_s\}_s$ and $\{\psi_s\}_s$ satisfy
the quadratic relations $\phi_s^2=\phi_s$, $\psi_s^2=\psi_s$ and
the braid relations of $(W,S)$;
a direct proof (found on \cite[Proposition 2.1]{MR2310418}) of this (for $\psi$) is that
both $\psi_s \psi_t \psi_s \dots$ and $\psi_t \psi_s \psi_t \dots$ ($m$ terms for each),
where $sts\dots=tst\dots$ ($m$ terms for each), 
send $x\in W$ to the shortest
(resp.\,longest, when we consider $\phi$)
element of the parabolic coset $W_{\{s,t\}}x$.
Therefore we can define without ambiguity $\phi_x, \psi_x:W\lra W$ for $x\in W$ by
$\phi_x=\phi_{s_1}\dots\phi_{s_n}$ and 
$\psi_x=\psi_{s_1}\dots\psi_{s_n}$ where
$x=s_{1}\dots s_{n}$ is any reduced expression.
Similarly we define right Demazure and anti-Demazure actions
$\phi_s^R,\psi_s^R\colon W\lra W$ for $s\in S$ by
$\phi_s^R(x) = \phi_s(x^{-1})^{-1}$ and
$\psi_s^R(x) = \psi_s(x^{-1})^{-1}$,
that is, $\phi_s^R(x)=xs$ if $x<xs$, etc.
We also define $\phi_x^R$ and $\psi_x^R$ to be
$\phi_{s_n}^R \dots \phi_{s_1}^R$ and
$\psi_{s_n}^R \dots \psi_{s_1}^R$
(be careful for the order of composition)
where 
$x=s_{1}\dots s_{n}$ is any reduced expression.
Note that $\phi_x(y)=x*y=\phi_y^R(x)$.
When $S$ is indexed with a set $I$, i.e.\,$S=\{s_i\mid i\in I\}$,
we often write $\phi_i=\phi_{s_i}$ and $\psi_i=\psi_{s_i}$.

The following lemma is essentially given in \cite[Theorem 4.2]{MR946427},
and explicitly in \cite[Proposition 3.1(e)]{buch2015}:
\begin{lemm}\label{theo:b_xyz}
	Let $x,y,z\in W$ with $x*y = z$, that is, $\phi_x(y)=z=\phi_y^R(x)$.
	Let $x'=zy^{-1}$ and $y'=x^{-1}z$, that is, $z=xy'=x'y$.
	Then we have
	\begin{enumerate}[label=\textup{(\arabic*)}]
		\item
			$x, x'\le_{R} z$
		\item
			$y, y'\le_{L} z$,
		\item
			$l(z) = l(x)+l(y') = l(x')+l(y)$.
		\item
			$x'\le x$.
		\item
			$y'\le y$.
	\end{enumerate}
\end{lemm}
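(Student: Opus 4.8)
The plan is to treat parts (1)--(3) as essentially formal, to put the real work into (5), and then to deduce (4) from (5) by an inversion symmetry.

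I would first observe that (1), (2), (3) come directly from the Demazure-action description of $*$. Since $z = x*y = \phi_x(y)$ and each elementary action satisfies $w\le_L\phi_s(w)$, composing along a reduced word for $x$ gives $y\le_L z$; dually $z = \phi_y^R(x)$ with $w\le_R\phi_s^R(w)$ gives $x\le_R z$. Now $x\le_R z$ unwinds to $l(z) = l(x)+l(x^{-1}z) = l(x)+l(y')$, and since $z(y')^{-1}=x$ this very length identity is the assertion $y'\le_L z$; symmetrically $y\le_L z$ yields $l(z) = l(x')+l(y)$ together with $x'\le_R z$. So (1), (2), (3) all fall out at once.

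The heart is (5), $y'\le y$ with $y' = x^{-1}z$. I would prove it by induction on $l(x)$, the case $x = e$ being trivial since then $z = y$. For $l(x)\ge 1$, write $x = s\bar x$ with $s\in S$ and $\redex{s}\redex{\bar x}$ reduced; set $\bar z = \bar x*y$ and $\bar y' = \bar x^{-1}\bar z$, so $\bar y'\le y$ by induction, and note $z = s*\bar z$. If $s\bar z > \bar z$, then $z = s\bar z$ and $y' = x^{-1}z = \bar x^{-1}\bar z = \bar y'\le y$. If $s\bar z < \bar z$, then $z = \bar z$; here I would use that $\bar x\le_R\bar z$ (the previous paragraph applied to $\bar x, y$), so that $\redex{\bar x}\redex{\bar y'}$ is a reduced word for $z$, and then apply the Strong Exchange Property to this word with the reflection $s$ (legitimate because $sz = s\bar z < \bar z = z$). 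The exchanged letter cannot lie in the $\redex{\bar x}$ block: otherwise, cancelling the common right factor $\redex{\bar y'}$ would force $s\bar x$ to be shorter than $\bar x$, contradicting that $\redex{s}\redex{\bar x}$ is reduced. Hence it lies in $\redex{\bar y'}$, producing a subword $w$ of $\redex{\bar y'}$ with $w\le\bar y'$ by the Subword Property and $sz = \bar x w$, so $z = s\bar x w = xw$ and $y' = x^{-1}z = w\le\bar y'\le y$. This closes the induction.

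For (4), I would invoke the left-right symmetry of the Demazure product: the anti-automorphism $v_w\mapsto v_{w^{-1}}$ of the $0$-Hecke algebra applied to $v_xv_y = \pm v_z$ gives $y^{-1}*x^{-1} = z^{-1}$, and running the result of the previous paragraph on the triple $(y^{-1},x^{-1},z^{-1})$ yields $(x')^{-1}\le x^{-1}$, hence $x'\le x$ since the Bruhat order is invariant under inversion. (One could instead repeat the induction of the previous paragraph with left and right interchanged.) The one genuinely combinatorial point, and the step I expect to be the main obstacle, is the branch $s\bar z < \bar z$ in (5): locating the exchanged letter inside the concatenated reduced word $\redex{\bar x}\redex{\bar y'}$ and ruling out that it lands in the $\redex{\bar x}$ block. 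Everything else is bookkeeping with the length characterizations of $\le_L$ and $\le_R$.
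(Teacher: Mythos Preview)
Your argument is correct. Parts (1)--(3) are handled cleanly, the induction in (5) with the Strong Exchange Property goes through exactly as you describe (the key point that the exchanged letter cannot lie in $\redex{\bar x}$ is justified correctly), and the inversion symmetry for (4) is fine.

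The paper's one-line proof (``from the definition of $*$ and the Subword Property'') almost certainly has in mind a more direct route than your induction. Unfolding $z=\phi_y^R(x)$ along a reduced word $y=t_1\cdots t_m$, one sees that $z$ is obtained from $x$ by right-multiplying in turn by each $t_j$ exactly when doing so increases the length. Thus $z=x\,t_{j_1}\cdots t_{j_l}$ for some subsequence $j_1<\cdots<j_l$, with $l(z)=l(x)+l$; hence $y'=x^{-1}z=t_{j_1}\cdots t_{j_l}$ is a (reduced) subword of $\redex{y}$, giving $y'\le y$ immediately by the Subword Property. The symmetric argument with $z=\phi_x(y)$ gives $x'\le x$. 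This avoids both the induction and the Strong Exchange step. What your approach buys is that it never leaves the left Demazure picture $\phi_x(y)$---useful if one is reluctant to invoke the equality $\phi_x(y)=\phi_y^R(x)$---at the cost of a slightly heavier combinatorial argument.
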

\begin{proof}
	It follows easily from the definition of $*$ and the Subword Property.
\end{proof}
The proof of the following lemma is easy and similar to that of Lemma \ref{theo:b_xyz}.
\Todo{really?}
\begin{lemm}\label{theo:psi_xyz}
	Let $x,y,z\in W$ with $\psi_x(y)=z$.
	Let $x'=zy^{-1}$, that is, $z=x'y$.
	Then we have
	\begin{enumerate}[label=\textup{(\arabic*)}]
		\item
			$x'\le x$.
		\item
			$z\le_L y$.
		\item
			$x'^{-1}\le_R y$.
	\end{enumerate}
\end{lemm}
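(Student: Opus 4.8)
The plan is to mirror the proof strategy of Lemma \ref{theo:b_xyz} almost verbatim, exploiting the fact that $\psi_x(y)$ is obtained from $y$ by a sequence of anti-Demazure moves on the left, each of which either fixes the current element or multiplies it on the left by a simple reflection and \emph{decreases} length. First I would fix a reduced expression $x = s_1\cdots s_n$ and set $y_0 = y$, $y_{j} = \psi_{s_j}(y_{j-1})$ for $j = 1,\dots,n$, so that $y_n = \psi_x(y) = z$. At each step either $y_j = y_{j-1}$ or $y_j = s_j y_{j-1}$ with $l(y_j) = l(y_{j-1}) - 1$; in particular $y_j \le_L y_{j-1}$ in the first case trivially and in the second case because $y_{j-1} = s_j y_j$ with $l(y_{j-1}) = l(y_j) + 1$. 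Chaining these together gives $z = y_n \le_L y_{n-1} \le_L \cdots \le_L y_0 = y$, which is part (2).

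For part (1), I would track that at each step $y_j \in \{y_{j-1}, s_j y_{j-1}\}$, so $y_j = w_j y_{j-1}$ for some $w_j \in \{e, s_j\}$, hence $z = w_n w_{n-1} \cdots w_1 y$, i.e. $x' = z y^{-1} = w_n \cdots w_1$. Each $w_j$ is either $e$ or $s_j$, so $w_n \cdots w_1$ is obtained from the reduced word $s_n \cdots s_1$ for $x^{-1}$ by deleting some letters; by the Subword Property this shows $x' \le x^{-1}$... but I actually want $x' \le x$. Here I would instead organize the bookkeeping so that $x'$ is read off as a subword of a reduced word for $x$ rather than $x^{-1}$: note that length considerations give $l(x') \le l(x)$ from $l(z) = l(y) - (\text{number of non-fixing steps})$ combined with $l(z) = l(x') + l(y)$... wait, that last identity is false for $\psi$ (unlike the $*$ case). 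The cleanest route is to observe directly that the multiset of $j$ for which $y_j = s_j y_{j-1}$ yields, reading in the order $s_1, \dots, s_n$ (not reversed), a subword; a short induction on $n$ shows $x' = z y^{-1}$ equals the product over the ``active'' indices in increasing order of the corresponding $s_j$, hence is a subword of $s_1\cdots s_n$, giving $x' \le x$ by the Subword Property. This is part (1).

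For part (3), since $z = x' y$ and, reading the construction from the $y$-side, $z$ is obtained by left-multiplications that never create a reduced factorization through $y$ on the left in the naive way, I would argue as follows: we have $z \le_L y$ from part (2), so $y = (y z^{-1}) z$ with $l(y) = l(y z^{-1}) + l(z)$, i.e. $z^{-1} \le_R y^{-1}$... hmm, I want $x'^{-1} \le_R y$. Note $x' = z y^{-1}$ so $x'^{-1} = y z^{-1}$, and $z \le_L y$ means precisely $l(y z^{-1}) + l(z) = l(y)$, which is $l(x'^{-1}) + l(z) = l(y)$; this says $x'^{-1} \le_R x'^{-1} z = y$ by the definition of right weak order recalled in Section \ref{sect:Prel::Coxeter::Order}. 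So part (3) is actually an immediate consequence of part (2) together with $z = x' y$.

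The main obstacle is getting the \emph{direction} right in part (1): the anti-Demazure recursion naturally presents $x'$ as a subword of the reverse word $s_n \cdots s_1$, which only gives $x' \le x^{-1}$, and one must be careful that $x' \le x$ is what is claimed. The fix is to run the induction in the correct orientation — either induct on $l(x)$ peeling off the \emph{first} letter $s_1$ of the reduced word for $x$ and using $\psi_x = \psi_{s_1} \circ \psi_{x''}$ where $x = s_1 x''$... no: $\psi_x$ is defined as $\psi_{s_1}\cdots\psi_{s_n}$ applied left-to-right, so $\psi_x(y) = \psi_{s_1}(\psi_{s_2\cdots s_n}(y))$, and with $x = s_1 x''$, $x'' = s_2\cdots s_n$, the inductive hypothesis gives $(x'')' := \psi_{x''}(y) y^{-1} \le x''$ as a subword of $s_2\cdots s_n$; then applying $\psi_{s_1}$ prepends either nothing or $s_1$, keeping it a subword of $s_1 \cdots s_n$, whence $x' \le x$. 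Once this orientation is pinned down, every step reduces to the Subword Property and the length characterizations of weak order, exactly as asserted after the statement of Lemma \ref{theo:psi_xyz}, so the proof is indeed short.
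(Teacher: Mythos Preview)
Your approach---unwinding $\psi_x$ step by step and invoking the Subword Property---is exactly what the paper intends (it gives no proof, just refers to the argument for Lemma~\ref{theo:b_xyz}). There is, however, a genuine indexing slip in your first paragraph: with $y_j=\psi_{s_j}(y_{j-1})$ you are computing $y_n=\psi_{s_n}\cdots\psi_{s_1}(y)=\psi_{x^{-1}}(y)$, not $\psi_x(y)=\psi_{s_1}\cdots\psi_{s_n}(y)$. You effectively catch this in your last paragraph, where you peel off $s_1$ first and induct via $\psi_x=\psi_{s_1}\circ\psi_{x''}$ with $x''=s_2\cdots s_n$; that orientation is the correct one and should be used consistently throughout (your argument for~(2) then goes through verbatim, or you can simply cite Lemma~\ref{theo:b_i}(1)). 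One more point to make explicit in~(1): when the outermost step is active, i.e.\ $z=s_1 z''$ with $l(z)=l(z'')-1$, you need $l(s_1(x'')')=l((x'')')+1$ so that prepending $s_1$ to a reduced subword for $(x'')'$ stays reduced; this follows from $z\le_L z''\le_L y$, which gives $l(x')=l(y)-l(z)=l((x'')')+1$. With these two fixes your argument is complete, and your derivation of~(3) from~(2) is clean.
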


We see more properties on $\phi_x,\psi_x$ in Section \ref{sect:PropOrder::phipsi}.

%
\subsection{Affine symmetric groups}\label{sect:Prel::AffSym}

In this section we briefly review the connection between
affine permutations, bounded partitions and core partitions. 
We refer the reader to
\cite[Chapter 2]{MR3379711} and \cite{MR3001656}
for the details.

\vspace{2mm}
\textit{Hereafter we fix a positive integer $k$.}

\subsubsection{Affine symmetric group}\label{sect:Prel::AffSym::AffSym}

Let $I=\mathbb{Z}_{k+1}=\{0,\dots,k\}$.
Let $[p,q]=\{p,p+1,\dots,q-1,q\}\subsetneq I$ for $p\neq q-1$.
For example, $[4,2]=\{4,5,0,1,2\}$ where $k=5$.
A subset $A\subset I$ is called \textit{connected} if $A=[p,q]$ for some $p,q$.
A \textit{connected component} of $A\subsetneq I$ is a maximal connected subset of $A$.

%
The {\it affine symmetric group} $\ti S_{k+1}$ is a group generated by 
the generators $\{ s_i\mid i\in I\}$
subject to the relations
$s_i^2=1$,
$s_i s_{i+1} s_i = s_{i+1} s_i s_{i+1}$,
$s_i s_j = s_j s_i$ for $i-j \not\equiv 0,\pm 1$,
with all indices considered mod $(k+1)$.
We often write $s_{ij\dots}$ instead of $s_i s_j \cdots$.
The parabolic quotient $\aSn/S_{k+1}$,
where $S_{k+1}$ is the symmetric group $\langle s_1,\dots,s_k\rangle$ as a subgroup of $\aSn$,
is denoted by $\aG$
and its elements are called {\it affine Grassmannian elements}.

%
For $x\in \aSn$, 
the set of \textit{right descents} $D_R(x)$ is $\{i\in I\mid x>xs_i\}$ ($\subsetneq I$).
The set of \textit{left descents} $D_L(x)$ is defined similarly.
%
%
For $i\in I$, an element $w\in\aSn$ is called \textit{$i$-dominant} if $D_R(w)\subset\{i\}$.
Note that an affine permutation is $0$-dominant if and only if it is affine Grassmannian.

%
\subsubsection{Cyclically decreasing elements}\label{sect:Prel::AffSym::CycDecrElem}
A word $a=a_1 a_2\dots a_m$ with letters from $I$ is called 
\textit{cyclically decreasing} (resp.\,\textit{cyclically increasing})
if $a_1,a_2,\dots,a_m$ are distinct and
any $j\in I$ does not precede $j+1$ (resp.\,$j-1$) in $a$.
For $A\subsetneq I$, the \textit{cyclically decreasing element} $d_A$ is defined to be
$s_{i_1}s_{i_2}\dots s_{i_m}$ where $A=\{i_1,i_2,\dots,i_m\}$ and
the word $i_1 i_2\dots i_m$ is cyclically decreasing.
The \textit{cyclically increasing element} $u_A=s_{i_m}s_{i_{m-1}}\dots s_{i_1}$ is defined similarly.
Note that these definitions are independent of the choice of the word.

\begin{exam}
Let $k=5$ and $A=\{0,1,3,5\}\subsetneq\Z_6$.
The possible cyclically decreasing words for $A$ are
$1053$, $1035$, $1305$ and $3105$, and hence
$d_A = s_1 s_0 s_5 s_3 = s_1 s_0 s_3 s_5 = s_1 s_3 s_0 s_5 = s_3 s_1 s_0 s_5$.
\end{exam}

%
\subsubsection{Connection to bounded partitions and core partitions}\label{sect:Prel::AffSym::Bijection}

In this section we review the bijection between
the set of $k$-bounded partitions,
$k+1$-core partitions and
affine Grassmannian elements in $\aSn$.
For the details see \cite[Chapter 2]{MR3379711} and references given there.

A partition $\lambda$ is called {\it $k$-bounded} if $\lambda_1 \le k$.
Let $\Pk$ be the set of all $k$-bounded partitions.
%
An {\it $r$-core} (or simply a \textit{core} if no confusion can arise) is a partition none of whose cells have a hook length equal to $r$.
We denote by $\mathcal{C}_r$ the set of all $r$-core partitions.

%
Now we recall the
bijection
\begin{equation}\label{eq:PCS}
	\Pk\simeq\Cn\simeq\aG.
\end{equation}

The map
	$\bdd \colon \Cn \lra \Pk ; \ka \mapsto \la$
is defined by
$\la_i=\#\{j\mid (i,j)\in\ka,\ \hook{(i,j)}{\ka}\le k\}$.
In fact $\bdd$ is bijective and
the inverse map $\core = \bdd^{-1} \colon \Pk \lra \Cn$
is algorithmically described as a ``sliding cells'' procedure.

The map $\StoC \colon \aG \lra \Cn$ is constructed via an action of $\aSn$ on $\Cn$:
for $\ka\in\Cn$ and $i\in I$,
we define $s_i\cdot \ka$ to be
$\ka$ with all its addable (resp.\,removable) corners with residue $i$ added (resp.\,removed),
where the \textit{residue} of a cell $(i,j)$ is $j-i\mod k+1$.
In fact 
this gives a well-defined $\aSn$-action on $\Cn$,
which induces the bijection
$\StoC : \aG \longrightarrow \Cn ; w \mapsto w\cdot \emptyset$.

The map $\Pk\lra\aG; \la\mapsto w_\la$ is given by
$w_\la = s_{i_1} s_{i_2}\dots s_{i_l}$,
where $(i_1,i_2,\dots,i_l)$ is the sequence
obtained by reading the residues of the cells in $\la$, from the shortest row to the largest, and within each row from right to left.
See \cite[Corollary 48]{MR2167475} for the proof.

%
For $\la\in\Pk$, the \textit{$k$-transpose} of $\la$ is $\bdd(\core(\la)')$ and denoted by $\la^{\omega_k}$.

\begin{exam}
Let $k=3$ and $\la=(3,2,1)\in\Pk[3]$.
The corresponding
$4$-core partition
and affine permutation are
$\core(\la)=(5,2,1)\in\Cn[4]$ and $w_\la=s_{203210}\in\aG[4]$.
(See Figure \ref{fig:core_bdd_perm}.)
\end{exam}
\begin{figure}
\centering
\begin{tikzpicture}[]
	\node (core) at (0,0) {
		\begin{tikzpicture}[scale=0.4]
			\tikzyfill{2}
			\tikzydiag{5,2,1}
			\tikzydnum{\small}{1}{0,1,2,3,0}
			\tikzydnum{\small}{2}{3,0}
			\tikzydnum{\small}{3}{2}
		\end{tikzpicture}
	};
	\node (bdd) at (3,0) {
		\begin{tikzpicture}[scale=0.4]
			\tikzydiag{3,2,1}
			\tikzydnum{\small}{1}{0,1,2}
			\tikzydnum{\small}{2}{3,0}
			\tikzydnum{\small}{3}{2}
		\end{tikzpicture}
	};
	\node (perm) at (6,0) {
		$s_{203210}$
	};
	\node [below of=core] {$\core(\la)$};
	\node [below of=bdd] {$\la$};
	\node [below of=perm] {$w_\la$};
	\draw [<->] (core) -- (bdd);
	\draw [<->]	(bdd) -- (perm);
\end{tikzpicture}
\caption{$k=3$, $\la=(3,2,1)\in\Pk[3]$,
$\core(\la)=(5,2,1)\in\Cn[4]$,
and $w_\la=s_{203210}\in\aG[4]$.}
\label{fig:core_bdd_perm}
\end{figure}

\subsubsection{Weak strips} \label{sect:Prel::AffSym::WeakStrip}

\begin{defi}\label{defi:weakstrip}
	For $v,w\in\aG$, we call $v/w$ is a \textit{weak strip} (or \textit{affine strip}) of size $r$ if
	$v=d_Aw\ge_L w$ for some $A\subsetneq I$ with $|A|=r$.
	We also say $v$ is a weak strip of size $r$ over $w$.
\end{defi}

\begin{defi}\label{defi:asvstrip}
	For $v,w\in\aG$ and $A\subsetneq I$,
	we call $(v/w,A)$ is an \textit{affine set-valued strip} of size $r$ if
	$v=d_A*w\,(=\phi_{d_A}(w))$ and $|A|=r$.
	We also say $(v,A)$ is a affine set-valued strip of size $r$ over $w$.
\end{defi}

Note that if $(v/w,A)$ is an affine set-valued strip of size $r$ then
$v/w$ is an affine strip of size $\le r$.

\begin{rema}
Idetifying $\la$, $\core(\la)$ and $w_\la$ through
the bijection $\Pk\simeq\Cn\simeq\aG$,
we often say $\mu/\la$ (resp.\,$\ka/\gamma$) is a weak strip for
$\la,\mu\in\Pk$ (resp.\,$\ka,\gamma\in\Cn$), etc.
\end{rema}

\begin{rema}
Regarding $v,w\in\aG$ as bounded (or core) partitions as above,
we see these notions are variants of the horizontal strip.
For example, 
$w_\mu/w_\la$ is a weak strip if and only if
the corresponding cores $\core(\mu)/\core(\la)$ form a horizontal strip and
$w_\mu\ge_L w_\la$,
and the term ``affine set-valued'' originates in affine set-valued tableaux.
See, for example, \cite{MR3379711,Morse12} for more details.
\Todo{more details}
\end{rema}

\begin{exam}
\label{exam:ws}
Let $k=3$ and $\la=(3,2,1)\in\Pk[3]$,
and thus $w_\la = s_{203210}$ and $\core(\la)=(5,2,1)$.
Figure \ref{fig:weakstrips} lists 
all $v$ such that $v/w_\la$ is a weak strip (the corresponding core partitions are displayed).
\end{exam}

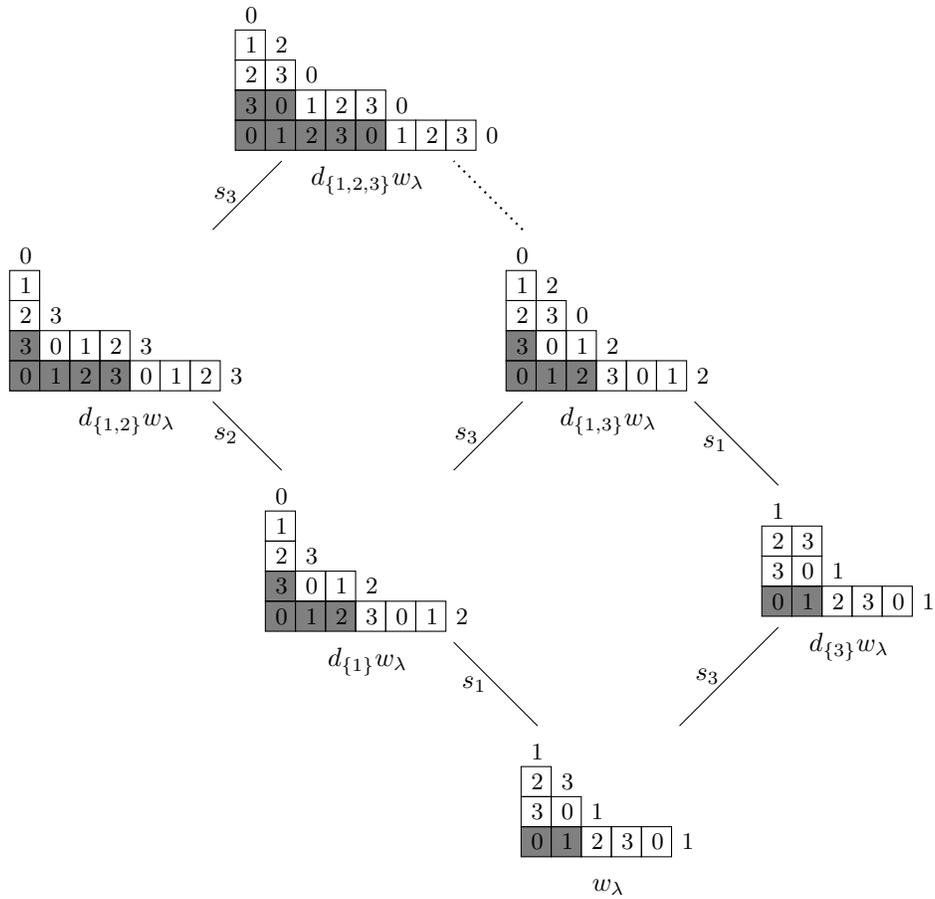
\begin{figure}
	\centering
	\begin{tikzpicture}
    	\node (Z) at (0,0) {
        	\begin{tikzpicture}[scale=1.6]
        		\node (la) at (0,0) {
					\tikzitem[0.4]{
						\tikzyfill{2};
						\tikzydiag{5,2,1};
						\tikzvnum{1}{0,1,2,3,0,1};
						\tikzvnum{2}{3,0,1};
						\tikzvnum{3}{2,3};
						\tikzvnum{4}{1};
					}
				};
				\node [below of=la,yshift=-2mm] {$w_\la$};
        		\node (1) at (-2,2) {
					\tikzitem[0.4]{
						\tikzyfill{3,1};
						\tikzydiag{6,3,1,1};
						\tikzvnum{1}{0,1,2,3,0,1,2};
						\tikzvnum{2}{3,0,1,2};
						\tikzvnum{3}{2,3};
						\tikzvnum{4}{1};
						\tikzvnum{5}{0};
					}
				};
				\node [below of=1,yshift=-4mm] {$d_{\{1\}}w_\la$};
        		\node (3) at (2,2)  {
					\tikzitem[0.4]{
						\tikzyfill{2};
						\tikzydiag{5,2,2};
						\tikzvnum{1}{0,1,2,3,0,1};
						\tikzvnum{2}{3,0,1};
						\tikzvnum{3}{2,3};
						\tikzvnum{4}{1};
					}
				};
				\node [below of=3,yshift=-2mm] {$d_{\{3\}}w_\la$};
        		\node (31) at (0,4) {
					\tikzitem[0.4]{
						\tikzyfill{3,1};
						\tikzydiag{6,3,2,1};
						\tikzvnum{1}{0,1,2,3,0,1,2};
						\tikzvnum{2}{3,0,1,2};
						\tikzvnum{3}{2,3,0};
						\tikzvnum{4}{1,2};
						\tikzvnum{5}{0};
					}
				};
				\node [below of=31,yshift=-4mm] {$d_{\{1,3\}}w_\la$};
        		\node (21) at (-4,4)  {
					\tikzitem[0.4]{
						\tikzyfill{4,1};
						\tikzydiag{7,4,1,1};
						\tikzvnum{1}{0,1,2,3,0,1,2,3};
						\tikzvnum{2}{3,0,1,2,3};
						\tikzvnum{3}{2,3};
						\tikzvnum{4}{1};
						\tikzvnum{5}{0};
					}
				};
				\node [below of=21,yshift=-4mm] {$d_{\{1,2\}}w_\la$};
        		\node (321) at (-2,6) {
					\tikzitem[0.4]{
						\tikzyfill{5,2};
						\tikzydiag{8,5,2,1};
						\tikzvnum{1}{0,1,2,3,0,1,2,3,0};
						\tikzvnum{2}{3,0,1,2,3,0};
						\tikzvnum{3}{2,3,0};
						\tikzvnum{4}{1,2};
						\tikzvnum{5}{0};
					}
				};
				\node [below of=321,yshift=-4mm] {$d_{\{1,2,3\}}w_\la$};
        		
        		\draw (la) -- node[left] {$s_1$} (1);
        		\draw (la) -- node[left] {$s_3$} (3);
        		\draw (1) -- node[left] {$s_3$} (31);
        		\draw (3) -- node[left] {$s_1$} (31);
        		\draw (1) -- node[left] {$s_2$} (21);
        		\draw (21) -- node[left] {$s_3$} (321);
        		\draw [thick, dotted] (31) -- (321);
        	\end{tikzpicture}
    	};
	\end{tikzpicture}
	\caption{
		The weak strips over $w_{\la}$ where $\la=(3,2,1)$.
		Left weak covers are represented as solid lines,
		and strong covers are solid or dotted lines.
		A solid edge between $v$ and $w$ is labelled with $s_i$ if
		$v=s_i w$.
	}
	\label{fig:weakstrips}
\end{figure}

%
\subsubsection{$k$-code}\label{sect:Prel::AffSym::k-code}

The content of this section is mostly cited from \cite{MR3001656}.

A \textit{$k$-code} is a function $\alpha:I\lra \Z_{\ge 0}$ such that
there exists at least one $i\in I$ with $\alpha(i)=0$.
We often write $\alpha_i=\alpha(i)$.
%
%
The \textit{diagram} of a $k$-code $\alpha$ is the Ferrers diagram on a cylinder with $k+1$ columns indexed by $I$,
where the $i$-th column contains $\alpha_i$ boxes.
%
%
A $k$-code $\alpha$ may be identified with its \textit{filling},
which is the diagram of $\alpha$ with all its boxes marked with
their residues, that is, $i-j$ ($\in I$) for one in the $i$-th column and $j$-th row.
A \textit{flattening} of the diagram of a $k$-code $\alpha$ is what is obtained by cutting out a column with no boxes (that is, column $j$ with $\alpha_j=0$).
A \textit{reading word} of $\alpha$ is obtained by reading the rows of the flattening of $\alpha$ from right to left, beginning with the last row.
Note that, though a $k$-code may have multiple columns with no boxes,
the affine permutation given by the reading word of $\alpha$ is independent of the choice of a flattening.
Indeed, for a $k$-code $\alpha$ with $m$ rows,
letting $A_i$ be the set of the residues of the boxes in the $i$-th row in the diagram of $\alpha$,
we have that $d_{A_m}\dots d_{A_2} d_{A_1}$ is the affine permutation corresponding to $\alpha$.
In fact this correspondence is bijective (Theorem \ref{theo:RD_bij});
an algorithm to obtain a $k$-code from an affine permutation is explained below.

\vspace{2mm}
\noindent\textit{Maximizing moves.}

For a cyclically decreasing decomposition
$w=d_{A_m}\dots d_{A_1}$,
there corresponds a ``skew $k$-code diagram'', that is,
a set of boxes in the cylinder with $k+1$ columns indexed by $I$
for which $A_i$ is the set of the residues of the boxes in the $i$-th row.
To justify it to the bottom, we consider the following ``two-row move'':
pick any consecutive two rows $A_{a}$ and $A_{a+1}$, and let $i,j\in I$ with $j\neq i-1$.
Then,
\begin{enumerate}[label=\textup{(\arabic*)}]
	\item
		if $i-1\notin A_{a+1}$, $[i,j]\subset A_{a+1}$, 
		$[i+1,j]\subset A_{a}$, and $i,j+1\notin A_{a}$,
		then we replace $A_{a}$ and $A_{a+1}$ with
		$A_{a}\cup\{i\}$ and $A_{a+1}\sm\{j\}$,
		reflecting the equation
		$(s_{j}s_{j-1}\dots s_{i})(s_{j}\dots s_{i+1}) = 
		(s_{j-1}\dots s_{i})(s_{j}\dots s_{i+1}s_{i})$.
		\[
			\tikzitem{
            	\node (1a) at (0,0){
            		\begin{tikzpicture}[scale=0.8]
            			\tikzbox[thick, loosely dotted]{1}{1}
            			\tikzbox[thick, loosely dotted]{2}{1}
            			\tikzbox[thick, loosely dotted]{1}{6}
            			\draw (2,0) -- (4,0);
            			\draw (2,1) -- (4,1);
            			\draw (2,2) -- (4,2);
            			\tikznum{1}{3.5}{$\dots$}
            			\tikznum{2}{3.5}{$\dots$}
            			
            			\tikzboxnum[\footnotesize]{1}{2}{$i+1$}
            			\tikzboxnum{2}{2}{$i$}
            			\tikzboxnum{1}{5}{$j$}
            			\tikzboxnum[\footnotesize]{2}{5}{$j-1$}
            			\tikzboxnum{2}{6}{$j$}
            		\end{tikzpicture}
            	};
            	\node (1b) at (6,0){
            		\begin{tikzpicture}[scale=0.8]
            			\tikzbox[thick, loosely dotted]{2}{1}
            			\tikzbox[thick, loosely dotted]{1}{6}
            			\tikzbox[thick, loosely dotted]{2}{6}
            			\draw (2,0) -- (4,0);
            			\draw (2,1) -- (4,1);
            			\draw (2,2) -- (4,2);
            			\tikznum{1}{3.5}{$\dots$}
            			\tikznum{2}{3.5}{$\dots$}
            			
            			\tikzboxnum{1}{1}{$i$}
            			\tikzboxnum[\footnotesize]{1}{2}{$i+1$}
            			\tikzboxnum{2}{2}{$i$}
            			\tikzboxnum{1}{5}{$j$}
            			\tikzboxnum[\footnotesize]{2}{5}{$j-1$}
            		\end{tikzpicture}
            	};
            	\draw [->,decorate,decoration={snake,amplitude=.4mm}] (1a) -- (1b);
			}
		\]
	\item
		if $i-1\notin A_{a+1}$, $[i,j]\subset A_{a+1}$, 
		$[i,j]\subset A_{a}$, and $j+1\notin A_{a}$,
		then we conclude this decomposition does not give a reduced expression,
		reflecting the fact that
		$(s_{j}s_{j-1}\dots s_{i})(s_{j}\dots s_{i+1}s_{i})$
		is not a reduced expression.
		\[
			\tikzitem{
    			\node at (0,0){
            		\begin{tikzpicture}[scale=0.8]
            			\tikzbox[thick, loosely dotted]{2}{1}
            			\tikzbox[thick, loosely dotted]{1}{6}
            			\draw (2,0) -- (4,0);
            			\draw (2,1) -- (4,1);
            			\draw (2,2) -- (4,2);
            			\tikznum{1}{3.5}{$\dots$}
            			\tikznum{2}{3.5}{$\dots$}
            			
            			\tikzboxnum{1}{1}{$i$}
            			\tikzboxnum[\footnotesize]{1}{2}{$i+1$}
            			\tikzboxnum{2}{2}{$i$}
            			\tikzboxnum{1}{5}{$j$}
            			\tikzboxnum[\footnotesize]{2}{5}{$j-1$}
            			\tikzboxnum{2}{6}{$j$}
            		\end{tikzpicture}
    			};
    			\node at (4,0) {: not reduced};
			}
		\]
\end{enumerate}

\noindent
Note that these moves look simpler when $i=j$:
\[
	\tikzitem{
		\node at (-1.5,0) {(1)};
    	\node (1a) at (0,0){
    		\begin{tikzpicture}[scale=0.8]
    			\tikzbox[thick, loosely dotted]{1}{1}
    			\tikzbox[thick, loosely dotted]{2}{1}
    			\tikzbox[thick, loosely dotted]{1}{2}
    			\tikzboxnum{2}{2}{$i$}
    		\end{tikzpicture}
    	};
    	\node (1b) at (2.5,0){
    		\begin{tikzpicture}[scale=0.8]
    			\tikzbox[thick, loosely dotted]{2}{1}
    			\tikzbox[thick, loosely dotted]{1}{2}
    			\tikzbox[thick, loosely dotted]{2}{2}
    			\tikzboxnum{1}{1}{$i$}
    		\end{tikzpicture}
    	};
    	\draw [->,decorate,decoration={snake,amplitude=.4mm}] (1a) -- (1b);
	}
	,\hspace{10mm}
	\tikzitem{
		\node at (-1.5,0) {(2)};
		\node at (0,0){
		\begin{tikzpicture}[scale=0.8]
			\tikzbox[thick, loosely dotted]{2}{1}
			\tikzbox[thick, loosely dotted]{1}{2}
			\tikzboxnum{1}{1}{$i$}
			\tikzboxnum{2}{2}{$i$}
		\end{tikzpicture}
		};
		\node at (2,0) {: not reduced};
	}.
\]

It is shown in \cite[Section 3]{MR3001656} that,
for any decomposition
$w=d_{A_m}\dots d_{A_1}$ that gives a reduced expression,
we can apply a finite series of moves of type (1) to justify its diagram to the bottom
and obtain a $k$-code,
which is in fact uniquely determined from $w$ and denoted by $\RD(w)$,
and gives the
\textit{maximal decreasing decomposition} $w=d_{B_n}\dots d_{B_1}$,
that is, the vector $(|B_1|,\dots,|B_n|)$ is maximal in the lexicographical order
among such decompositions for $w$.
Furthermore,
this procedure bijectively maps
affine permutations to $k$-codes:
\begin{theo}[{\cite[Theorem 38]{MR3001656}}]\label{theo:RD_bij}
	The map $w\mapsto \mathrm{RD}(w)$ gives a bijection between
	$\aSn$ and the set of $k$-codes.
\end{theo}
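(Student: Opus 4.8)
The plan is to produce an explicit two-sided inverse of $\RD$, given by ``reading words''. For a $k$-code $\alpha$ with $m$ nonempty rows (numbered $1,\dots,m$ from the bottom), let $A_j(\alpha)\subsetneq I$ denote the set of residues occurring in the $j$-th row of its diagram, so that $A_j(\alpha)=\{\,c-j \bmod (k+1)\mid \alpha_c\ge j\,\}$ (a proper subset of $I$, since $\alpha$ vanishes on some column), and set $\rho(\alpha)=d_{A_m(\alpha)}\cdots d_{A_1(\alpha)}$. By the discussion in Section~\ref{sect:Prel::AffSym::k-code} this is precisely the affine permutation attached to a reading word of $\alpha$, independent of the flattening used, so $\rho$ is a well-defined map from $k$-codes to $\aSn$. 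I will show $\RD\circ\rho=\mathrm{id}$ and $\rho\circ\RD=\mathrm{id}$.

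The key combinatorial input is what one might call the \emph{shift property} of a $k$-code diagram: because each column is bottom-justified, a box in column $c$ and row $j+1$ forces a box in column $c$ and row $j$; these two boxes carry residues $c-(j+1)$ and $c-j$, so
\[
	A_{j+1}(\alpha)+1\ \subseteq\ A_j(\alpha) \qquad \text{for every pair of consecutive rows},
\]
with indices taken mod $k+1$. I would use this to check that the diagram of $\alpha$ admits no two-row move and contains no ``not reduced'' pattern: a type-(1) move on rows $A_a,A_{a+1}$ requires an interval $[i,j]\subseteq A_{a+1}(\alpha)$ together with $j+1\notin A_a(\alpha)$, but $[i,j]\subseteq A_{a+1}(\alpha)$ and the shift property give $[i+1,j+1]\subseteq A_a(\alpha)$, hence $j+1\in A_a(\alpha)$, a contradiction; the identical computation excludes the type-(2) configuration. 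Now, type-(1) moves preserve both the underlying affine permutation and the total number of boxes, hence preserve reducedness of a cyclically decreasing decomposition; combined with the results of \cite[Section~3]{MR3001656} (every reduced cyclically decreasing decomposition rewrites, by type-(1) moves, to the diagram of a $k$-code, and reaching a type-(2) pattern certifies non-reducedness), this yields: a cyclically decreasing decomposition is reduced if and only if no sequence of type-(1) moves ever exhibits a type-(2) pattern. Since the diagram of $\alpha$ admits no type-(1) move at all and shows no type-(2) pattern, $\rho(\alpha)=d_{A_m(\alpha)}\cdots d_{A_1(\alpha)}$ is a reduced expression, and its diagram --- being already move-stable --- is by definition the one attached to $\RD(\rho(\alpha))$. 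Hence $\RD(\rho(\alpha))=\alpha$, so $\RD$ is surjective onto $k$-codes with right inverse $\rho$.

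For the opposite composition, note that for any $w\in\aSn$ the $k$-code $\RD(w)$ is obtained from a cyclically decreasing decomposition $w=d_{B_n}\cdots d_{B_1}$ which is, by construction, a reduced expression for $w$; therefore a reading word of $\RD(w)$ is a reduced word for $w$, i.e.\ $\rho(\RD(w))=w$. Thus $\rho$ is a genuine two-sided inverse of $\RD$, which proves Theorem~\ref{theo:RD_bij}.

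I expect the only real friction to lie in the second paragraph, where one must be sure that the shift property is exactly the right hypothesis to annihilate \emph{every} obstruction to reducedness, and that the move-theoretic criterion for reducedness from \cite{MR3001656} can be invoked cleanly here (their statements are phrased for decompositions that \emph{rewrite} to a $k$-code, whereas we begin with a diagram that is already a $k$-code on which no move acts). If that bridge proves awkward, a self-contained substitute is a direct length count: $\ell(\rho(\alpha))\le\sum_j |A_j(\alpha)|=|\alpha|$ trivially, and one produces $|\alpha|$ inversions of $\rho(\alpha)$, one per box of the diagram, forcing equality and hence reducedness of the row decomposition; this trades the move calculus for more explicit bookkeeping in the affine symmetric group.
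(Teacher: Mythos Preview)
The paper does not supply its own proof of this theorem; it is quoted verbatim from Denton \cite[Theorem 38]{MR3001656}, so there is no in-paper argument to compare your proposal against.

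As for the proposal itself: your overall architecture (define the reading map $\rho$, show $\RD\circ\rho=\mathrm{id}$ via move-stability, show $\rho\circ\RD=\mathrm{id}$ trivially) is sound, and the shift property $A_{j+1}(\alpha)+1\subseteq A_j(\alpha)$ correctly kills both the type-(1) and type-(2) configurations on the \emph{given} diagram. The friction you flag is real, however, and is the only genuine gap: the summary of \cite{MR3001656} in this paper only asserts that a \emph{reduced} cyclically decreasing decomposition can be justified via type-(1) moves to a $k$-code. It does not assert the biconditional ``reduced $\iff$ no sequence of type-(1) moves exposes a type-(2) pattern'', and in particular the ``if'' direction---that every non-reduced decomposition must eventually display a type-(2) pattern under type-(1) moves---is not stated here and would need its own argument (a priori a non-reduced word could be move-stable without visibly containing a type-(2) block). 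So as written, the second paragraph does not close.

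Your proposed fallback is the right repair and is in fact closer to how Denton establishes the bijection: one shows directly that $\ell(\rho(\alpha))=|\alpha|$ by exhibiting, for each box of the diagram, a distinct affine inversion of $\rho(\alpha)$ (equivalently, $\alpha$ is an affine version of the Lehmer code / inversion table). Once reducedness of the row decomposition is known, move-stability gives $\RD(\rho(\alpha))=\alpha$ immediately, and $\rho(\RD(w))=w$ is automatic since $\RD(w)$ is built from a reduced word for $w$. I would replace the move-calculus reducedness argument with the inversion count and keep the rest.
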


\begin{exam}
Let $k=3$ and $w=s_{2}s_{30} s_{431}$ (this expression gives the maximal decreasing decomposition).
Then $\RD(w)=(0,2,0,1,3)$. (See Figure \ref{fig:k_code})
\end{exam}
\begin{figure}
\centering
\begin{tikzpicture}
	\node (RD) at (0,0){
		\begin{tikzpicture}[scale=0.4]
			\draw (0,0) -- (5,0);
			\tikzboxnum{1}{2}{1}
			\tikzboxnum{1}{4}{3}
			\tikzboxnum{1}{5}{4}
			\tikzboxnum{2}{2}{0}
			\tikzboxnum{2}{5}{3}
			\tikzboxnum{3}{5}{2}
		\end{tikzpicture}
	};
\end{tikzpicture}
\caption{$\RD(w)$ where $k=3$ and $w=s_{2}s_{30} s_{431}$}
\label{fig:k_code}
\end{figure}

\vspace{2mm}
Note that this construction also works if 
maximal decreasing decomposition is replaced with
\textit{maximal increasing decompositions}, that is,
the unique decomposition
$w=u_{B_n}\dots u_{B_1}$ into cyclically increasing elements
with the vector 
$(|B_1|,\dots,|B_n|)$ being maximal in the lexicographical order,
by modifying the notion of the filling of a $k$-code
so that the box in the $i$-th column and $j$-th row is marked with $j-i$ instead of $i-j$.
The resulting $k$-code is denoted by $\mathrm{RI}(w)$.
The map $w\mapsto\mathrm{RI}(w)$ also gives a bijection between $\aSn$ and the set of $k$-codes.

It is proved \cite[Corollary 39]{MR3001656} that
$w\in\aSn$ is $i$-dominant if and only if
the flattening of the corresponding $k$-code $\RD(w)$ forms a $k$-bounded partition
with residue $i$ in its lower left box,
that is,
$\RD(w)_i\ge\RD(w)_{i+1}\ge\dots\ge\RD(w)_{i-2}\ge\RD(w)_{i-1}=0$.
When $i=0$,
this mapping from 0-dominant permutations to $k$-bounded partitions coincides with
the one described earlier in Section \ref{sect:Prel::AffSym::Bijection}.
Moreover, it is proved \cite[Proposition 51]{MR3001656} that,
for $w\in\Wo$ the two corresponding $k$-codes $\RD(w)$ and $\RI(w)$,
regarded as $k$-bounded partitions, are transformed into each other by taking $k$-transpose: $\mathrm{sh}(\RI(w))=(\mathrm{sh}(\RD(w)))^{\omega_k}$
where $\mathrm{sh}(\alpha)\in\Pk$ is defined by
$\mathrm{sh}(\alpha)_j=|\{i \mid \alpha_i \ge j \}|$.

It is also proved in \cite[Proposition 56]{MR3001656} that
if $x\le_L y$ then
$\RD(x)\subset\RD(y)$ and $\RI(x)\subset\RI(y)$.

\begin{exam}
Let $k=3$ and $w=s_{0}s_1 s_{32} s_{03} s_{210} = s_{1} s_0 s_3 s_{12} s_{01} s_{30}$
(these presentations give the maximal decreasing and increasing decompositions).
Then $\RD(w)=(5,3,1,0)$ and
$\RI(w)=(6,3,0,0)$, and thus
$\mathrm{sh}(\RD(w)) = (3,2,2,1,1) = (2,2,2,1,1,1)^{\omega_3} = \mathrm{sh}(\RI(w))^{\omega_3}$.
(See Figure \ref{fig:RD_RI})
\end{exam}
\begin{figure}
\centering
\begin{tikzpicture}
	\node (RD) at (0,0) {
		\begin{tikzpicture}[scale=0.4]
			\tikzydiag{3,2,2,1,1}
			\tikzydnum{\small}{1}{0,1,2}
			\tikzydnum{\small}{2}{3,0}
			\tikzydnum{\small}{3}{2,3}
			\tikzydnum{\small}{4}{1}
			\tikzydnum{\small}{5}{0}
		\end{tikzpicture}
	};
	\node (cRD) at (3,0) {
		\begin{tikzpicture}[scale=0.4]
			\tikzyfill{3,1,1}
			\tikzydiag{6,3,3,1,1}
		\end{tikzpicture}
	};
	\node (cRI) at (6,0) {
		\begin{tikzpicture}[scale=0.4]
			\tikzyfill{3,1,1}
			\tikzydiag{5,3,3,1,1,1}
		\end{tikzpicture}
	};
	\node (RI) at (9,0) {
		\begin{tikzpicture}[scale=0.4]
			\tikzydiag{2,2,2,1,1,1}
			\tikzydnum{\small}{1}{0,3}
			\tikzydnum{\small}{2}{1,0}
			\tikzydnum{\small}{3}{2,1}
			\tikzydnum{\small}{4}{3}
			\tikzydnum{\small}{5}{0}
			\tikzydnum{\small}{6}{1}
		\end{tikzpicture}
	};
	\node [below of=RD, yshift=-5mm] {$\RD(w)$};
	\node [below of=RI, yshift=-5mm] {$\RI(w)$};
	\node [below of=cRD, yshift=-5mm] {$\core(\mathrm{sh}(\RD(w)))$};
	\node [below of=cRI, yshift=-5mm] {$\core(\mathrm{sh}(\RI(w)))$};
	\draw [|->] (RD) -- node[below]{$\core$} (cRD);
	\draw [|->] (RI) -- node[below]{$\core$} (cRI);
	\draw [<->] (cRD) -- node[below]{$(\cdot')$} (cRI);
\end{tikzpicture}
\caption{}
\label{fig:RD_RI}
\end{figure}

\subsubsection{$k$-rectangles}\label{sect:Prel::AffSym::k-rect}
%
The partition
$(t^{k+1-t})=(t,t,\dots,t) \in\Pk$, for $1\le t \le k$,
is denoted by $R_t$
and called a {\it $k$-rectangle}.

\begin{rema}\label{theo:rem_Rt}
Consider the affine permutation $w_{R_i}$ corresponding to the $k$-rectangle $R_i$ under the bijection (\ref{eq:PCS}).
In fact $w_{R_i}$ is congruent,
in the extended affine Weyl group, 
to the translation $t_{-\varpi_i^\vee}$ 
by the negative of a fundamental coweight,
modulo left multiplication by the length zero elements.
\end{rema}

The next lemma describes the mapping $\la\mapsto\Rt\cup\la$ in terms of affine permutations.
For $A\subset I$ and $0\le t\le k$,
we write $A+t = \{a+t\mid a\in A\}$ ($\subset I$).

\begin{lemm}\label{theo:cupRt}
	Let $1\le t\le k$.
	Define a group isomorphism
	\[
		f_t \colon \aSn \lra \aSn \ ; \ s_i \mapsto s_{i+t} \quad \text{for $i\in I$}.
	\]
	For any $\la\in\Pk$, we have
	\[
		w_{\Rt\cup\la} = f_t(w_{\la}) w_{\Rt}.
	\]
\end{lemm}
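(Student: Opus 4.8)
The plan is to prove the identity $w_{\Rt\cup\la} = f_t(w_\la)\,w_{\Rt}$ by analyzing how the reading word that defines $w_\mu$ (Section \ref{sect:Prel::AffSym::Bijection}) behaves under the operation $\mu\mapsto\Rt\cup\mu$. First I would recall that $w_\mu = s_{i_1}\dots s_{i_l}$ where $(i_1,\dots,i_l)$ is obtained by reading residues of the cells of $\mu$, row by row from the shortest row to the longest, and within each row from right to left. So my first task is to understand the diagram of $\Rt\cup\la$: its rows are exactly the rows of $\Rt$ (namely $k+1-t$ copies of a row of length $t$) together with the rows of $\la$, all sorted into weakly decreasing order. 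Crucially, since the rows of $\la$ all have length $\le k$ and a row of $\la$ of length exactly $t$ could interleave with the $\Rt$-rows, I need the factorization to be insensitive to the exact interleaving; this should follow because residues only depend on $(i,j)$ through $j-i \bmod (k+1)$, and shifting a whole block of rows up or down by a multiple of... no, that is not a multiple of $k+1$, so I must be more careful — the residues genuinely shift. The correct observation is: in $w_\mu$, reading within a single row of length $\ell$ starting at row index $r$ (from the top) gives $s_{c}s_{c-1}\dots s_{c-\ell+1}$ for the appropriate residue $c$, and this is a cyclically decreasing element; the product over the rows of $\Rt$ is exactly $w_{\Rt}$ up to the residue offset.

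**Key steps.** (1) Compute $w_{\Rt}$ explicitly: the $k+1-t$ rows of $R_t$, each of length $t$, stacked, give by the reading rule a product of $k+1-t$ blocks each of the form $s_{c_j}s_{c_j-1}\dots s_{c_j-t+1}$ where the residues $c_j$ increase by $1$ as we go from the bottom row to the top (each row sits one higher). (2) Compute $w_{\Rt\cup\la}$ by splitting the sorted rows of $\Rt\cup\la$ into the "$\la$-part" (rows coming from $\la$) and the "$\Rt$-part" (rows coming from $\Rt$): reading from shortest to longest, every $\la$-row of length $\le t$ is read before... actually the rows of length $> t$ of $\la$ come after all $\Rt$-rows, and rows of length $<t$ before; rows of length $=t$ may interleave, but the key point is that any $\la$-row and any $\Rt$-row of the same length commute suitably or can be reordered because reordering equal-length rows in the reading word gives the same affine permutation (this is the well-definedness of $w_\mu$). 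So WLOG read all the $\la$-rows first, then all the $\Rt$-rows. (3) Reading the $\la$-rows first: a $\la$-row of $\Rt\cup\la$ that corresponds to the $p$-th row of $\la$ now sits at a row-index shifted up by exactly $k+1-t$ (the number of $\Rt$-rows that are longer or could be placed above — here one needs $\Rt$ has exactly $k+1-t$ rows and $\la$-cells keep their relative positions), so its residues are uniformly shifted by $-(k+1-t) \equiv t \pmod{k+1}$; hence the product of the $\la$-blocks is $f_t(w_\la)$ where $f_t(s_i)=s_{i+t}$. (4) Reading the $\Rt$-rows after that: these occupy the bottom $k+1-t$ rows of the diagram $\Rt\cup\la$, which sit at exactly the same row-indices (counted from the bottom) as in $R_t$ alone, so their contribution is literally $w_{\Rt}$. (5) Concatenate: $w_{\Rt\cup\la} = f_t(w_\la)\,w_{\Rt}$, and check that this product is reduced by a length count — $l(w_{\Rt\cup\la}) = |\Rt\cup\la| = |\Rt| + |\la| = l(w_{\Rt}) + l(w_\la) = l(f_t(w_\la)) + l(w_{\Rt})$, using that $w_\mu$ is the length-$|\mu|$ reduced word and $f_t$ preserves length.

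**Main obstacle.** The subtle point — and the step I expect to need the most care — is the bookkeeping in (3)–(4): verifying that when the rows of $\la$ and of $R_t$ are merged and re-sorted, each $\la$-cell's residue shifts by precisely $t \bmod (k+1)$ while each $\Rt$-cell's residue is unchanged, independent of how rows of length exactly $t$ interleave. The clean way to handle this is to avoid sorting altogether: use instead the characterization via $k$-codes / the maximal cyclically decreasing decomposition (Section \ref{sect:Prel::AffSym::k-code}), or simply note that the residue of cell $(i,j)$ is $j-i$ and that in $\Rt\cup\la$ every $\la$-cell has its row index increased by exactly the number of rows of $R_t$, namely $k+1-t$, regardless of sorting ties (equal rows contribute equally to the reading word, so ties are harmless). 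I would state this as a short sublemma. A cleaner alternative, which I would mention, is to invoke Lemma \ref{theo:b_xyz}-style reasoning together with the known $k$-Schur rectangle factorization $\ks{\Rt\cup\la}=\ks{\Rt}\ks{\la}$ and Lam's theorem identifying $\ks{}$ with affine Schubert classes, but the direct combinatorial argument above is self-contained and is what I would write out in full.
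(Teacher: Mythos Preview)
Your reading-word argument works cleanly only in the special case $\la_1\le t$. In that case one can place the $k+1-t$ rows of $R_t$ at indices $1,\dots,k+1-t$ of the partition $R_t\cup\la$ and the rows of $\la$ at indices $k+2-t,\dots,k+1-t+l(\la)$; then every $\la$-cell has its row index increased by exactly $k+1-t$, every $R_t$-cell keeps its row index, and the reading word factors as $f_t(w_\la)\,w_{R_t}$ on the nose. But when $\la$ has parts strictly larger than $t$, those long rows must occupy the top of $R_t\cup\la$ and push the $R_t$-block downward; neither the $\la$-cells nor the $R_t$-cells then have a uniform residue shift, so steps (3) and (4) as written are false. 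Your remark about reordering equal-length rows does not help: the obstruction is moving $\la$-rows of length $>t$ past $R_t$-rows of length $t$ in the reading word, and you give no argument that the corresponding cyclically decreasing factors can be commuted through each other.

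You correctly identify the remedy: go through $k$-codes. That is exactly what the paper does. One takes the maximal decreasing decompositions $w_\la=d_{A_m}\cdots d_{A_1}$ and $w_{R_t}=d_{B_{k+1-t}}\cdots d_{B_1}$, observes that $f_t(w_\la)=d_{A_m+t}\cdots d_{A_1+t}$, and stacks the diagram of $f_t(w_\la)$ on top of that of $w_{R_t}$. This stacked (generally non-justified) diagram represents the product $f_t(w_\la)w_{R_t}$; applying the maximizing moves of Section~\ref{sect:Prel::AffSym::k-code} justifies it to a genuine $k$-code, which one checks has shape $R_t\cup\la$ and hence corresponds to $w_{R_t\cup\la}$. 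The maximizing moves are precisely the missing ``slide the long $\la$-rows past the $R_t$-block'' step, carried out in a setting where such moves are already proved to preserve the underlying affine permutation. If you want to keep your reading-word framing, you would need to reprove that sliding mechanism in that language; otherwise, simply replacing steps (2)--(4) by the $k$-code stacking argument gives a complete proof.
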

\begin{proof}
	Let 
	$d_{A_m}\dots d_{A_1}$ and
	$d_{B_{k+1-t}}\dots d_{B_1}$ be
	the maximal decreasing decompositions of
	$w_\la$ and $w_{\Rt}$.
	Then
	$d_{A_m+t}\dots d_{A_1+t}$
	is the maximal decomposition of $f_t(w_\la)$.
	Stacking the $k$-code diagram of $f_t(w_\la)$ on that of $w_{\Rt}$,
	we obtain the diagram (not necessarily justified to the bottom)
	corresponding to the (not necessarily maximal) decreasing decomposition
	$f_t(w_\la) w_{\Rt} = d_{A_m+t}\dots d_{A_1+t} d_{B_{k+1-t}}\dots d_{B_1}$
	(See Figure \ref{fig:cupRt}).
	With maximizing moves,
	we can justify the diagram to
	obtain one with shape $\Rt\cup\la$,
	which corresponds to the maximal decomposition of $w_{\Rt\cup\la}$.
	
	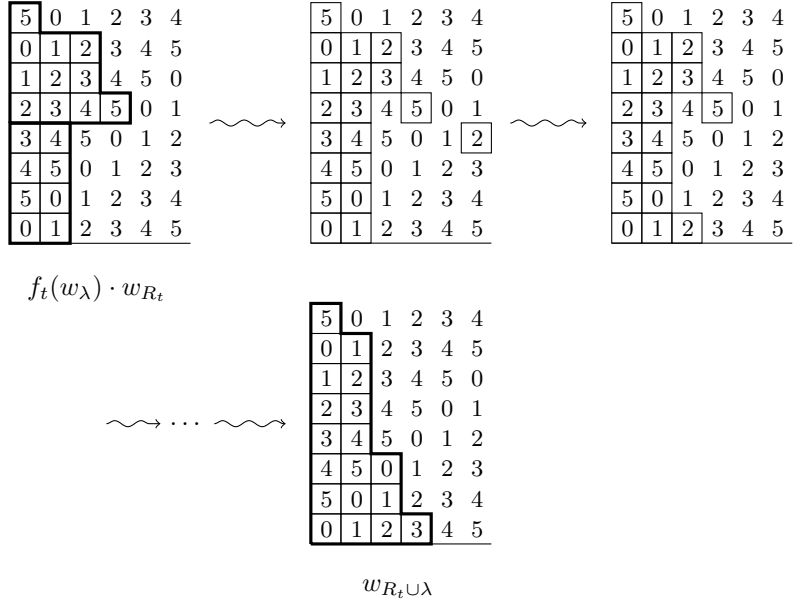
\begin{figure}
		\centering
    	\begin{tikzpicture}
        	\node (a) at (0,0){
        		\tikzitem[0.4]{
        			\draw (0,0) -- (6,0);
					\draw [very thick] (0,0) rectangle (2,4);
					\draw [very thick] (0,4) -| +(4,1) -| +(3,3) -| +(1,4) -| +(0,0);
					\tikzvnum{1}{0,1,2,3,4,5}
					\tikzvnum{2}{5,0,1,2,3,4}
					\tikzvnum{3}{4,5,0,1,2,3}
					\tikzvnum{4}{3,4,5,0,1,2}
					\tikzvnum{5}{2,3,4,5,0,1}
					\tikzvnum{6}{1,2,3,4,5,0}
					\tikzvnum{7}{0,1,2,3,4,5}
					\tikzvnum{8}{5,0,1,2,3,4}
					\tikzydiag{2,2,2,2,4,3,3,1}
				}
        	};
        	\node (b) at (4,0){
        		\tikzitem[0.4]{
        			\draw (0,0) -- (6,0);
					\tikzvnum{1}{0,1,2,3,4,5}
					\tikzvnum{2}{5,0,1,2,3,4}
					\tikzvnum{3}{4,5,0,1,2,3}
					\tikzvnum{4}{3,4,5,0,1,2}
					\tikzvnum{5}{2,3,4,5,0,1}
					\tikzvnum{6}{1,2,3,4,5,0}
					\tikzvnum{7}{0,1,2,3,4,5}
					\tikzvnum{8}{5,0,1,2,3,4}
					\tikzydiag{2,2,2,2,2,3,3,1}
					\tikzbox{4}{6}
					\tikzbox{5}{4}
				}
        	};
        	\node (c) at (8,0){
        		\tikzitem[0.4]{
        			\draw (0,0) -- (6,0);
					\tikzvnum{1}{0,1,2,3,4,5}
					\tikzvnum{2}{5,0,1,2,3,4}
					\tikzvnum{3}{4,5,0,1,2,3}
					\tikzvnum{4}{3,4,5,0,1,2}
					\tikzvnum{5}{2,3,4,5,0,1}
					\tikzvnum{6}{1,2,3,4,5,0}
					\tikzvnum{7}{0,1,2,3,4,5}
					\tikzvnum{8}{5,0,1,2,3,4}
					\tikzydiag{3,2,2,2,2,3,3,1}
					\tikzbox{5}{4}
				}
        	};
        	\node (d) at (4,-4){
        		\tikzitem[0.4]{
        			\draw (0,0) -- (6,0);
					\draw [very thick] (0,0) -| +(4,1) -| +(3,3) -| +(2,7) -| +(1,8) -| +(0,0);
					\tikzvnum{1}{0,1,2,3,4,5}
					\tikzvnum{2}{5,0,1,2,3,4}
					\tikzvnum{3}{4,5,0,1,2,3}
					\tikzvnum{4}{3,4,5,0,1,2}
					\tikzvnum{5}{2,3,4,5,0,1}
					\tikzvnum{6}{1,2,3,4,5,0}
					\tikzvnum{7}{0,1,2,3,4,5}
					\tikzvnum{8}{5,0,1,2,3,4}
					\tikzydiag{4,3,3,2,2,2,2,1}
				}
        	};
			\node (ccd) at (0,-4) {};
			\node (cd) at (1.2,-4) {$\dots$};
			\draw [->,decorate,decoration={snake,amplitude=.4mm}] (a) -- (b);
			\draw [->,decorate,decoration={snake,amplitude=.4mm}] (b) -- (c);
			\draw [->,decorate,decoration={snake,amplitude=.4mm}] (ccd) -- (cd);
			\draw [->,decorate,decoration={snake,amplitude=.4mm}] (cd) -- (d);
        	\node [below of=a,yshift=-12mm] {$f_t(w_\la) \cdot w_{\Rt}$};
        	\node [below of=d,yshift=-12mm] {$w_{\Rt\cup\la}$};
    	\end{tikzpicture}
		\caption{Justifying process with maximizing moves, 
			where $k=5$, $t=2$, $R_2=(2^4)$, and $\la=(4,3,3,1)$.}
    	\label{fig:cupRt}
	\end{figure}
\end{proof}

The next lemma explains
the correspondence between weak strips over $\la$ and weak strips over $\Rt\cup\la$.

\begin{lemm}\label{theo:Rt}
    Let $\la\in\Pk$.
    
    \noindent $(1)$
    For $A\subsetneq I$,
    if $d_A\la/\la$ is a weak strip then
    $\Rt\cup(d_{A}\la) = d_{A+t}(\Rt\cup\la)$.
    
    \vspace{1mm}
    \noindent
    Moreover, let $d_{A_1}\la, d_{A_2}\la, \dots$ 
    be the list of all weak strips over $\la$ (of size $r$).
    
    \noindent $(2)$
    $\Rt\cup(d_{A_1}\la), \Rt\cup(d_{A_2}\la), \dots$ 
    is the list of all weak strips over $\Rt\cup\la$ (of size $r$).
    
    \noindent $(3)$
    $d_{A_1+t}(\Rt\cup\la), d_{A_2+t}(\Rt\cup\la), \dots$ 
    is the list of all weak $r$-strips over $\Rt\cup\la$ (of size $r$).
\end{lemm}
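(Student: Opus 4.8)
The plan is to get (1) by a short computation from Lemma~\ref{theo:cupRt}, to obtain the ``easy half'' of (2) and (3) from (1) together with a length count, and to reduce the remaining surjectivity statement to the claim that a weak strip over $\Rt\cup\la$ still ``contains'' the rectangle $\Rt$, which I expect to be the real obstacle.

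For (1): the automorphism $f_t$ of Lemma~\ref{theo:cupRt} permutes the simple reflections, hence preserves length, and it sends $d_A$ to $d_{A+t}$ (adding the constant $t$ preserves cyclic decreasingness, and $A+t\subsetneq I$ since $|A+t|=|A|\le k$). Put $\mu=d_A\la$, so that $w_\mu=d_Aw_\la$ because $d_A\la/\la$ is a weak strip. Two applications of Lemma~\ref{theo:cupRt} then give
\[
	w_{\Rt\cup\mu}=f_t(w_\mu)\,w_{\Rt}=f_t(d_A)\,f_t(w_\la)\,w_{\Rt}=d_{A+t}\bigl(f_t(w_\la)\,w_{\Rt}\bigr)=d_{A+t}\,w_{\Rt\cup\la},
\]
and since the left-hand side is $w_\nu$ for $\nu=\Rt\cup\mu\in\Pk$, it lies in $\aG$; hence $d_{A+t}$ carries $\core(\Rt\cup\la)$ to $\core(\Rt\cup\mu)$, which is exactly the assertion $\Rt\cup(d_A\la)=d_{A+t}(\Rt\cup\la)$.

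Now the ``easy half'' of (3). Given a weak $r$-strip $d_{A_i}\la/\la$, part (1) shows $d_{A_i+t}(\Rt\cup\la)=\Rt\cup(d_{A_i}\la)\in\aG$, and because $l(w_\nu)=|\nu|$, $|\Rt\cup\nu|=|\Rt|+|\nu|$, and $|d_{A_i}\la|=r+|\la|$ (as $d_{A_i}\la/\la$ is a weak $r$-strip), we get
\[
	l(d_{A_i+t}\,w_{\Rt\cup\la})=|\Rt|+r+|\la|=l(d_{A_i+t})+l(w_{\Rt\cup\la}),
\]
so $d_{A_i+t}\,w_{\Rt\cup\la}\ge_L w_{\Rt\cup\la}$ and $d_{A_i+t}(\Rt\cup\la)/(\Rt\cup\la)$ is a weak $r$-strip; distinct $A_i$ give distinct ones since $d_A$ determines $A$ and $w_{\Rt\cup\la}$ is cancellable. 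Granting surjectivity of $i\mapsto d_{A_i+t}(\Rt\cup\la)$ onto the weak $r$-strips over $\Rt\cup\la$, statement (3) follows, and then (2) is immediate since by (1) the two lists in (2) and (3) coincide.

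It remains to treat surjectivity. Let $v/(\Rt\cup\la)$ be a weak $r$-strip, so $v=d_Bw_{\Rt\cup\la}$ with $|B|=r$, $v\in\aG$, and $l(v)=r+l(w_{\Rt\cup\la})$. From $w_{\Rt\cup\la}=f_t(w_\la)w_{\Rt}$ with $l(w_{\Rt\cup\la})=l(f_t(w_\la))+l(w_{\Rt})$ (Lemma~\ref{theo:cupRt}), the triangle inequality forces $l(d_Bf_t(w_\la))=r+l(f_t(w_\la))$; applying $f_t^{-1}$ (and using $d_B=f_t(d_{B-t})$, $|B-t|=r$) this reads $d_{B-t}w_\la\ge_L w_\la$. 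Also $w_{\Rt}\le_L w_{\Rt\cup\la}\le_L v$. So the only remaining point is that $d_{B-t}w_\la\in\aG$, equivalently that $v=w_{\Rt\cup\mu}$ for some $\mu\in\Pk$: given this, cancelling $w_{\Rt}$ in $v=d_Bf_t(w_\la)w_{\Rt}=f_t(w_\mu)w_{\Rt}$ yields $d_{B-t}w_\la=w_\mu\in\aG$, while $(\ref{eq:gq_isom})$ (applicable since $f_t(w_\la),f_t(w_\mu)\in\aSn/\{w_{\Rt}\}$) turns $v\ge_L w_{\Rt\cup\la}$ into $w_\mu\ge_L w_\la$; hence $d_{B-t}w_\la/w_\la=w_\mu/w_\la$ is a weak $r$-strip, so $B-t=A_i$ and $v=d_{A_i+t}(\Rt\cup\la)$ for some $i$. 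The main obstacle is therefore precisely the implication ``$v\in\aG$ and $v\ge_L w_{\Rt}\ \Rightarrow\ v=w_{\Rt\cup\mu}$ for some $\mu$''. I would prove it by revisiting the stacking of $k$-code diagrams from the proof of Lemma~\ref{theo:cupRt}: stacking the row $B$ on top of the diagram of $\Rt\cup\la$ and performing the maximizing moves, one checks that only the $\la$-part of the shape is altered, so the result is again of the form $\Rt\cup\mu$; an alternative, more algebraic route is to show directly that $(\ref{eq:gq_isom})$ identifies the $t$-dominant affine permutations with $\{v\in\aG:v\ge_L w_{\Rt}\}$ via $x\mapsto xw_{\Rt}$.
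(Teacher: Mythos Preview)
Your argument for (1) is correct and is essentially the paper's proof (the paper reduces to $|A|=1$ first, but the computation is the same).

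The substantive divergence is in (2) and (3). The paper does not attempt surjectivity from scratch: it simply \emph{cites} (2) as \cite[Theorem~20]{MR2079931} (Lapointe--Morse), and then (3) is immediate from (1) and (2). The implication you isolate as ``the main obstacle'' --- that any weak strip over $R_t\cup\la$ is again of the form $R_t\cup\mu$ --- is precisely the nontrivial content of that cited theorem, whose original proof proceeds by a direct analysis of addable and removable corners of $\core(\la)$ versus $\core(R_t\cup\la)$.

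Your two sketched routes for closing this gap are reasonable heuristics but not proofs as stated. For the $k$-code stacking argument, the assertion ``only the $\la$-part of the shape is altered'' is exactly the point at issue: one must verify that the maximizing moves triggered by the new row $B$ do not interact with the $R_t$ block, and this requires an argument (particularly when some part of $\la$ equals $t$). The alternative route --- that $x\mapsto xw_{R_t}$ carries $t$-dominant permutations bijectively onto $\{v\in\aG:v\ge_L w_{R_t}\}$ --- is true, but establishing it is again tantamount to the Lapointe--Morse result. So your plan is structurally sound and your reductions are correct; the missing step is exactly where the paper appeals to the literature rather than reproving it.
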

\begin{proof}
	(2) is \cite[Theorem 20]{MR2079931}.
	(3) follows from (1) and (2).
	
	\noindent (1)
	It suffices to show the case $|A|=1$, that is,
	$\Rt\cup(s_i \la) = s_{i+t}(\Rt\cup\la)$.
	This is essentially shown in the process of proving \cite[Theorem 20]{MR2079931}
	by seeing correspondence between
	addable corners of $\core(\la)$ with residue $i$ and
	addable corners of $\core(\Rt\cup\la)$ with residue $i+t$,
	yet we here give another proof:
	by Lemma \ref{theo:cupRt}, it follows
	$w_{\Rt\cup(s_i \la)} = f_t(w_{s_i\la}) w_{\Rt}
		= f_t(s_i w_{\la}) w_{\Rt}
		= s_{i+t} f_t(w_{\la}) w_{\Rt}
		= s_{i+t} w_{\Rt\cup\la}$.
\end{proof}

\subsection{Symmetric functions}\label{sect:Prel::SymFcn}

For basic definitions for symmetric functions,
see for instance \cite[Chapter I]{MR1354144}.

\subsubsection{Symmetric functions}
Let $\La=\Z[h_1,h_2,\dots]$ be the ring of symmetric functions,
generated by the 
\textit{complete symmetric functions}
$ h_r = \sum_{i_1\le i_2\le\dots\le i_r} x_{i_1}\dots x_{i_r}$.
%
For a partition $\la$
we set 
$h_\la = h_{\la_1}h_{\la_2}\dots h_{\la_{l(\la)}}$.
The set
$\{h_\la\}_{\la\in \mathcal{P}}$
forms a $\Z$-basis of $\La$.

\subsubsection{Schur functions}
The \textit{Schur functions} $\{s_\la\}_{\la\in\mathcal{P}}$ are the family of symmetric functions satisfying the \textit{Pieri rule}:
\[ h_r s_\la = \sum_{\text{$\mu/\la$:horizontal strip of size $r$}} s_\mu.\]

\subsubsection{$k$-Schur functions}
We recall a characterization of $k$-Schur functions given in \cite{MR2331242},
since it is a model for and has a relationship with $K$-$k$-Schur functions.

\begin{defi}[$k$-Schur function via $k$-Pieri rule]
$k$-Schur functions
$\{\ks{w}\}_{w\in\aG}$ are the family of symmetric functions
such that 
\begin{align*}
	\ks{e}&=1, \\
	h_r \ks{w} &= \sum_{v} \ks{v} \quad \text{for $1\le r\le k$ and $w\in\aG$,}
\end{align*}
summed over $v\in\aG$ such that 
$v/w$ is a weak strip of size $r$.
\end{defi}

It is known that $\{\ks{w}\}_{w\in\aG}$ forms a basis of $\Lk=\Z[h_1,\dots,h_k]\subset\La$,
and $\ks{w}$ is homogeneous of degree $l(w)$.
We regard $\ks{\la}$ as $\ks{w_\la}$ for $\la\in\Pk$.
It is proved in \cite[Theorem 40]{MR2331242} that
\begin{prop}[$k$-rectangle property]\label{prop:kS_fac}
For $1\le t \le k$ and $\la\in\Pk$, we have
$
	\ks{R_t\cup\la} = \ks{R_t} \ks{\la} (=s_{R_t} \ks{\la}).
$
\end{prop}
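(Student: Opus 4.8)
The plan is to read off the factorization directly from the $k$-Pieri rule, using Lemma~\ref{theo:Rt} to match weak strips over $\la$ with weak strips over $R_t\cup\la$. Concretely, I would consider the two families of elements of $\Lk$
\[
	F_\la:=\ks{R_t\cup\la},\qquad G_\la:=\ks{R_t}\,\ks{\la}\qquad(\la\in\Pk),
\]
show that each satisfies one and the same ``Pieri recursion'', and conclude $F_\la=G_\la$ for all $\la$ from the fact that $\{\ks{\mu}\}_{\mu\in\Pk}$ is a basis of $\Lk$.

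First I would record the two recursions. Multiplying the $k$-Pieri rule $h_r\ks{\la}=\sum_v\ks{v}$ (the sum over $v$ with $v/\la$ a weak strip of size $r$, for $1\le r\le k$) by $\ks{R_t}$ gives $h_rG_\la=\sum_v G_v$ over the same index set. On the other hand, applying the $k$-Pieri rule to $\ks{R_t\cup\la}$ writes $h_rF_\la$ as $\sum_w\ks{w}$ over weak strips $w/(R_t\cup\la)$ of size $r$, and by Lemma~\ref{theo:Rt}(2) these $w$ are precisely the partitions $R_t\cup v$ with $v/\la$ a weak strip of size $r$; hence $h_rF_\la=\sum_vF_v$, again over the same index set. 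Since moreover $F_\emptyset=\ks{R_t}=\ks{R_t}\ks{\emptyset}=G_\emptyset$, both $F$ and $G$ solve the system: $H_\emptyset=\ks{R_t}$, and $h_rH_\la=\sum_{v/\la\,\text{weak strip of size }r}H_v$ for all $\la\in\Pk$ and $1\le r\le k$.

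Next I would show that such a system has at most one solution once $H_\emptyset$ is prescribed. Iterating the recursion, for each $\mu\in\Pk$ one obtains an identity $h_\mu H_\emptyset=\sum_{\la\in\Pk}c_{\la\mu}H_\la$, where $c_{\la\mu}$ is the number of chains of weak strips from $\emptyset$ to $\la$ whose successive sizes are the parts of $\mu$; crucially $c_{\la\mu}$ depends only on $k,\mu,\la$, not on the family $H$. Specializing to $H_\la=\ks{\la}$ (so $H_\emptyset=1$) identifies $(c_{\la\mu})$ with the transition matrix expressing $\{h_\mu\}_{\mu\in\Pk}$ in the basis $\{\ks{\la}\}_{\la\in\Pk}$ of $\Lk$; since $\{h_\mu\}_{\mu\in\Pk}$ is itself a $\Z$-basis of $\Lk=\Z[h_1,\dots,h_k]$, that matrix is invertible over $\Z$. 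Hence the system solves as $H_\la=\bigl(\sum_\mu(c^{-1})_{\la\mu}h_\mu\bigr)H_\emptyset=\ks{\la}\,H_\emptyset$, and applying this to $H=F$ gives $\ks{R_t\cup\la}=\ks{R_t}\ks{\la}$. For the parenthetical $\ks{R_t}=s_{R_t}$ I would use the well-known fact that $\ks{\mu}=s_\mu$ whenever every hook length of $\mu$ is at most $k$ (equivalently $\core(\mu)=\mu$): within that class of partitions weak strips are just ordinary horizontal strips, so the $k$-Pieri rule reduces to the classical Pieri rule; and $R_t=(t^{k+1-t})$ has top-left hook length $t+(k+1-t)-1=k<k+1$, so this applies.

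I do not expect a genuine obstacle here: granted the $k$-Pieri rule and Lemma~\ref{theo:Rt}, everything is routine. The two points needing care are (i) that the iterated coefficients $c_{\la\mu}$ are literally the same for $F$ as for $G$, because they are purely combinatorial counts of chains of weak strips, and (ii) the bookkeeping that the weak strips of a given size over $R_t\cup\la$ are exactly the $R_t\cup v$ — which is precisely Lemma~\ref{theo:Rt}(2). If one prefers, the uniqueness step can be recast as an induction on $|\la|$ together with the dominance order instead of as a matrix inversion, but the inversion argument is shorter.
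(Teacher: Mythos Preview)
Your argument is correct and is essentially the same as the approach the paper alludes to (see Section~\ref{sect:StrongSumFactorization}, where the analogous map $\Theta\colon\kkss{\la}\mapsto\kkss{R_t\cup\la}$ is shown to commute with multiplication by the Pieri generators via Lemma~\ref{theo:Rt}); the paper itself does not reprove Proposition~\ref{prop:kS_fac} but cites \cite[Theorem~40]{MR2331242}, whose proof is the $\Theta$-as-module-homomorphism version of what you wrote. Your ``two families satisfying the same recursion plus uniqueness by inverting the $h_\mu$-to-$\ks{\la}$ transition matrix'' is just a repackaging of ``$\Theta$ is $\Lk$-linear, hence $\Theta(f)=f\cdot\Theta(1)$''.
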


\subsubsection{$K$-$k$-Schur functions}

In this paper we employ the following characterization with the Pieri rule
(\cite[Corollary 7.6]{MR2660675}, \cite[Corollary 50]{Morse12}) of the $K$-$k$-Schur function as its definition.
\begin{defi}[$K$-$k$-Schur function via $K$-$k$-Pieri rule]\label{defi:KkPieri}
$K$-$k$-Schur functions $\{\kks{w}\}_{w\in\aG}$ are the family of symmetric functions such that 
$\kks{e}=1$ and 
\[
h_r \cdot \kks{w} =
\sum_{(A,v)} (-1)^{r+l(w)-l(v)} \kks{v},
	\]
for $w \in \aG$ and $1 \le r \le k$,
summed over $v\in\aG$ and $A\subsetneq I$ such that 
$(v/w,A)$ is an affine set-valued strip of size $r$.
\end{defi}

It is known that $\{\kks{w}\}_{w\in\aG}$ forms a basis of $\Lk$.
Besides,
though $\kks{w}$ is an inhomogeneous symmetric function in general,
the degree of $\kks{w}$ is $l(w)$
and its homogeneous part of highest degree is equal to $\ks{w}$.
In this paper, for $f=\sum_{w} c_w \kks{w} \in \Lk$ we write $[\kks{v}](f)=c_v$.

\section{Properties on the strong and weak orderings on Coxeter groups}\label{sect:PropOrder}

In this section we let $(W,S)$ be an arbitrary Coxeter group.

Recall that for a poset $(P,\le)$ and a subset $A\subset P$,
if the set $\{z\in P\mid z \le y \text{ for any }y\in A\}$ has the maximum element $z_0$
then $z_0$ is called the \textit{meet} of $A$ and denoted by $\bigwedge A$,
and if $\{z\in P\mid z \ge y\text{ for any }y\in A\}$ has the minimum element
then it is called the \textit{join} of $A$ and denoted by $\bigvee A$.
When $A=\{x,y\}$, its meet and join are simply called the meet and join of $x$ and $y$,
and denoted by $x\wedge y$ and $x\vee y$.
A poset for which any nonempty subset has the meet is called a \textit{complete meet-semilattice}.
A poset for which any two elements have the meet and join is called a \textit{lattice}.
A subset of a complete meet-semilattice has the join if it has a common upper bound, since the join is the meet of all its common upper bounds then.

In this paper we denote the meet of $x,y\in W$ under
the strong (resp.\,left, right) order
by $x \wedge y$ (resp.\,$x\wedge_L y$, $x\wedge_R y$) and 
call it the \textit{strong meet} (resp.\,\textit{left meet}, \textit{right meet})
of $\{x,y\}$.
We define $x\vee y$, $x\vee_L y$ and $x\vee_R y$ similarly.

\subsection{Lattice property of the weak order}\label{sect:PropOrder::Lattice}

It is known that the weak order on any Coxeter group or its parabolic quotient forms complete meet semilattice (see, for example, \cite[Theorem 3.2.1]{MR2133266}).
The join of two elements in them, however, does not always exist,
but it is known that the quotient of an affine Weyl group by its corresponding finite Weyl group forms a lattice under the weak order \cite{MR1740744}.
We here include another proof for the type affine A case for the sake of completeness.

\begin{lemm}\label{theo:join_exist}
	For any $v,w\in \aG$, their join $v\vee_L w$ under the left weak order exists.
\end{lemm}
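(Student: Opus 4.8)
The plan is to exploit the fact, recalled just above, that $\aG$ is a complete meet-semilattice under $\le_L$, so that it suffices to produce, for any two $v,w\in\aG$, a \emph{common upper bound} under $\le_L$; the join is then automatically the meet of all such upper bounds. Since we are given the $k$-Schur functions and their Pieri rule as a tool, I would translate the left weak order into a statement about $k$-Schur functions. First I would recall that $v\le_L w$ in $\aG$ is equivalent to a divisibility/expansion statement: iterating the $k$-Pieri rule, $h_{\mu}\,\ks{v}$ expands positively in the $\ks{u}$ with $u$ running over affine Grassmannian elements reachable from $v$ by a chain of weak strips, and in particular $\ks{w}$ occurs in $h_{\mathrm{sh}(\RD(wv^{-1}))}\ks{v}$ (or a similar explicit product of $h_r$'s read off from a cyclically decreasing decomposition of $wv^{-1}$) precisely when $v\le_L w$. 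So membership in $[v,\infty)_L$ is detected by which $\ks{u}$ appear in a suitable product $h_{r_1}\cdots h_{r_m}\ks{v}$.

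The key step is then: given $v,w\in\aG$, find a single symmetric function, a product of $h_r$'s, which when multiplied by $1=\ks{e}$ produces \emph{both} $\ks{v}$ and $\ks{w}$ with nonzero (hence, by positivity, positive) coefficient, and moreover whose product with $\ks{v}$ still contains $\ks{w}$. Concretely, take a cyclically decreasing decomposition $v = d_{A_m}\cdots d_{A_1}$ and $w = d_{B_n}\cdots d_{B_1}$; the product $H = h_{|A_1|}\cdots h_{|A_m|}\,h_{|B_1|}\cdots h_{|B_n|}$ (or better, interleaving them appropriately, or simply using $H\cdot \ks{e}$ where $H$ is built from a long enough string covering both) has the property that $\ks{v}$ and $\ks{w}$ both appear in $H$. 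Now pick any $u$ with $[\ks{u}]\big(h_{|B_1|}\cdots h_{|B_n|}\,\ks{v}\big)\neq 0$ and $[\ks{u}]\big(h_{|A_1|}\cdots h_{|A_m|}\,\ks{w}\big)\neq 0$ — such $u$ exists, e.g.\ by applying the weak strips for $w$ on top of $v$ and checking the resulting element is reachable from $w$ as well — so that $v\le_L u$ and $w\le_L u$. I would actually streamline this: since $v\le_L v*w$ and (by symmetry of the Demazure product's relation to the weak order, Lemma~\ref{theo:b_xyz}) also $w\le_L v*w$? — no, that is false in general, so instead I use the $h_r$-expansion: the coefficient of $\ks{z}$ in $h_{\mathrm{sh}(\RD(v))}\cdot\ks{e}$ is positive iff $z$ is obtainable from $e$ by weak strips whose sizes are a rearrangement of the parts, which includes $v$ itself; combining the strip-sequences for $v$ and for $w$ into one long product of $h_r$'s and taking a maximal (in $\le_L$) element $u$ in its $k$-Schur support that dominates the supports coming from both sub-products gives $u\in[v,\infty)_L\cap[w,\infty)_L$.

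The hard part will be making the last paragraph precise: ensuring that the common upper bound $u$ really exists, i.e.\ that the $k$-Schur support of the combined product of $h_r$'s simultaneously contains an element above $v$ \emph{and} above $w$ in $\le_L$, rather than just containing $v$ and $w$ separately. The clean way around this is to avoid asking for a specific $u$ and instead argue structurally: the positivity of the $k$-Pieri rule shows $[v,\infty)_L$ is exactly the set of $z$ such that $\ks{z}$ occurs in $\ks{v}\cdot h_{r_1}\cdots h_{r_m}$ for some sequence $(r_i)$, and this set is closed under the Demazure operators $\phi_i$ applied on the left-strip side; then $v*$(long element of a big parabolic) $ = $ some explicit element lies in both $[v,\infty)_L$ and $[w,\infty)_L$ once the parabolic is large enough to contain all the generators used in reduced words for both $v$ and $w$. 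That explicit common upper bound, together with the complete meet-semilattice property, finishes the proof. I would present the argument in this last, structural form, citing Lemma~\ref{theo:b_xyz} and the $k$-Pieri rule, and treat the ``big parabolic'' bound as the main technical point to verify.
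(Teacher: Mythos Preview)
Your proposal has a genuine gap. You correctly identify the two ingredients: the complete meet-semilattice property of $\aG$ reduces the problem to finding a common upper bound, and the $k$-Pieri rule should supply it. But none of your proposed constructions actually close. The attempts with products $h_{r_1}\cdots h_{r_m}\cdot 1$ only show that $\ks{v}$ and $\ks{w}$ each lie in some support set; they do not produce a single $u$ lying above both in $\le_L$, and you yourself flag this as ``the hard part''. Your final ``big parabolic'' escape route is fatally flawed in $\aSn$: the only parabolic subgroup containing every generator $s_0,\dots,s_k$ (which you need, since reduced words for $v$ and $w$ may collectively use all of them) is $\aSn$ itself, which is infinite and has no longest element. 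So there is no ``large enough'' parabolic to take the Demazure product with.

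The paper's argument is much shorter and avoids all of this: consider directly the product $\ks{v}\ks{w}=\sum_u c^u_{vw}\ks{u}$. Since $\ks{v}\in\Lk=\Z[h_1,\dots,h_k]$, write $\ks{v}$ as a polynomial in the $h_r$'s and apply the $k$-Pieri rule repeatedly to $\ks{w}$; every $u$ with $c^u_{vw}\neq 0$ then satisfies $u\ge_L w$. By commutativity of multiplication the same reasoning with the roles of $v$ and $w$ swapped gives $u\ge_L v$. The product is nonzero (its top homogeneous component is $\ks{v}\ks{w}\neq 0$ as polynomials), so at least one such $u$ exists, and it is the required common upper bound. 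The key idea you are missing is precisely this use of \emph{the product of the two $k$-Schur functions themselves}, which makes the symmetry between $v$ and $w$ automatic.
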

\begin{proof}
	Since $\aG$ is a meet complete semilattice,
	it remains to show the existence of a common upper bound of $v$ and $w$ under the left order.
	Let $\ks{v}$ and $\ks{w}$ denote the $k$-Schur functions corresponding to $v$ and $w$.
	In the expansion of their product in the $k$-Schur function basis
	$\ks{v}\ks{w}=\sum_{u} c^{u}_{vw} \ks{u}$,
	every $u$ appearing in the right-hand side satisfies $w\le_L u$
	because $\ks{v}$ can be written as a polynomial in $h_1,\dots,h_k$
	and by the Pieri rule $h_i \ks{x}$ is in general a linear combination of $\ks{y}$ with $y\ge_L x$.
	By the same reason we have $v\le_L u$.
\end{proof}

We proved the following corollary in the proof of the lemma above:
\begin{coro}\label{theo:split_condition_with_join}
	For any $v,w\in \aG$,
	every $u$ appearing with a nonzero coefficient in the right-hand side of $\ks{v}\ks{w}=\sum_{u} c^{u}_{vw} \ks{u}$ satisfies
	$u \ge_L v\vee_L w$.
\end{coro}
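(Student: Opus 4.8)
The plan is to observe that this corollary is already contained in the proof of Lemma~\ref{theo:join_exist}, and then to conclude by the very definition of the join.

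First I would recall the mechanism used in that proof. Since $\{\ks{u}\}_{u\in\aG}$ is a $\Z$-basis of $\Lk=\Z[h_1,\dots,h_k]$, we may write $\ks{v}$ as a $\Z$-polynomial in $h_1,\dots,h_k$. By the $k$-Pieri rule, for each $x\in\aG$ and $1\le i\le k$ the product $h_i\ks{x}$ is a sum of $\ks{y}$ over weak strips $y/x$, and every such $y=d_A x$ satisfies $y\ge_L x$ by the definition of a weak strip. Iterating, any monomial $h_{i_1}\cdots h_{i_m}\ks{w}$ expands as a sum of $\ks{u}$ with $w\le_L u$; forming a $\Z$-linear combination of such monomials can only cause cancellation among these terms and never introduces a $\ks{u}$ with $u\not\ge_L w$. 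Hence every $u$ with $c^{u}_{vw}\ne 0$ satisfies $w\le_L u$.

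Next I would exploit commutativity: since $\ks{v}\ks{w}=\ks{w}\ks{v}$, the identical argument with the roles of $v$ and $w$ exchanged gives $v\le_L u$ for every such $u$. Therefore each $u$ occurring in the expansion is a common upper bound of $\{v,w\}$ in the poset $(\aG,\le_L)$.

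Finally, Lemma~\ref{theo:join_exist} guarantees that $v\vee_L w$ exists and, being by definition the minimum of the set of common upper bounds of $v$ and $w$ under $\le_L$, satisfies $v\vee_L w\le_L u$. This is precisely the assertion. I do not expect any genuine obstacle here; the only point that requires a moment's care is the remark above that passing from the nonnegative Pieri expansions to a general $\Z$-combination cannot create new terms that are not $\ge_L w$ (respectively $\ge_L v$) in the weak order, since cancellation only deletes terms and never adds them.
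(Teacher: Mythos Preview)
Your proposal is correct and is essentially the same as the paper's own argument: the paper simply states that this corollary was already established in the course of proving Lemma~\ref{theo:join_exist}, namely by writing $\ks{v}$ as a polynomial in $h_1,\dots,h_k$, applying the $k$-Pieri rule to conclude $w\le_L u$, and then using commutativity to get $v\le_L u$. Your added remark that passing to a $\Z$-linear combination can only cancel terms, never introduce new ones outside the upper set of $w$, makes explicit a point the paper leaves implicit.
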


With the $K$-$k$-Pieri rule instead of the $k$-Pieri in hand, the same holds for the $K$-$k$-Schur functions:
\begin{coro}\label{theo:Kk_split_condition_with_join}
	For any $v,w\in \aG$,
	every $u$ appearing with a nonzero coefficient in the right-hand side of $\kks{v}\kks{w}=\sum_{u} d^{u}_{vw} \kks{u}$ satisfies
	$u \ge_L v\vee_L w$.
\end{coro}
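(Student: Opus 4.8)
The plan is to mimic the proof of Corollary~\ref{theo:split_condition_with_join}, replacing the $k$-Pieri rule by the $K$-$k$-Pieri rule of Definition~\ref{defi:KkPieri}. The only new point that needs checking is that the $K$-theoretic Pieri expansion still moves upward in the left weak order; everything else is a formal induction identical to the homogeneous case.

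First I would record the following claim: for $w\in\aG$ and $1\le r\le k$, every $v$ occurring with nonzero coefficient in $h_r\cdot\kks{w}$ satisfies $v\ge_L w$. Indeed, by Definition~\ref{defi:KkPieri} such a $v$ arises from an affine set-valued strip $(v/w,A)$, which by Definition~\ref{defi:asvstrip} means $v=d_A*w=\phi_{d_A}(w)$. Applying Lemma~\ref{theo:b_xyz}(2) with $x=d_A$, $y=w$, $z=v$ (so that $x*y=z$) gives $w=y\le_L z=v$, as desired. The signs $(-1)^{r+l(w)-l(v)}$ play no role here, since we only track which $v$ can appear.

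Next, since $\{\kks{u}\}_{u\in\aG}$ is a basis of $\Lk=\Z[h_1,\dots,h_k]$, I can write $\kks{v}=\sum_{\la\in\Pk}c_\la\,h_{\la_1}h_{\la_2}\cdots h_{\la_{l(\la)}}$ with each $1\le\la_i\le k$. Multiplying $\kks{w}$ by the factors $h_{\la_i}$ one at a time and applying the claim above repeatedly (an induction on $l(\la)$), I conclude that each $h_\la\cdot\kks{w}$, hence $\kks{v}\cdot\kks{w}$, is a $\Z$-linear combination of $\kks{u}$ with $u\ge_L w$. By the symmetric argument (expanding $\kks{w}$ as a polynomial in the $h_i$ instead, using commutativity of $\Lk$), every such $u$ also satisfies $u\ge_L v$. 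Thus any $u$ with $d^u_{vw}\neq0$ is a common upper bound of $v$ and $w$ under $\le_L$; in particular $v\vee_L w$ exists (it does anyway by Lemma~\ref{theo:join_exist}, $\aG$ being a complete meet-semilattice), and $u\ge_L v\vee_L w$.

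I do not expect a genuine obstacle: the only substantive ingredient beyond the argument already given for $k$-Schur functions is the left-weak-order monotonicity of the $K$-$k$-Pieri rule, and this is immediate from Lemma~\ref{theo:b_xyz} once one unwinds the definition of an affine set-valued strip in terms of the Demazure product. The rest is the same bookkeeping induction used for Corollary~\ref{theo:split_condition_with_join}.
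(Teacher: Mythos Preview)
Your proof is correct and follows essentially the same approach as the paper: express $\kks{v}$ as a polynomial in $h_1,\dots,h_k$ and use that each application of the $K$-$k$-Pieri rule moves upward in left weak order (your invocation of Lemma~\ref{theo:b_xyz}(2) is exactly the point the paper has in mind, equivalently Lemma~\ref{theo:b_i}(1)). The paper states this corollary as an immediate consequence of the argument for Corollary~\ref{theo:split_condition_with_join} with the $K$-$k$-Pieri rule substituted for the $k$-Pieri rule, which is precisely what you have written out.
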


\subsection{Properties of Demazure and anti-Demazure actions} \label{sect:PropOrder::phipsi}

\begin{lemm}\label{theo:b_i}
	Let $x\in W$ and 
	$\phi_x,\psi_x$ be the Demazure and anti-Demazure actions defined in Section \ref{sect:Prel::Coxeter::0Hecke}.
	\begin{enumerate}[label=\textup{(\arabic*)}]
		\item
			$\phi_x(w) \ge_L w$ and $\psi_x(w)\le_L w$ for any $w\in W$.
		\item
			$\phi_x$ and $\psi_x$ are order-preserving under $\le$.
			Namely,
			if $v\le w$ then $\phi_x(v)\le \phi_x(w)$ and $\psi_x(v)\le\psi_x(w)$.
		\item
			For any $y\in W$, the map $(x\mapsto \phi_x(y))$ is order-preserving
			and the map $(x\mapsto \psi_x(y))$ is order-reversing
			under $\le$.
		\item
			$\phi_x \psi_{x^{-1}}(y) \ge y$ and
			$\psi_{x^{-1}} \phi_x(y) \le y$ for any $y\in W$.
		\item
			$\phi_x$ preserves strong meets and
			$\psi_x$ preserves strong joins.
			Namely,
			for $v, w\in W$,
			\begin{enumerate}[label=\textup{(}\alph*\textup{)}]
				\item
					if $v\wedge w$ exists then $\phi_x(v)\wedge \phi_x(w)$ exists and equals to $\phi_x(v\wedge w)$.
				\item
					if $v\vee w$ exists then $\psi_x(v)\vee \psi_x(w)$ exists and equals to $\psi_x(v\vee w)$.
			\end{enumerate}
	\end{enumerate}
\end{lemm}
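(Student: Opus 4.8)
The plan is to prove the five items in the order given, using each to bootstrap the next, and reducing everything to the single-generator case $x = s$ whenever possible by the multiplicativity built into the definitions of $\phi_x$ and $\psi_x$. For (1), I would argue by induction on $l(x)$: writing $x = s x'$ reduced, we have $\phi_x(w) = \phi_s(\phi_{x'}(w))$, and $\phi_s(y) \ge_L y$ holds trivially from the definition of the Demazure action (either $\phi_s(y) = y$ or $\phi_s(y) = sy \gecovL y$); then transitivity of $\le_L$ finishes it, and the $\psi_x$ case is the mirror image. For (2), the same induction reduces to showing $\phi_s$ and $\psi_s$ are order-preserving under the strong order; this is exactly the content of the Lifting Property (Z-property) quoted in Section \ref{sect:Prel::Coxeter::Order}. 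Concretely, if $v \le w$ and we want $\phi_s(v) \le \phi_s(w)$, split into the cases according to whether $sv > v$ or $sv < v$ and similarly for $w$; in each case the three-way equivalence $v \le w \iff v \le sw \iff sv \le sw$ (applied, when a descent is involved, after first replacing an element by its shorter representative) delivers the claim. The $\psi_s$ case is symmetric.

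For (3), I would again peel off one generator from $x$, so it suffices to compare $\phi_s(y)$ with $\phi_{s'}\phi_s(y) = \phi_{ss'}(y)$-type steps; but more cleanly, since $\phi_x(y) = \phi_y^R(x) = x * y$ and $\psi$ has an analogous right description, the monotonicity of $(x \mapsto x*y)$ under $\le$ follows from the Subword Property applied to a reduced expression obtained by concatenating those of $x$ (or $x'\le x$) and of the complementary factor $y'$ in $z = x*y = xy'$ — this is essentially Lemma \ref{theo:b_xyz}. For $(x \mapsto \psi_x(y))$ being order-\emph{reversing}: if $x' \le x$, I would show $\psi_x(y) \le \psi_{x'}(y)$ by noting that applying one more anti-Demazure generator can only move the element down in the strong order (this is (2) together with (1): $\psi_s(\psi_{x'}(y)) \le \psi_{x'}(y)$), and then inducting; alternatively use Lemma \ref{theo:psi_xyz}. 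Item (4) is then a two-line consequence of (1) and (2): from (1), $\psi_{x^{-1}}(y) \le_L y$, hence in particular $\psi_{x^{-1}}(y) \le y$ in the strong order; applying the order-preserving map $\phi_x$ (by (2)) and then using (1) again, $\phi_x\psi_{x^{-1}}(y) \ge_L \psi_{x^{-1}}(y)$... — the cleanest route is: $\psi_{x^{-1}}(y)\le_L y \le_L \phi_x(y)$ is false in general, so instead observe $\psi_{x^{-1}}(y) = z$ for some $z \le_L y$ with $z^{-1}\le_R$ (something), and $\phi_x(z) \ge_L z$; pin down that $\phi_x(z) \ge y$ using that the deleted reflections can be reinstated. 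I expect item (4) to need a careful bookkeeping argument via the $0$-Hecke algebra: $v_x v_{x^{-1}}$ is an idempotent-like product, so $x * (x^{-1} * y)$ relates to $y$ in a controlled way; the identities $\phi_x = (\,\cdot\,)*$ and the associativity of $*$ should make $\phi_x\psi_{x^{-1}}(y) = x*(\text{a subword of }x^{-1}* y)$, from which $\ge y$ follows by Subword Property.

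Finally, item (5) is the substantive one and the main obstacle. Reducing to $x = s$ and inducting, I must show: if $v \wedge w$ exists then $\phi_s(v) \wedge \phi_s(w)$ exists and equals $\phi_s(v\wedge w)$. That $\phi_s(v\wedge w)$ is a \emph{lower} bound of $\{\phi_s(v), \phi_s(w)\}$ is immediate from (2). The hard direction is that it is the \emph{greatest} lower bound: given any $z \le \phi_s(v)$ and $z \le \phi_s(w)$, I must produce $z \le \phi_s(v\wedge w)$. The natural move is to consider $\psi_s(z)$, which satisfies $\psi_s(z) \le \psi_s\phi_s(v) \le v$ (using (2) and then (4): $\psi_s\phi_s(v)\le v$), and likewise $\psi_s(z)\le w$; hence $\psi_s(z) \le v\wedge w$, so $\phi_s\psi_s(z) \le \phi_s(v\wedge w)$ by (2). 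It then remains to show $z \le \phi_s\psi_s(z)$ — but $\phi_s\psi_s(z)$ equals $z$ if $sz<z$ and equals $s\psi_s(z)=ssz=z$... in fact $\phi_s\psi_s = \mathrm{id}$ on $W$? No: if $sz > z$ then $\psi_s(z) = z$ and $\phi_s(z)$ may be $sz \ne z$. So $\phi_s\psi_s(z) \ge z$ always, with equality unless $sz>z$, in which case $\phi_s\psi_s(z) = sz \gecov z$, so still $\ge z$. Thus $z \le \phi_s\psi_s(z) \le \phi_s(v\wedge w)$, completing the meet case. The join case for $\psi_x$ is dual, swapping the roles of $\phi$ and $\psi$ and of $\le_L$-up versus $\le_L$-down, using $\psi_s\phi_s(z) \le z$. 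I anticipate the only real care needed in (5) is verifying the two auxiliary inequalities $\psi_s\phi_s(v) \le v$ and $z \le \phi_s\psi_s(z)$ at the level of generators — both are immediate case checks on whether $s$ is a left descent — and then confirming that the induction on $l(x)$ genuinely goes through, i.e.\ that "$\phi_{x'}$ preserves the meet" composed with "$\phi_s$ preserves the meet" yields "$\phi_{sx'} = \phi_s\phi_{x'}$ preserves the meet", which is formal.
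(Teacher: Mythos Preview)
Your approach is essentially the same as the paper's for items (1), (2), (3), and (5): reduce to a single generator $s$, use the Lifting Property for monotonicity, and for (5) run the ``apply $\psi_s$, use the meet property downstairs, re-apply $\phi_s$'' argument exactly as you describe. In particular your verification that $z \le \phi_s\psi_s(z)$ and $\psi_s\phi_s(v) \le v$ by a two-case check on whether $s$ is a left descent is precisely what the paper does (the paper phrases the latter as the identity $\psi_s\phi_s = \psi_s$ together with $\psi_s(v)\le v$, and notes $\phi_s\psi_s(u)=\phi_s(u)\ge u$).

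The one place where you diverge from the paper is (4), and there you are making it harder than it needs to be. Your first attempt stalls because $\psi_{x^{-1}}(y)\le y$ and $\phi_x\psi_{x^{-1}}(y)\ge \psi_{x^{-1}}(y)$ do not combine, and your fallback to a $0$-Hecke bookkeeping argument is vague (and conflates $\psi_{x^{-1}}(y)$ with $x^{-1}*y$, which it is not). The paper's proof of (4) is a one-line induction: the case $x=s$ is exactly the two generator identities $\phi_s\psi_s(y)\ge y$ and $\psi_s\phi_s(y)\le y$ that you already checked inside (5); for the general case, write $x=s_1\cdots s_n$ reduced so that $\phi_x\psi_{x^{-1}}=\phi_{s_1}\cdots\phi_{s_n}\psi_{s_n}\cdots\psi_{s_1}$, set $z=\psi_{s_1}(y)$, apply the inductive hypothesis to $s_2\cdots s_n$ to get $\phi_{s_2}\cdots\phi_{s_n}\psi_{s_n}\cdots\psi_{s_2}(z)\ge z$, then apply the order-preserving $\phi_{s_1}$ (by (2)) and the base case $\phi_{s_1}\psi_{s_1}(y)\ge y$. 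You have all of these ingredients; you just did not assemble them.
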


\begin{rema}
This lemma also works for $\phi^R_x$ and $\psi^R_x$ instead of $\phi_x$ and $\psi_x$.
\end{rema}

\begin{rema}
	For the statements on $\phi_x$, 
	(1) of this lemma is done in \cite[Proposition 3.1(d)]{buch2015};
	(2) and (3) in \cite[Proposition 3.1(c)]{buch2015}.
\end{rema}

\begin{proof}
	(1) is clear from the definition of $\phi_s$ and $\psi_s$.
	(2) is from the Lifting Property.
	(3) is clear from (1) and the Subword Property.
	(4)
	The case $x=s\in S$ is clear from the definition of $\phi_s,\psi_s$,
	and the general case follows from this and (2).
	
	\noindent
	(5)
	($a$)
	It suffices to prove it when $x=s\in S$.
	Write simply $\phi = \phi_s$ and $\psi=\psi_s$.
	Assume $v\wedge w$ exists.
	We have $\phi(v\wedge w) \le \phi(v), \phi(w)$ by (2).
	To show that $\phi(v\wedge w)$ is the meet of $\phi(v)$ and $\phi(w)$,
	take arbitrary $u$ with $u\le \phi(v), \phi(w)$.
	Then $\psi(u)\le \psi(v), \psi(w)$ from the Lifting Property,
	and hence $\psi(u)\le v, w$, 
	which implies $\psi(u)\le v\wedge w$.
	Applying $\phi$, we have
	$\phi(u) = \phi(\psi(u)) \le \phi(v\wedge w)$,
	and hence $u\le \phi(v\wedge w)$.
	($b$) is essentially the same as ($a$).
\end{proof}

\begin{rema}
The map $\phi_x$ (resp.\,$\psi_x$) does not preserve strong joins (resp.\,meets) in general.
For example,
letting $W=S_4$,
we have $s_{212}\wedge s_{232} = s_{2}$ but
$\psi_2(s_{212})\wedge \psi_2(s_{232}) = s_{12}\wedge s_{32} = s_{2} \neq \psi_2(s_2)$,
where we write $s_{ij\dots}$ instead of $s_i s_j \cdots$.
Mapping everything above via $x\mapsto xw_0$ where $w_0$ is the longest element of $W$,
we obtain a counterexample for $\phi_x$ preserving joins.
\end{rema}

\begin{coro}\label{theo:ux=vy}
	Let $u,v,x,y\in W$ with
	$\redprod{u}{x}$ and $\redprod{v}{y}$ are reduced
	and $ux=vy$
	(or namely, $u\le_L ux=vy\ge_L v$).
	Then $u\ge v \iff x\le y$.
\end{coro}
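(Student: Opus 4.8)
The plan is to realize both $x$ and $u$ as explicit outputs of anti-Demazure operators applied to $z:=ux=vy$ (respectively to $z^{-1}$), and then read off the equivalence from the order-reversing property of these operators recorded in Lemma~\ref{theo:b_i}(3), together with the fact that $w\mapsto w^{-1}$ preserves the strong order.

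First I would establish the ``peeling'' identities $\psi_{u^{-1}}(z)=x$ and $\psi_{v^{-1}}(z)=y$. Since $\redprod{u}{x}$ is reduced, writing a reduced word $u=s_1\cdots s_n$ and applying the anti-Demazure actions for $u^{-1}=s_n\cdots s_1$ to $z=s_1\cdots s_n\,x$ strips the leading generators one at a time (each step strictly shortens, so $\psi_{s_i}$ acts by left multiplication), leaving $x$. Alternatively, one can avoid words entirely: $\psi_{u^{-1}}(z)\le x$ by Lemma~\ref{theo:b_i}(4) applied to $z=\phi_u(x)$, while $l(\psi_{u^{-1}}(z))\ge l(z)-l(u)=l(x)$ since each of the $l(u)$ generators composing $\psi_{u^{-1}}$ drops the length by at most one; hence $\psi_{u^{-1}}(z)=x$. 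Running exactly the same argument on the reduced factorization $z^{-1}=x^{-1}u^{-1}=y^{-1}v^{-1}$ yields $\psi_x(z^{-1})=u^{-1}$ and $\psi_y(z^{-1})=v^{-1}$.

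With these in hand the conclusion is short. Recall that $w\mapsto w^{-1}$ is an automorphism of $(W,\le)$ (immediate from the Subword Property) and that, for each fixed $a\in W$, the map $w\mapsto\psi_w(a)$ is order-reversing under $\le$ by Lemma~\ref{theo:b_i}(3). Then $u\ge v\Rightarrow u^{-1}\ge v^{-1}\Rightarrow x=\psi_{u^{-1}}(z)\le\psi_{v^{-1}}(z)=y$; and conversely $x\le y\Rightarrow\psi_x(z^{-1})\ge\psi_y(z^{-1})$, i.e.\ $u^{-1}\ge v^{-1}$, hence $u\ge v$. (One could instead argue the converse with the right anti-Demazure action $\psi^R$ via $\psi^R_{x^{-1}}(z)=u$, using the Remark after Lemma~\ref{theo:b_i}; this is a matter of taste.)

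The only delicate point is the peeling identity: one must be careful with the order of composition hidden in $\psi_{u^{-1}}$, and with the fact that it is genuinely necessary to prove both implications separately. In particular, one cannot obtain the reverse implication ``for free'' by applying the forward implication to $z^{-1}=x^{-1}u^{-1}$: that route only yields ``$x\ge y\Rightarrow u\le v$'', which is insufficient since a priori $u$ and $v$ need not be $\le$-comparable. Everything else is a direct application of the properties of Demazure and anti-Demazure actions already established in Lemma~\ref{theo:b_i}.
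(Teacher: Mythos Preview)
Your proof is correct and follows essentially the same approach as the paper: both use the order-reversing property of $w\mapsto\psi_w(a)$ from Lemma~\ref{theo:b_i}(3) together with the peeling identity $\psi_{u^{-1}}(ux)=x$ (and its analogues), and both obtain the converse by the symmetric argument on $z^{-1}$. Your version is simply more explicit, supplying two justifications for the peeling identity that the paper treats as evident, and spelling out the converse direction that the paper dismisses with ``similar''.
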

\begin{proof}
	By Lemma \ref{theo:b_i}(3) we have
	$u\ge v \iff u^{-1}\ge v^{-1} \implies (x=)\,\psi_{u^{-1}}(ux)\le\psi_{v^{-1}}(vy)\,(=y)$.
	The other direction is similar.
\end{proof}

\subsection{Half-strong, half-weak meets and joins}
\label{sect:PropOrder::SLjoin}

Analogous to the meets and joins under the weak order,
we show the existence of the minimum element (under $\le$) of the set
\begin{gather*}
	\{z \in W \mid x\le z \ge_L y\},
\end{gather*}
and the maximum of
\begin{gather*}
	\{z \in W \mid x\ge_L z \le y\}.
\end{gather*}

\begin{rema}
	It seems that the existence of such elements has been known;
	for example, in his Sage implementation to compute the Deodhar lift \cite{Deodhar87},
	Shimozono explicitly used (1) of the following proposition.
	However we do not know about a reference,
	so we take the opportunity to give one here.
	The proof of (1) of the following proposition is by Shimozono \cite{PC_Shimozono}.
\end{rema}

\begin{prop}\label{theo:SL_join'}
	Let $x,y\in W$.
	\begin{enumerate}[label=\textup{(\arabic*)}]
		\item
			The set $\{u\in W\mid x\le \phi_u(y)\}$ has the minimum element
			$\psi^R_{y^{-1}}(x)$ under the strong order.
		\item
			The set $\{u\in W\mid \psi_{u^{-1}}(x)\le y\}$ has the minimum element
			$\psi^R_{y^{-1}}(x)$ under the strong order.
	\end{enumerate}
\end{prop}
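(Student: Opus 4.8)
The plan is to prove the two statements in parallel, showing that both sets coincide and that both have minimum $z := \psi^R_{y^{-1}}(x)$. First I would verify the equivalence of the two membership conditions. Writing $\psi_{u^{-1}} = \psi_{s_n}\cdots\psi_{s_1}$ and $\phi_u = \phi_{s_1}\cdots\phi_{s_n}$ for a reduced word $u = s_1\cdots s_n$, one sees by induction on $l(u)$, using the Lifting Property at each step, that $x \le \phi_u(y)$ if and only if $\psi_{u^{-1}}(x) \le y$: indeed for a single generator $s$, if $sy > y$ then $\phi_s(y) = sy$ and $x \le sy \iff \psi_s(x) \le y$ (split into cases $sx>x$, $sx<x$ and apply Lifting), while if $sy<y$ then $\phi_s(y)=y$, $\psi_s(y)=sy$, and the claim $x\le y \iff \psi_s(x)\le y$ follows since $\psi_s(x)\le x$. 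So (1) and (2) describe the same set $U := \{u \mid x\le\phi_u(y)\} = \{u \mid \psi_{u^{-1}}(x)\le y\}$.

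Next I would check that $z = \psi^R_{y^{-1}}(x) \in U$, i.e. $\psi_{z^{-1}}(x) \le y$. Here I intend to use Lemma \ref{theo:psi_xyz} applied to $\psi$-actions, together with the description of $\psi^R$ via $\psi^R_{y^{-1}}(x) = \psi_{y^{-1}}(x^{-1})^{-1}$ wait — more carefully, $\psi^R_s(x) = xs$ if $x<xs$, and $\psi^R_{y^{-1}}$ is built by composing right anti-Demazure generators along a reduced word for $y^{-1}$ read appropriately. The key identity I want is that $z = \psi^R_{y^{-1}}(x)$ is obtained from $x$ by ``pushing $x$ down along $y$ on the right'', so that $z \le_R x$ and $z^{-1} \le_L y^{-1}$, equivalently $z^{-1}$ is a suffix-type piece; then $\psi_{z^{-1}}(x)$ should simplify. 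Concretely I would show $\psi_{z^{-1}}(x) = \phi_z(?)$-type relation, or more directly compute $z*z^{-1}$... Actually the cleanest route: set $y' = z^{-1}$ read-off, and use that $z = \psi^R_{y^{-1}}(x)$ means there is a factorization $x = z \cdot c$ with $\redprod{z}{c}$ reduced and $c \le_R$-related to $y$, $l(c)$ as large as possible; then $\psi_{z^{-1}}(x) = \psi_{z^{-1}}(zc)$, and since $z^{-1}$ and $zc$ interact via $z^{-1}\cdot zc = c$ with this being a ``Demazure-friendly'' product, one gets $\psi_{z^{-1}}(zc) \le_L c \le y$ by Lemma \ref{theo:psi_xyz}(2) combined with $c \le y$.

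Finally, minimality: given any $u \in U$, I must show $z \le u$. Using (2)'s description, $\psi_{u^{-1}}(x) \le y$. By Lemma \ref{theo:b_i}(3), $x \mapsto$ things, but I want to compare $u$ and $z$. The idea is: $\psi_{u^{-1}}(x)$ is obtained from $x$ by removing, on the left, a subword governed by $u^{-1}$; the condition $\psi_{u^{-1}}(x)\le y$ forces $u^{-1}$ to ``contain enough'' to push $x$ below $y$, and $z^{-1}$ is by construction the minimal such pusher. I would make this precise by the universal property of $\psi^R_{y^{-1}}$: among all $w$ with $w\le_R x$ and $xw^{-1}... $ hmm. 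I expect the cleanest argument uses Corollary \ref{theo:ux=vy} or Lemma \ref{theo:b_xyz}: write $x = \psi_{u^{-1}}(x)\cdot$(something) after canceling, relate to the Demazure product $u * \psi_{u^{-1}}(x)$, and then invoke that $z$ is the Demazure product $x * y^{-1}$ restricted appropriately — in fact I suspect $\psi^R_{y^{-1}}(x)$ equals the element $x'$ in Lemma \ref{theo:b_xyz} for the product $x * y^{-1}$ when $y^{-1} \le_L$-compatible, giving $z \le x$ and the minimality from part (4) of that lemma.

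\textbf{Main obstacle.} The hard part will be the minimality direction: translating ``$\psi_{u^{-1}}(x) \le y$'' into a statement that lets me compare $u$ with the explicit element $z = \psi^R_{y^{-1}}(x)$ under the \emph{strong} order (not the weak order, where it would be more transparent). I expect to need a careful induction on $l(y)$ (peeling off a left descent of $y$, or a generator $s$ with $sy<y$) combined with the Lifting Property and Lemma \ref{theo:b_i}, reducing to the one-generator case where the assertion is a direct case check on whether $sx \gtrless x$.
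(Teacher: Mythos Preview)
Your first step (the equivalence $\{u\mid x\le\phi_u(y)\}=\{u\mid\psi_{u^{-1}}(x)\le y\}$) is correct and is exactly the paper's proof of (2); the paper phrases it via Lemma~\ref{theo:b_i}(2),(4) rather than an explicit induction on $l(u)$, but the content is the same.

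The middle section, where you try to verify directly that $z=\psi^R_{y^{-1}}(x)$ lies in $U$, is where the proposal goes astray: none of the attempted factorization arguments lands, and in fact there is no clean direct computation of $\psi_{z^{-1}}(x)$ available here. The good news is that this step is unnecessary. Your closing intuition---induct on $l(y)$ and peel off a generator using the Lifting Property---is exactly the paper's argument, but it should be used to prove \emph{all} of (1) at once, not just the minimality half. Concretely: take $s$ with $ys<y$ (a \emph{right} descent, not a left descent as you wrote; this matters because $u*y=(u*ys)*s$ factors on the right), and observe that by Lifting
\[
x\le u*y=(u*ys)*s \iff \psi^R_s(x)\le u*(ys).
\]
Hence the set $D(x,y)=\{u\mid x\le\phi_u(y)\}$ equals $D(\psi^R_s(x),ys)$ on the nose. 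By induction the latter has minimum $\psi^R_{(ys)^{-1}}(\psi^R_s(x))=\psi^R_{y^{-1}}(x)$, and you are done---membership of $z$ in $U$ comes for free from this equality of sets, so no separate verification is needed.
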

\begin{proof}
	\noindent
	(1)
	We prove it by induction on $l(y)$.
	The base case $l(y)=0$ being clear,
	we assume $l(y)>0$.
	Take $s\in S$ such that $y>ys$.
	Let $x'=\psi^R_{s}(x)\,(=\min(x,xs))$ and $y'=ys$.
	Since $y=y'*s$, for any $u$ we see $u*y=u*y'*s$,
	whence by the Lifting Property $x\le u*y \iff x'\le u*y'$.
	Hence $D(x,y)=D(x',y')$ where $D(x,y) = \{u\in W\mid x\le \phi_u(y)\,(=u*y)\}$.
	By the induction hypothesis it follows that
	$D(x,y)=D(x',y')$ has the minimum element $\psi^R_{y'^{-1}}(x')$,
	which equals to $\psi^R_{y^{-1}}(x)$.
	
	\noindent
	(2)
	Let $E(x,y)=\{u\in W\mid \psi_{u^{-1}}(x)\le y\}$.
	It suffices to show $D(x,y)=E(x,y)$.
	By Lemma \ref{theo:b_i}(2),(4) we have
	$x\le \phi_u(y) \implies \psi_{u^{-1}}(x)\le\psi_{u^{-1}}\phi_u(y) \le y$ and
	$\psi_{u^{-1}}(x)\le y \implies x\le\phi_u\psi_{u^{-1}}(x)\le\phi_u(y)$.
\end{proof}

%
\begin{prop}\label{theo:SL_join}
	Let $x,y\in W$.
	\begin{enumerate}[label=\textup{(\arabic*)}]
		\item
			The set $\{z\in W\mid x\le z\ge_L y\}$ has the minimum element
			$\psi^R_{y^{-1}}(x)y$ under the strong order.
		\item
			The set $\{z\in W\mid x\ge_L z\le y\}$ has the maximum element
			$\big(\psi^R_{y^{-1}}(x)\big)^{-1}x$ under the strong order.
	\end{enumerate}
\end{prop}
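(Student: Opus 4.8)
The plan is to deduce both parts from Proposition~\ref{theo:SL_join'}, which already pins down $u_0:=\psi^R_{y^{-1}}(x)$ as the relevant minimal parameter, and then to upgrade the resulting (anti-)Demazure expressions to honest group products.

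For (1), I would first note that $\{z\in W\mid z\ge_L y\}=\{\phi_u(y)\mid u\in W\}$: the inclusion $\supseteq$ is Lemma~\ref{theo:b_i}(1), and if $z\ge_L y$ then $z=(zy^{-1})y$ with $\redex{zy^{-1}}\redex{y}$ reduced, so $z=(zy^{-1})*y=\phi_{zy^{-1}}(y)$. Hence $\{z\mid x\le z\ge_L y\}=\{\phi_u(y)\mid u\in D\}$ with $D:=\{u\mid x\le\phi_u(y)\}$, and $D$ has minimum $u_0$ by Proposition~\ref{theo:SL_join'}(1). Since $u\mapsto\phi_u(y)$ is order-preserving under $\le$ (Lemma~\ref{theo:b_i}(3)), $\phi_{u_0}(y)$ is then the minimum of $\{z\mid x\le z\ge_L y\}$. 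To see that $\phi_{u_0}(y)=u_0 y$, set $v:=\phi_{u_0}(y)y^{-1}$ (legitimate since $\phi_{u_0}(y)\ge_L y$); Lemma~\ref{theo:b_xyz} gives $v\le u_0$ and $\redex{v}\redex{y}$ reduced, so $v*y=vy=\phi_{u_0}(y)\ge x$, i.e.\ $v\in D$, whence $u_0\le v$ and $v=u_0$. Therefore the minimum equals $u_0 y=\psi^R_{y^{-1}}(x)\,y$.

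For (2), I would run the mirror argument with $\psi$ in place of $\phi$. First I record the elementary fact that $l(v)+l(w)=l(vw)$ implies $\psi_{v^{-1}}(vw)=w$ (peel off a reduced word for $v$ from the left), so that $\{z\mid z\le_L x\}=\{\psi_{u^{-1}}(x)\mid u\in W\}$ and, for $z\le_L x$, the choice $u=xz^{-1}$ satisfies $u\le_R x$ and $\psi_{u^{-1}}(x)=u^{-1}x=z$. Hence $\{z\mid x\ge_L z\le y\}=\{\psi_{u^{-1}}(x)\mid u\in E\}$ with $E:=\{u\mid\psi_{u^{-1}}(x)\le y\}$, and $E$ has minimum $u_0$ by Proposition~\ref{theo:SL_join'}(2). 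As $u\mapsto u^{-1}$ preserves $\le$ and $v\mapsto\psi_v(x)$ reverses it (Lemma~\ref{theo:b_i}(3)), the map $u\mapsto\psi_{u^{-1}}(x)$ is order-reversing, so $\psi_{u_0^{-1}}(x)$ is an upper bound of the set and, lying in it (as $u_0\in E$ and $\psi_{u_0^{-1}}(x)\le_L x$), is its maximum. Finally $u_0^{-1}=\psi_y(x^{-1})$ (from $\psi^R_w(x)=\psi_{w^{-1}}(x^{-1})^{-1}$, immediate by induction on $l(w)$ from the case $w\in S$), so Lemma~\ref{theo:psi_xyz}(2) gives $u_0^{-1}\le_L x^{-1}$, i.e.\ $u_0\le_R x$; the peeling fact then identifies $\psi_{u_0^{-1}}(x)=u_0^{-1}x=\big(\psi^R_{y^{-1}}(x)\big)^{-1}x$.

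In both parts the one genuinely delicate step is the last one: the extremal element is only known a priori as a (anti-)Demazure image, and must be rewritten as the plain product appearing in the statement. In (1) this is a minimality bootstrap — the ``compressed'' element $\phi_{u_0}(y)y^{-1}$ still lies in $D$, so it cannot undercut $u_0$ — while in (2) it reduces to the length identity $u_0\le_R x$, which falls out of Lemma~\ref{theo:psi_xyz} once $u_0^{-1}$ is recognized as $\psi_y(x^{-1})$. Everything else is bookkeeping with the length characterizations of $\le_L$ and $\le_R$ and the monotonicity properties collected in Lemma~\ref{theo:b_i}.
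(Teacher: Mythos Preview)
Your proof is correct and follows essentially the same route as the paper: both parts are deduced from Proposition~\ref{theo:SL_join'} by parametrizing the target set as the (anti-)Demazure image of $D$ (resp.\ $E$), using monotonicity to locate the extremum at the minimal parameter $u_0=\psi^R_{y^{-1}}(x)$, and then checking that at $u_0$ the (anti-)Demazure image collapses to the ordinary product. The only minor difference is in the final identification for~(2): the paper reuses the minimality bootstrap from~(1) (if $\psi_{u_0^{-1}}(x)\ne u_0^{-1}x$ then $x(\psi_{u_0^{-1}}(x))^{-1}$ would be a strictly smaller element of $E$, by Lemma~\ref{theo:psi_xyz}(1)), whereas you instead compute $u_0^{-1}=\psi_y(x^{-1})$ explicitly and read off $u_0\le_R x$ from Lemma~\ref{theo:psi_xyz}(2); both are equally short.
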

\begin{proof}
	\noindent{(1)}
	By (\ref{eq:gq_isom}), we have
	$D(x,y)\supset \{u\mid x\le uy\ge_L y\} \simeq \{z\mid x\le z\ge_L y\}$;
	$u\mapsto uy$,
	where the isomorphism is under $\le$.
	The minimum element $u$ of $D(x,y)$ satisfies $u*y=uy$ i.e.\,$uy\ge_Ly$,
	since otherwise $(u*y)y^{-1}$ is a smaller element of $D(x,y)$.
	Hence by Proposition \ref{theo:SL_join'}(1)
	we have $\psi^R_{y^{-1}}(x)y =\min_{\le}\{z\mid x\le z\ge_L y\}$.
	
	\noindent{(2)}
	By Corollary \ref{theo:ux=vy} we have
	$E(x,y)\supset \{u\mid x\ge_L u^{-1}x\le y\} \underset{\text{anti}}{\simeq} \{z\mid x\ge_L z\le y\}$;
	$u\mapsto u^{-1}x$,
	where the anti-isomorphism is under $\le$.
	For a similar reason to (1) we have 
	$\max_{\le}\{z\mid x\ge_L z\le y\} = (\min_\le E(x,y))^{-1}x 
		= (\psi^R_{y^{-1}}(x))^{-1}x$.
\end{proof}

From the proposition above, we define 
\begin{align*}
	x\SLjoin y = y\LSjoin x 
		&:= \min_{\le}\{z\in W\mid x\le z\ge_L y\} = \psi^R_{y^{-1}}(x)y, \\
	x\LSmeet y = y\SLmeet x 
		&:= \max_{\le}\{z\in W\mid x\ge_L z\le y\} = \big(\psi^R_{y^{-1}}(x)\big)^{-1}x.
\end{align*}
We define $x\SRjoin y$ and $x\SRmeet y$ similarly.

\subsection{Flipping lower weak intervals}
\label{sect:PropOrder::flip}

For any $z\in W$,
define the map
\[\Phi_z: [e, z]_L \longrightarrow [e, z]_R; x \mapsto zx^{-1}\]
and its inverse
\[\Psi_z: [e, z]_R \longrightarrow [e, z]_L; y \mapsto y^{-1}z.\]

Proposition \ref{theo:anti_isom} below demonstrates that
these maps behave well along with the strong order on $W$ and its meet/join operations.
\Todo{where it is needed?}

\begin{prop}\label{theo:anti_isom}
	Let $z\in W$.
	\begin{enumerate}[label=\textup{(\arabic*)}]
		\item
			$\Phi_z$ and $\Psi_z$ are anti-isomorphisms under the strong order.
		\item
			$l(\Phi_z(x)) = l(z)-l(x)$ for any $x\in[e,z]_L$ and
			$l(\Psi_z(y)) = l(z)-l(y)$ for any $y\in[e,z]_R$.
		\item
			$\Phi_z$ and $\Psi_z$ send strong meets to strong joins.
			Namely, 
			\begin{enumerate}[label=\textup{(}\alph*\textup{)}]
				\item
					for $x, y\in [e, z]_L$ such that $x\wedge y$ exists and $x\wedge y \in [e,z]_L$,
					we have $\Phi_z(x\wedge y)=\Phi_z(x)\vee\Phi_z(y)$.
				\item
					for $x, y\in [e, z]_R$ such that $x\wedge y$ exists and $x\wedge y \in [e,z]_R$,
					we have $\Psi_z(x\wedge y)=\Psi_z(x)\vee\Psi_z(y)$.
			\end{enumerate}
			(Note that the meets and joins are not taken in $[e,z]_L$ or $[e,z]_R$ but in $W$.)
	\end{enumerate}
\end{prop}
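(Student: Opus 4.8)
The plan is to dispatch the structural facts about $\Phi_z$ and $\Psi_z$ first, and then the meet/join statement.

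First I would check that $\Phi_z$ and $\Psi_z$ are well defined and mutually inverse. If $x\le_L z$ then $l(zx^{-1})+l(x)=l(z)$, so $z=(zx^{-1})\,x$ is a reduced product; this is exactly the assertion $zx^{-1}\le_R z$, so $\Phi_z$ does map $[e,z]_L$ into $[e,z]_R$, and by the symmetric computation $\Psi_z$ maps $[e,z]_R$ into $[e,z]_L$. Since $\Psi_z(\Phi_z(x))=(zx^{-1})^{-1}z=x$ and $\Phi_z(\Psi_z(y))=z(y^{-1}z)^{-1}=y$, the two maps are inverse bijections. Part (2) then drops out of the same reduced factorization: $l(\Phi_z(x))=l(z)-l(x)$ for $x\in[e,z]_L$, and likewise $l(\Psi_z(y))=l(z)-l(y)$.

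For part (1) I would apply Corollary \ref{theo:ux=vy}. Let $x,x'\in[e,z]_L$ and set $u=\Phi_z(x)$, $v=\Phi_z(x')$; then $\redprod{u}{x}$ and $\redprod{v}{x'}$ are reduced and $ux=z=vx'$, so the corollary gives $u\ge v\iff x\le x'$, that is, $\Phi_z(x)\ge\Phi_z(x')\iff x\le x'$. Hence $\Phi_z$ is an order anti-isomorphism for the strong order, and being its inverse, $\Psi_z$ is one too.

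For part (3), take $x,y\in[e,z]_L$ with $m:=x\wedge y$ existing in $W$ and lying in $[e,z]_L$. Because $m$ is already the $W$-meet and it belongs to the subset $[e,z]_L$, it is also the meet of $\{x,y\}$ computed inside the poset $[e,z]_L$. By (1), $\Phi_z$ is an anti-isomorphism between the posets $[e,z]_L$ and $[e,z]_R$, so it carries this meet to the join of $\{\Phi_z(x),\Phi_z(y)\}$ computed inside $[e,z]_R$; in particular $\Phi_z(m)\ge\Phi_z(x),\Phi_z(y)$ in $W$. It remains to promote ``least upper bound inside $[e,z]_R$'' to ``least upper bound in $W$'': one must show that every $u\in W$ with $u\ge\Phi_z(x)$ and $u\ge\Phi_z(y)$ satisfies $u\ge\Phi_z(m)$. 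I expect this to be the main obstacle, because $u$ need not lie in $[e,z]_R$ and so $\Psi_z$ cannot be applied to it directly. The route I would take is to bring in the right Demazure actions of Lemma \ref{theo:b_i}: from $u\ge\Phi_z(x)$ and the reduced product $z=\Phi_z(x)\,x$ one obtains $\phi^R_x(u)=u*x\ge\Phi_z(x)*x=z$, and likewise $u*y\ge z$; the plan is then to combine these with $m=x\wedge y$, descend a saturated chain from $\Phi_z(m)$ down to $\Phi_z(x)$ inside $[e,z]_R$ (available via the Chain Property of lower weak intervals, Section \ref{sect:PropOrder::CP}), and use the Lifting Property to keep $u$ above each successive step. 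Equivalently, it would suffice to exhibit an order-preserving retraction $r\colon W\to[e,z]_R$ with $r(w)\le w$ and $r$ the identity on $[e,z]_R$, and apply it to $u$. Once $u\ge\Phi_z(m)$ is shown for all such $u$, we conclude $\Phi_z(m)=\Phi_z(x)\vee\Phi_z(y)$ in $W$, which is (3)(a); part (3)(b) is the same argument with $\Psi_z$ in place of $\Phi_z$.
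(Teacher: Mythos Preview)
Your treatment of (1) and (2) is correct and matches the paper's: (1) is exactly Corollary~\ref{theo:ux=vy}, and (2) is the length identity coming from the reduced factorization $z=(zx^{-1})x$.

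For (3), you have correctly isolated the only nontrivial point---promoting the join inside $[e,z]_R$ to a join in $W$---but you have not actually closed it. Your first proposed route (pushing $u*x\ge z$ and $u*y\ge z$ together via a chain and the Lifting Property) is left as a sketch, and it is not clear it can be made to work: having $u*x\ge z$ and $u*y\ge z$ gives no obvious handle on $u*m$ or on $u$ versus $\Phi_z(m)$. Your second proposal, namely an order-preserving retraction $r\colon W\to[e,z]_R$ with $r(w)\le w$, is exactly what is needed, and it is already available in the paper: Proposition~\ref{theo:SL_join} supplies
\[
r(w)\;=\;z\RSmeet w\;=\;\max_{\le}\{v\in W\mid z\ge_R v\le w\}.
\]
This $r$ is the identity on $[e,z]_R$, satisfies $r(w)\le w$, and is order-preserving (if $w_1\le w_2$ then $r(w_1)\in\{v\mid z\ge_R v\le w_2\}$, hence $r(w_1)\le r(w_2)$). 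The paper's proof of (3)(a) is precisely your retraction argument spelled out: given $u\ge\Phi_z(x),\Phi_z(y)$, set $w'=z\RSmeet u\in[e,z]_R$; then $\Phi_z(x),\Phi_z(y)\le w'$, so $x,y\ge\Psi_z(w')$, whence $x\wedge y\ge\Psi_z(w')$ and $\Phi_z(x\wedge y)\le w'\le u$. So your plan is right, but you should invoke Proposition~\ref{theo:SL_join} rather than leave the existence of the retraction open.
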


\begin{proof}
	(1) is done in Corollary \ref{theo:ux=vy}, and (2) is obvious.
	
	\noindent{(3)}
	We only prove ($a$) since ($b$) is shown similarly.
	
	Let $x, y, x\wedge y\in[e, z]_L$.
	From (1) it follows that $\Phi_z(x\wedge y)\ge \Phi_z(x),\Phi_z(y)$.
	To show the minimality of $\Phi_z(x\wedge y)$,
	let us take arbitrary $w\in W$ such that $w\ge \Phi_z(x), \Phi_z(y)$.
	From Proposition \ref{theo:SL_join}, we can let $w'=z\RSmeet w$.
	Since $\Phi_z(x),\Phi_z(y)\in [e,z]_R\cap [e,w]$,
	we have $\Phi_z(x),\Phi_z(y)\le w'$.
	Since $w'\le_R z$, applying $\Psi_z$ ($=\Phi_z^{-1}$),
	we have $x,y\ge \Psi_z(w')$.
	Hence $x\wedge y \ge \Psi_z(w')$.
	Applying $\Phi_z$, we have $\Phi_z(x\wedge y) \le w'$,
	and hence $\Phi_z(x\wedge y)\le w$.
	Therefore $\Phi_z(x\wedge y)$ is the join of $\{\Phi_z(x), \Phi_z(y)\}$.
\end{proof}

\begin{rema}
	It seems to be true that $\Phi_z$ and $\Psi_z$ send strong joins to strong meets.
	Its proof would require that there be the strong-minimum element of
	$\{z\mid x\le z\le_L y\}$ and
	the strong-maximum of $\{z\mid x\le_L z\le y\}$
	for any $x,y \in W$, 
	analogous to Proposition \ref{theo:SL_join}.
\end{rema}

\subsection{Chain Property for lower weak intervals}\label{sect:PropOrder::CP}

In this section we prove the Chain Property for the lower weak intervals
$[e,u]_L$ and $[e,u]_R$ for arbitrary Coxeter group $W$ and its element $u\in W$.
\Todo{where it is needed?}
This is similar to that for the generalized quotients,
in that 
$[e,u]_L = \{x\mid x\le_L u\}$
whereas
$W/\{u\} \simeq \{x\mid x\ge_L u\}$.
Besides
it is shown in \cite[Corollary 4.5]{MR946427} that 
the class of right generalized quotients and lower left intervals coincide for finite $W$.
When $W$ is infinite, however, these do not,
as we give a counterexample below.
Beforehand we recall
\cite[Theorem 4.10]{MR946427}:
for any Coxeter group $W$,
the left generalized quotients and the right generalized quotients are in bijection by
$U\mapsto W/U$ and $V\backslash W \mapsfrom V$,
and
a subset $U\subset W$ is a right generalized quotient if and only if $U=W/(U\backslash W)$.

\begin{exam}
Let $W=\aSn=\langle s_0,s_1,\dots,s_{k}\rangle$.
Let $w_0$ be the longest element of $S_{k+1}=\langle s_1,\dots,s_{k}\rangle$.
From the following claim
we have
$s_0w_0\in \aSn/(S_{k+1}\backslash \aSn)$,
and thereby $S_{k+1}=[e,w_0]_L$ is not a right generalized quotient of $\aSn$.
\end{exam}
\noindent\textit{Claim.}
For any $z\in \aSn$, the product
$\redprod{w_0}{z}$ is reduced if and only if $\redprod{s_0w_0}{z}$ is reduced.

\begin{proof}[Proof of Claim]
	The ``if'' direction is clear.
	Toward the ``only if'' direction,
	assume $\redprod{w_0}{z}$ is reduced, that is, $\redprod{z^{-1}}{w_0}$ is reduced.
	Since $z^{-1}w_0\ge_L w_0$,
	we have $\mathrm{RD}(z^{-1}w_0)\supset \mathrm{RD}(w_0)$.
	Hence, since the first row of $\mathrm{RD}(w_0)$ is $\{1,\dots,k\}$ and the rows of a $k$-code are proper subsets of $\{0,1,\dots,k\}$,
	the first row of $\mathrm{RD}(z^{-1}w_0)$ is also $\{1,\dots,k\}$.
	Thus, inserting $s_0$ into $\mathrm{RD}(z^{-1}w_0)$ from the bottom and justifying it to the bottom with maximizing moves,
	we successfully obtain $\mathrm{RD}(z^{-1}w_0s_0)$, the $i$-th column of which is
	\begin{itemize}
		\item
			the $k$-th column of $\mathrm{RD}(z^{-1}w_0)$ with an $s_0$ added, when $i=0$,
		\item
			the $i$-th column of $\mathrm{RD}(z^{-1}w_0)$ when $i=1,\dots,k-1$,
		\item
			empty when $i=k$.
	\end{itemize}
	(See Figure \ref{fig:RD_justify})
	In particular $\redprod{z^{-1}w_0}{s_0}$ is reduced.
	Combining this with that $\redprod{z^{-1}}{w_0}$ is reduced, 
	we have $\redprodd{z^{-1}}{w_0}{s_0}$ is reduced,
	and hence so is $\redprodd{s_0}{w_0}{z}$,
	as desired.
	
	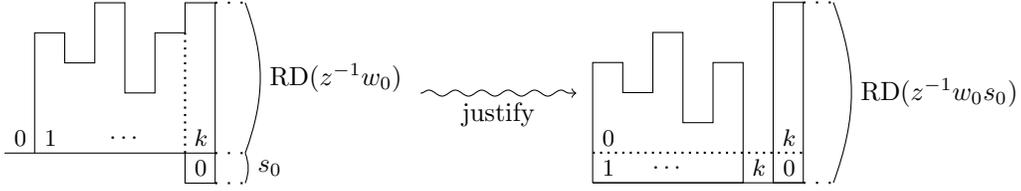
\begin{figure}
		\centering
		\begin{tikzpicture}
			\node (before) at(0,0) {
				\begin{tikzpicture}[scale=0.4]
					\draw (0,0) -- (7,0);
					\draw (1,0) |- (2,4) |- (3,3) |- (4,5) |- (5,2) 
						|- (6,4) |- (7,5) |- (6,-1) -- (6,0);
					\draw [thick, dotted] (6,0) -- (6,4);
					\tikznum{1}{1}{0};
					\tikznum{1}{2}{1};
					\tikznum{1}{7}{$k$};
					\tikznum{1}{4.5}{$\dots$};
					\tikznum{0}{7}{0};
					
					\draw [thick, loosely dotted] (7,0) -- (8,0);
					\draw [thick, loosely dotted] (7,5) -- (8,5);
					\draw [thick, loosely dotted] (7,-1) -- (8,-1);
					\draw (8,0) to [out=70, in=-70] node[right]{$\RD(z^{-1}w_0)$} (8,5);
					\draw (8,-1) to [out=70, in=-70] node[right]{$s_0$} (8,0);
				\end{tikzpicture}
			};
			\node (after) at(8,0) {
				\begin{tikzpicture}[scale=0.4]
					\draw [dotted, thick] (0,0) -- (7,0);
					\draw (0,-1) -- (7,-1);
					\draw (0,-1) |- (1,3) |- (2,2) |- (3,4) |- (4,1) |- (5,3) |- (0,-1);
					\draw (6,-1) rectangle (7,5);
					\tikznum{1}{1}{0};
					\tikznum{1}{7}{$k$};
					\tikznum{0}{3}{$\dots$};
					\tikznum{0}{1}{1};
					\tikznum{0}{6}{$k$};
					\tikznum{0}{7}{0};					

					\draw [thick, loosely dotted] (7,5) -- (8,5);
					\draw [thick, loosely dotted] (7,-1) -- (8,-1);
					\draw (8,-1) to [out=70, in=-70] 
						node[right]{$\RD(z^{-1}w_0s_0)$} (8,5);
				\end{tikzpicture}
			};
			\draw [->, decorate,decoration={snake,amplitude=.4mm}] 
				(before) -- node[below] {justify} (after);
		\end{tikzpicture}
		\caption{Inserting $s_0$ into $\RD(z^{-1}w_0)$ and justify it to obtain a $k$-code for $z^{-1}w_0s_0$}
		\label{fig:RD_justify}
	\end{figure}
\end{proof}

The proof of the following proposition is parallel to
that of \cite[Theorem 3.4]{MR946427}.
Beforehand we recall that, for $x,y\in W$ with $x\ge y$ and
any fixed reduced expression $x=s_{1}\dots s_{m}$,
there exists $1\le j_1<j_2<\dots<j_l\le m$ such that
$x=y^{(0)}\gecov y^{(1)}\gecov\dots\gecov y^{(l)}=y$ where
\[
	y^{(a)} = s_{1}\dots \widehat{s_{j_1}} \dots \widehat{s_{j_a}} \dots s_{m}.
\]
See, for example, \cite[Section 3]{MR946427} or \cite{MR644668} for the detail.

\begin{prop}\label{theo:ChainProp_weakinterval}
	Let $u,x,y\in W$ with
	$xu,yu\le_L u$ and $xu\le yu$.
	Note that
	$xu\le yu \iff x^{-1}\ge y^{-1} \iff x\ge y$ for $xu,yu\le_L u$.
	Fix a reduced expression for $x=s_{1}\dots s_{m}$
	and take $y^{(0)},\dots,y^{(l)}$ as right above.
	Then $y^{(a)}u\le_L u$ for any $a$.
\end{prop}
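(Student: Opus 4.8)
The plan is first to turn the statement into a pure statement about the left weak order, and then to induct. For any $g\in W$ the length characterizations of Section~\ref{sect:Prel::Coxeter::Order} give
\[
	gu\le_L u \iff l(g)+l(gu)=l(u) \iff g\le_L u^{-1},
\]
so, setting $w:=u^{-1}$, the hypotheses become $x\le_L w$ and $y\le_L w$, the conclusion becomes $y^{(a)}\le_L w$ for every $a$, and the hypothesis $xu\le yu$ is, as the statement notes, exactly $x\ge y$ --- which is what makes the chain $y^{(0)},\dots,y^{(l)}$ available in the first place. It therefore suffices to prove: if $x,y\le_L w$, $x\ge y$, and $x=y^{(0)}\gecov\dots\gecov y^{(l)}=y$ is the chain obtained from the fixed reduced word $x=s_1\cdots s_m$, then $y^{(a)}\le_L w$ for all $a$. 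I would prove this by induction on $l(x)$, the case $x=e$ being trivial.

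For the inductive step I peel off the \emph{last} letter $s:=s_m$. Since $x\le_L w$, some reduced word of $w$ ends in $s_1\cdots s_m$, so $s\in D_R(w)$ and $w':=ws$ has $l(w')=l(w)-1$; writing $x''$ for the element with reduced word $s_1\cdots s_{m-1}$ (so $x=x''s$), reducedness of $w=(wx^{-1})x=\bigl((wx^{-1})x''\bigr)s$ shows $(wx^{-1})x''=w'$ with $\redprod{wx^{-1}}{x''}$ reduced, i.e. $x''\le_L w'$. Now I distinguish whether the last position $m$ of the word is deleted along the chain. If it is not, then every $y^{(a)}=z^{(a)}s$ where $z^{(a)}$ is the element with word $s_1\cdots\widehat{s_{j_1}}\cdots\widehat{s_{j_a}}\cdots s_{m-1}$, the chain $x''=z^{(0)}\gecov\dots\gecov z^{(l)}$ is the one obtained from the reduced word of $x''$, and $z^{(l)}=ys$ with $s\in D_R(y)$. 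If it is deleted, then (as $j_1<\dots<j_l$) it is deleted last, so $y^{(a)}=z^{(a)}s$ for $a<l$, $y^{(l)}=y=z^{(l-1)}$, and $x''=z^{(0)}\gecov\dots\gecov z^{(l-1)}=y$ is the chain for $x''$, now with $s\notin D_R(y)$. In either case, granting the induction hypothesis applied to $x''$, $w'$ and the relevant endpoint (checked below), I get $z^{(a)}\le_L w'$ for all $a$; since $z^{(a)}s>z^{(a)}$ and $w=w's$, a reduced word of $w'$ having $z^{(a)}$ as a suffix, extended by $s$, is a reduced word of $w$ having $z^{(a)}s$ as a suffix, so $y^{(a)}=z^{(a)}s\le_L w$, and the single $y^{(a)}$ not of this form is an endpoint, already $\le_L w$ by hypothesis.

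What remains is to verify the induction hypothesis for the new triple, i.e. that its reduced endpoint is $\le_L w'=ws$: in the first case that $ys\le_L ws$, in the second that $y\le_L ws$. The first is immediate, since from $w=(wy^{-1})y$ reduced, $s\in D_R(w)$ and $s\in D_R(y)$ one gets $ws=(wy^{-1})(ys)$ with $l(wy^{-1})+l(ys)=l(w)-1=l(ws)$. The second is the one place the Strong Exchange Property is needed: take a reduced word of $w$ of the form $\redprod{wy^{-1}}{y}$; since $s\in D_R(w)$, right multiplication by $s$ deletes exactly one of its letters, and if that letter lay in the $\redex{y}$-block, deleting it there would yield a reduced word for $ys$ of length $l(y)-1$, contradicting $s\notin D_R(y)$; hence the deleted letter lies in the $\redex{wy^{-1}}$-block, and what is left is a reduced factorization of $ws$ ending in $\redex{y}$, so $y\le_L ws$. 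I expect this last point --- using Strong Exchange to locate the deleted letter and to exclude the $y$-block --- to be the only genuine obstacle; everything else is bookkeeping with the length identities for the weak order. The overall argument parallels that of \cite[Theorem~3.4]{MR946427} for generalized quotients, with Strong Exchange taking over the role played there by the closure properties of $W/V$, which are unavailable for lower weak intervals.
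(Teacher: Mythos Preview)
Your proof is correct, but it follows a genuinely different route from the paper's. The paper argues by contradiction in the spirit of \cite[Theorem~3.4]{MR946427}: it picks the largest $a$ with $y^{(a)}u\not\le_L u$, then within the reduced word of $y^{(a)}$ locates the leftmost position $p$ at which the suffix first fails to satisfy $(\cdot)u\le_L u$; comparing with the corresponding suffix of $y^{(a+1)}$ and using the Lifting Property together with a length count forces an equality $s_p z=\widehat z$ that contradicts reducedness of the word for $y^{(a)}$. Your argument instead translates to the equivalent statement about the lower left interval $[e,u^{-1}]_L$ and runs a clean induction on $l(x)$, peeling off the rightmost letter $s_m$ and reducing to $x''=xs_m$ and $w'=u^{-1}s_m$; the only nontrivial step is showing $y\le_L w'$ when $s_m\notin D_R(y)$, which you handle directly with the Strong Exchange Property. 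What your approach buys is a shorter, more transparent argument that avoids the ``first failure position'' bookkeeping; what the paper's approach buys is a proof that mirrors the Bj\"orner--Wachs argument almost verbatim, making the analogy with the generalized-quotient Chain Property explicit. One minor quibble: where you say ``would yield a reduced word for $ys$ of length $l(y)-1$'', a priori you only get an expression of that length, but since $l(ys)\in\{l(y)\pm1\}$ this expression is in fact reduced, so the conclusion stands.
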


\begin{proof}
	Suppose to the contrary that there exists $a$ such that $y^{(a)}u\not\le_L u$.
	Since $y^{(l)}u=yu\le_L u$,
	we can take such $a$ that $y^{(a)}u\not\le_L u$ and $y^{(a+1)}u\le_L u$.
	
	Since $xu\le_L u$, we have
	$s_{j_a+1}\dots s_{m}u\le_L u$.
	Hence there exists $p<j_a$ such that
	\begin{equation}\label{eq:sipz}
		s_{p}zu \not\le_L u \quad\text{and}\quad zu \le_L u,
	\end{equation}
	where we put
	\[
		z = s_{p+1}\dots \widehat{s_{j_a}} \dots s_{j_{a+1}} \dots s_{m},
	\]
	where there may be more indices omitted between
	$s_{p+1}$ and $\widehat{s_{j_1}}$,
	according to the omissions in
	$y^{(a)} = s_{1}\dots \widehat{s_{j_1}} \dots \widehat{s_{j_a}} \dots s_{m}$.
	Since $y^{(a+1)}u\le_L u$, we have
	\begin{equation}\label{eq:sipzhat}
		s_{p}\widehat{z}u \le_L u \quad\text{and}\quad \widehat{z}u \le_L u,
	\end{equation}
	where we put
	\[
		\widehat{z} = s_{p+1}\dots \widehat{s_{j_a}} \dots \widehat{s_{j_{a+1}}} \dots s_{m}.
	\]
	
	We have
	$zu\lecov s_{p}zu$ by (\ref{eq:sipz})
	and
	$\widehat{z}u\gecov s_{p}\widehat{z}u$ by (\ref{eq:sipzhat}).
	Besides, since $y^{(a)}\gecov y^{(a+1)}$ it follows $z\gecov\widehat{z}$, 
	and thereby $zu\lecov\widehat{z}u$.
	Hence we have $s_{p}zu=\widehat{z}u$ by the Lifting Property and length arguments.
	Therefore $s_{p}z=\widehat{z}\lecov z$,
	which contradicts the fact that $s_{p}z$ is a consecutive subword of a reduced expression for $y^{(a)}$.
\end{proof}

As a corollary, we have the Chain Property for weak lower intervals:
\begin{theo}\label{theo:CP_weakorderideal}
	For any $u\in W$,
	the principal order ideal 
	$[e,u]_L$ (resp.\,$[e,u]_R$)
	under the left (resp.\,right) weak order has the Chain Property.
\end{theo}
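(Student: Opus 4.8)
The plan is to reduce Theorem \ref{theo:CP_weakorderideal} to Proposition \ref{theo:ChainProp_weakinterval}; once that proposition is available, only a short length computation remains. I will prove the statement for $[e,u]_L$ and then transport it to $[e,u]_R$ via the map $w\mapsto w^{-1}$, which sends $[e,u]_R$ bijectively onto $[e,u^{-1}]_L$, interchanges $\le_L$ and $\le_R$, and preserves the strong order together with its covering relations.

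So fix $u$ and let $v\le w$ be elements of $[e,u]_L$; I must produce a saturated strong chain from $v$ to $w$ all of whose terms lie in $[e,u]_L$. Set $x=vu^{-1}$ and $y=wu^{-1}$, so that $xu=v\le_L u$, $yu=w\le_L u$, and $xu\le yu$; by the note in Proposition \ref{theo:ChainProp_weakinterval} this forces $x\ge y$ in the strong order. Fix a reduced expression $x=s_1\cdots s_m$ and let $x=y^{(0)}\gecov y^{(1)}\gecov\cdots\gecov y^{(l)}=y$ be the associated chain of strong covers furnished by the deletion procedure recalled just before Proposition \ref{theo:ChainProp_weakinterval}. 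That proposition tells us $y^{(a)}u\le_L u$, i.e.\ $y^{(a)}u\in[e,u]_L$, for every $a$, and clearly $y^{(0)}u=v$ and $y^{(l)}u=w$.

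It then remains to check that $v=y^{(0)}u\lecov y^{(1)}u\lecov\cdots\lecov y^{(l)}u=w$ is a chain of strong covers. Writing $y^{(a)}=t\,y^{(a+1)}$ with $t$ a reflection (since $y^{(a+1)}\lecov y^{(a)}$) and multiplying on the right by $u$ gives $y^{(a)}u=t\,(y^{(a+1)}u)$; because $y^{(a)}u,y^{(a+1)}u\le_L u$ we have $l(y^{(b)}u)=l(u)-l(y^{(b)})$ for $b\in\{a,a+1\}$, hence $l(y^{(a+1)}u)=l(y^{(a)}u)+1$, so the two elements differ by a reflection and by one in length and therefore $y^{(a)}u\lecov y^{(a+1)}u$. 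This gives exactly the Chain Property for $[e,u]_L$ (and the chain has the expected length $l=l(x)-l(y)=l(w)-l(v)$). I expect the only subtlety to be this bookkeeping — translating membership in $[e,u]_L$ into the ``$xu\le yu\le_L u$'' shape and verifying that right multiplication by $u$ carries strong covers among elements $\le_L u$ to strong covers — while the substantive content is already contained in Proposition \ref{theo:ChainProp_weakinterval}.
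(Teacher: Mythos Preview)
Your proposal is correct and follows essentially the same route as the paper: both reduce to Proposition \ref{theo:ChainProp_weakinterval} by writing elements of $[e,u]_L$ as $xu$ with $xu\le_L u$, and both handle $[e,u]_R$ by the obvious symmetry. The paper is terser---it records the parametrization $\{x\mid xu\le_L u\}=[e,u^{-1}]_L$ and leaves implicit the length computation showing that the covers $y^{(a)}\gecov y^{(a+1)}$ translate to covers $y^{(a)}u\lecov y^{(a+1)}u$---whereas you spell this out explicitly, which is a welcome clarification.
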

\begin{proof}
	The statement for left lower intervals follows from
	Proposition \ref{theo:ChainProp_weakinterval} and that
	$\{x\in W\mid xu\le_L u\} = [e,u^{-1}]_L$
	for $u\in W$,
	which follows from
	$xu\le_L u \iff x^{-1}\le_R u \iff x\le_L u^{-1}$.
	The statement for right intervals is proved parallely.
\end{proof}

\section{Properties on the weak strips}\label{sect:PropWS}

Hereafter, throughout this paper,
we restrict our attention to $\aSn$ rather than general Coxeter groups
and let $W=\aSn$ and $\Wo=\aG$.
In Section \ref{sect:Prel::AffSym}
we put $I=\Z_{k+1}=\{0,1,\dots,k\}$ and
let $d_A$ denote the cyclically decreasing element corresponding to $A\subsetneq I$.

In this section we prove some properties on weak strips.
First we define for any $u\in W$,
\Todo{$D=\dots$}
\begin{align*}
	Z_{u,+} 
		&= \{v\in W \mid v=d_{A}u\ge_L u \text{ for $\exists A\subsetneq I$}\}, \\
	Z'_{u,+} 
		&= \{A\subsetneq I \mid d_{A}u\ge_L u \}
		 = \{A\subsetneq I \mid d_{A}u\in Z_{u,+} \}, \\
	Z_{u,-} 
		&= \{v\in W \mid v=d_{A}^{-1}u\le_L u \text{ for $\exists A\subsetneq I$}\}, \\
	Z'_{u,-} 
		&= \{A\subsetneq I \mid d_{A}^{-1}u\le_L u \}
		 = \{A\subsetneq I \mid d_{A}^{-1}u\in Z_{u,-} \}.
\end{align*}
It is an immediate observation from the Subword Property that
\begin{itemize}
	\item
		The map $(Z'_{u,+}, \subset) \lra (Z_{u,+}, \le)\,;\, A\mapsto d_Au$
		is an isomorphism of posets.
	\item
		The map $(Z'_{u,-}, \subset) \lra (Z_{u,-}, \le)\,;\, A\mapsto d_A^{-1}u$
		is an anti-isomorphism of posets.
\end{itemize}

Since if $u\in\Wo$ and $v\le_L u$ then $v\in\Wo$, for $u\in\Wo$ we have
\[
	Z_{u,-} = \{v \mid \text{$u/v$ is a weak strip}\}.
\]
On the other hand, the set $Z_{u,+}$ does not coincide with the set of $v$ such that 
$v/u$ is a weak strip. More precisely, for $u\in\Wo$ we have by definition
\begin{align*}
	\text{$v/u$ is a weak strip}
	&\iff v\in Z_{u,+} \text{ and } v\in\Wo.
	\intertext{
		Recalling that $v\in\Wo \iff vw_0^J \ge_L w_0^J$ where
		$J=\{1,\dots,k\}$ and $w_0^J$ is the longest element of $W_J=S_{k+1}$,
		by Lemma \ref{theo:weakorder_xyz} we have
	}
	\text{$v/u$ is a weak strip}
	&\iff vw_0^J \in Z_{uw_0^J,+}  \\
	&\iff v=d_A u \text{ with } A\in Z'_{uw_0^J,+}.
\end{align*}
In other words, defining
\begin{align*}
	Z_{u,+}^\circ 
		&= \{v \mid \text{$v/u$ is a weak strip}\}, \\
	Z_{u,+}^{'\circ}
		&= \{A\subsetneq I \mid \text{$d_{A}u/u$ is a weak strip} \}
		 = \{A\subsetneq I \mid d_{A}u\in Z_{u,+}^\circ \},
\end{align*}
we have
\begin{align*}
	Z_{u,+}^\circ &\simeq Z_{uw_0^J,+}\ ;\ v \mapsto v w_0^J, \\
	Z_{u,+}^{'\circ} &= Z'_{uw_0^J,+}.
\end{align*}

\begin{exam}
Figure \ref{fig:ZandZ'} illustrates the same example as Example \ref{exam:ws}.
\end{exam}
\begin{figure}
	\centering
	\begin{tikzpicture}
    	\node (Z) at (0,0) {
        	\begin{tikzpicture}[scale=0.7]
        		\node (la) at (0,0) {$\la$};
        		\node (1) at (-2,2) {$s_1\la$};
        		\node (3) at (2,2)  {$s_3\la$};
        		\node (31) at (0,4) {$s_3s_1\la$};
        		\node (21) at (-4,4)  {$s_2s_1\la$};
        		\node (321) at (-2,6) {$s_3s_2s_1\la$};
        		
        		\draw (la) -- node[left] {$1$} (1);
        		\draw (la) -- node[left] {$3$} (3);
        		\draw (1) -- node[left] {$3$} (31);
        		\draw (3) -- node[left] {$1$} (31);
        		\draw (1) -- node[left] {$2$} (21);
        		\draw (21) -- node[left] {$3$} (321);
        		\draw [thick, dotted] (31) -- (321);
        	\end{tikzpicture}
    	};
		\node [below of=Z, yshift=-17mm] {$Z^\circ_{u,+}$};
    	\node (Z') at (6,0) {
        	\begin{tikzpicture}[scale=0.7]
        		\node (la) at (0,0) {$\emptyset$};
        		\node (1) at (-2,2) {$\{1\}$};
        		\node (3) at (2,2)  {$\{3\}$};
        		\node (31) at (0,4) {$\{1,3\}$};
        		\node (21) at (-4,4)  {$\{1,2\}$};
        		\node (321) at (-2,6) {$\{1,2,3\}$};
        		
        		\draw (la) -- node[left] {$1$} (1);
        		\draw (la) -- node[left] {$3$} (3);
        		\draw (1) -- node[left] {$3$} (31);
        		\draw (3) -- node[left] {$1$} (31);
        		\draw (1) -- node[left] {$2$} (21);
        		\draw (21) -- node[left] {$3$} (321);
        		\draw [thick, dotted] (31) -- (321);
        	\end{tikzpicture}
    	};
		\node [below of=Z', yshift=-17mm] {$Z^{'\circ}_{u,+}$};
	\end{tikzpicture}
	\caption{
		The posets $Z_{u,+}^\circ$ ($\simeq Z_{u w_0^J,+}$) and 
		$Z^{'\circ}_{u,+}$ ($= Z'_{u w_0^J,+}$) 
		for $u=w_{\la}$ where $k=3$ and $\la=(3,2,1)\in\Pk[3]$ 
		(and $w_0^J$ is the longest element of $S_4$).
		Left weak covers are represented as solid lines,
		and strong covers are solid or dotted lines.
		A solid edge between $v$ and $w$ is labelled with $i$ if
		$v=s_i w$.
	}
	\label{fig:ZandZ'}
\end{figure}
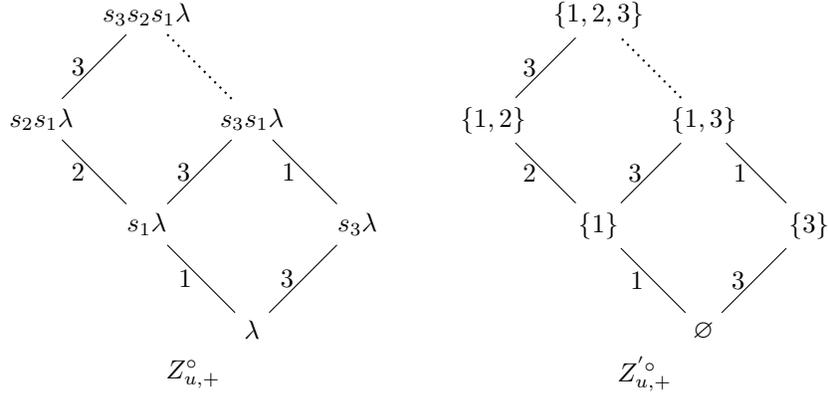

From the example above,
we would expect these properties:
\begin{enumerate}[label=\textup{(\arabic*)}]
	\item
		$Z'_{u,\pm}$ is closed under intersection and union.
	\item
		$Z'_{u,\pm}$ has the maximum element. 
	\item
		$Z_{u,\pm}$ and $Z'_{u,\pm}$ have the Chain Property. 
		(See Section \ref{sect:PropWS::CP} for the detail)
\end{enumerate}
(1), (2), (3) are proved in 
Section
\ref{sect:PropWS::Structure},
\ref{sect:PropWS::Forbidden},
\ref{sect:PropWS::CP},
respectively.

\subsection{Intersection and union} \label{sect:PropWS::Structure}

In this section we prove the following proposition
as the compilation of Lemma \ref{theo:dA_meet}, \ref{theo:dA_join} and \ref{theo:dA_cup}.
\begin{prop}\label{theo:Zu_props}
	For $u\in W$, we have
	\begin{enumerate}[label=\textup{(\arabic*)}]
		\item
			$A,B\in Z'_{u,\pm}$ and $A\cup B\neq I$ $\implies A\cup B\in Z'_{u,\pm}$.
		\item
			$A,B\in Z'_{u,\pm} \implies A\cap B\in Z'_{u,\pm}$.
		\item
			$A,B\in Z'_{u,+} \implies d_{A\cap B}u=(d_Au)\wedge(d_Bu)$.
		\item
			$A,B\in Z'_{u,-} \implies d_{A\cap B}^{-1}u=(d_A^{-1}u)\vee(d_B^{-1}u)$.
	\end{enumerate}
\end{prop}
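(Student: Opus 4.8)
The plan is to deduce Proposition~\ref{theo:Zu_props} from three lemmas matching the statement: Lemma~\ref{theo:dA_meet} giving the ``$+$'' assertions (part (2) for $Z'_{u,+}$ together with part (3)), Lemma~\ref{theo:dA_join} the ``$-$'' assertions (part (2) for $Z'_{u,-}$ together with part (4)), and Lemma~\ref{theo:dA_cup} the union assertion (1). Each is reduced to two ingredients: a statement about cyclically decreasing elements alone (the case $u=e$), and an induction that peels one generator off $u$, controlled by the Demazure--action toolkit of Section~\ref{sect:PropOrder::phipsi}. The cyclically decreasing input is: the map $A\mapsto d_A$ is injective on $\{A:A\subsetneq I\}$ (the support of $d_A$ equals $A$, since in affine type $A$ the braid and commutation relations preserve the support), and every subword of a cyclically decreasing reduced word is again cyclically decreasing; hence, by the Subword Property, $d_A\le d_B\iff A\subseteq B$, and $d_A\wedge d_B=d_{A\cap B}$ for all $A,B\subsetneq I$, while $d_{A\cup B}$ lies above both $d_A$ and $d_B$. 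This settles Lemmas~\ref{theo:dA_meet} and~\ref{theo:dA_join} when $u=e$.

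For Lemma~\ref{theo:dA_meet}: since $d_{A\cap B}\le d_A$ and $\redprod{d_A}{u}$ is reduced, the Subword Property produces a subword of $\redprod{d_A}{u}$ (lying inside $\redex{d_A}$, followed by all of $\redex{u}$) with product $d_{A\cap B}u$, so $d_{A\cap B}u\le d_Au$, and symmetrically $d_{A\cap B}u\le d_Bu$; moreover $u\le d_Au,d_Bu$, so $u\le d_Au\wedge d_Bu$. The content is the reverse inequality for an arbitrary common lower bound, and I would prove part (2) for $+$ and part (3) together by induction on $l(u)$, the base case being the paragraph above. For $l(u)>0$ choose $s\in D_R(u)$; a length count gives $A,B\in Z'_{us,+}$, so the induction hypothesis applies to $us$, and one transports back by right multiplication by $s$ using the right-handed form of Lemma~\ref{theo:b_i}(5) (the action $\phi^R_s$ preserves strong meets), noting $\phi^R_s(d_A(us))=d_Au$ and $\phi^R_s(d_B(us))=d_Bu$. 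The step I expect to be the main obstacle is verifying $d_{A\cap B}(us)\cdot s>d_{A\cap B}(us)$ --- equivalently $A\cap B\in Z'_{u,+}$ --- so that $\phi^R_s$ acts on the meet $d_A(us)\wedge d_B(us)=d_{A\cap B}(us)$ by a genuine right multiplication and not trivially: for an arbitrary pair this can fail (for instance $s_2s_1\wedge s_2s_3=s_2$ in $S_4$, where $s_2$ is a right descent of the meet but of neither factor), so one must use the special shape of $d_A(us),d_B(us)$, either by a Lifting-Property argument or, more robustly, by reading reducedness of $\redprod{d_{A\cap B}}{u}$ off the maximizing-moves picture of Section~\ref{sect:Prel::AffSym::k-code}.

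For Lemma~\ref{theo:dA_join}: one has $A\in Z'_{u,-}\iff d_A\le_R u$, and the flip $\Psi_u\colon[e,u]_R\to[e,u]_L$ of Section~\ref{sect:PropOrder::flip} carries $d_A$ to $d_A^{-1}u$. Since $\Psi_u$ is an anti-isomorphism for the strong order that sends strong meets to strong joins (Proposition~\ref{theo:anti_isom}), the identity $d_{A\cap B}^{-1}u=(d_A^{-1}u)\vee(d_B^{-1}u)$ is simply the image of $d_{A\cap B}=d_A\wedge d_B$, provided $d_{A\cap B}\in[e,u]_R$, i.e. $A\cap B\in Z'_{u,-}$; this last point (part (2) for $Z'_{u,-}$) is proved just as in the previous paragraph with the roles of left and right exchanged.

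Finally, for Lemma~\ref{theo:dA_cup}, given $A,B\in Z'_{u,\pm}$ with $A\cup B\ne I$, pick $m\notin A\cup B$ and build the cyclically decreasing word of $d_{A\cup B}$ one letter at a time in the cyclic order $m-1,m-2,\dots$, so that the letter being adjoined can always be placed at the left end (its cyclic predecessor has not yet occurred); peeling off these leftmost letters reduces (1) to the claim that, for $C\in Z'_{u,\pm}$ and the leftmost new letter $i\in(A\cup B)\setminus C$, left multiplication by $s_i$ keeps $\redprod{d_C}{u}$ reduced, i.e. $i\notin D_L(d_Cu)$. Because $i$ lies in $A$ or in $B$, which belongs to $Z'_{u,\pm}$, the corresponding column of the $k$-code $\RD(u)$ (respectively $\RI(u)$) has the room this requires; making this precise again calls for the maximizing-moves bookkeeping of Section~\ref{sect:Prel::AffSym::k-code}, and I expect it to be of difficulty comparable to the obstacle above. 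Assembling the three lemmas then yields Proposition~\ref{theo:Zu_props}.
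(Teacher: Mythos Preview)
Your overall decomposition into three lemmas matches the paper, and your treatment of part~(4) via the flip $\Psi_u$ is exactly the paper's argument for Lemma~\ref{theo:dA_join}(2). The gap is in part~(2), the intersection closure $A\cap B\in Z'_{u,\pm}$, which is the technical heart of the proposition and on which everything else rests.

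Your induction on $l(u)$ does not close. You correctly identify the obstacle: having $A\cap B\in Z'_{us,+}$ by induction, you need $s\notin D_R(d_{A\cap B}(us))$ to transport back to $u$, and as your own $S_4$ example shows, a strong meet can acquire a right descent absent from both factors. You propose to repair this either by a Lifting-Property argument or by the maximizing-moves picture, but neither is carried out, and in fact neither seems to give a clean proof: the Lifting Property alone does not control new descents of a meet, and the $k$-code insertion of $d_{A\cap B}$ into $\RD(u)$ can in principle meet a ``not reduced'' pattern even when $d_A$ and $d_B$ do not, so this route would require a nontrivial case analysis you have not supplied. The same circularity infects your parallel treatment of $Z'_{u,-}$.

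The paper avoids this by \emph{not} inducting on $l(u)$ at all. Instead it decomposes $A$ and $B$ into connected components, observes that each connected component $C$ of $A\cap B$ satisfies $d_C\le_L d_A$ or $d_C\le_L d_B$ (so $d_Cu\ge_L u$ is inherited), and then glues the components back together using the key Lemma~\ref{theo:sc_xyz}: if $x,y$ are \emph{strongly commutative} (supports at distance $\ge 2$) and $z\le_L xz,yz$, then $z\le_L xyz$. This is the missing idea in your sketch. Once $A\cap B\in Z'_{u,+}$ is established this way, part~(3) follows in a single stroke by applying $\phi^R_u$ (not $\phi^R_s$) to the identity $d_{A\cap B}=d_A\wedge d_B$ and invoking Lemma~\ref{theo:b_i}(5); your layer-by-layer induction is unnecessary here. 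The union closure (Lemma~\ref{theo:dA_cup}) is likewise handled by first reducing to the case where $A$, $B$, $A\cup B$ are all connected (again via strong commutativity), after which a short Lifting-Property contradiction finishes it; your letter-by-letter scheme with $k$-codes is plausible but, as you say yourself, not made precise.
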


\vspace{2mm}
In this section we say $A,B\subset I$ are \textit{strongly disjoint}
if for any $i\in A$ and $j\in B$ it holds that $i-j\not\equiv 0,\pm 1$,
and $x,y\in W$ are \textit{strongly commutative}
if any Coxeter generator $s_i$ appearing in a reduced expression of $x$ and
any $s_j$ appearing in that of $y$ satisfy $i-j\not\equiv 0,\pm 1$.
The next lemma is straightforward.
\begin{lemm}\label{theo:strongly}
	Let $A,B\subsetneq I$ and $x,y\in W$.
    \begin{enumerate}[label=\textup{(\arabic*)}]
    	\item
			If $A,B$ are strongly disjoint,
			then $d_A, d_B$ are strongly commutative.
    	\item
    		For the decomposition $A=A_1\sqcup\dots\sqcup A_m$ into connected components,
			$A_1,\dots,A_m$ are pairwise strongly disjoint and
    		$d_{A_1},\dots,d_{A_m}$ are pairwise strongly commutative.
    	\item
    		For $x'\le x$ and $y'\le y$,
			if $x,y$ are strongly commutative then so are $x',y'$.
    	\item
    		If $x,y$ are strongly commutative, then $x,y$ are commutative and $l(xy)=l(x)+l(y)$.
    \end{enumerate}
\end{lemm}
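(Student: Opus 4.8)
The plan is to reduce the whole statement to the behaviour of the \emph{support} of an affine permutation. For $w\in\aSn$ let $\mathrm{supp}(w)\subseteq I$ be the set of $i$ such that $s_i$ occurs in some reduced word for $w$; since any two reduced words for $w$ are related by a sequence of braid moves, which preserve the set of letters occurring, $\mathrm{supp}(w)$ is well defined, and by construction $x,y$ are strongly commutative precisely when $\mathrm{supp}(x)$ and $\mathrm{supp}(y)$ are strongly disjoint (note that $i-j\not\equiv 0$ already forces $\mathrm{supp}(x)\cap\mathrm{supp}(y)=\emptyset$). With this dictionary in place, part (1) is immediate: the defining word $d_A=s_{i_1}\cdots s_{i_m}$ for $A=\{i_1,\dots,i_m\}$ is reduced of length $|A|$, so $\mathrm{supp}(d_A)=A$ and likewise $\mathrm{supp}(d_B)=B$; hence if $A,B$ are strongly disjoint then so are the supports of $d_A$ and $d_B$.

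For part (2) I would first note that two distinct connected components $A_p,A_q$ of a proper subset $A\subsetneq I$ are strongly disjoint: they are disjoint by definition, and if some $i\in A_p$ and $j\in A_q$ satisfied $i-j\equiv\pm 1$, then $i$ and $j$ would be adjacent on the cycle $\Z_{k+1}$, so $A_p\cup A_q$ would be a single cyclic interval, contradicting the maximality of the components. Applying part (1) to each pair $A_p,A_q$ then shows that the $d_{A_i}$ are pairwise strongly commutative. Part (3) is just the Subword Property: a reduced word for $x'$ may be chosen inside one for $x$, so $\mathrm{supp}(x')\subseteq\mathrm{supp}(x)$, and similarly $\mathrm{supp}(y')\subseteq\mathrm{supp}(y)$; subsets of a strongly disjoint pair are again strongly disjoint.

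For part (4), commutativity follows at once from the generators: each $s_i$ with $i\in\mathrm{supp}(x)$ commutes with each $s_j$ with $j\in\mathrm{supp}(y)$ by the relation $s_is_j=s_js_i$, valid since $i-j\not\equiv 0,\pm 1$, so $xy=yx$ and the concatenation $\redprod{x}{y}$ represents $xy$. To see that this word is reduced, i.e.\ $l(xy)=l(x)+l(y)$, one can either invoke the standard fact that the parabolic subgroup on a vertex set spanning no edge of the Coxeter graph splits as a direct product with additive length, applied to $\mathrm{supp}(x)$ and $\mathrm{supp}(y)$, or argue by induction on $l(y)$: writing $y=s_j y'$ reduced and using $s_j x=x s_j$, one checks via the Strong Exchange Property that $s_j$ cannot be deleted from the reduced word $\redprod{x}{y'}$ (a deletion inside $\redprod{x}{}$ would give $s_j\in D_L(x)$, hence $j\in\mathrm{supp}(x)$, and a deletion inside $\redprod{y'}{}$ would give $s_j y'=y''$ with $l(y'')=l(y')-1$), so $l(xy)=l(xy')+1=l(x)+l(y)$.

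Everything here is essentially bookkeeping once the support reformulation is set up; the only step carrying any content is the length additivity in part (4), and even that rests on a completely standard fact about direct products of Coxeter groups, so I do not anticipate a genuine obstacle.
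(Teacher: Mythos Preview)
Your argument is correct. The paper itself gives no proof of this lemma beyond the sentence ``The next lemma is straightforward,'' so your support-based treatment is a legitimate and complete elaboration of what the paper leaves to the reader; in particular your handling of the length additivity in (4) via the Exchange Property (or, equivalently, the direct-product decomposition of the parabolic on $\mathrm{supp}(x)\cup\mathrm{supp}(y)$) is exactly the kind of routine verification the paper is implicitly invoking.
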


\begin{lemm}\label{theo:sc_xyz}
	Let $x,y,z\in W$ with $x,y$ are strongly commutative.
	\Todo{Is strong commutativity necessary?}
	Then
	\begin{enumerate}[label=\textup{(\arabic*)}]
		\item
			$z\le_L xyz \iff z\le_L xz,yz$.
		\item
			$z\ge_L xyz \iff z\ge_L xz,yz$.
	\end{enumerate}
\end{lemm}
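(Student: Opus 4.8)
The plan is to translate the weak-order conditions into statements about left descent sets and then induct on $l(y)$. Throughout we use only the following consequences of strong commutativity of $x$ and $y$ (Lemma~\ref{theo:strongly}(3),(4)): $xy=yx$; $l(xy)=l(x)+l(y)$; and every Coxeter generator occurring in a reduced word for $y$ is strongly commutative with $x$ (so in particular it does not occur in any reduced word for $x$). The forward implications are then immediate from Lemma~\ref{theo:weakorder_xyz}: for (1), $z\le_L xyz$ together with $y\le_L xy$ (which holds since $l(xy)=l(x)+l(y)$) gives $z\le_L yz\le_L xyz$ by Lemma~\ref{theo:weakorder_xyz}(1), hence $z\le_L yz$; rewriting $xyz=yxz$ and applying the same lemma to the triple $(y,x,z)$ gives $z\le_L xz$. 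Part (2) is identical with Lemma~\ref{theo:weakorder_xyz}(2).

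The engine for the reverse implications is a descent-set sublemma: \emph{if $v\in W$, $s_i\in S$, and $a\in W$ with $s_i$ strongly commutative with $a$, and if either $l(av)=l(a)+l(v)$ or $l(av)=l(v)-l(a)$, then $s_i\in D_L(av)$ if and only if $s_i\in D_L(v)$.} To prove it, choose a reduced word for $av$ (in the first case) or for $v$ (in the second case) whose initial block is a reduced word for $a$, respectively for $a^{-1}$; since $s_ia=as_i$ one has $s_iav=as_iv$, and $s_i$ occurs in no reduced word for $a$. One of the two implications is a one-line length estimate through $l(s_iav)=l(as_iv)\le l(a)+l(s_iv)$ (and a symmetric estimate). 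For the other, if $s_i$ were a left descent of whichever of $av$, $v$ is longer, the Strong Exchange Property would delete a single letter from the chosen reduced word; a deletion inside the initial block would exhibit $s_i$ as a left descent of $a$ (resp.\ of $a^{-1}$) and hence force $s_i$ into the support of $a$, which is impossible, so the deleted letter lies in the complementary block, and unwinding this deletion (using $a^{-1}s_ia=s_i$) produces exactly the asserted descent.

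For the reverse implication of (1), induct on $l(y)$; the base case $l(y)=0$ is trivial. Write $y=s_jy'$ with $l(y)=l(y')+1$, so that $x$ and $y'$ are strongly commutative and $s_j$ is strongly commutative with $x$. Assume $z\le_L xz$ and $z\le_L yz$. Comparing the identity $l(s_jy'z)=l(y)+l(z)$ with the bounds $l(s_jy'z)\le 1+l(y'z)\le 1+l(y')+l(z)$ forces both inequalities to be equalities, i.e.\ $z\le_L y'z$ and $s_j\notin D_L(y'z)$. The induction hypothesis gives $z\le_L xy'z$, so $l(xy'z)=l(x)+l(y'z)$; the sublemma (additive case, with $a=x$ and $v=y'z$) then gives $s_j\notin D_L(xy'z)$. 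Since $xyz=s_jxy'z$ (using $xs_j=s_jx$), this says $l(xyz)=l(xy'z)+1=l(x)+l(y)+l(z)$, i.e.\ $z\le_L xyz$. The reverse implication of (2) is the mirror image: with $y=s_jy'$ and the hypotheses $z\ge_L xz$, $z\ge_L yz$, one extracts $z\ge_L y'z$ and $s_j\in D_L(y'z)$ from the identity $l(s_jy'z)=l(z)-l(y)$ and the bounds $l(z)-l(y')\le l(y'z)\le l(s_jy'z)+1$; the induction hypothesis gives $l(xy'z)=l(z)-l(x)-l(y')=l(y'z)-l(x)$; the sublemma (subtractive case) gives $s_j\in D_L(xy'z)$; and $xyz=s_jxy'z$ yields $l(xyz)=l(xy'z)-1=l(z)-l(x)-l(y)$, i.e.\ $z\ge_L xyz$.

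The only substantive step is the descent-set sublemma — specifically the implication proved via the Strong Exchange Property, which is exactly where strong commutativity is essential: because $s_i$ is ``far'' from $a$ it cannot be absorbed into the $a$-block of the reduced word, so any length drop caused by $s_i$ must come entirely from the complementary block. The forward implications and all remaining length arithmetic are routine bookkeeping.
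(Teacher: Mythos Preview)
Your proof is correct and takes a genuinely different route from the paper's. Both handle the forward implications the same way (via Lemma~\ref{theo:weakorder_xyz} and $xy=yx$). For the reverse implications, the paper inducts on $l(x)+l(y)$ with a nontrivial base case $l(x)=l(y)=1$ settled by the parabolic-quotient observation $z\in W/W_{\{i,j\}}$, and in the inductive step peels a generator off the \emph{right} of $y$ (writing $y=y's_{i_l}$, $z'=s_{i_l}z$) and invokes the induction hypothesis twice --- once for $(x,y',z')$ and once for $(x,s_{i_l},z)$. You instead induct on $l(y)$ alone with trivial base case, peel a generator off the \emph{left} of $y$, and replace the nested induction by your descent-set sublemma (proved via the Strong Exchange Property). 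Your approach has the virtue of isolating exactly where strong commutativity is used --- namely, that $s_j$ cannot lie in the support of $x$, so any exchange must occur in the complementary block --- and the induction bookkeeping is lighter. The paper's approach avoids explicit descent-set arguments and Strong Exchange, trading them for the parabolic-quotient step in the base case and a slightly more intricate inductive structure.
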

\begin{proof}
	\noindent (1)
	The ``only if'' direction immediately follows by the definition of the weak order and commutativity of $x,y$.
	We prove the ``if'' direction by induction on $l(x)+l(y)$.
	It is clear when $l(x)=0$ or $l(y)=0$.
	In particular the case $l(x)+l(y)\le 1$ is done 
	and we may assume $l(x)+l(y)\ge 2$ and $l(x),l(y)>0$.
	
	\noindent\underline{\textit{Step A}}: the case $l(x)+l(y)=2$, i.e. $l(x)=l(y)=1$.
	
	We can write $x=s_i$ and $y=s_j$ with $s_i\neq s_j$, $s_i s_j=s_j s_i$ from the strong commutativity.
	We have $s_i z, s_j z \ge_L z$ by the assumption.
	Hence $z\in W/W_{\{i,j\}}$, where $W_{\{i,j\}}=\langle s_i,s_j\rangle=\{e,s_i,s_j,s_i s_j\}$.
	Therefore $s_i s_j z\ge_L z$.
	
	\noindent\underline{\textit{Step B:}} the case $l(x)+l(y)>2$.
	
	From the commutativity of $x,y$ we may assume $l(y)\ge l(x)$; in particular $l(y)>1$.
	Take a reduced expression of $y=s_{i_1}\dots s_{i_l}$ and put $y'=s_{i_1}\dots s_{i_{l-1}}$,
	$z'=s_{i_l}z$.
	Since $z\le_L yz$ and $s_{i_l}\le_L y$,
	we have $z\le_L z'$.
	Now we can obtain $z'\le_L xy'z'$,
	which implies $z\le_L z'\le_L xy'z'=xyz$ as desired,
	by applying the induction hypothesis
	for $(x,y,z) := (x,y',z')$,
	having its assumption satisfied as follows:
	\begin{itemize}
		\item
			$x,y'$ are strongly commutative. \\
			\textit{Proof.}
			From Lemma \ref{theo:strongly} (3).
		\item
			$z'\le_L y'z'$. \\
			\textit{Proof.}
			Since $z\le_L yz$ and $s_{i_l}\le_L y$, by Lemma \ref{theo:weakorder_xyz}(1)
			we have $z' = s_{i_l}z \le_L yz = y'z'$.
		\item
			$z'\le_L xz'$. \\
			\textit{Proof.}
			Since $l(x)+l(y)>l(x)+l(s_{i_l})$,
			we can obtain $z\le_L xz'$
			by applying the induction hypothesis for $(x,y,z):=(x,s_{i_l},z)$,
			having that its assumption described below is clearly satisfied:
			\begin{itemize}
				\item
					$x$ and $s_{i_l}$ are strongly commutative.
				\item
					$z\le_L xz$.
				\item
					$z\le_L s_{i_l}z$.
			\end{itemize}
			Besides $s_{i_l}\le_L xs_{i_l}$,
			hence 
			we have $z'\le_L xz'$
			by Lemma \ref{theo:weakorder_xyz}(1).
	\end{itemize}

	\noindent (2) is proved similarly to (1).
\end{proof}

\begin{lemm}\label{theo:dA_meet}
	Let $w\in W$ and $A, B\subsetneq I$ with
	$w\le_L d_Aw, d_Bw$.
	\begin{enumerate}[label=\textup{(\arabic*)}]
		\item
			$w\le_L d_{A\cap B}w$.
		\item
			The element $d_{A\cap B}w$ is the strong meet of $d_{A}w$ and $d_{B}w$.
	\end{enumerate}
\end{lemm}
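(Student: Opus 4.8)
The plan is to prove (1) first, by reducing to a single ``run'', and then to read off (2) from (1) using the Demazure/anti-Demazure formalism of Sections~\ref{sect:PropOrder::phipsi}--\ref{sect:PropOrder::SLjoin}.

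For (1): write $A=A_1\sqcup\dots\sqcup A_r$ for the decomposition of $A$ into runs (connected components). By Lemma~\ref{theo:strongly} the $d_{A_i}$ are pairwise strongly commutative and $d_A=d_{A_1}\cdots d_{A_r}$, so iterating Lemma~\ref{theo:sc_xyz} gives $w\le_L d_Aw\iff w\le_L d_{A_i}w$ for every $i$, and likewise for $B$ and for $A\cap B$. It therefore suffices to show $w\le_L d_Cw$ for one run $C$ of $A\cap B$. I would observe that $C$ lies in a unique run $A'$ of $A$ and a unique run $B'$ of $B$, and is in fact a connected component of $A'\cap B'$ (any interval strictly between $C$ and $A'\cap B'$ would lie in $A\cap B$, contradicting maximality of $C$). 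The least element $p$ of the interval $C$ then satisfies $p-1\notin A'$ or $p-1\notin B'$; since $A'$ and $B'$ are themselves intervals --- here one uses $A,B\subsetneq I$, so that runs are proper intervals with well-defined least elements --- in the first case $p$ is the least element of $A'$, i.e.\ $C$ is an initial ``bottom'' segment of $A'$. Hence $d_{A'}=d_{A'\setminus C}\,d_C$ with lengths adding, so $d_C$ appears as a terminal segment of a reduced word for $d_{A'}$; and since $w\le_L d_{A'}w$ says $\langle d_{A'}\rangle\langle w\rangle$ is reduced, its terminal segment $\langle d_C\rangle\langle w\rangle$ is reduced too, i.e.\ $w\le_L d_Cw$. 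The case $p-1\notin B'$ is symmetric.

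For (2): $d_{A\cap B}\le d_A$ and $d_{A\cap B}\le d_B$ under the strong order, because a cyclically decreasing word for $A\cap B$ is a subword of one for $A$ (resp.\ $B$). Combining this with $w\le_L d_{A\cap B}w,\,d_Aw,\,d_Bw$ --- the first by (1) --- the isomorphism $W/\{w\}\simeq[w,\infty)_L$ of (\ref{eq:gq_isom}) gives $d_{A\cap B}w\le d_Aw,\,d_Bw$, so $d_{A\cap B}w$ is a common lower bound. For maximality, let $z\le d_Aw$ and $z\le d_Bw$. Since $w\le_L d_Aw$ we have $d_Aw=\phi_{d_A}(w)$, and similarly $d_Bw=\phi_{d_B}(w)$, so $d_A,d_B\in\{u\in W\mid z\le\phi_u(w)\}$, which by Proposition~\ref{theo:SL_join'}(1) has a strong-least element $u_0$. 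Then $u_0\le d_A$ and $u_0\le d_B$, and since every element below a cyclically decreasing element is again cyclically decreasing (the same subword remark), $u_0=d_C$ with $C\subseteq A$ and $C\subseteq B$; hence $u_0\le d_{A\cap B}$. Finally $z\le\phi_{u_0}(w)\le\phi_{d_{A\cap B}}(w)=d_{A\cap B}w$, using that $u\mapsto\phi_u(w)$ is order-preserving (Lemma~\ref{theo:b_i}(3)) together with (1). Thus $d_{A\cap B}w=(d_Aw)\wedge(d_Bw)$.

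The step I expect to carry the real weight is the combinatorial observation in (1) that a run of $A\cap B$ shares its least element with the containing run of $A$ or of $B$; once that is in place, the rest of (1) is bookkeeping with runs and the elementary fact that terminal segments of reduced words are reduced. Part (2) should introduce nothing new beyond the correct invocation of Proposition~\ref{theo:SL_join'}(1) and the order-preservation properties of $\phi$ in Lemma~\ref{theo:b_i}.
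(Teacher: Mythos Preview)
Your proof is correct. Part (1) is essentially identical to the paper's argument: both reduce to a single connected component $C$ of $A\cap B$ via Lemma~\ref{theo:sc_xyz}, and both observe that such a $C$ is an initial segment of the run of $A$ or of $B$ containing it, so that $d_C\le_L d_{A'}$ (or $\le_L d_{B'}$) and hence $w\le_L d_Cw$.

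For part (2), your argument via Proposition~\ref{theo:SL_join'}(1) is sound, but the paper takes a shorter route. Having noted (as you do) that $d_{A\cap B}=d_A\wedge d_B$ in the strong order, the paper simply invokes Lemma~\ref{theo:b_i}(5): the right Demazure action $\phi^R_w$ preserves strong meets, and by the hypotheses together with (1) we have $\phi^R_w(d_X)=d_Xw$ for $X\in\{A,B,A\cap B\}$, so $d_{A\cap B}w=(d_Aw)\wedge(d_Bw)$ in one line. Your approach instead produces, for an arbitrary lower bound $z$, the minimal $u_0$ with $z\le u_0*w$, argues $u_0=d_C$ with $C\subseteq A\cap B$, and then pushes up via Lemma~\ref{theo:b_i}(3). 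Both rest on the same underlying fact $d_A\wedge d_B=d_{A\cap B}$ and on the Lifting Property (packaged either in Lemma~\ref{theo:b_i}(5) or in Proposition~\ref{theo:SL_join'}); the paper's packaging is just more direct here.
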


\begin{rema}
The same statement with all $d_X$ replaced with $u_X$ is proved similarly.
\end{rema}

\begin{rema}
	It does not generally hold that
	if $w\le_L xw,yw$ and $x\wedge y$ exists then $w\le_L (x\wedge y)w$;
	a counterexample is $W=S_4$, $x=s_{21}$, $y=s_{23}$, $w=s_{2}$.
\end{rema}

\begin{proof}
	\noindent (1)
	Within this proof we call $x\in W$ satisfies $(*)$ if $w\le_L xw$.
	
	Decomposing $A, B$ into connected components
	$A = A_1 \sqcup \dots \sqcup A_m$ and
	$B = B_1 \sqcup \dots \sqcup B_n$,
	we have
	$A\cap B = \bigsqcup_{i,j} (A_i \cap B_j)$.
	Each nonempty $A_i \cap B_j$ has at most two connected components,
	each component $C$ of which satisfies
	$d_{A_i} = x d_{C}$ for some $x\in W$ or 
	$d_{B_j} = y d_{C}$ for some $y\in W$ as easily seen.
	Having that both $d_{A}$ ($\ge_L d_{A_i}$) and 
	$d_{B}$ ($\ge_L d_{B_j}$) satisfy $(*)$
	and that lower bounds in $\le_{L}$ inherit $(*)$,
	we see each $d_C$ satisfies $(*)$.
	Besides $(A_i \cap B_j) \cap (A_{i'} \cap B_{j'}) = (A_i \cap A_{i'}) \cap (B_{j} \cap B_{j'})$
	is empty unless $(i,j)=(i',j')$,
	we thus have $A \cap B$ decomposes as $A\cap B = C_1 \sqcup \dots \sqcup C_l$ into connected components,
	where each $d_{C_i}$ satisfies $(*)$.
	Now it follows from Lemma \ref{theo:sc_xyz} (1) that
	$d_{A\cap B} = d_{C_1} \dots d_{C_l}$ satisfies $(*)$, as desired.
	
	\noindent (2)
	By the Subword Property we have $d_{A\cap B}=d_A\wedge d_B$.
	From the assumption and (1),
	we have $\phi^R_{w}(d_X)=d_Xw$ for $X=A,B,A\cap B$.
	Hence by Lemma \ref{theo:b_i} (5) we have $d_{A\cap B}w=d_Aw\wedge d_Bw$.
\end{proof}

\begin{coro}\label{theo:meetSemiLattice}
	Let $\la\in\Pk$,
	and $\ka^{(1)}, \ka^{(2)}$ be weak strips over $\la$.
	Write $\ka^{(i)} = d_{A_i} \la$ for each $i$ with $A_i\subsetneq I$.
	Then $d_{A_1 \cap  A_2} \la$ is a weak strip over $\la$ and
	is the meet of $\ka^{(1)},\ka^{(2)}$
	in the poset $\Pk$ with the strong order:
	$\displaystyle \ka^{(1)} \wedge \ka^{(2)} = d_{A_1 \cap  A_2} \la$.
\end{coro}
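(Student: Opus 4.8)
The plan is to obtain both assertions directly from Lemma~\ref{theo:dA_meet}, the only point needing care being that Lemma~\ref{theo:dA_meet} produces meets in the full group $W=\aSn$, whereas the corollary asks for a meet inside $\Pk\simeq\aG$.

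First I would identify $\la$ with $w_\la\in\aG$. Since $\ka^{(i)}=d_{A_i}\la$ is a weak strip over $\la$, by definition $\la\le_L d_{A_i}\la$ for $i=1,2$. Applying Lemma~\ref{theo:dA_meet} with $w=\la$, $A=A_1$, $B=A_2$ then yields at once that $\la\le_L d_{A_1\cap A_2}\la$ and that $d_{A_1\cap A_2}\la$ is the strong meet $d_{A_1}\la\wedge d_{A_2}\la$ computed in $W$.

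It remains to check $d_{A_1\cap A_2}\la\in\aG$; this simultaneously promotes ``$\la\le_L d_{A_1\cap A_2}\la$'' to ``$d_{A_1\cap A_2}\la/\la$ is a weak strip'' and lets the meet be taken in $\aG$. For this I would use the dictionary recorded just before this corollary: $d_A\la/\la$ is a weak strip if and only if $A\in Z'_{\la w_0^J,+}$, i.e.\ $\la w_0^J\le_L d_A(\la w_0^J)$, where $w_0^J$ is the longest element of $S_{k+1}$. Since $A_1,A_2\in Z'_{\la w_0^J,+}$ by hypothesis, a second application of Lemma~\ref{theo:dA_meet}(1), now with $w=\la w_0^J$, gives $A_1\cap A_2\in Z'_{\la w_0^J,+}$, that is, $d_{A_1\cap A_2}\la/\la$ is a weak strip; in particular $d_{A_1\cap A_2}\la\in\aG$. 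Finally, an element of $\aG$ that is the strong meet of $d_{A_1}\la,d_{A_2}\la$ in $W$ is automatically their meet in $\aG$, since any common lower bound lying in $\aG$ is one in $W$ and hence is $\le d_{A_1\cap A_2}\la$; transporting through the order isomorphism $\Pk\simeq\aG$ then gives $\ka^{(1)}\wedge\ka^{(2)}=d_{A_1\cap A_2}\la$. The only genuine obstacle is this last descent from $W$ to $\aG$; one could instead invoke the general fact that a parabolic quotient is closed under strong meets (which follows from the Lifting Property), but routing through $w_0^J$ keeps everything inside the machinery already set up in this section.
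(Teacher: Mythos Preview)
Your proof is correct and follows essentially the same route as the paper: apply Lemma~\ref{theo:dA_meet}(1) with $w=w_\la w_0^J$ to obtain that $d_{A_1\cap A_2}\la/\la$ is a weak strip (this is exactly the paper's use of Lemma~\ref{theo:weakorder_xyz} to rewrite the weak-strip condition), apply Lemma~\ref{theo:dA_meet}(2) with $w=w_\la$ to get the meet in $W$, and then observe that a meet in $W$ lying in $\aG$ is automatically the meet in the subposet $\aG\simeq\Pk$. The only cosmetic difference is the order in which you invoke the two applications of Lemma~\ref{theo:dA_meet}.
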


\begin{proof}
	Let $w_{\la}\in \Wo$ be the affine Grassmannian permutation corresponding to $\la$,
	and $w_0$ the longest element of $S_{k+1}$.
	By Lemma \ref{theo:weakorder_xyz},
	the condition $d_A\la/\la$ is a weak strip is equivalent to
	$d_A w_\la w_0 \ge_L w_\la w_0$.
	From this and Lemma \ref{theo:dA_meet}(1)
	we see $d_{A_1\cap A_2}\la/\la$ is a weak strip.
	From Lemma \ref{theo:dA_meet}(2)
	we have 
	$d_{A_1\cap A_2} w_\la = (d_{A_1} w_\la) \wedge (d_{A_2} w_\la)$
	in $W$.
	Since $\Wo\subset W$ is a subposet, this is also the meet in $\Wo\simeq\Pk$.
\end{proof}

\begin{lemm}\label{theo:dA_join}
	Let $w\in W$ and $A, B\subsetneq I$ with
	$d^{-1}_Aw, d^{-1}_Bw \le_L w$.
	\begin{enumerate}[label=\textup{(\arabic*)}]
		\item
			$d^{-1}_{A\cap B}w \le_L w$.
		\item
			The element $d^{-1}_{A\cap B}w$ is the strong join of $d^{-1}_Aw$ and $d^{-1}_Bw$.
	\end{enumerate}
\end{lemm}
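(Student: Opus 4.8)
The plan is to prove (1) by mirroring the proof of Lemma \ref{theo:dA_meet}(1), and then to deduce (2) from (1) using the interval-flipping map of Section \ref{sect:PropOrder::flip}.

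\emph{Part (1).} First I would rephrase everything in terms of the right weak order. For $X\subsetneq I$, writing $v=d_X^{-1}w$ we have $wv^{-1}=d_X$, so $v\le_L w$ is equivalent to $l(d_X)+l(v)=l(w)$, i.e.\ to $d_X\le_R w$ (recall $d_X^{-1}=u_X$). Thus the hypothesis says $d_A,d_B\le_R w$ and the goal is $d_{A\cap B}\le_R w$; the advantage of this rephrasing is that the property ``$\,\cdot\le_R w\,$'' is inherited by $\le_R$-smaller elements for free, by transitivity. Now decompose $A\cap B=C_1\sqcup\dots\sqcup C_l$ into connected components. By Lemma \ref{theo:strongly} the elements $d_{C_1},\dots,d_{C_l}$, hence also their inverses $u_{C_1},\dots,u_{C_l}$, are pairwise strongly commutative, and $d_{A\cap B}^{-1}w=u_{C_1}\cdots u_{C_l}w$. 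For each $i$, writing $C_i=[\,p,c\,]$, the endpoint $c$ satisfies $c+1\notin A\cap B$ (else $[\,p,c+1\,]$ would be a larger connected subset of $A\cap B$, impossible since $A\cap B\subsetneq I$), hence $c+1\notin A$ or $c+1\notin B$; in the first case $c$ is the corresponding endpoint of the connected component $A'$ of $A$ containing $C_i$, so $d_{C_i}$ is a left factor of $d_{A'}$ (in the $\le_R$ sense), and $d_{A'}\le_R d_A$ since $d_A$ is a strongly commutative product over the components of $A$; thus $d_{C_i}\le_R d_A\le_R w$. In the second case $d_{C_i}\le_R d_B\le_R w$ symmetrically. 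Either way $d_{C_i}\le_R w$, i.e.\ $w\ge_L u_{C_i}w$. Applying Lemma \ref{theo:sc_xyz}(2) repeatedly (peeling off one strongly commutative factor at a time), we conclude $w\ge_L u_{C_1}\cdots u_{C_l}w$, that is $d_{A\cap B}^{-1}w\le_L w$.

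\emph{Part (2).} By (1), the three elements $d_A^{-1}w$, $d_B^{-1}w$, $d_{A\cap B}^{-1}w$ all lie in $[e,w]_L$. The map $\Phi_w\colon[e,w]_L\to[e,w]_R$, $x\mapsto wx^{-1}$, of Section \ref{sect:PropOrder::flip} sends $d_X^{-1}w$ to $d_X$, so $d_A,d_B,d_{A\cap B}\in[e,w]_R$. By the Subword Property $d_A\wedge d_B=d_{A\cap B}$ in $W$, and this strong meet lies in $[e,w]_R$ by the previous sentence. Hence Proposition \ref{theo:anti_isom}(3) applies to $\Psi_w=\Phi_w^{-1}$ and gives $\Psi_w(d_{A\cap B})=\Psi_w(d_A)\vee\Psi_w(d_B)$; since $\Psi_w(d_X)=d_X^{-1}w$, this is exactly $d_{A\cap B}^{-1}w=(d_A^{-1}w)\vee(d_B^{-1}w)$ in $W$, which is (2).

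The main obstacle is the combinatorial core of Part (1): arranging the connected-component factorization of $d_{A\cap B}^{-1}$ into strongly commutative pieces so that Lemma \ref{theo:sc_xyz}(2) can be invoked — which is why the cyclically increasing elements $u_{C_i}$, rather than the $d_{C_i}$, enter — and verifying for each component $C_i$ of $A\cap B$ that it ``docks'' against a component of $A$ or of $B$ in the way needed to produce $d_{C_i}\le_R d_A$ or $d_{C_i}\le_R d_B$. This is the precise mirror of the delicate step in the proof of Lemma \ref{theo:dA_meet}(1); once it is in place, everything else, and Part (2) in particular, is a short formal consequence of the already-established properties of $\Phi_w$, $\Psi_w$, and the Demazure/anti-Demazure actions.
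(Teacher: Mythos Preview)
Your proof is correct and follows the same approach as the paper: Part (1) is the natural parallel of Lemma \ref{theo:dA_meet}(1) using Lemma \ref{theo:sc_xyz}(2) in place of (1), and Part (2) is exactly the paper's argument via $\Phi_w$, $\Psi_w$, and Proposition \ref{theo:anti_isom}(3). The only cosmetic difference is that you spell out the parallel for (1) in terms of the right order and the right endpoint of each component of $A\cap B$, whereas the paper simply says ``proved parallelly''; your version is in fact the correct way to carry out that parallel (since the property $d_X\le_R w$ is what is inherited downward under $\le_R$).
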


\begin{proof}
	\noindent
	(1) is proved parallelly to Lemma \ref{theo:dA_meet} (1),
	making use of Lemma \ref{theo:sc_xyz}(2) instead of Lemma \ref{theo:sc_xyz}(1).
	
	\noindent
	(2)
	We have $d^{-1}_Aw, d^{-1}_Bw, d^{-1}_{A\cap B}w \in [e,w]_L$ by (1).
	The map $\Phi_w$ in Lemma \ref{theo:anti_isom} sends $d^{-1}_Aw, d^{-1}_Bw, d^{-1}_{A\cap B}w$ to $d_A, d_B, d_{A\cap B}$ respectively.
	Since $d_{A\cap B} = d_A \wedge d_B$, sending them back via $\Psi_w$,
	we have $d^{-1}_{A\cap B}w = (d^{-1}_Aw) \vee (d^{-1}_Bw)$ by Lemma \ref{theo:anti_isom}(3).
\end{proof}

\begin{lemm}\label{theo:dA_cup}
	Let $u\in W$ and $A,B\subsetneq I$ with $A\cup B\neq I$.
	\begin{enumerate}[label=\textup{(\arabic*)}]
		\item \label{enum:dA_cup:Z+}
			If $d_A u, d_B u \ge_L u$, then $d_{A\cup B} u\ge_L u$.
		\item \label{enum:dA_cup:Z-}
			If $d^{-1}_A u, d^{-1}_B u \le_L u$, then $d^{-1}_{A\cup B} u\le_L u$.
	\end{enumerate}
\end{lemm}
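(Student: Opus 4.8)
We prove (1); statement (2) is obtained by the same argument with $\le_L$ and $\ge_L$ interchanged, each $d_X$ replaced by $d_X^{-1}$, and Lemma~\ref{theo:sc_xyz}(1) replaced by Lemma~\ref{theo:sc_xyz}(2), exactly as Lemma~\ref{theo:dA_join}(1) is obtained from the ingredients behind Lemma~\ref{theo:dA_meet}(1).

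\textbf{Reduction to $A\cup B$ connected.} Let $A\cup B=C_1\sqcup\dots\sqcup C_m$ be the decomposition into connected components. Each connected component of $A$ lies in a single $C_r$, so $A\cap C_r$ is a union of connected components of $A$; hence $d_{A\cap C_r}\le_L d_A$, and likewise $d_{B\cap C_r}\le_L d_B$. Since $\le_L$-lower bounds inherit the property ``$xu\ge_L u$'' (used already in the proof of Lemma~\ref{theo:dA_meet}(1)), this gives $d_{A\cap C_r}u\ge_L u$ and $d_{B\cap C_r}u\ge_L u$. On the other hand $d_{A\cup B}=d_{C_1}\cdots d_{C_m}$ with the factors pairwise strongly commutative (Lemma~\ref{theo:strongly}(2)), so iterating Lemma~\ref{theo:sc_xyz}(1) shows that $d_{A\cup B}u\ge_L u$ is equivalent to $d_{C_r}u\ge_L u$ for every $r$. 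As $C_r=(A\cap C_r)\cup(B\cap C_r)$, it suffices to treat the case in which $A\cup B=[p,q]$ is connected (and $\subsetneq I$); we do this from now on.

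\textbf{The connected case.} We induct on $N:=|[p,q]|$, the cases $N\le 1$ being trivial (then $A\cup B\in\{A,B\}$). For $N\ge 2$, since $d_{[p,q]}=s_qs_{q-1}\cdots s_p$, it suffices to prove $s_iu_i>u_i$ for each $i\in[p,q]$, where $u_i:=s_{i-1}\cdots s_pu$; note $u_i$ is a reduced product equal to $d_{[p,i-1]}u$, by the induction hypothesis applied to the shorter interval $[p,i-1]=\big(A\cap[p,i-1]\big)\cup\big(B\cap[p,i-1]\big)$ — its two pieces are right factors of $d_A$, $d_B$, and a reduced suffix stays reduced after multiplying by $u$, so they satisfy the hypotheses. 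Fix $i$, say $i\in A$. Writing $d_A=d_{A\cap[i,q]}\,d_{A\cap[p,i-1]}$ in decreasing order and using that $\redprod{d_A}{u}$ is reduced, the product $\redprod{d_{A\cap[i,q]}}{d_{A\cap[p,i-1]}u}$ is reduced; as $s_i$ is the last letter of $d_{A\cap[i,q]}$, this forces $i\notin D_L\big(d_{A\cap[p,i-1]}u\big)$. It remains to transfer this non-descent from $A\cap[p,i-1]$ to the full interval $[p,i-1]$, i.e.\ to deduce $i\notin D_L(u_i)$.

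\textbf{The transfer, and the main obstacle.} The mechanism is a ``descent-descending'' principle: for $p\le m\le i-1$, if $d_{[p,m]}u\ge_L u$ then $i\in D_L\big(d_{[p,m]}u\big)$ implies $i\in D_L\big(d_{[p,m-1]}u\big)$. This is proved by peeling the leading $s_m$ and invoking the Lifting Property together with the standard fact that $\{j,j'\}\subseteq D_L(w)$ implies $w_0^{\{j,j'\}}\le_L w$ — one treats $m=i-1$ and $m<i-1$ separately, using that $s_m,s_i$ are adjacent in the first case and commute in the second (no cyclic degeneracy occurs since $[p,q]\subsetneq I$). Iterating, $i\in D_L(u_i)$ would force $i\in D_L(u)$. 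If $i-1\notin A$, every generator occurring in $d_{A\cap[p,i-1]}$ commutes with $s_i$, so $i\in D_L(u)$ would give $i\in D_L\big(d_{A\cap[p,i-1]}u\big)$, contradicting the previous paragraph; hence $i\notin D_L(u_i)$. The remaining case $i-1\in A$ is the genuinely delicate one: letting $[c,i]$ be the maximal interval contained in $A$ with top $i$ (so $c\le i-1$, and $c=p$ or $c-1\in B$), peeling $\redprod{d_A}{u}$ and using that $d_{A\cap[p,c-1]}$ commutes with $d_{[c,i]}$ yields $d_{[c,i]}u\ge_L u$; one must then combine this with the descent-descending principle along the run $[c,i]$ and, when $c>p$, with the corresponding information drawn from $d_Bu\ge_L u$ at the residue $c-1$, in order to exclude $i\in D_L(u_i)$. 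Arranging this last analysis so that it closes the induction on $N$ is where the real work of the proof lies. Once $s_iu_i>u_i$ holds for all $i\in[p,q]$, the word $\redex{s_q}\redex{s_{q-1}}\cdots\redex{s_p}\redex{u}$ is reduced, i.e.\ $d_{[p,q]}u\ge_L u$, completing the induction and proving (1).
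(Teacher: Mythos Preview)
Your proposal has a genuine gap: you explicitly concede that the case $i-1\in A$ is ``where the real work of the proof lies'' and offer only a sketch (``one must then combine\dots''). That case is not a detail---it is exactly the obstruction your letter-by-letter strategy runs into. Knowing $i\notin D_L(d_{A\cap[p,i-1]}u)$ does not transfer to $i\notin D_L(d_{[p,i-1]}u)$ once $s_{i-1}$ sits in $d_{A\cap[p,i-1]}$, because the commutation trick you used when $i-1\notin A$ is unavailable, and the descent-descending principle only moves information \emph{down} to $u$, not back up along a chain that mixes letters from $A$ and $B$. Your outline for closing this (take the maximal $A$-run $[c,i]$, then borrow information from $B$ at $c-1$) does not terminate as written: if $c>p$ you are back in the same situation one level down, and you have not set up an induction that makes this recursion decrease.

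The paper avoids this difficulty by reducing further than you do. Rather than stopping at ``$A\cup B$ connected,'' it also reduces to $A$ and $B$ themselves connected, and then replaces $B$ by $B\setminus A$ so that $A=[i,j]$ and $B=[p,i-1]$ are \emph{adjacent disjoint} intervals. At that point there is no letter-by-letter climb: one sets $u'=d_{B\setminus\{i-1\}}u$, checks $s_{i-1}u'\ge_L u'$ and $d_Au'\ge_L u'$, and then runs a single contradiction argument. Assuming $d_{A\cup B}u\not\ge_L u$, one locates the first failure $s_a x s_{i-1}u'<xs_{i-1}u'$ (with $x=s_{a-1}\cdots s_i$), compares it via the Lifting Property to $s_a x u'>xu'$ coming from $d_Au'\ge_L u'$, and forces the equality $s_a x s_{i-1}=x$, which is absurd. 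If you want to rescue your approach, the cleanest fix is to perform this extra reduction to $A,B$ connected before attempting the inductive step; once you do, your ``delicate case'' collapses to a single boundary index and the paper's Lifting-Property argument finishes it.
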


\begin{proof}
	We only give a proof of \ref{enum:dA_cup:Z+}
	since that of \ref{enum:dA_cup:Z-} is quite similar.

	Assume $d_Au, d_Bu \ge_L u$.
	Take the decomposition
	$A = A_1 \sqcup\dots\sqcup A_m$ and
	$B = B_1 \sqcup\dots\sqcup B_n$
	into connected components.
	Since $d_{A_i}\le_L d_A$, we have $d_{A_i}u\ge_L u$ for any $i$,
	and similarly $d_{B_j}u\ge_L u$ for any $j$.
	Since $A\cup B = (\dots(A\cup B_1)\cup \dots)\cup B_n$,
	we only need to prove it when $B$ is connected.
	Assume $B$ is connected.
	It is also easy to see, from Lemma \ref{theo:strongly} and Lemma \ref{theo:sc_xyz}(1),
	that it suffices to prove it
	when $A$, $B$ and $A\cup B$ are connected.
	We therefore assume $A$, $B$ and $A\cup B$ are connected.
	The case $A\subset B$ or $B\subset A$ being clear,
	we assume $A\not\subset B$ and $B\not\subset A$;
	namely we let $A=[i,j]$ and $B=[p,q]$ with $p\le i\le q+1\le j+1$ without loss of generality,
	where we employ an ordering $r+1<\dots<k<0<\dots<r-1$ of $I\sm\{r\}$ with an arbitrarily fixed element $r\in I\sm(A\cup B)$.
	Since $d_B=s_q\dots s_p \ge_L s_{i-1}\dots s_p=d_{B\sm A}$ and $d_Bu \ge_L u$,
	we have $d_{B\sm A}u\ge_L u$.
	Hence we may replace $B$ by $B\sm A$ ($=[p,i-1]$).
	
	Let $B'=B\sm\{i-1\}=[p,i-2]$ and $u'=d_{B'}u$.
	Since $d_{B'}\le_L d_B$ and $d_B u\ge_L u$, it follows that $u'\ge_L u$.
	Since $s_{i-1}u'=d_Bu\ge_L u$ and $d_Au'=d_Ad_{B'}u\ge_L u$,
	the latter of which is from Lemma \ref{theo:sc_xyz} (1),
	it easily follows that
	$s_{i-1}u'\ge_L u'$ and $d_Au'\ge_L u'$ from Lemma \ref{theo:weakorder_xyz}.
	
	Toward a contradiction,	
	suppose $d_{A\cup B}u\not\ge_L u$.
	Then we have $d_A s_{i-1} u'\not\ge_L u'$
	since $d_{A\cup B}u=d_A s_{i-1}u'$ and $u\le_L u'$.
	Since $s_{i-1}u'\ge_L u'$,
	there exists $a\in[i,j]$ such that
	$x s_{i-1} u' \ge_L u'$ and
	$s_a x s_{i-1} u' \not\ge_L u'$, which implies $s_a x s_{i-1} u' \lecov x s_{i-1} u'$,
	where we write $x=s_{a-1} s_{a-2}\dots s_{i+1} s_{i}$.
	On the other hand, since $d_Au'\ge_L u'$ we have
	$s_a xu' \ge_L u'$ and
	$x u' \ge_L u'$.
	Besides we have $x s_{i-1}u' \gecov xu'$ from the Subword Property.
	Hence the Lifting Property implies that $xu'\le s_a x s_{i-1} u'$,
	which is actually an equality since both sides have the same length.
	Therefore we have ($s_{a-1} s_{a-2} \dots s_{i+1}s_{i} =$) $x = s_a x s_{i-1}$ ($=s_a s_{a-1} \dots s_i s_{i-1}$),
	which is absurd.	
\end{proof}

\begin{rema}
    Unlike the ``cap'' case, it does not always hold that 
    $d_{A\cup B}u = (d_Au)\vee(d_Bu)$ in (1),
    or $d^{-1}_{A\cup B}u=(d^{-1}_Au)\wedge(d^{-1}_Bu)$ in (2).
    A counterexample for (1) is given by $W=S_3$, $u=e$, $A=\{1\}$ and $B=\{2\}$.
\end{rema}

\subsection{Non-appearing indices}\label{sect:PropWS::Forbidden}

\begin{prop}\label{theo:forbiddenindex}
	\noindent $(1)$
	For any $w\in W$,
	there exists $i_w^-\in I$ such that 
	$i_w^-\notin A$ for any $A\subsetneq I$ with
	$d_A^{-1}u\le_L u$.
	
	\noindent $(2)$
	For any $w\in \Wo$,
	there exists $i_w^+\in I$ such that 
	$i_w^+\notin A$ for any $A\subsetneq I$ with
	$d_Au\ge_L u$ and $d_Au\in\Wo$.
\end{prop}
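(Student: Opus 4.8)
The plan is to note that both statements say that a certain downward-closed family $\mathcal A$ of proper subsets of $I$ has a common non-member, i.e. $\bigcup_{A\in\mathcal A}A\neq I$: in $(1)$ we have $\mathcal A=Z'_{w,-}=\{A\subsetneq I\mid d_A^{-1}w\le_L w\}$, and in $(2)$, after the identity $Z_{w,+}^{'\circ}=Z'_{ww_0^J,+}$ (with $J=\{1,\dots,k\}$) recorded earlier in this section, $\mathcal A=\{A\subsetneq I\mid d_A(ww_0^J)\ge_L ww_0^J\}$. That each of these is downward-closed is immediate from the Subword Property, and $(2)$'s is closed under unions $\neq I$ by Lemma \ref{theo:dA_cup}\,(1) (and $(1)$'s by Lemma \ref{theo:dA_cup}\,(2)).

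For $(1)$ I would read the conclusion directly off the $k$-code bijection. A short length computation of the kind recalled in Section \ref{sect:Prel::Coxeter::Order} rewrites $d_A^{-1}w\le_L w$ as $d_A\le_R w$, i.e. $u_A=d_A^{-1}\le_L w^{-1}$, so $\RI(u_A)\subset\RI(w^{-1})$ by the monotonicity of $\RI$ under $\le_L$ (\cite[Proposition 56]{MR3001656}). Since $u_A$ is already cyclically increasing, its maximal cyclically increasing decomposition is the single row $u_A$, so $\RI(u_A)$ is the one-row $k$-code whose occupied columns correspond, under the fixed residue normalisation of $k$-codes, exactly to $A$. Hence $A$ is contained in the corresponding image of the support of $\RI(w^{-1})$; and since $\RI(w^{-1})$ is a $k$-code (Theorem \ref{theo:RD_bij}), by definition it has an empty column, so its support is a proper subset of $I$. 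Thus $\bigcup_{A\in\mathcal A}A\neq I$, and any index outside it serves as $i_w^-$.

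For $(2)$ I would show that $\mathcal A=Z_{w,+}^{'\circ}$ has a maximum; being a proper subset of $I$, it then yields $i_w^+$ as any index outside it. Since $\mathcal A$ is finite and closed under unions $\neq I$, the only obstruction to a maximum is a pair of incomparable maximal elements $A\neq B$ with $A\cup B=I$; assume such exist. Since $w\in\Wo$, the join $z:=(d_Aw)\vee_L(d_Bw)$ exists and lies in $\Wo$ by Lemma \ref{theo:join_exist}, and $z\ge_L w$. If $A\cup B\neq I$ then $d_{A\cup B}w$ is a weak strip over $w$ (Lemma \ref{theo:dA_cup}\,(1)) dominating $d_Aw$ and $d_Bw$ in $\le_L$, so $z\le_L d_{A\cup B}w$; cancelling the common right factor $w$ gives $zw^{-1}\le_L d_{A\cup B}$, so $zw^{-1}=d_C$ (an element $\le_L$ a cyclically decreasing element is cyclically decreasing) for some $C\subseteq A\cup B\subsetneq I$, whence $z=d_Cw$ is itself a weak strip with $A,B\subseteq C$, and maximality of $A$ and $B$ in $Z_{w,+}^{'\circ}$ forces $A=C=B$ — a contradiction. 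So necessarily $A\cup B=I$, and this is the configuration that must be excluded: two incomparable maximal weak strips over $w$ cannot jointly exhaust $I$. I would establish this by a direct analysis of the maximizing moves of Section \ref{sect:Prel::AffSym::k-code} — tracking how the boxes of the cyclically decreasing layer stacked on $\RD(w)$ redistribute — to see that in this case $z$ above is in fact a weak strip $d_Cw$ with $C\subsetneq I$, contradicting $A\cup B=I$. This last step is where I expect the main difficulty to lie; the remainder of $(2)$ is formal, and $(1)$ is essentially immediate from the $k$-code bijection.
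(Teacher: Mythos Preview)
Your argument for (1) is correct and essentially identical to the paper's.

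For (2), however, you have a genuine gap: the sentence ``I would establish this by a direct analysis of the maximizing moves\dots'' is not a proof, and you yourself flag it as the hard part. The paper avoids this difficulty entirely by \emph{not} trying to show that $Z_{w,+}^{'\circ}$ has a maximum directly. Instead, it runs an argument parallel to (1): let $z=\bigvee_L\{d_Aw\mid A\in Z_{w,+}^{'\circ}\}$ (which exists by Lemma \ref{theo:join_exist}). For any weak strip $d_Aw$, from $w\le_L d_Aw\le_L z$ one gets $zw^{-1}\ge_R z(d_Aw)^{-1}=zw^{-1}u_A$, equivalently $d_A\le_L zw^{-1}$. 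Now apply $\RD$-monotonicity (in place of the $\RI$-monotonicity you used in (1)) to conclude that $A$ lies in the first row of $\RD(zw^{-1})$, a fixed proper subset of $I$ independent of $A$. This gives $i_w^+$ immediately.

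The point you are missing is that $z$ need not itself be a weak strip for this to work: all that matters is $d_Aw\le_L z$, which forces $d_A\le_L zw^{-1}$, and then the $k$-code argument from (1) transfers verbatim with $\RD$ in place of $\RI$. In particular, this same observation would already close your gap: applying it to your $z=(d_Aw)\vee_L(d_Bw)$ shows $A,B$ both sit in the first row of $\RD(zw^{-1})\subsetneq I$, contradicting $A\cup B=I$ without any analysis of maximizing moves.
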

\begin{proof}
	\noindent
	(1)
	For any $A\subsetneq I$, we have
	\begin{align*}
		d_A^{-1}w \le_L w
			&\iff d_A\le_R w \\
			&\iff u_A\le_L w^{-1} \\
			&\implies \mathrm{RI}(u_A)\subset \mathrm{RI}(w^{-1}),
	\end{align*}
	and the last condition is equivalent to
	$A$ being included by the first row of $\mathrm{RI}(w^{-1})$.
	Hence we can take $i_w^-$ from the complement of the first row of $\mathrm{RI}(w^{-1})$.
	
	\noindent
	(2)
	By Lemma \ref{theo:join_exist} we may take
	$z := \bigvee_L\{d_Aw\mid A\subsetneq I \text{ s.t. } d_Aw\ge_L w,\,d_Aw\in\Wo\}$,
	the left join of all weak strips over $w$.
	Take any $A\subsetneq I$ such that $d_Aw\ge_L w$ and $d_Aw\in\Wo$.
	Since $w,d_A w\le_L z$, we have
	$zw^{-1}\ge_R z(d_Aw)^{-1} = zw^{-1}u_A$, which is equivalent to
	$wz^{-1}\ge_L d_Awz^{-1}$.
	Hence, similarly to the proof of (1) we have
	$A$ is a subset of the first row of
	$\mathrm{RD}((wz^{-1})^{-1})=\mathrm{RD}(zw^{-1})$,
	which is a proper subset of $I$ and independent of $A$,
	and therefore we can take $i_w^+$ from its complement.
\end{proof}

\begin{rema}
The index $i_w^+$ in (2) above is in fact uniquely determined as follows:
a bounded partition $\la\in\Pk$,
corresponding to a 0-dominant affine permutation $w_\la\in\Wo$,
has the unique weak strip of size $k$, namely $(k)\cup\la$.
Since the corresponding core $\core((k)\cup\la)$ has $k$ more boxes in the first row
than $\core(\la)$ does,
the only possibility for $i_{w_\la}^+$ is
what is determined by the following equivalent descriptions:
\begin{itemize}
	\item
		The residue of the rightmost box in the first row of $\core(\la)$.
	\item
		The negative of the residue written in the leftmost box in the last row of
		$\mathrm{RI}(w_\la) = \la^{\omega_k}$.
	\item
		$m-1$,
		where $w_\la=u_{A_m}\dots u_{A_1}$
		is the maximal increasing decomposition for $w_\la$.
		(Note that $A_m=\{i,i+1,\dots,m-2,m-1\}$ for some $i$.)
\end{itemize}
\end{rema}
\begin{figure}
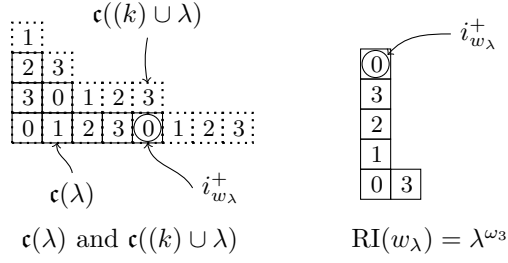

	\centering
	\tikzitem{
		\node (core) at (0,0) {
			\tikzitem[0.4]{
				\tikzydiag[thick, dotted]{8,5,2,1};
				\tikzydiag{5,2,1};
				\tikzvnum{1}{0,1,2,3,0,1,2,3};
				\tikzvnum{2}{3,0,1,2,3};
				\tikzvnum{3}{2,3};
				\tikzvnum{4}{1};
				
				\draw [<-] (1.5,0) to [out=-100,in=100] (2,-1) node[below]{$\core(\la)$};
				\draw [<-] (4.5,2) to [out=100,in=-100] (4.5,3.5) 
					node[above]{$\core((k)\cup\la)$};
				
				\draw (4.5,0.5) circle (0.45);
				\draw [<-] (4.5, 0) to [out=-80, in = 120] (6, -1.5)
					node[right]{$i_{w_\la}^+$};
			}
		};
		\node [below of=core, yshift=-8mm] {
			$\core(\la)$ and $\core((k)\cup\la)$
		};
		\node (RI) at (4,0) {
			\tikzitem[0.4]{
				\tikzydiag{2,1,1,1,1};
				\tikzvnum{1}{0,3};
				\tikzvnum{2}{1};
				\tikzvnum{3}{2};
				\tikzvnum{4}{3};
				\tikzvnum{5}{0};
								
				\draw (0.5,4.5) circle (0.45);
				\draw [<-] (0.9, 4.8) to [out=50, in = 180] (3, 5.5)
					node[right]{$i_{w_\la}^+$};
			}
		};
		\node [below of=RI, yshift=-8mm] {
			$\RI(w_\la) = \la^{\omega_3}$
		};
	}
	\label{fig:forbidden_index}
	\caption{
		An example where $k=3$, $\la=(3,2,1)$ and $\core(\la)=(5,2,1)$.
		The dotted shape on the left figure represents $\core((k)\cup\la)$,
		and the solid one does $\core(\la)$.
		In this case $w_{(k)\cup\la} = s_3 s_2 s_1 w_\la = d_{\{1,2,3\}}w_\la$
		and therefore
		$i_{w_\la}^+=0$.
	}
\end{figure}

\begin{rema}
We cannot drop the assumption on $0$-dominantness of $d_Aw$ in (2) of the proposition.
For example, let $k=3$ and $w=s_3 s_0$.
Then $w=u_{\{3,0\}}$ is the maximal increasing decomposition
and hence $i_w^+$ should be $0$,
but $d_{\{0\}}w=s_0 s_3 s_0 \ge_L w$.
\end{rema}

\begin{coro}\label{theo:Zu_max}
	Let $u\in W$.
	\begin{enumerate}[label=\textup{(\arabic*)}]
		\item
			$Z'_{u,+}$ has the maximum element under $\subset$.
			Hence, $Z_{u,+}$ has the maximum element under $\le$.
		\item
			$Z'_{u,-}$ has the maximum element under $\subset$.
			Hence, $Z_{u,-}$ has the minimum element under $\le$.
	\end{enumerate}
\end{coro}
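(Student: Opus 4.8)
The plan is to obtain all four assertions from the two structural results established in this section: the closure of $Z'_{u,\pm}$ under unions that remain proper subsets of $I$ (Proposition \ref{theo:Zu_props}(1)), and the existence of an index of $I$ avoided by every member of $Z'_{u,\pm}$ (Proposition \ref{theo:forbiddenindex}). Together these say that $Z'_{u,\pm}$ is genuinely closed under union, and a nonempty finite family of subsets of a fixed finite set which is closed under union has a greatest member, namely the union of all of its members. Transporting that greatest member through the (anti-)isomorphisms $A\mapsto d_Au$ and $A\mapsto d_A^{-1}u$ recorded at the start of Section \ref{sect:PropWS} will then give the statements about $Z_{u,\pm}$.

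Concretely, for the ``$-$'' case I would argue as follows. By Proposition \ref{theo:forbiddenindex}(1) there is an index $i_u^-\in I$ with $i_u^-\notin A$ for every $A\in Z'_{u,-}$, so every such $A$ is contained in $I\setminus\{i_u^-\}$. Hence, for any $A,B\in Z'_{u,-}$, we have $A\cup B\subseteq I\setminus\{i_u^-\}\subsetneq I$, so $A\cup B\neq I$ and Proposition \ref{theo:Zu_props}(1) applies to give $A\cup B\in Z'_{u,-}$. Since $\emptyset\in Z'_{u,-}$ and $Z'_{u,-}$ is a finite subset of $2^I$, the set $M:=\bigcup_{A\in Z'_{u,-}}A$ again lies in $Z'_{u,-}$ and is its maximum under $\subseteq$. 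Finally, since $A\mapsto d_A^{-1}u$ is an anti-isomorphism $(Z'_{u,-},\subseteq)\to(Z_{u,-},\le)$, it carries $M$ to the minimum $d_M^{-1}u$ of $Z_{u,-}$. This is (2). The argument for (1) is identical in form, using instead the index produced by Proposition \ref{theo:forbiddenindex}(2) and the poset isomorphism $A\mapsto d_Au$; here one should simply apply the version of Proposition \ref{theo:forbiddenindex} whose hypotheses are in force (in particular restricting to affine Grassmannian $u$, equivalently working with $Z^\circ_{u,+}$, wherever the weak-strip hypothesis is needed).

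I do not expect any real obstacle: once Propositions \ref{theo:Zu_props} and \ref{theo:forbiddenindex} are in hand, the corollary is just the observation that a finite union-closed family of sets has a top element. The single point that must not be glossed over is verifying $A\cup B\neq I$ before invoking Proposition \ref{theo:Zu_props}(1) — this hypothesis is genuinely needed, and dropping the forbidden-index input makes the statement fail (for instance $Z'_{e,+}$ is the family of all proper subsets of $I$, with the $k+1$ pairwise incomparable maximal elements $I\setminus\{i\}$). So the only care required is to invoke, in each of the four cases, exactly the version of Propositions \ref{theo:Zu_props} and \ref{theo:forbiddenindex} whose hypotheses are met.
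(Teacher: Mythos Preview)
Your argument is exactly the paper's one-line proof: combine Proposition \ref{theo:Zu_props}(1) with Proposition \ref{theo:forbiddenindex} to get genuine union-closure in a finite family, and take the union of all members as the maximum.

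You have also correctly flagged a subtlety the paper glosses over: Proposition \ref{theo:forbiddenindex}(2) only supplies a forbidden index when $u\in\Wo$ and under the extra constraint $d_Au\in\Wo$, so the argument actually establishes that $Z^{'\circ}_{u,+}$ (equivalently $Z'_{uw_0^J,+}$) has a maximum, not $Z'_{u,+}$ for arbitrary $u\in W$. Your example $u=e$ shows the latter is indeed false as stated. This is an imprecision in the corollary's formulation rather than a defect in your proof; the restricted version is all that the later applications (e.g.\ Lemma \ref{theo:A0_r}, where one is really working over $w=w_\la\in\Wo$ with affine Grassmannian weak strips) require. Part (2) goes through exactly as you wrote it, since Proposition \ref{theo:forbiddenindex}(1) carries no restriction on $u$.
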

\begin{proof}
	By Proposition \ref{theo:Zu_props} (1) and Proposition \ref{theo:forbiddenindex}.
\end{proof}

\subsection{Chain Property}\label{sect:PropWS::CP}

\begin{prop}\label{theo:CP_Z+-}
	The sets $Z_{u,+}$ and $Z_{u,-}$ have the Chain Property.
	Namely, for any $x,y\in Z_{u,\pm}$ such that $x\le y$,
	there exists a sequence
	$x=\exists z^{(0)}\lecov \exists z^{(1)} \lecov\dots\lecov \exists z^{(l)}=y$
	such that $z^{(i)}\in Z_{u,\pm}$ for any $i$.
	\Todo{Can it be simplified with projection operators?}
\end{prop}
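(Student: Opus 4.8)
The plan is to push the statement, through the (anti-)isomorphisms recorded at the start of Section~\ref{sect:PropWS}, to a purely combinatorial claim about the set systems $Z'_{u,\pm}$. Recall that $A\mapsto d_Au$ is a poset isomorphism $(Z'_{u,+},\subset)\simeq(Z_{u,+},\le)$ and $A\mapsto d_A^{-1}u$ a poset anti-isomorphism $(Z'_{u,-},\subset)\simeq(Z_{u,-},\le)$, and that $l(d_Au)=|A|+l(u)$ for $A\in Z'_{u,+}$ while $l(d_A^{-1}u)=l(u)-|A|$ for $A\in Z'_{u,-}$. Consequently a strong cover inside $Z_{u,\pm}$ corresponds exactly to a pair $A\subset A'$ of members of $Z'_{u,\pm}$ with $|A'|=|A|+1$, so the Chain Property for $Z_{u,+}$ and for $Z_{u,-}$ is equivalent to the assertion that $(Z'_{u,+},\subset)$ and $(Z'_{u,-},\subset)$ are \emph{graded by cardinality}: every covering pair $C\subsetcov C'$ in either poset satisfies $|C'\setminus C|=1$. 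I would prove exactly this.

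The key preliminary is a ``letter-removal'' lemma. Suppose $D\in Z'_{u,+}$ and $b\in D$ with $b+1\notin D$ (an ``arc-end''). Ordering the connected components of $D$ so that the one terminating at $b$ comes first yields a cyclically decreasing word for $d_D$ beginning with $s_b$; writing $d_D=s_b\,d_{D\setminus\{b\}}$, the word $d_D\cdot u$ is reduced, hence so is its suffix, which spells $d_{D\setminus\{b\}}u$, giving $l(d_{D\setminus\{b\}}u)=l(d_{D\setminus\{b\}})+l(u)$ and so $D\setminus\{b\}\in Z'_{u,+}$. Dually, using $A\in Z'_{u,-}\iff d_A\le_R u$ and a cyclically decreasing word for $d_D$ \emph{ending} with $s_c$ (where $c\in D$, $c-1\notin D$, an ``arc-start''), the corresponding factorization of the reduced word $u=d_D\cdot(d_D^{-1}u)$ gives $D\setminus\{c\}\in Z'_{u,-}$. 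Thus an arc-end may always be deleted from a member of $Z'_{u,+}$, and an arc-start from a member of $Z'_{u,-}$, remaining in the set system.

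For gradedness of $(Z'_{u,+},\subset)$ I would argue by contradiction. Assume $C\subsetcov C'$ with $|C'\setminus C|\ge2$. If some arc-end $b$ of $C'$ lay outside $C$, the lemma would give $C\subseteq C'\setminus\{b\}\subsetneq C'$ with $C'\setminus\{b\}\in Z'_{u,+}$, violating the cover; hence every arc-end of $C'$ lies in $C$, and (as $C\subseteq C'$) is then an arc-end of $C$ as well. Pick $x\in C'\setminus C$, let $[c,d]$ be the component of $C'$ containing $x$, so $d\in C$ and $x\ne d$, and let $y$ be the largest element of $[x,d]$ not in $C$. Then $[y+1,d]\subseteq C$, and since $y\notin C$ while $d$ is an arc-end of $C$, the interval $[y+1,d]$ is a full component of $C$; therefore $d_{[y+1,d]}u\ge_L u$ by the strong-commutativity splitting of $d_C$ (Lemmas~\ref{theo:strongly} and~\ref{theo:sc_xyz}). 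Also $[c,y-1]$ arises from the component $[c,d]$ of $C'$ by iterating arc-end removal, so $d_{[c,y-1]}u\ge_L u$. Feeding these, together with the unchanged remaining components of $C'$, back through the strong-commutativity splitting shows $d_{C'\setminus\{y\}}u\ge_L u$, i.e.\ $C'\setminus\{y\}\in Z'_{u,+}$; but $C\subsetneq C'\setminus\{y\}\subsetneq C'$, contradicting $C\subsetcov C'$. The case of $(Z'_{u,-},\subset)$ is the mirror argument, deleting an arc-start $y$ (take $y$ the smallest element of $[c,x]$ not in $C$, with $[c,y-1]$ a component of $C$ and $[y+1,d]$ obtained from $[c,d]$ by iterated arc-start removal).

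The genuinely delicate point is this last step: after the easy observation that no arc-end of $C'$ can be deleted to land strictly between $C$ and $C'$, one must still exhibit \emph{some} deletable element of $C'\setminus C$, whereas the removal lemma only licenses arc-ends. What makes it work is that cutting a component of $C'$ ``back down to'' $C$ exposes an honest component of $C$, whose membership in $Z'_{u,+}$ is then free, so the deleted element $y$ is sandwiched between a component of $C$ on one side and an arc-end-shortening of a component of $C'$ on the other, and strong commutativity glues the two verifications together.
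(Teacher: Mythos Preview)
Your argument is correct, but it proceeds quite differently from the paper.  The paper never unpacks the cyclic combinatorics of $Z'_{u,\pm}$; instead it observes that the set $D=\{d_A\mid A\subsetneq I\}$ is a strong order ideal of $W$, so that $D\cap(W/\{u\})$ and $D\cap[e,u]_R$ are order ideals of $W/\{u\}$ and $[e,u]_R$, respectively.  Since generalized quotients and (by Theorem~\ref{theo:CP_weakorderideal}) lower weak intervals both enjoy the Chain Property, so do these order ideals, and transporting them along the (anti-)isomorphisms $(\cdot u)$ and $\Psi_u$ yields the result for $Z_{u,+}$ and $Z_{u,-}$ directly.

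Your route---reducing to gradedness of $(Z'_{u,\pm},\subset)$ and proving that via an explicit arc-end/arc-start removal argument glued together with Lemmas~\ref{theo:strongly} and~\ref{theo:sc_xyz}---is more hands-on and entirely self-contained: it neither invokes the Chain Property for generalized quotients from \cite{MR946427} nor the paper's Theorem~\ref{theo:CP_weakorderideal}.  The trade-off is that your proof is specific to the cyclic structure of $\tilde A$ (the arc-end/arc-start language and the strong-disjointness of the two halves $[c,y-1]$, $[y+1,d]$), whereas the paper's argument would transfer verbatim to any order ideal of the form $D\cap(W/\{u\})$ or $D\cap[e,u]_R$ in an arbitrary Coxeter group.
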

\begin{proof}
	First we note a few immediate observations:
	\begin{itemize}
		\item
			For a poset $P$ and a subposet $Q\subset P$,
			if $A\subset P$ is an order ideal
			then $A\cap Q$ is an order ideal of $Q$.
		\item
			If a subset $X$ of a Coxeter group $W$ has the Chain Property and 
			$Y\subset X$ is an order ideal, then $Y$ also has the Chain Property.
	\end{itemize}
	
	Let $D = \{d_A\mid A\subsetneq I\}$.
	Since $D\subset W$ is an order ideal,
	the set $\{d_A\mid d_A\le_R u\} = D \cap [e,u]_R$ is an order ideal of $[e,u]_R$
	and hence has the Chain Property since
	so does $[e,u]_R$ as proved in Theorem \ref{theo:CP_weakorderideal}.
	Hence
	$Z_{u,-}$ also has the Chain Property since
	it is the image of $\{d_A\mid d_A\le_R u\}$
	under the the anti-isomorphism $\Psi_u\colon[e,u]_R\lra[e,u]_L ; x\mapsto x^{-1}u$.

	Similarly,
	the set $\{d_A\mid d_Au\ge_L u\} = D \cap (W/\{u\})$ has the Chain Property
	since
	it is an order ideal of $W/\{u\}$,
	which has the Chain Property \cite[Corollary 3.5]{MR946427}.
	Hence, 
	since $Z_{u,+}$ is the image of $\{d_A\mid d_Au\ge_L u\}$
	under the isomorphism $(\cdot u)\colon W/\{u\}\lra[u,\infty)_L$,
	we conclude that $Z_{u,+}$ has the Chain Property.
\end{proof}

From the isomorphism
$(Z_{u,+},\le) \simeq (Z'_{u,+},\subset)$
and the anti-isomorphism
$(Z_{u,-},\le) \underset{\text{anti}}{\simeq} (Z'_{u,-},\subset)$,
we have the Chain Property for $Z'_{u,\pm}$:

\begin{coro}\label{theo:CP_Z'+-}
	The sets $Z'_{u,+}$ and $Z'_{u,-}$ have the Chain Property.
	Namely, for any $A,B\in Z'_{u,\pm}$ with $A\subset B$,
	there exists a sequence
	$A=\exists C^{(0)}\subsetcov \exists C^{(1)} \subsetcov\dots\subsetcov \exists C^{(l)}=B$
	such that $C^{(i)}\in Z'_{u,\pm}$ for any $i$.
\end{coro}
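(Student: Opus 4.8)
The plan is to transport the Chain Property established for $Z_{u,+}$ and $Z_{u,-}$ in Proposition~\ref{theo:CP_Z+-} through the bijections $A\mapsto d_Au$ and $A\mapsto d_A^{-1}u$, which were noted at the beginning of this section to give a poset isomorphism $(Z'_{u,+},\subset)\simeq(Z_{u,+},\le)$ and a poset anti-isomorphism between $(Z'_{u,-},\subset)$ and $(Z_{u,-},\le)$ respectively. Since covering relations are preserved by any isomorphism and reversed by any anti-isomorphism, the statement should follow at once, the only slightly delicate point being that the covers appearing in Proposition~\ref{theo:CP_Z+-} are strong covers in $W$ and we want Boolean-lattice covers $\subsetcov$ on the other side; this gap is closed by a length computation.

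First I would handle $Z'_{u,+}$. Given $A\subset B$ in $Z'_{u,+}$, the isomorphism gives $d_Au\le d_Bu$, so Proposition~\ref{theo:CP_Z+-} furnishes a chain $d_Au=z^{(0)}\lecov z^{(1)}\lecov\dots\lecov z^{(l)}=d_Bu$ with every $z^{(i)}\in Z_{u,+}$. Writing $z^{(i)}=d_{C^{(i)}}u$ with $C^{(i)}\in Z'_{u,+}$ (uniquely, by bijectivity) we get $C^{(0)}=A$ and $C^{(l)}=B$, and it remains to see that each step is a genuine covering $C^{(i)}\subsetcov C^{(i+1)}$ in the Boolean lattice of subsets of $I$. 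Here one uses that $d_{C^{(i)}}u\ge_L u$ forces $l(z^{(i)})=l(d_{C^{(i)}})+l(u)=|C^{(i)}|+l(u)$; hence the strong cover $z^{(i)}\lecov z^{(i+1)}$, which increases length by exactly one, forces $|C^{(i+1)}|=|C^{(i)}|+1$, and since also $C^{(i)}\subsetneq C^{(i+1)}$ (transport of $<$) this means $C^{(i+1)}=C^{(i)}\cup\{j\}$ for some $j\notin C^{(i)}$.

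The case $Z'_{u,-}$ is symmetric, using the anti-isomorphism: given $A\subset B$ in $Z'_{u,-}$ we have $d_B^{-1}u\le d_A^{-1}u$ in $Z_{u,-}$, so Proposition~\ref{theo:CP_Z+-} provides a strong chain from $d_B^{-1}u$ to $d_A^{-1}u$ lying in $Z_{u,-}$; writing its members as $d_{D^{(i)}}^{-1}u$ with $D^{(0)}=B$ and $D^{(l)}=A$, and using the dual length identity $l(d_{D^{(i)}}^{-1}u)=l(u)-|D^{(i)}|$ (valid because $d_{D^{(i)}}^{-1}u\le_L u$), one sees that $|D^{(i)}|$ drops by exactly one at each step, so $D^{(i+1)}\subsetcov D^{(i)}$. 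Reindexing by $C^{(i)}:=D^{(l-i)}$ then yields the required chain $A=C^{(0)}\subsetcov C^{(1)}\subsetcov\dots\subsetcov C^{(l)}=B$ inside $Z'_{u,-}$.

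I do not expect any serious obstacle: the whole argument is transport of structure along the (anti-)isomorphisms, and the only mild care needed is the length bookkeeping identifying "covering inside $Z'_{u,\pm}$" with "covering in the Boolean lattice", which rests only on $l(d_X)=|X|$ together with the length-additivity of the products $d_X^{\pm 1}u$ guaranteed by membership in $Z_{u,\pm}$.
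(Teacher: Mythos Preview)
Your proposal is correct and follows exactly the approach the paper takes: the paper's proof consists of a single sentence invoking the poset isomorphism $(Z'_{u,+},\subset)\simeq(Z_{u,+},\le)$ and the anti-isomorphism $(Z'_{u,-},\subset)\simeq(Z_{u,-},\le)$ to transport the Chain Property from Proposition~\ref{theo:CP_Z+-}. Your length bookkeeping, verifying that strong covers in $W$ correspond under these bijections to Boolean covers $\subsetcov$, makes explicit a detail the paper leaves to the reader.
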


\section{Proof of the Pieri rule for $\kkss{\la}$}\label{sect:StrongSumPieri}

This section is devoted for the proof of
Theorem \ref{theo:StrongSumPieri} and \ref{theo:StrongSumPieri'}.

\subsection{Outline}\label{sect:StrongSumPieri::Outline}

	Let $w = w_{\la} \in \Wo$ be the affine Grassmannian element corresponding to $\la$.
	Recall the Pieri rule for $\kks{\la}$ (Definition \ref{defi:KkPieri}):
	$$
		\kks{v} h_i = 
			\sum_{\substack{
				A \subset I, |A| = i \\
				d_A*v \in \Wo
			}}
			(-1)^{i-(l(d_A*v)-l(v))}
			\kks{d_A*v}.
	$$
	Summing this up over
	$v\in \Wo\cap[e,w]$ and
	$i \in\{0,1,\dots,r\}$,
	we have
	$$
		\kkss{w} \wth_r = 
			\sum_{\substack{v\le w \\ v\in \Wo}}
			\sum_{\substack{
				A \subset I, |A| \le r \\
				d_A*v \in \Wo
			}}
			(-1)^{|A|-(l(d_A*v)-l(v))}
			\kks{d_A*v},
	$$
	and its coefficient of $\kks{u}$ (for $u\in \Wo$) is
	\begin{equation}\label{eq:gu_coeff}
		[\kks{u}](\kkss{w} \wth_r) =
			\sum_{\substack{
				v \le w \\
				v \in \Wo
			}}
			\sum_{\substack{
				A \subset I, |A| \le r \\
				u = d_A*v
			}}
			(-1)^{|A|-(l(u)-l(v))}.
	\end{equation}
	
	We shall illustrate,
	in the example below,
	that
	if the summation above is not empty then
	there is a ``matching'' on the set of appearing $(A,v)$'s
	with an unmatched element,
	and the corresponding summands cancel accordingly,
	and consequently the value of the summation is equal to 1.
	
\begin{exam}
	Let $k=3$ and $u=s_{310}=w_{\la} \in \aG[4]$ where $\la=(2,1)\in\Pk[3]$.
	Table \ref{table:(v,A)} lists the pairs $(v,A)$ such that
	$d_A*v=u$, organized according to the size of $A$.
	Apparently there are the same number of 
	pairs $(v,A)$ with $|A|=r'$ and $(-1)^{|A|-(l(u)-l(v))}=+1$,
	and those with $|A|=r'$ and $(-1)^{|A|-(l(u)-l(v))} = -1$,
	for each fixed $r'>0$.
	Furthermore, introducing the condition $v\le w$ for $w=s_{210}$, say,
	we see that
	the set of the pairs $(v,A)$ with 
	$d_A*v=u$ and $v\le w$ is
	$\{(s_{10},\{3\}), (s_{0},\{1,3\}), (s_{10},\{1,3\})\}$,
	and that
	the number of such pairs $(v,A)$ with $|A|=r'$ and $(-1)^{|A|-(l(u)-l(v))}=+1$
	and those with $|A|=r'$ and $(-1)^{|A|-(l(u)-l(v))}=-1$
	coincide whenever $r'\neq 1$, and differ by $1$ when $r'=1$.
\end{exam}
	
\begin{figure}
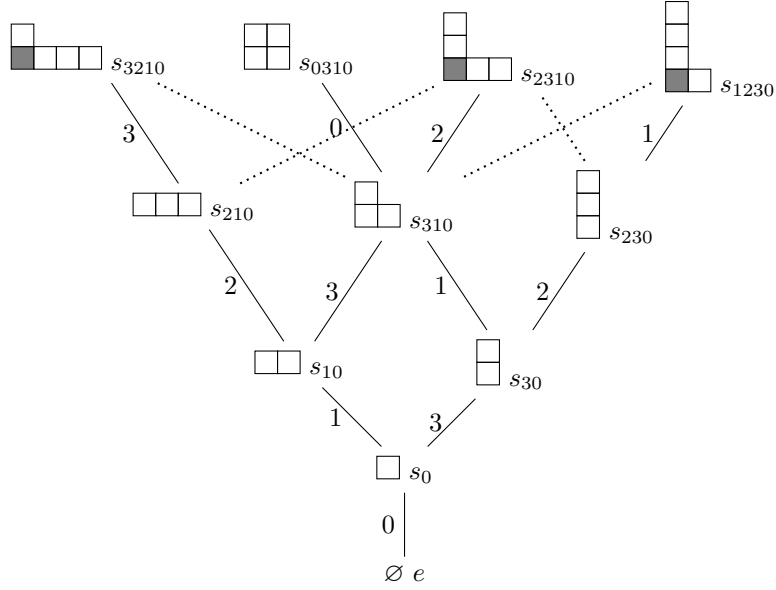

	\centering
	\tikzitem[1.4]{
		\node (0) at (0,0) {$\emptyset$ $e$};
    	\node (1) at (0,1) {$\sk[0.3]{1}{}$ $s_0$};
    	\node (2) at (-1,2) {$\sk[0.3]{2}{}$ $s_{10}$};
    	\node (11) at (1,2) {$\sk[0.3]{1,1}{}$ $s_{30}$};
    	\node (3) at (-2,3.5) {$\sk[0.3]{3}{}$ $s_{210}$};
    	\node (21) at (0,3.5) {$\sk[0.3]{2,1}{}$ $s_{310}$};
    	\node (111) at (2,3.5) {$\sk[0.3]{1,1,1}{}$ $s_{230}$};
    	\node (31) at (-3,5) {$\sk[0.3]{4,1}{1}$ $s_{3210}$};
    	\node (22) at (-1,5) {$\sk[0.3]{2,2}{}$ $s_{0310}$};
    	\node (211) at (1,5) {$\sk[0.3]{3,1,1}{1}$ $s_{2310}$};
    	\node (1111) at (3,5) {$\sk[0.3]{2,1,1,1}{1}$ $s_{1230}$};
    	\draw 
			(0) -- node[left]{$0$} (1)
    		(1) -- node[left]{$1$} (2)
    		(1) -- node[left]{$3$} (11)
    		(2) -- node[left]{$2$} (3)
    		(2) -- node[left]{$3$} (21)
    		(11) -- node[left]{$1$} (21)
    		(11) -- node[left]{$2$} (111)
    		(3) -- node[left]{$3$} (31)
    		(21) -- node[left]{$0$} (22)
    		(21) -- node[left]{$2$} (211)
    		(111) -- node[left]{$1$} (1111);
    	\draw[thick, dotted]
    		(3) -- (211)
    		(21) -- (31)
    		(21) -- (1111)
    		(111) -- (211);

	}
	\label{fig:weakposet_k=3}
	\caption{
		The poset of 4-cores (and corresponding elements in $\tilde{S_4^\circ}$) 
		up to those of size $4$.
		The left weak covers are represented by solid lines,
		and the strong covers are dotted or solid lines.
		A solid edge labelled with $i$ corresponds to the left multiplication by $s_i$.
	}
\end{figure}
\begin{table}[h]
    \caption{
    	The list of $(v,A)$ such that $d_A*v=u$, where $u=s_{310}$.
    }
    \label{table:(v,A)}
    \centering
    \begin{tabular}{ccc}
    	& $(v,A)$ & $(-1)^{|A|-(l(u)-l(v))}$ \\ \hline
    	$|A|=0$ & $(s_{310},\emptyset)$ & $+1$ \\ \hline
    	$|A|=1$ & $(s_{30},\{1\})$ & $+1$ \\
    	 & $(s_{310},\{1\})$ & $-1$ \\
    	 & $(s_{10},\{3\})$ & $+1$ \\
    	 & $(s_{310},\{3\})$ & $-1$ \\ \hline
    	$|A|=2$ & $(s_{0},\{1,3\})$ & $+1$ \\
    	 & $(s_{10},\{1,3\})$ & $-1$ \\
    	 & $(s_{30},\{1,3\})$ & $-1$ \\
    	 & $(s_{310},\{1,3\})$ & $+1$ \\ \hline
    \end{tabular}
\end{table}

According to the observation above,
we let
\begin{align*}
	X_{A,u} &= \{v\in W \mid d_A*v = u\} = \{v\in W \mid \phi_{d_A}(v) = u \},\\
	Y_{A,u} &= X_{A,u} \cap [e,w].
\end{align*}
for $u\in \Wo$ and $A\subsetneq I$.
Note that, for any $v\in X_{A,u}$,
Lemma \ref{theo:b_i}(1) implies $v\le_L u$,
and hence it follows from $u\in\Wo$ that $v\in \Wo$.
Hence
\begin{equation}\label{eq:gu_coeff_sumsplit}
	[\kks{u}](\kkss{w} \wth_r)
		= \sum_{|A|\le r} \sum_{v\in Y_{A,u}} (-1)^{|A|-(l(u)-l(v))}.
\end{equation}

The flow of the proof is as follows:
\begin{enumerate}[label=Step \arabic*.]
	\item 
		Every element of $X_{A,u}$ has the form $d_B^{-1}u$ with $B\subset A$,
		and thereby $X_{A,u}$ is anti-isomorphic to a subposet of $[\emptyset, A]$,
		denoted later by $X'_{A,u}$,
		by $d_B^{-1}u\mapsto B$.
	\item 
		The poset $X'_{A,u}\subset [\emptyset, A]$ has the minimum element $B$ and is a boolean poset; $X'_{A,u}=[B,A]$.
	\item 
		The subset $Y_{A,u}$ of $X_{A,u}$ being an order ideal,
		its image $Y'_{A,u}$ under $X_{A,u}\simeq X'_{A,u}$ is an order filter of $X'_{A,u}$.
		Moreover $Y'_{A,u}$ is closed under intersection, reflecting join-closedness of $Y_{A,u}$.
		Hence $Y'_{A,u}$ is also a boolean lattice.
		Therefore,
		the value of the summation over $v\in Y_{A,u}$ in (\ref{eq:gu_coeff_sumsplit}) 
		is 0 unless $|Y_{A,u}|=1$
		since its summands cancel out,
		and $1$ if $|Y_{A,u}|=1$.
	\item 
		If $u\le d_{B}w$ for some $B\subsetneq I$ with $|B|= r$ and $d_Bw\ge_L w$,
		then there uniquely exists $A$ such that $|Y_{A,u}|=1$,
		and hence the value of the right-hand side in (\ref{eq:gu_coeff_sumsplit}) is $1$.
		If there does not exist such $B$, then neither does such $A$,
		and hence (\ref{eq:gu_coeff_sumsplit}) is $0$.
\end{enumerate}

\begin{rema}
	The set $X_{A,u}$ is a fiber of the Demazure action $\phi_{d_A}$.
	In Step 2 (Corollary \ref{theo:YA_boolean}) this fiber is shown to be a boolean poset.
	Meanwhile, for the longest element $w_J$ of a finite parabolic subgroup $W_J$,
	any fiber of its Demazure action $\phi_{w_J}$ is a parabolic coset $W_Jx$, whence isomorphic to $W_J$.
	More generally it might be interesting to find fibers of the Demazure action $\phi_w$ of an arbitrary element $w$.
\end{rema}

\subsection{Proof of Theorem \ref{theo:StrongSumPieri} and \ref{theo:StrongSumPieri'}}

We fix $u\in \Wo$.

\subsubsection{Step 1}
	
	We fix $A\subsetneq I$.
	Since $Y_{A,u}\subset X_{A,u}$,
	the summation over $v$ in (\ref{eq:gu_coeff_sumsplit}) is 0 when $X_{A,u}=\emptyset$.
	We hence assume $X_{A,u}\neq\emptyset$,
	since otherwise such $A$ does not contribute to the value of the right-hand side of (\ref{eq:gu_coeff_sumsplit}).
	Take arbitrary $v\in X_{A,u}$.
	From Lemma \ref{theo:b_xyz} and the definition of $X_{A,u}$ we have
	\begin{enumerate}[label=\textup{(\arabic*)}]
		\item
			$v, d_A^{-1}u \le_L u$,
		\item
			$d_A^{-1}u\le v$.
	\end{enumerate}
	From Proposition \ref{theo:anti_isom} (1) and (1) above,
	(2) is equivalent to 
	\begin{enumerate}[label=\textup{(\arabic*)}]
		\setcounter{enumi}{2}
		\item
			$uv^{-1}\le d_A$.
	\end{enumerate}
	The Subword Property and (3) imply $uv^{-1}=d_B$, or equivalently $v=d_B^{-1}u$,
	for some $B\subset A$.
	We have $A,B\in Z'_{u,-}$ from (1).

	We let
	\begin{align*}
		X'_{A,u} &= \{ B\subsetneq I \mid d_B^{-1}u \in X_{A,u}\}, \\
		Y'_{A,u} &= \{ B\subsetneq I \mid d_B^{-1}u \in Y_{A,u}\}.
	\end{align*}
	The argument above is restated as follows (see also Figure \ref{fig:XYZ_comm_diag}):
\begin{lemm}\label{theo:XA_XA'}
	\begin{enumerate}[label=\textup{(\arabic*)}]
		\item
			$X_{A,u}\neq\emptyset \implies A\in Z'_{u,-}$.
		\item
			$X_{A,u}\subset [d_A^{-1}u, u]$.
		\item
			$(X'_{A,u}, \subset)$ and $(X_{A,u},\le)$ are anti-isomorphic by $B\mapsto d_B^{-1}u$.
		\item
			$X'_{A,u}\subset [\emptyset,A]$.
		\item
			$X'_{A,u} \subset Z'_{u,-}$.
	\end{enumerate}
\end{lemm}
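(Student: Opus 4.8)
The plan is to obtain all five items as a direct unpacking of Lemma \ref{theo:b_xyz} together with the Subword Property, in essentially one application. Fix $A\subsetneq I$ and suppose $X_{A,u}\neq\emptyset$; choose $v\in X_{A,u}$, so that $\phi_{d_A}(v)=d_A*v=u$. I would apply Lemma \ref{theo:b_xyz} with $(x,y,z)=(d_A,v,u)$, for which $y'=x^{-1}z=d_A^{-1}u$ and $x'=zy^{-1}=uv^{-1}$. Its parts (2), (5) and (4) then yield, respectively, $d_A^{-1}u\le_L u$ and $v\le_L u$; $d_A^{-1}u\le v$; and $uv^{-1}\le d_A$.

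From $d_A^{-1}u\le_L u$ we read off $A\in Z'_{u,-}$, which is part (1). Combining $d_A^{-1}u\le v$ with $v\le_L u$ (hence $v\le u$) gives $v\in[d_A^{-1}u,u]$, which is part (2); note that $v\le_L u$ also shows $X_{A,u}\subset Z_{u,-}$.

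Next I would feed $uv^{-1}\le d_A$ into the Subword Property. Since $d_A$ is cyclically decreasing with letter set exactly $A$, any subword of a reduced (cyclically decreasing) word for $d_A$ is again cyclically decreasing with letter set some $B\subset A$; hence $uv^{-1}=d_B$, that is, $v=d_B^{-1}u$ with $B\subset A$. This realizes $X'_{A,u}\subset[\emptyset,A]$ (part (4)), and since $d_B^{-1}u=v\le_L u$ it also gives $B\in Z'_{u,-}$, i.e.\ $X'_{A,u}\subset Z'_{u,-}$ (part (5)). For part (3), note that $B\mapsto d_B^{-1}u$ is a bijection $X'_{A,u}\to X_{A,u}$ by the very definition of $X'_{A,u}$ (injectivity being $d_B\neq d_C$ for $B\neq C$), and that it is order-reversing because it is the restriction of the poset anti-isomorphism $(Z'_{u,-},\subset)\to(Z_{u,-},\le)$, $B\mapsto d_B^{-1}u$, recorded just after the definition of $Z'_{u,-}$, using parts (5) and $X_{A,u}\subset Z_{u,-}$ above.

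I do not anticipate a real obstacle in this lemma: the only step needing a moment's care is that a subword of a cyclically decreasing reduced word remains cyclically decreasing, which forces $uv^{-1}$ to have the precise form $d_B$ with $B\subset A$ rather than merely $uv^{-1}\le d_A$; everything else is a bookkeeping translation of Lemma \ref{theo:b_xyz}. The genuine work is postponed to Steps 2--4, which use the two remaining facts --- that $X'_{A,u}$ is an interval $[B,A]$ and that $Y'_{A,u}$ is intersection-closed --- to collapse the alternating sum in $(\ref{eq:gu_coeff_sumsplit})$.
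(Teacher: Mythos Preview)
Your proposal is correct and follows essentially the same approach as the paper: the paper also unpacks Lemma~\ref{theo:b_xyz} to obtain $v,\,d_A^{-1}u\le_L u$, $d_A^{-1}u\le v$, and $uv^{-1}\le d_A$, then invokes the Subword Property to write $uv^{-1}=d_B$ with $B\subset A$. The only cosmetic difference is that for the order-reversing in part~(3) the paper cites Proposition~\ref{theo:anti_isom}(1) together with the Subword Property directly, whereas you cite the pre-recorded anti-isomorphism $(Z'_{u,-},\subset)\to(Z_{u,-},\le)$ from the start of Section~\ref{sect:PropWS}; these are the same fact.
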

\begin{proof}
	It remains to show that the mapping $B\mapsto d_B^{-1}u$ in (3) is order-reversing,
	which follows from Proposition \ref{theo:anti_isom} (1) and the Subword Property.
\end{proof}

\begin{figure}
	\caption{Relation between $Z_{u,-},Z'_{u,-},X_{A,u},X'_{A,u},Y_{A,u},Y'_{A,u}$.}
	\label{fig:XYZ_comm_diag}
    \[
    \begin{array}{ccccccc}
    	 & & B & \longmapsto & d_B^{-1}u & & \\
    	 & & \rotatebox{-90}{$\in$} & & \rotatebox{-90}{$\in$} & & \\
    	\text{$[\emptyset,I)$} & \supset & Z'_{u,-} & \underset{\text{anti}}{\simeq} & Z_{u,-} & \subset & [e,u]_L \\
    	\rotatebox{90}{$\subset$} & & \rotatebox{90}{$\subset$} & & \rotatebox{90}{$\subset$} & & \rotatebox{90}{$\subset$} \\
    	 \text{$[\emptyset, A]$} & \supset & X'_{A,u} & \underset{\text{anti}}{\simeq} & X_{A,u} & \subset & [e,u]_L \cap [d_A^{-1}u,u] \\
    	 & & \rotatebox{90}{$\subset$} & & \rotatebox{90}{$\subset$} & & \\
    	 & & Y'_{A,u} & \underset{\text{anti}}{\simeq} & Y_{A,u} & = & X_{A,u}\cap [e,w]
    \end{array}
    \]
\end{figure}

\subsubsection{Step 2 and 3}\label{sect:StrongSumPieri::Proof::Step23}

Let us start with an example to describe the situation.

\begin{exam}\label{exam:XA}
	Let $k=5$, $\la=(5,3,2,1)$, $\mu=(5,2,2,2)$, $u=w_\la$ and $w=w_\mu$
	(see Figure \ref{fig:XA}).
	When $A=\{5,0,1\}$%
	\footnote{In this example we follow the cyclic ordering $3<4<5<0<1$ on $I\sm\{2\}$,
		as we see $i_u^-=2$,
		i.e.\,every element of $Z'_{u,-}$ is a subset of $I\sm\{2\}$.
	},
	for example,
	$X_{A,u}=Y_{A,u}=\{s_{1}u,s_{01}u,s_{51}u,s_{501}u\}$ and
	$X'_{A,u}=Y'_{A,u}=[\{1\},\{5,0,1\}]$.
	Similarly, when $A=\{3,5,1\}$ we see
	$X'_{A,u}=[\emptyset,\{3,5,1\}]$ and
	$Y'_{A,u}=[\{1\},\{3,5,1\}]$.
\end{exam}

\begin{figure}
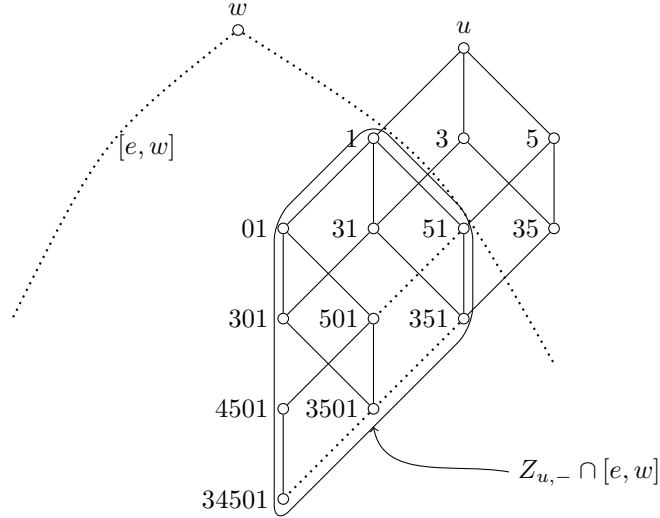

	\centering
	\caption{
		Each vertex labelled with $i_1\dots i_m$ represents
		$s_{i_1}\dots s_{i_m}u \in Z_{u,-}$.
		Left covers are represented by solid edges,
		and strong covers are dotted or solid edges.
	}
	\label{fig:XA}
	\tikzitem[1.2]{
		\tikzstyle{vertex}=
			[draw,circle,fill=white,minimum size=4pt,inner sep=0pt]
		\node [vertex,label=above:$u$] (u) at (0,0) {};
		\node [vertex,label=left:$1$] (1) at (-1,-1) {};
		\node [vertex,label=left:$3$] (3) at (0,-1) {};
		\node [vertex,label=left:$5$] (5) at (1,-1) {};
		\node [vertex,label=left:$01$] (01) at (-2,-2) {};
		\node [vertex,label=left:$31$] (13) at (-1,-2) {};
		\node [vertex,label=left:$51$] (15) at (0,-2) {};
		\node [vertex,label=left:$35$] (35) at (1,-2) {};
		\node [vertex,label=left:$301$] (301) at (-2,-3) {};
		\node [vertex,label=left:$501$] (501) at (-1,-3) {};
		\node [vertex,label=left:$351$] (135) at (0,-3) {};
		\node [vertex,label=left:$4501$] (4501) at (-2,-4) {};
		\node [vertex,label=left:$3501$] (5301) at (-1,-4) {};
		\node [vertex,label=left:$34501$] (34501) at (-2,-5) {};
		
		\draw (u) to (1);
		\draw (u) to (3);
		\draw (u) to (5);
		\draw (1) to (01);
		\draw (1) to (13);
		\draw (1) to (15);
		\draw (3) to (13);
		\draw (3) to (35);
		\draw (5) to (15);
		\draw (5) to (35);
		\draw (01) to (301);
		\draw (01) to (501);
		\draw (13) to (301);
		\draw (13) to (135);
		\draw (15) to (135);
		\draw (35) to (135);
		\draw (301) to (5301);
		\draw (501) to (5301);
		\draw (501) to (4501);
		\draw (4501) to (34501);
		
		\draw [thick,dotted] (15) -- (501);
		\draw [thick,dotted] (135) -- (5301) -- (34501);

		\node [vertex,label=above:$w$] (w) at (-2.5, 0.2) {};
		\draw [thick, dotted] (w) .. controls (-.3,-1.2) .. (1,-3.5);
		\draw [thick, dotted] (w) 
			.. controls (-4,-1) .. 
			node [right] {$[e,w]$}
			(-5,-3);
			
		
		\draw [rounded corners=8pt]
			(-1,-.8) -- (-2.1,-1.9) -- (-2.1,-5.3) -- (.1,-3.1) -- (.1,-1.9) -- cycle;
		\draw [<-] (-1,-4.2)
			to [out=-80,in=170]
			(.5,-4.7) node [right] {$Z_{u,-}\cap[e,w]$};
	}
\end{figure}
	
\begin{lemm}\label{theo:XA_convex}
	$X_A$ and $Y_A$ are convex under the strong order.
	Namely, if $v\le v' \le v''$ and $v, v''\in X_A$ (resp.\ $Y_A$) then $v'\in X_A$ (resp.\ $Y_A$).
\end{lemm}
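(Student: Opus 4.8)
The plan is to recognize $X_A$ as the fiber of the Demazure action $\phi_{d_A}$ over $u$ and to exploit the general fact that fibers of order-preserving maps between posets are convex. Recall from Section \ref{sect:Prel::Coxeter::0Hecke} that $\phi_{d_A}(v) = d_A * v$, so that $X_A = \{v \in W \mid \phi_{d_A}(v) = u\}$, while $Y_A = X_A \cap [e,w]$ by definition.

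First I would invoke Lemma \ref{theo:b_i}(2), which asserts that $\phi_{d_A}$ is order-preserving under the strong order. Hence, given $v \le v' \le v''$ with $v, v'' \in X_A$, applying $\phi_{d_A}$ yields $u = \phi_{d_A}(v) \le \phi_{d_A}(v') \le \phi_{d_A}(v'') = u$, which forces $\phi_{d_A}(v') = u$, i.e.\ $v' \in X_A$. This settles the convexity of $X_A$.

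For $Y_A$ it then suffices to note that the strong (Bruhat) interval $[e,w]$ is convex by the very definition of an interval, and that an intersection of two convex subsets of a poset is convex; since $Y_A = X_A \cap [e,w]$, it is convex. Equivalently, one argues directly: if $v \le v' \le v''$ with $v, v'' \in Y_A$, then $v' \in X_A$ by the previous paragraph, and $v' \le v'' \le w$ gives $v' \in [e,w]$, whence $v' \in Y_A$.

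The only point requiring care is to invoke the strong-order monotonicity of $\phi_{d_A}$ supplied by Lemma \ref{theo:b_i}(2) rather than its left-weak-order behavior (which is what the definition of $\phi$ makes most visible); beyond that there is no genuine obstacle, and I would keep the whole argument to the few lines sketched above.
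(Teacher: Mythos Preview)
Your proof is correct and is exactly the argument the paper has in mind: the paper's own proof consists of the single line ``It follows from Lemma \ref{theo:b_i}(2),'' which is precisely the monotonicity of $\phi_{d_A}$ under the strong order that you spell out. Your treatment of $Y_A$ via $Y_A = X_A \cap [e,w]$ is the natural completion of that sentence.
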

\begin{proof}
	It follows from Lemma \ref{theo:b_i}(2).
\end{proof}

\begin{rema}
It is not a very immediate consequence of Lemma \ref{theo:XA_convex} that $X'_{A,u}$ and $Y'_{A,u}$ are convex 
in the boolean poset $[\emptyset, I]$,
yet it is shown to be true in Corollary \ref{theo:YA_boolean}.
\end{rema}
		
In this section we write
$\{i_1,\dots,i_m\}_{<}$ to denote the set $\{i_1,\dots,i_m\}$
for which 
the condition that $(i_1,\dots,i_m)$ is cyclically increasing is imposed.

\begin{lemm}\label{theo:XA_cap}
	\begin{enumerate}[label=\textup{(\arabic*)}]
		\item
			$B,C\in X'_{A,u} \implies B\cap C\in X'_{A,u}$.
		\item
			$B,C\in Y'_{A,u} \implies B\cap C \in Y'_{A,u}$.
	\end{enumerate}
\end{lemm}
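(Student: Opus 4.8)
The plan is to deduce (2) from (1), and to prove (1) by transporting it to a statement about strong joins in $Z_{u,-}$ and then inducting on $l(d_A)$. Throughout write $v_X=d_X^{-1}u$.

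For (2): if $B,C\in Y'_{A,u}$ then in particular $B,C\in X'_{A,u}$, so, granting (1), $B\cap C\in X'_{A,u}$; it remains to see $v_{B\cap C}\le w$. By Lemma~\ref{theo:XA_XA'}(5) we have $B,C\in Z'_{u,-}$, so Proposition~\ref{theo:Zu_props}(4) gives $v_{B\cap C}=v_B\vee v_C$ (strong join). Since $v_B,v_C\in Y_{A,u}\subset[e,w]$, the element $w$ is a common upper bound of $v_B$ and $v_C$, so $v_{B\cap C}$, being the least upper bound, satisfies $v_{B\cap C}\le w$, i.e.\ $B\cap C\in Y'_{A,u}$.

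For (1): again $B,C\in Z'_{u,-}$, so $B\cap C\in Z'_{u,-}$ and $v_{B\cap C}=v_B\vee v_C$ by Proposition~\ref{theo:Zu_props}(2),(4); thus it suffices to prove $\phi_{d_A}(v_{B\cap C})=u$. One inequality is immediate: $v_{B\cap C}\ge v_B$, so by Lemma~\ref{theo:b_i}(2), $\phi_{d_A}(v_{B\cap C})\ge\phi_{d_A}(v_B)=u$. I would prove $\phi_{d_A}(v_{B\cap C})\le u$ by induction on $l(d_A)=|A|$, the case $X_{A,u}=\emptyset$ being vacuous. Assume $X_{A,u}\ne\emptyset$, let $i$ be the first letter of a cyclically decreasing word for $A$, and set $A'=A\sm\{i\}$, so $d_A=s_id_{A'}$ is reduced and $\phi_{d_A}=\phi_{s_i}\circ\phi_{d_{A'}}$. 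Then $s_iu<u$ (otherwise $\phi_{s_i}^{-1}(u)=\emptyset$ forces $X_{A,u}=\emptyset$), so $\phi_{s_i}^{-1}(u)=\{u,s_iu\}$ with $s_iu\in\Wo$, giving a disjoint decomposition $X_{A,u}=X_{A',u}\cup X_{A',s_iu}$. Since $\phi_{s_i}$ is order-preserving and $\phi_{s_i}(u)=u$, it suffices to show $\phi_{d_{A'}}(v_{B\cap C})\le u$. If $v_B,v_C$ both lie in $X_{A',u}$ (so $i\notin B,C$ and $B\cap C\subset A'$), the induction hypothesis for $(A',u)$ gives $\phi_{d_{A'}}(v_{B\cap C})=u$. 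If both lie in $X_{A',s_iu}$, then identifying this fiber with $\{v_{\{i\}\cup B''}:B''\in X'_{A',s_iu}\}$ via $d_{\{i\}\cup B''}=s_id_{B''}$ (and noting that $v_{B\cap C}$ is the strong join of $v_B,v_C$ whether one forms it relative to $u$ or to $s_iu$), the induction hypothesis for $(A',s_iu)$ gives $\phi_{d_{A'}}(v_{B\cap C})=s_iu<u$.

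The remaining case, $v_B\in X_{A',u}$ and $v_C\in X_{A',s_iu}$ (or vice versa), is the crux: then $i\notin B$, $i\in C$, so $B\cap C\subset A'$, and since $\phi_{d_{A'}}(v_{B\cap C})\ge\phi_{d_{A'}}(v_B)=u>s_iu$ we must prove the equality $\phi_{d_{A'}}(v_{B\cap C})=u$, i.e.\ that $v_{B\cap C}$ lies in $X_{A',u}$ and not merely in $X_{A,u}$. Granting that $v_{C\sm\{i\}}\in X_{A',u}$, the induction hypothesis for $(A',u)$ applied to $v_B$ and $v_{C\sm\{i\}}$ finishes, since $B\cap(C\sm\{i\})=B\cap C$. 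So everything reduces to showing that passing from the basepoint $u$ to the descent $s_iu<u$ does not split the fiber incompatibly — concretely, that the minimal elements of $X'_{A',u}$ and of $X'_{A',s_iu}$ coincide whenever both fibers are nonempty. Proving this compatibility is the main obstacle; I expect to establish it by a direct reduced-word computation, writing $v_C=s_{j_1}\cdots s_{j_m}u$ with $\{j_1,\dots,j_m\}_{<}=C$, peeling off the letter $s_i$, and comparing Demazure remainders under $d_{A'}$ using the Lifting Property. Everything else is assembled from Lemma~\ref{theo:dA_join}/Proposition~\ref{theo:Zu_props}, Lemma~\ref{theo:b_i}, Lemma~\ref{theo:b_xyz}, and the Lifting Property.
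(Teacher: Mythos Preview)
Your deduction of (2) from (1) via Proposition~\ref{theo:Zu_props}(4) is correct and is exactly how the paper proceeds. The gap is in (1), and it is precisely the mixed case you flag.

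You peel off the \emph{leftmost} letter of $d_A$: taking $i$ with $i+1\notin A$, you write $d_A=s_id_{A'}$ so that $\phi_{d_A}=\phi_{s_i}\circ\phi_{d_{A'}}$, which splits $X_{A,u}$ into $X_{A',u}\sqcup X_{A',s_iu}$. Your proposed reduction of the mixed case hinges on showing $v_{C\setminus\{i\}}\in X_{A',u}$, but this already fails at the level of membership in $Z'_{u,-}$: since $i+1\notin C$, one has $d_{C\setminus\{i\}}\le_L d_C$, not $\le_R d_C$, so $d_C\le_R u$ does \emph{not} give $d_{C\setminus\{i\}}\le_R u$. (For instance with $k=2$, $u=s_1s_0$, $A=C=\{0,1\}$, $i=1$, one finds $\{0\}\notin Z'_{u,-}$.) The statement $C\setminus\{i\}\in X'_{A',u}$ does turn out to hold whenever the mixed case actually occurs, but only because $X'_{A,u}$ is eventually an interval $[\min X'_{A,u},A]$ with $i\notin\min X'_{A,u}$ --- which is exactly what this lemma is used to prove. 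So the ``direct reduced-word computation'' you anticipate would have to reprove essentially the whole structure result, and your reformulation in terms of minimal elements of $X'_{A',u}$ and $X'_{A',s_iu}$ does not make this easier.

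The paper sidesteps the issue by peeling from the \emph{other} end. Take $i_1$ to be the cyclically \emph{smallest} element of $A$, so that $d_A=d_{A'}s_{i_1}$ and $\phi_{d_A}=\phi_{d_{A'}}\circ\phi_{i_1}$ with $A'=A\setminus\{i_1\}$. Now for any $B\in X'_{A,u}$ one has $d_{B\setminus\{i_1\}}\le_R d_B\le_R u$, so $B\setminus\{i_1\}\in Z'_{u,-}$ automatically; a short argument then gives $\phi_{i_1}(v_B)=v_{B\setminus\{i_1\}}$ and $B\cup\{i_1\}\in Z'_{u,-}$ in all cases. Consequently $\phi_{i_1}$ carries \emph{all} of $X_{A,u}$ into the single fiber $X_{A',u}$, there is no mixed case, and the induction closes directly. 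The moral: which end of the cyclically decreasing word you peel determines whether deletions stay right-weakly below $u$, and only the ``small'' end works.
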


\begin{proof}
	(1)
	We prove it by induction on $|A|$.
	The base case $A=\emptyset$ is clear.
	Assume $|A|=m>0$.
	Write $A=\{i_1,\dots,i_m\}_{<}$.
	We need to show
	$\phi_{d_A}(d_{B\cap C}^{-1}u)=u$
	if $\phi_{d_A}(d_{B}^{-1}u)=u$ and $\phi_{d_A}(d_{B}^{-1}u)=u$ for $B,C\subset A$.
	Note that $B\cap C\in Z'_{u,-}$ by Lemma \ref{theo:XA_XA'}(5).
	Let $A'=A\sm\{i_1\}, B'=B\sm\{i_1\}$, $C'=C\sm\{i_1\}$,
	$B''=B\cup\{i_1\}$ and $C''=C\cup\{i_1\}$.
	Note that $\phi_{d_A}=\phi_{i_m}\dots\phi_{i_1} = \phi_{d_{A'}}\phi_{i_1}$.
	
	\vspace{2mm}
	\noindent\underline{\textit{Claim 1.}}
	(a) 
	$\phi_{i_1}(d_B^{-1}u)=d_{B'}^{-1}u$ and
	$\phi_{i_1}(d_C^{-1}u)=d_{C'}^{-1}u$.
	(b) $B'',C''\in Z'_{u,-}$.
	
	\vspace{1mm}
	\noindent\textit{Proof of Claim 1.}
	We only give a proof of the statement for $B$ since that for $C$ is the same.
	
	(Case 1) When $i_1\in B$, we see
	$d_{B''}^{-1}u = d_B^{-1}u = s_{i_1}d_{B'}^{-1}u \lecov d_{B'}^{-1}u$,
	and hence both (a) and (b) is clear.
	
	(Case 2) When $i_1\notin B$,
	we claim that $s_{i_1}d_{B}^{-1}u < d_B^{-1}u$;
	suppose, on the contrary, $s_{i_1}d_{B}^{-1}u > d_B^{-1}u$.
	Then we have $s_{i_1}d_{B}^{-1}u \not\le_L u$ since 
	$l(s_{i_1}d_{B}^{-1}u) > l(u) - l(s_{i_1}d_{B}^{-1})$.
	On the other hand,
	$u=\phi_{d_A}(d_B^{-1}u) = \phi_{d_{A'}}(s_{i_1}d_{B}^{-1}u)$
	since $s_{i_1}d_{B}^{-1}u > d_B^{-1}u$,
	and therefore $s_{i_1}d_{B}^{-1}u\le_L u$ by Lemma \ref{theo:b_xyz},
	which is in contradiction.	
	
	Therefore $s_{i_1}d_{B}^{-1}u < d_B^{-1}u$.
	Now (a) is clear since $d_B^{-1}u = d_{B'}^{-1}u$,
	and (b) follows from $d_{B''}^{-1}u = s_{i_1}d_{B}^{-1}u$.
	\textit{Claim 1 is proved.}
	
	\vspace{2mm}
	\noindent\underline{\textit{Claim 2.}}
	$\phi_{i_1}(d_{B\cap C}^{-1}u)=d_{B'\cap C'}^{-1}u$.
	
	\vspace{1mm}
	\noindent\textit{Proof of Claim 2.}
	By Claim 1(b) and Proposition \ref{theo:Zu_props} (2), we have $B''\cap C'' \in Z'_{u,-}$,
	that is, $u\ge_L d_{B''\cap C''}^{-1}u$.
	Since $B''\cap C'' = (B'\cap C')\cup\{i_1\}$, we have
	$d_{B''\cap C''}^{-1} = s_{i_1} d_{B'\cap C'}^{-1}\gecov d_{B'\cap C'}^{-1}$,
	and hence $d_{B''\cap C''}^{-1}u = s_{i_1} d_{B'\cap C'}^{-1}u \lecov d_{B'\cap C'}^{-1}u$ by Lemma \ref{theo:weakorder_xyz}.
	Noting that $B\cap C = B'\cap C'$ or $B''\cap C''$, 
	in either case $\phi_{i_1}(d_{B\cap C}^{-1}u) = d_{B'\cap C'}^{-1}u$.
	\textit{Claim 2 is proved.}

	\vspace{2mm}
	\noindent\underline{\textit{Claim 3.}}
	$B'\cap C'\in X'_{A',u}$.
	
	\vspace{1mm}
	\noindent\textit{Proof of Claim 3.}
	By Claim 1(a) and that $B\in X'_{A,u}$, we have
	$u = \phi_{d_A}(d_{B}^{-1}u) = \phi_{d_{A'}}\phi_{i_1}(d_{B}^{-1}u) = \phi_{d_{A'}}(d_{B'}^{-1}u)$, 
	and hence $B'\in X'_{A',u}$.
	Similarly $C'\in X'_{A',u}$.
	Hence $B'\cap C'\in X'_{A',u}$ by the induction hypothesis.
	\textit{Claim 3 is proved.}
	
	\vspace{1mm}
	Now we have
	\begin{align*}
		\phi_{d_A}(d_{B\cap C}^{-1}u) 
			&= \phi_{d_{A'}}\phi_{i_1}(d_{B\cap C}^{-1}u) \\
			&= \phi_{d_{A'}}(d_{B'\cap C'}^{-1}u) & &\text{(by Claim 2)} \\
			&= u. & &\text{(by Claim 3)}
	\end{align*}

	\begin{figure}
		\centering
		\label{fig:d_BcapC}
		\caption{For Lemma \ref{theo:XA_cap}}
		\tikzitem{
			\node (case1) at (0,0) {
            	\tikzitem[0.8]{
            		\tikzstyle{vertex}=
            			[draw,circle,fill=white,minimum size=4pt,inner sep=0pt]
            		\draw
            			(0,0) node[vertex] (u) [label=left:$u$]{}
            			-- +(-1,-1) node[vertex] (x1){};
            		\draw [thick, dotted]
            			(x1) -- +(-1.5,-3) node[solid,vertex] (x2) [label=left:$d_{B'}^{-1}u$] {};
            		\draw
            			(x2)
            			to node[right]{$i_1$} 
            			+(-0,-1) node[vertex] (x) [label=left:$d_B^{-1}u$]{};
            			
            		\draw (u)
            			-- +(1,-1) node[vertex] (y1){};
            		\draw [thick, dotted] (y1)
            			-- +(+1.5,-2.5) node[solid,vertex] (y2) [label=left:$d_{C'}^{-1}u$] {};
            		\draw (y2)
            			to node[right]{$i_1$}
            			+(0, -1) node[vertex] (y) [label=left:$d_C^{-1}u$]{};
            
            		\draw [thick, dotted] (u)
            			-- +(0,-2.5) node[solid,vertex] (z2) [label=left:$d_{B'\cap C'}^{-1}u$] {};
            		\draw (z2)
            			to node[right]{$i_1$}
            			+(0, -1) node[vertex] (z) [label=left:$d_{B\cap C}^{-1}u$]{};
            	}
        	};
			\node [below of=case1, yshift=-15mm] {(When $i_1\in B\cap C$)};
			\node (case2) at (6,0) {
            	\tikzitem[0.8]{
            		\tikzstyle{vertex}=
            			[draw,circle,fill=white,minimum size=4pt,inner sep=0pt]
            		\draw
            			(0,0) node[vertex] (u) [label=left:$u$]{}
            			-- +(-1,-1) node[vertex] (x1) {};
            		\draw [thick, dotted]
            			(x1) -- +(-1.5,-3) 
						node[solid, vertex] (x2) [label=left:$d_B^{-1}u$] {};
            		\draw
            			(x2)
            			to node[right]{$i_1$} 
            			+(-0,-1) node[vertex] (x) [label=left:$d_{B''}^{-1}u$] {};
            			
            		\draw (u)
            			-- +(1,-1) node[vertex] (y1){};
            		\draw [thick, dotted] (y1)
            			-- +(+1.5,-2.5) node[solid, vertex] (y2) [label=left:$d_{C'}^{-1}u$] {};
            		\draw (y2)
            			to node[right]{$i_1$}
            			+(0, -1) node[vertex] (y) [label=left:$d_C^{-1}u$]{};
            
            		\draw [thick, dotted] (u)
            			-- +(0,-2.5) node[solid, vertex] (z2) [label=left:$d_{B\cap C}^{-1}u$] {};
            		\draw (z2)
            			to node[right]{$i_1$}
            			+(0, -1) node[vertex] (z) [label=left:$d_{B''\cap C''}^{-1}u$] {};
            	}
        	};
			\node [below of=case2, yshift=-15mm] {(When $i_1\notin B$ and $i_1\in C$)};
       	}
	\end{figure}

	(2) follows from (1) and the definition of join and $Y_{A,u}$.
\end{proof}

\begin{lemm}\label{theo:XA_cover}
	Let $A,A'\in Z'_{u,-}$ with $A'\subset A$ and $|A\sm A'|=1$.
	Then $A'\in X'_{A,u}$.
\end{lemm}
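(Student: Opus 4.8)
The plan is to establish the single identity $\phi_{d_A}(d_{A'}^{-1}u) = u$, which is exactly the assertion $A'\in X'_{A,u}$. Write $m = |A|$ (so $|A'| = m-1$) and put $v = d_{A'}^{-1}u$. I would pin down $\phi_{d_A}(v) = d_A * v$ by squeezing it from below, in the strong order, by $u$, and from above, in length, by $l(u)$: since $u\le d_A * v$ together with $l(d_A * v)\le l(u)$ forces $d_A * v = u$, this finishes the proof.

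\emph{Lower bound.} Since $A'\subset A$, deleting from a cyclically decreasing reduced word for $d_A$ the single letter $s_i$ with $\{i\} = A\sm A'$ yields a cyclically decreasing reduced word for $d_{A'}$, so $d_{A'}\le d_A$ by the Subword Property. By Lemma~\ref{theo:b_i}(3) the map $x\mapsto\phi_x(v)$ is order-preserving under $\le$, hence $\phi_{d_{A'}}(v)\le\phi_{d_A}(v)$. Since $A'\in Z'_{u,-}$, the product $d_{A'}\cdot(d_{A'}^{-1}u) = u$ is reduced, so $\phi_{d_{A'}}(v) = d_{A'} * v = d_{A'}v = u$; thus $u\le\phi_{d_A}(v)$.

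\emph{Upper bound.} Fix a base point $r\in I\sm A$ and, with the resulting linear order on $I\sm\{r\}$, choose reduced words compatibly so that $d_A = x\,s_i\,y$ and $d_{A'} = x\,y$ with both products reduced, where $x$ and $y$ collect the generators of $A'$ lying respectively above and below $i$. Then $t := d_A d_{A'}^{-1} = x s_i x^{-1}$ satisfies $t\,d_A = x s_i x^{-1}\cdot x s_i y = xy = d_{A'}$. From $A\in Z'_{u,-}$ we have $d_A\le_R u$; writing $w = d_A^{-1}u$ we get $u = d_A w$ reduced with $l(w) = l(u)-m$, and therefore $d_A v = d_A d_{A'}^{-1}u = t u = (t\,d_A)\,w = d_{A'}w$, so that $l(d_A v)\le l(d_{A'})+l(w) = (m-1)+(l(u)-m) = l(u)-1$. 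On the other hand $l(d_A)+l(v) = m+(l(u)-(m-1)) = l(u)+1$, using $l(v) = l(u)-(m-1)$ (again from $A'\in Z'_{u,-}$), so $d_A v$ is not reduced; hence $l(\phi_{d_A}(v)) = l(d_A * v) < l(d_A)+l(v)$ by Lemma~\ref{theo:b_xyz}, i.e.\ $l(\phi_{d_A}(v))\le l(u)$. Combined with $u\le\phi_{d_A}(v)$ this gives $\phi_{d_A}(v) = u$, as desired.

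The only point requiring care is the simultaneous choice $d_A = x s_i y$, $d_{A'} = xy$ of reduced words, which is precisely where the hypothesis $|A\sm A'| = 1$ enters: relative to a base point outside $A$, the cyclically decreasing word of a subset of $I$ is just its decreasing arrangement, so the word of $A'$ is obtained from that of $A$ by removing the single entry $i$ from its slot. (For $|A\sm A'|\ge 2$ the statement genuinely fails, as one sees from Example~\ref{exam:XA}.) Everything else is length bookkeeping together with the properties of $\phi$ already recorded in Lemma~\ref{theo:b_i} and Lemma~\ref{theo:b_xyz}, so I do not anticipate any further difficulty.
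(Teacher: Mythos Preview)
Your argument is correct and takes a genuinely different route from the paper's. The paper proves $\phi_{d_A}(d_{A'}^{-1}u)=u$ by an explicit step-by-step calculation: writing $A=\{i_1,\dots,i_m\}_<$ and $A'=A\sm\{i_k\}$, it tracks $\phi_{i_m}\cdots\phi_{i_1}$ applied to $s_{i_1}\cdots\widehat{s_{i_k}}\cdots s_{i_m}u$, showing that $\phi_{i_j}$ cancels $s_{i_j}$ for $j\neq k$ (using $A'\in Z'_{u,-}$) and that $\phi_{i_k}$ acts as the identity at its stage (using $A\in Z'_{u,-}$). Your proof instead squeezes $d_A*v$ between $u$ (via monotonicity of $x\mapsto\phi_x(v)$ from Lemma~\ref{theo:b_i}(3)) and the length bound $l(d_A*v)\le l(u)$, the latter coming from the nice rewriting $d_A v = d_{A'}(d_A^{-1}u)$ which exploits that $d_A d_{A'}^{-1}$ is a reflection. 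This is more conceptual and avoids the index-by-index bookkeeping; the paper's computation, on the other hand, is reused almost verbatim in the proof of the next Lemma~\ref{theo:XA_props}, so its explicitness pays off there. One small point: the inequality $l(d_A*v)<l(d_A)+l(v)$ when $\redprod{d_A}{v}$ is not reduced is not literally stated in Lemma~\ref{theo:b_xyz}, though it follows from parts (3) and (4); you might cite it as a standard fact about the Demazure product instead.
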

\begin{proof}
	Let $A=\{i_1,\dots,i_m\}_{<}$ and $A'=\{i_1,\dots, \widehat{i_k},\dots,i_m\}_{<}$.
	
	Since $u\ge_L d_{A'}^{-1}u = s_{i_1}\dots \widehat{s_{i_k}}\dots s_{i_m} u$,
	\begin{itemize}
		\item
			$\phi_{i_j} (s_{i_j}\dots s_{i_{k-1}} s_{i_{k+1}}\dots s_{i_m} u) = s_{i_{j+1}}\dots s_{i_{k-1}} s_{i_{k+1}}\dots s_{i_m} u$\quad for $1\le j<k$,
		\item
			$\phi_{i_j} (s_{i_j}\dots s_{i_m} u) = s_{i_{j+1}}\dots s_{i_m} u$\quad for $k<j\le m$.
	\end{itemize}

	Since $u\ge_L d_{A}^{-1}u = s_{i_1} \dots s_{i_m} u$,
	\begin{itemize}
		\item
			$\phi_{i_k} (s_{i_{k+1}}\dots s_{i_m} u) = s_{i_{k+1}}\dots s_{i_m} u$.
	\end{itemize}

	Hence
	\begin{align*}
		\phi_{d_A}(d_{A'}^{-1}u)
		&= \phi_{i_m}\dots \phi_{i_{k+1}}\phi_{i_k}\phi_{i_{k-1}}\dots \phi_{i_1} (s_{i_1}\dots s_{i_{k-1}} s_{i_{k+1}}\dots s_{i_m} u) \\
		&= \phi_{i_m}\dots \phi_{i_{k+1}}\phi_{i_k} (s_{i_{k+1}}\dots s_{i_m} u) \\
		&= \phi_{i_m}\dots \phi_{i_{k+1}} (s_{i_{k+1}}\dots s_{i_m} u) \\
		&= u.
	\end{align*}

	\begin{figure}
		\centering
		\label{fig:XA_cover}
		\caption{
			For Lemma \ref{theo:XA_cover}
		}
    	\tikzitem[0.8]{
    		\tikzstyle{vertex}=
    			[draw,circle,fill=white,minimum size=4pt,inner sep=0pt]
    		\draw
    			(0,0) node[vertex] (u) [label=above:$u$]{}
    			to node[left]{$i_m$} 
						+(0,-1) node[vertex] (u1){};
    		\draw [thick, dotted]
    			(u1) -- +(0,-1) node[solid,vertex] (u2) {};
    		\draw
    			(u2)
    			to node[left]{$i_{k+1}$} 
    			+(0,-1) node[vertex] (Y) {};
			\draw (Y)
				to node[left]{$i_{k}$}
				+(-0.5,-1) node[vertex] (l0) {};
			\draw (l0)
				to node[left]{$i_{k-1}$}
				+(0,-1) node[vertex] (l1) {};
			\draw [thick, dotted]
				(l1) -- +(0,-1) node [solid, vertex] (l2) {};
			\draw
				(l2)
				to node[left]{$i_{1}$}
				+(0,-1) node[vertex] (l3) [label=below:$d_A^{-1}u$] {};

			\draw (Y)
				to node[right]{$i_{k-1}$}
				+(0.5,-1) node[vertex] (r1) {};
			\draw [thick, dotted]
				(r1) -- +(0,-1) node [solid, vertex] (r2) {};
			\draw
				(r2)
				to node[right]{$i_{1}$}
				+(0,-1) node[vertex] (r3) [label=below:$d_{A'}^{-1}u$] {};
    	}
	\end{figure}
\end{proof}
	
	\begin{lemm}\label{theo:XA_props}
		Let $A=\{i_1,\dots,i_m\}_{<}\in Z'_{u,-}$ and $B\in X'_{A,u}$.
		By Lemma \ref{theo:XA_XA'}(4) we can write
		$B=\{i_1,\dots,\widehat{i_{j_1}},\dots,\widehat{i_{j_l}},\dots,i_m\}$
		for some $1\le j_1<\dots<j_l \le m$.
		Let $A^{(a)} = \{i_{j_a+1},i_{j_a+2},\dots,i_{m-1},i_m\}$ and
		$B^{(a)}=B\cap A^{(a)}=\{i_{j_a+1},\dots,\widehat{i_{j_{a+1}}},\dots,\widehat{i_{j_{l}}},\dots,i_{m}\}$
		for each $a\in \{1,\dots,l\}$.
		Then,
		for each $1\le a \le l$,
		$$s_{i_{j_a}} d_{B^{(a)}}^{-1} u < d_{B^{(a)}}^{-1} u.$$
	\end{lemm}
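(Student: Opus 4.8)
The plan is to run the Demazure action one generator at a time. Since the cyclically decreasing word of $A$ is the reverse of $(i_1,\dots,i_m)$, we have $\phi_{d_A}=\phi_{i_m}\circ\cdots\circ\phi_{i_1}$; set $v_0=d_B^{-1}u$ and $v_p=\phi_{i_p}(v_{p-1})$, so that $v_m=\phi_{d_A}(d_B^{-1}u)=u$ because $B\in X'_{A,u}$. For $0\le p\le m$ write $B^{[p]}=B\cap\{i_{p+1},\dots,i_m\}$ for the tail of $B$ past position $p$, so $B^{(a)}=B^{[j_a]}$. I would prove by induction on $p$ that $v_p=d_{B^{[p]}}^{-1}u$ and, at each ``skipped'' position $p=j_a$, that $s_{i_{j_a}}d_{B^{(a)}}^{-1}u<d_{B^{(a)}}^{-1}u$; the latter is exactly the assertion of the lemma, and collecting these over $a=1,\dots,l$ finishes the proof.

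Three preliminary facts feed the induction. First, every tail $B^{[p]}$ lies in $Z'_{u,-}$: by Lemma~\ref{theo:XA_XA'}(5) we have $B\in Z'_{u,-}$, i.e.\ $d_B\le_R u$, and the cyclically decreasing word of $B^{[p]}$ is a left factor of that of $B$, so $d_{B^{[p]}}\le_R d_B\le_R u$; hence $l\bigl(d_{B^{[p]}}^{-1}u\bigr)=l(u)-|B^{[p]}|$. Second, because $i_p$ precedes every element of $B^{[p]}$ in the cyclically increasing order of $A$, we have $d_{B^{[p-1]}}^{-1}=s_{i_p}d_{B^{[p]}}^{-1}$ reduced when $i_p\in B$, and $s_{i_p}d_{B^{[p]}}^{-1}=d_{\{i_p\}\cup B^{[p]}}^{-1}$ reduced when $i_p\notin B$. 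Third, $v_0\le_L v_1\le_L\cdots\le_L v_m=u$ by Lemma~\ref{theo:b_i}(1), so in particular every $v_p\le_L u$.

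For the inductive step, assume $v_{p-1}=d_{B^{[p-1]}}^{-1}u$. If $i_p\in B$, then $v_{p-1}=s_{i_p}\,d_{B^{[p]}}^{-1}u$ while $l\bigl(d_{B^{[p]}}^{-1}u\bigr)=l(u)-|B^{[p]}|=l(v_{p-1})+1$, so $v_{p-1}<d_{B^{[p]}}^{-1}u$ and $\phi_{i_p}(v_{p-1})=d_{B^{[p]}}^{-1}u=v_p$. If $i_p\notin B$, say $p=j_a$, then $B^{[p-1]}=B^{[p]}=B^{(a)}$ and $v_{p-1}=d_{B^{(a)}}^{-1}u$ with $l(v_{p-1})=l(u)-|B^{(a)}|$; here I first verify $s_{i_{j_a}}d_{B^{(a)}}^{-1}u<d_{B^{(a)}}^{-1}u$ by contradiction. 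If this failed, then $v_{j_a}=\phi_{i_{j_a}}(v_{j_a-1})=s_{i_{j_a}}v_{j_a-1}=d_C^{-1}u$ with $C=\{i_{j_a}\}\cup B^{(a)}$ and $|C|=|B^{(a)}|+1$, giving $l(v_{j_a})=l(u)-|B^{(a)}|+1$; but $v_{j_a}\le_L u$ then forces $l(u)=l\bigl(uv_{j_a}^{-1}\bigr)+l(v_{j_a})=l(d_C)+l(v_{j_a})=l(u)+2$, absurd. With the descent established, $\phi_{i_{j_a}}(v_{j_a-1})=v_{j_a-1}=d_{B^{(a)}}^{-1}u=d_{B^{[p]}}^{-1}u=v_p$, closing the induction.

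The main obstacle is precisely the descent claim at the skipped positions, since that is the content of the lemma; the point is that it cannot be decoupled from the orbit computation --- one needs $v_{j_a-1}=d_{B^{(a)}}^{-1}u$ already in hand --- after which the length count ``$l(u)+2\ne l(u)$'' does the job. The routine matters I would not belabour are the translation between cyclically increasing/decreasing factors and left/right factors under the weak order, and the standard length criteria for $\le_L$ and $\le_R$ used above.
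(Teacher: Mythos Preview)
Your proof is correct and follows essentially the same approach as the paper's: both track the intermediate element of the Demazure action $\phi_{d_A}$ applied to $d_B^{-1}u$, showing it equals $d_{B^{[p]}}^{-1}u$ at each stage, and derive the descent at each skipped index $i_{j_a}$ from a length contradiction with the fact that every intermediate element lies $\le_L u$. The only cosmetic difference is that the paper groups the generators between consecutive skipped indices and inducts on $l=|A\setminus B|$, whereas you process one generator at a time and induct on $p$.
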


	\begin{figure}
		\centering
		\label{fig:XA_props}
		\caption{
			For Lemma \ref{theo:XA_props}
		}
    	\tikzitem[0.8]{
    		\tikzstyle{vertex}=
    			[draw,circle,fill=white,minimum size=4pt,inner sep=0pt]
    		\draw
    			(0,0) node[vertex] (u) [label=above:$u$]{}
    			to node[left]{$i_m$} 
				+(0,-1) node[vertex] (u1){};
    		\draw [thick, dotted]
    			(u1) -- +(0,-1.5) node[solid,vertex] (u2) {};
    		\draw
    			(u2)
    			to node[left]{$i_{j_a+1}$} 
    			+(0,-1) node[vertex] (Ya) [label=right:$d_{B^{(a)}}^{-1}u$] {};
			\draw (Ya)
				to node[left]{$i_{j_a-1}$}
				+(-.5,-1) node[vertex] (l1) {};
			\draw (Ya)
				to node[right]{$i_{j_a}$}
				+(0.3,-1) node[vertex] {};
			\draw [thick, dotted]
				(l1) -- +(0,-1.5) node [solid, vertex] (l2) {};
			\draw
				(l2)
				to node[left]{$i_{j_1+1}$}
				+(0,-1) node[vertex] (Yl) [label=right:$d_{B^{(1)}}^{-1}u$] {};
			\draw (Yl)
				to node[left]{$i_{j_1-1}$}
				+(-.5,-1) node[vertex] (l3) {};
			\draw (Yl)
				to node[right]{$i_{j_1}$}
				+(0.3,-1) node[vertex] {};
			\draw [thick,dotted]
				(l3) -- +(0,-1.5) node [solid, vertex] (l4) {};
			\draw
				(l4)
				to node[left]{$i_1$}
				+(0,-1) node[vertex] (l5) [label=below:$d_B^{-1}u$] {};
    	}
	\end{figure}

	\begin{proof}
		We carry out induction on $l=|A\sm B|$, with trivial base case $l=0$.
		Assume $l>0$.
		From Lemma \ref{theo:XA_XA'}(5), we have
		$u \ge_L d^{-1}_{B}u = s_{i_1}\dots s_{i_{j_{1}-1}} d^{-1}_{B^{(1)}} u$,
		and hence $d^{-1}_{B^{(1)}}u \ge_L s_{i_1}\dots s_{i_{j_{1}-1}} d^{-1}_{B^{(1)}} u$ by Lemma \ref{theo:weakorder_xyz}.
		Hence
		\begin{align}
			u 
				&= \phi_{d_A}(d^{-1}_{B}u) \notag \\
				&= \phi_{d_{A^{(1)}}}\phi_{i_{j_1}}\phi_{i_{j_1-1}}\dots\phi_{i_1}(s_{i_1}\dots s_{i_{j_{1}-1}} d^{-1}_{B^{(1)}} u ) \notag \\
				&= \phi_{d_{A^{(1)}}}\phi_{i_{j_1}}(d^{-1}_{B^{(1)}} u). \label{eq:phiA_dBu}
		\end{align}
		
		We now claim $s_{i_{j_1}} d^{-1}_{B^{(1)}} u < d^{-1}_{B^{(1)}} u$;
		suppose to the contrary that 
		$s_{i_{j_1}} d^{-1}_{B^{(1)}} u > d^{-1}_{B^{(1)}} u$.
		Then we have
		$s_{i_{j_1}} d^{-1}_{B^{(1)}} u \not\le_L u$ since
		$l(s_{i_{j_1}} d^{-1}_{B^{(1)}} u) >l(u) - l(s_{i_{j_1}} d^{-1}_{B^{(1)}})$.
		On the other hand,
		$s_{i_{j_1}} d^{-1}_{B^{(1)}} u > d^{-1}_{B^{(1)}} u$ implies
		$\phi_{i_{j_1}}(d^{-1}_{B^{(1)}} u) = s_{i_{j_1}} d^{-1}_{B^{(1)}} u$,
		which implies
		$\phi_{d_{A^{(1)}}}(s_{i_{j_1}} d^{-1}_{B^{(1)}} u) = u$ by (\ref{eq:phiA_dBu}),
		which implies
		$s_{i_{j_1}} d^{-1}_{B^{(1)}} u \le_L u$ by Lemma \ref{theo:b_xyz},
		which is in contradiction.
		
		Therefore $s_{i_{j_1}} d^{-1}_{B^{(1)}} u < d^{-1}_{B^{(1)}} u$, that is,
		$\phi_{i_{j_1}}(d^{-1}_{B^{(1)}} u)=d^{-1}_{B^{(1)}} u$,
		and hence $\phi_{d_{A^{(1)}}}(d^{-1}_{B^{(1)}} u)=u$ by (\ref{eq:phiA_dBu}).
		Hence, 
		since $|A^{(1)}\sm B^{(1)}|=|A\sm B|-1$,
		we obtain $s_{i_{j_a}} d^{-1}_{B^{(a)}} u < d^{-1}_{B^{(a)}} u$
		for $a=2,\dots,l$
		by the induction hypothesis applied for $(A,B):=(A^{(1)},B^{(1)})$.
	\end{proof}
	
	\begin{lemm}\label{theo:XA_booleanlattice}
		Let $A,B\in Z'_{u,-}$ with $B\subset A$.
		The following are equivalent:
		\begin{enumerate}[label=\textup{(\arabic*)}]
			\item
				$B\in X'_{A,u}$.
			\item
				$B\cup\{i\}\in Z'_{u,-}$ for any $i\in A\sm B$.
			\item
				$B\cup\{i\}\in X'_{A,u}$ for any $i\in A\sm B$.
			\item
				$A\sm\{i\}\in Z'_{u,-}$ for any $i\in A\sm B$.
			\item
				$A\sm\{i\}\in X'_{A,u}$ for any $i\in A\sm B$.
			\item
				$[B,A]\subset Z'_{u,-}$.
			\item
				$[B,A]\subset X'_{A,u}$.
		\end{enumerate}
	\end{lemm}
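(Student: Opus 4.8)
The plan is to establish the equivalence of the seven conditions by a cycle of implications, after first disposing of two degenerate cases. If $B=A$, then $[B,A]=\{A\}$ and all seven conditions hold trivially, using that $A\in X'_{A,u}$ always: $A\in Z'_{u,-}$ forces $\phi_{d_A}(d_A^{-1}u)=d_A\cdot d_A^{-1}u=u$. If $|A\sm B|=1$, say $A\sm B=\{i\}$, then $[B,A]=\{B,A\}$ and $B=A\sm\{i\}\in X'_{A,u}$ by Lemma~\ref{theo:XA_cover}, so again all seven hold. Hence from now on assume $B\subsetneq A$ with $|A\sm B|\ge 2$, and aim to prove
\[(1)\Rightarrow(2)\Rightarrow(4)\Leftrightarrow(5)\Rightarrow(7)\Rightarrow\{(1),(3),(6)\},\qquad(3)\Rightarrow(2),\qquad(6)\Rightarrow(2),\]
which gives the full equivalence. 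The implications out of $(7)$ are immediate, since $B$, $B\cup\{i\}$ and $A\sm\{i\}$ all lie in $[B,A]$ and $X'_{A,u}\subset Z'_{u,-}$ by Lemma~\ref{theo:XA_XA'}(5); by the same token $(3)\Rightarrow(2)$, $(5)\Rightarrow(4)$ and $(6)\Rightarrow(2)$ are immediate.

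Next come the moderate implications. For $(2)\Rightarrow(4)$, fix $i\in A\sm B$ and write $A\sm\{i\}=\bigcup_{i'\in(A\sm B)\sm\{i\}}(B\cup\{i'\})$; each $B\cup\{i'\}$ lies in $Z'_{u,-}$ by $(2)$, and since every partial union stays inside $A\sm\{i\}\subsetneq I$, repeated application of Proposition~\ref{theo:Zu_props}(1) yields $A\sm\{i\}\in Z'_{u,-}$. For $(4)\Rightarrow(5)$, apply Lemma~\ref{theo:XA_cover} to the pair $A\sm\{i\}\subset A$, which are both in $Z'_{u,-}$ and differ by a single element. For $(5)\Rightarrow(7)$, take $C\in[B,A]$; the case $C=A$ is trivial, and otherwise $C=\bigcap_{i\in A\sm C}(A\sm\{i\})$ with each $A\sm\{i\}$ (here $i\in A\sm C\subset A\sm B$) in $X'_{A,u}$ by $(5)$, so iterating the intersection-closedness of $X'_{A,u}$ (Lemma~\ref{theo:XA_cap}(1)) gives $C\in X'_{A,u}$.

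The only implication carrying real content is $(1)\Rightarrow(2)$, and it is exactly what Lemma~\ref{theo:XA_props} was prepared for; I expect this to be the main obstacle. Given $B\in X'_{A,u}$, write $A=\{i_1,\dots,i_m\}_{<}$, $B=A\sm\{i_{j_1},\dots,i_{j_l}\}$, and set $A^{(a)}=\{i_{j_a+1},\dots,i_m\}$, $B^{(a)}=B\cap A^{(a)}$ as in that lemma, which supplies $s_{i_{j_a}}d_{B^{(a)}}^{-1}u<d_{B^{(a)}}^{-1}u$ for every $a$. Since $A^{(a)}$ indexes a suffix of the cyclically increasing word of $A$, a short length count gives $A^{(a)}\in Z'_{u,-}$, hence $B^{(a)}=B\cap A^{(a)}\in Z'_{u,-}$ by Proposition~\ref{theo:Zu_props}(2), so $d_{B^{(a)}}^{-1}u\le_L u$; combined with the identity $d_{B^{(a)}\cup\{i_{j_a}\}}^{-1}u=s_{i_{j_a}}d_{B^{(a)}}^{-1}u$ (valid since $i_{j_a}$ heads the cyclically increasing word of $B^{(a)}\cup\{i_{j_a}\}$), the displayed inequality gives $d_{B^{(a)}\cup\{i_{j_a}\}}^{-1}u\le_L d_{B^{(a)}}^{-1}u\le_L u$, i.e.\ $B^{(a)}\cup\{i_{j_a}\}\in Z'_{u,-}$. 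The key move is now to observe that $B\in Z'_{u,-}$ (Lemma~\ref{theo:XA_XA'}(5)) and $B^{(a)}\cup\{i_{j_a}\}\in Z'_{u,-}$ have union $B\cup\{i_{j_a}\}\subset A\subsetneq I$, so Proposition~\ref{theo:Zu_props}(1) gives $B\cup\{i_{j_a}\}\in Z'_{u,-}$; letting $a$ run over $1,\dots,l$ makes $i_{j_a}$ run over all of $A\sm B$, which is precisely $(2)$. Apart from combining Lemma~\ref{theo:XA_props} with the union-closedness of $Z'_{u,-}$ through this observation, the whole argument is formal bookkeeping inside the posets $Z'_{u,-}$ and $X'_{A,u}$ using the intersection and cover lemmas already in hand.
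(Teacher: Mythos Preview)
Your proof is correct and follows essentially the same approach as the paper. The key step $(1)\Rightarrow(2)$ is handled identically, combining Lemma~\ref{theo:XA_props} with the union-closedness of $Z'_{u,-}$ (Proposition~\ref{theo:Zu_props}(1)); the only routing difference is that you obtain $(7)$ from $(5)$ via the intersection-closedness of $X'_{A,u}$ (Lemma~\ref{theo:XA_cap}), whereas the paper reaches $(7)$ from $(1)$ and $(6)$ by invoking the strong-order convexity of $X_{A,u}$ (Lemma~\ref{theo:XA_convex}).
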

	\Todo{relation to the M{\"o}bius function $\varphi_{Z'_{u,-}}$?}
	\begin{proof}
		\noindent\underline{$(2)\iff(4)\iff(6)$}:
		$(6)\implies(4)$ and $(6)\implies(2)$ are obvious.
		$(2)\implies(4)\implies(6)$ is from Lemma \ref{theo:Zu_props}(1).
	
		\noindent\underline{$(1)\implies (2)$}:
		We use the notations $A^{(a)}$ and $B^{(a)}$ in Lemma \ref{theo:XA_props}.
		From Lemma \ref{theo:XA_props} we have
		$\{i_{j_a}\}\cup B^{(a)} \in Z'_{u,-}$ for any $a$,
		and hence $B\cup\{i_{j_a}\} = (\{i_{j_a}\}\cup B^{(a)})\cup B \in Z'_{u,-}$ 
		by Proposition \ref{theo:Zu_props}(1).

		\noindent\underline{$(1)\implies(7)$}:
		We already proved $(1)\implies(2)\iff(6)$.
		Hence, since $A,B\in X'_{A,u}$ and $[B,A]\subset Z'_{u,-}$,
		by Lemma \ref{theo:XA_convex} \Todo{why} we have $[B,A]\subset X'_{A,u}$.
		
		\noindent\underline{$(1)\iff(3)\iff(5)\iff(7)$}:
		It is obvious that $(7)\implies (3),(5)$.
		From Lemma \ref{theo:XA_cap}(1) we have $(3)\implies(1)$ and $(5)\implies(1)$.
		Besides we already proved $(1)\implies(7)$.
		
		\noindent\underline{$(4)\implies(5)$}:
		By Lemma \ref{theo:XA_cover}.
	\end{proof}
	
We write $\bigcap X = \bigcap_{x\in X} x$ for a set $X$ of sets.
\begin{coro}\label{theo:YA_boolean}
	We have $X'_{A,u}=[\bigcap X'_{A,u}, A]$ if $X'_{A,u}\neq\emptyset$,
	and $Y'_{A,u}=[\bigcap Y'_{A,u}, A]$ if $Y'_{A,u}\neq\emptyset$.
	In particular, $X'_{A,u}$ and $Y'_{A,u}$ are isomorphic to boolean posets,
	and therefore so are $X_{A,u}$ and $Y_{A,u}$.
\end{coro}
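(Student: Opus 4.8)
The plan is to read off the result from three inputs that are already in place: the intersection–closure statements in Lemma~\ref{theo:XA_cap}, the seven equivalent conditions in Lemma~\ref{theo:XA_booleanlattice}, and the trivial fact that the poset $[\emptyset,I]$ of subsets of $I$ is boolean, so that every interval in it is again a boolean poset (isomorphic to $2^{S}$ for a finite set $S$).

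\emph{Identifying $X'_{A,u}$.} Suppose $X'_{A,u}\neq\emptyset$. Since $I$ is finite, $B_{0}:=\bigcap X'_{A,u}$ is a finite intersection of members of $X'_{A,u}$, hence lies in $X'_{A,u}$ by repeated use of Lemma~\ref{theo:XA_cap}(1). Now $B_{0}\in X'_{A,u}\subset Z'_{u,-}$ by Lemma~\ref{theo:XA_XA'}(5), $A\in Z'_{u,-}$ by Lemma~\ref{theo:XA_XA'}(1), and $B_{0}\subset A$ because every member of $X'_{A,u}$ lies in $[\emptyset,A]$ by Lemma~\ref{theo:XA_XA'}(4). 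Thus Lemma~\ref{theo:XA_booleanlattice}, in the form $(1)\Rightarrow(7)$ applied with $B=B_{0}$, gives $[B_{0},A]\subset X'_{A,u}$; the reverse inclusion is immediate since each $B\in X'_{A,u}$ satisfies $B_{0}\subset B\subset A$. Hence $X'_{A,u}=[B_{0},A]$, which is boolean.

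\emph{Identifying $Y'_{A,u}$.} Suppose $Y'_{A,u}\neq\emptyset$; then $X'_{A,u}\neq\emptyset$, so $X'_{A,u}=[B_{0},A]$ as above. The bijection $B\mapsto d_{B}^{-1}u$ of Lemma~\ref{theo:XA_XA'}(3) is order–reversing from $X'_{A,u}$ onto $X_{A,u}$, and $[e,w]$ is downward closed under the strong order; hence $Y'_{A,u}=\{B\in X'_{A,u}\mid d_{B}^{-1}u\le w\}$ is an order filter of $[B_{0},A]$. It is moreover closed under intersection by Lemma~\ref{theo:XA_cap}(2), so $C_{0}:=\bigcap Y'_{A,u}$ belongs to $Y'_{A,u}$ (again by finiteness) and is its minimum; since $Y'_{A,u}$ is an order filter of $[B_{0},A]$ containing $C_{0}$ we get $[C_{0},A]\subset Y'_{A,u}$, while minimality of $C_{0}$ gives the opposite inclusion. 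Hence $Y'_{A,u}=[C_{0},A]$, again boolean.

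\emph{Transport and the hard point.} Finally $X_{A,u}$ and $Y_{A,u}$ are anti–isomorphic to $X'_{A,u}$ and $Y'_{A,u}$ by Lemma~\ref{theo:XA_XA'}(3), and a boolean poset is self–dual under complementation, so $X_{A,u}$ and $Y_{A,u}$ are themselves isomorphic to boolean posets. All of the substantive work has already been done in Lemmas~\ref{theo:XA_cap} and~\ref{theo:XA_booleanlattice}; the only point needing a moment's care is the combinatorial observation that an intersection–closed order filter of a finite boolean lattice is the principal up–set above its (necessarily unique) minimum, hence an interval with top $A$. Consequently I do not expect any real obstacle in this corollary.
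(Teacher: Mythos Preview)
Your proof is correct and follows essentially the same approach as the paper's: both use Lemma~\ref{theo:XA_cap} to obtain the minimum $\bigcap X'_{A,u}$ (resp.\ $\bigcap Y'_{A,u}$) and then Lemma~\ref{theo:XA_booleanlattice}(1)$\Rightarrow$(7) together with the order-filter property to identify the set with the interval up to $A$. The only cosmetic difference is that you first establish $X'_{A,u}=[B_0,A]$ and deduce the $Y'_{A,u}$ case from it, whereas the paper argues $Y'_{A,u}$ directly and declares $X'_{A,u}$ similar.
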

\begin{proof}
	
	Assume $Y'_{A,u}$ is nonempty.
	Then $Y'_{A,u}$ has 
	the minimum element $C=\bigcap Y'_{A,u}$ by Lemma \ref{theo:XA_cap}(2).
	By Lemma \ref{theo:XA_booleanlattice}(1)$\implies$(7) we have $[C,A]\subset X'_{A,u}$.
	Moreover, 
	since $Y_{A,u}$ is an order ideal of $X_{A,u}$
	we have $Y'_{A,u}$ is an order filter of $X'_{A,u}$,
	and therefore $[C,A]\subset Y'_{A,u}$.
	The opposite inclusion $Y'_{A,u}\subset[C,A]$ is implied by minimality of $C$.
	Therefore $Y'_{A,u} = [C,A]$.
	
	It is proved similarly that $X'_{A,u}=[\bigcap X'_{A,u}, A]$ whenever $X'_{A,u}\neq\emptyset$.
\end{proof}
	
	Therefore we have
	\begin{align}\label{eq:sum_over_YA}
		\sum_{v\in Y_{A,u}} (-1)^{|A|-(l(u)-l(v))} 
		&= \sum_{B\in Y'_{A,u}} (-1)^{|A|-(l(u)-l(d_B^{-1}u))} \\ 
		&= \sum_{B\in Y'_{A,u}} (-1)^{|A|-|B|} \notag \\ 
		&=
		\begin{cases}
			1 & \text{if ($|Y_{A,u}|=$) $|Y'_{A,u}|=1$}, \\
			0 & \text{otherwise}.
		\end{cases} \notag
	\end{align}
	
\subsubsection{Step 4}

	Next we discuss which $A$ satisfies the condition $|Y_{A,u}|=1$.

	Since $Z_{u,-}\subset [e,u]_L$ is an order filter,
	so is $Z_{u,-}\cap [e,w]\subset [e,u]_L \cap [e,w]$.
	Hence, if ($w\SLmeet u =$) $\max([e,u]_L \cap [e,w]) \notin Z_{u,-}$,
	then $Z_{u,-}\cap [e,w] = \emptyset$,
	and hence $Y_{A,u}=\emptyset$ for any $A$
	since $Y_{A,u} = X_{A,u}\cap[e,w] \subset Z_{u,-}\cap [e,w]$.
	We hence assume $w \SLmeet u \in Z_{u,-}$
	and write $w \SLmeet u = d_{A_0}^{-1}u$ with $A_0\in Z'_{u,-}$.
	Write $Z_{u,-}^{\le w}=Z_{u,-}\cap[e,w]$.
	Note that $w\SLmeet u = \max Z_{u,-}^{\le w}$.

\begin{exam}
	Recall Example \ref{exam:XA}.
	In that case $\mathrm{max}(Z_{u,-}\cap[e,w]) = s_1 u$ and hence $A_0=\{1\}$.
	It is easily checked that
	$X_{\{1\},u}=\{u,s_1 u\}$ and
	$Y_{\{1\},u}=\{s_1 u\}$.
\end{exam}

\begin{lemm}\label{theo:YA=1}
	$|Y_{A,u}| = 1 \iff A = A_0$.
\end{lemm}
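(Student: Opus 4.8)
The plan is to turn the statement into a comparison between $d_A^{-1}u$ and the maximum $d_{A_0}^{-1}u = w\SLmeet u$ of $Z_{u,-}^{\le w}$, using the boolean structure of $Y'_{A,u}$ from Corollary \ref{theo:YA_boolean}. First I would record the elementary reformulation: $Y_{A,u}$ and $Y'_{A,u}$ are in bijection via $B\mapsto d_B^{-1}u$, so $|Y_{A,u}|=1$ is equivalent to $Y'_{A,u}=\{A\}$; and by Corollary \ref{theo:YA_boolean} this holds iff $Y'_{A,u}$ is nonempty (equivalently $A\in Z'_{u,-}$ and $d_A^{-1}u\le w$, i.e.\ $d_A^{-1}u\in Y_{A,u}$) and no proper subset of $A$ lies in $Y'_{A,u}$. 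I will also use throughout the anti-isomorphism $(Z'_{u,-},\subset)\simeq(Z_{u,-},\le);\,C\mapsto d_C^{-1}u$ and the length identity $l(d_C^{-1}u)=l(u)-|C|$ for $C\in Z'_{u,-}$.

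For the direction $A=A_0\Rightarrow|Y_{A,u}|=1$: by the standing hypothesis $d_A^{-1}u = w\SLmeet u = \max Z_{u,-}^{\le w}$, so $A\in Z'_{u,-}$ and $d_A^{-1}u\le w$, whence $A\in Y'_{A,u}$. If some $B\subsetneq A$ were in $Y'_{A,u}$, then $B\in X'_{A,u}\subset[\emptyset,A]$ and $d_B^{-1}u\in Z_{u,-}^{\le w}$, so $d_B^{-1}u\le d_A^{-1}u$; but the anti-isomorphism forces $B\subsetneq A\Rightarrow d_B^{-1}u> d_A^{-1}u$, a contradiction. Hence $Y'_{A,u}=\{A\}$.

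For the converse, suppose $|Y_{A,u}|=1$, so $Y'_{A,u}=\{A\}$; then $A\in Z'_{u,-}$ and $d_A^{-1}u\le w$, giving $d_A^{-1}u\in Z_{u,-}^{\le w}$ and therefore $d_A^{-1}u\le d_{A_0}^{-1}u$. If $A\ne A_0$ this inequality is strict, and I would apply the Chain Property of $Z_{u,-}$ (Proposition \ref{theo:CP_Z+-}) to obtain a chain $d_A^{-1}u=z^{(0)}\lecov z^{(1)}\lecov\dots\lecov z^{(l)}=d_{A_0}^{-1}u$ inside $Z_{u,-}$. Then $z^{(1)}\le d_{A_0}^{-1}u\le w$, so it remains to show $z^{(1)}\in X_{A,u}$. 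Writing $z^{(1)}=d_B^{-1}u$ with $B\in Z'_{u,-}$, the identity $l(d_C^{-1}u)=l(u)-|C|$ together with $l(z^{(1)})=l(z^{(0)})+1$ gives $|B|=|A|-1$; since $z^{(1)}=t z^{(0)}$ for a reflection $t$ we get $d_B^{-1}=t d_A^{-1}$ with $l(d_B^{-1})=l(d_A^{-1})-1$, and the Strong Exchange Property applied to a reduced word of $d_A^{-1}$ shows $d_B^{-1}$ is obtained from $d_A^{-1}$ by deleting one letter. Deleting a letter from the (cyclically increasing) reduced word of $d_A^{-1}$ yields exactly $d_{A\sm\{i\}}^{-1}$ for some $i\in A$, so $B=A\sm\{i\}\subset A$ and $B\in Z'_{u,-}$; then Lemma \ref{theo:XA_booleanlattice} ((4)$\Rightarrow$(5), taking $B=A\sm\{i\}$, so $A\sm B=\{i\}$) gives $A\sm\{i\}\in X'_{A,u}$. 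Hence $z^{(1)}\in Y_{A,u}$ and $z^{(1)}\ne z^{(0)}$, contradicting $|Y_{A,u}|=1$; therefore $A=A_0$.

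The main obstacle is this last point of the converse: showing that the first cover $z^{(1)}$ furnished by the Chain Property actually lands in $X_{A,u}$. It is here that the length bookkeeping, the Strong Exchange Property, the fact that single-letter deletions from cyclically (in/de)creasing words stay cyclically (in/de)creasing, and the boolean-interval description of $X'_{A,u}$ in Lemma \ref{theo:XA_booleanlattice} all have to be combined; everything else reduces to the anti-isomorphism $Z'_{u,-}\simeq Z_{u,-}$ and the standing hypothesis $w\SLmeet u\in Z_{u,-}$.
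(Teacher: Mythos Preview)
Your proof is correct and follows essentially the same approach as the paper's: both directions hinge on the maximality of $d_{A_0}^{-1}u$ in $Z_{u,-}^{\le w}$, and the converse is settled by producing a second element $d_{A'}^{-1}u\in Y_{A,u}$ with $A'\subset A$, $|A\setminus A'|=1$, via Lemma~\ref{theo:XA_cover}.

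The only difference is a minor detour in your converse. The paper works directly in $Z'_{u,-}$: it applies the Chain Property of $Z'_{u,-}$ (Corollary~\ref{theo:CP_Z'+-}) to obtain $A'\in Z'_{u,-}$ with $A_0\subset A'\subset A$ and $|A\setminus A'|=1$, and then invokes Lemma~\ref{theo:XA_cover} directly. You instead use the Chain Property in $Z_{u,-}$ (Proposition~\ref{theo:CP_Z+-}), then transfer back to $Z'_{u,-}$ with a Strong Exchange argument to identify $B=A\setminus\{i\}$, and finally appeal to Lemma~\ref{theo:XA_booleanlattice} $(4)\Rightarrow(5)$ --- which, in the special case $|A\setminus B|=1$, is nothing but Lemma~\ref{theo:XA_cover}. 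Your Strong Exchange step is redundant: once you know the chain is inside $Z_{u,-}$ and each step is a strong cover, the anti-isomorphism $(Z'_{u,-},\subset)\simeq(Z_{u,-},\le)$ together with the length identity already forces $B\subset A$ with $|A\setminus B|=1$, which is precisely Corollary~\ref{theo:CP_Z'+-}.
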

\begin{proof}
	($\implies$)
	Clearly $d_{A_0}^{-1}u \in Y_{A_0,u}$.
	On the contrary, take any $v\in Y_{A_0,u}$.
	Then $v=d_B^{-1}u$ for some $B\in Y'_{A_0,u}$.
	Since $Y'_{A_0,u}\subset X'_{A_0,u} \subset [\emptyset, A_0]$,
	we have $B\subset A_0$.
	On the other hand,
	since $v\in Y_{A_0,u} = X_{A_0,u}\cap[e,w]\subset Z_{u,-}^{\le w}$,
	we have $v\le \max Z_{u,-}^{\le w} = d_{A_0}^{-1}u$,
	and hence $B\supset A_0$. Therefore $B=A_0$.

	($\impliedby$)
	If $A\notin Z'_{u,-}$, 
	then $|Y_{A,u}| \le |X_{A,u}| = 0$ from Lemma \ref{theo:XA_XA'}(1).
	We hence assume $A\in Z'_{u,-}$.
	Then $d_A^{-1}u\in Z_{u,-}$.
	
	If $d_A^{-1}u \not\le w$, 
	then $Y_{A,u} = \emptyset$ since $d_A^{-1}u$ is the minimum element of $X_{A,u}$
	and $Y_{A,u}=X_{A,u}\cap[e,w]$ is an order ideal of $X_{A,u}$.
	
	Hence we assume $d_A^{-1}u \le w$.
	Since $d_{A_0}^{-1}u = \max Z_{u,-}^{\le w}$,
	we have $d_A^{-1}u\le d_{A_0}^{-1}u$, and hence $A_0\subset A$.
	Suppose $A_0\subsetneq A$.
	By Corollary \ref{theo:CP_Z'+-}
	there exists an $A'\in Z'_{u,-}$ such that $A_0\subset A'\subset A$ and $|A\sm A'|=1$.
	By Lemma \ref{theo:XA_cover} and that $d_{A'}^{-1}u\le d_{A_0}^{-1}u\le w$
	we have $d_{A'}^{-1}u\in Y_{A,u}$.
	Hence $Y_{A,u}\supset \{d_{A}^{-1}u, d_{A'}^{-1}u\}$.
\end{proof}	

Therefore, substituting (\ref{eq:sum_over_YA}) and the result of Lemma \ref{theo:YA=1} into the right-hand side of (\ref{eq:gu_coeff_sumsplit}) and noting that $|A_0|=l(u)-l(w\SLmeet u)$, we have
$$
	[\kks{u}](\kkss{w} \wth_r) =
		\begin{cases}
			1 & \text{if $w\SLmeet u\in Z_{u,-}$ and $l(u)-l(w\SLmeet u) \le r$,} \\
			0 & \text{otherwise.}
		\end{cases}
$$
Finally, we show the following:
\begin{lemm}\label{theo:A0_r}
	The following are equivalent:
	\begin{enumerate}[label=\textup{(\arabic*)}]
		\item
			$w\SLmeet u\in Z_{u,-}$ and $l(u)-l(w\SLmeet u) \le r$.
		\item
			There exists $A$ such that $|A|\le r$ and $u\ge_L d_A^{-1}u \le w$.
		\item
			There exists $A$ such that $|A|\le r$ and $u\le d_A w \ge_L w$.
		\item
			There exists $A$ such that $|A|=r$ and $u\le d_A w \ge_L w$.
	\end{enumerate}
\end{lemm}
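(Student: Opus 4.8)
The plan is to establish the cycle $(1)\Rightarrow(2)\Rightarrow(3)\Rightarrow(4)\Rightarrow(1)$. The implication $(1)\Rightarrow(2)$ is immediate: if $w\SLmeet u\in Z_{u,-}$, write $w\SLmeet u = d_{A_0}^{-1}u$ with $A_0\in Z'_{u,-}$; then $d_{A_0}^{-1}u = w\SLmeet u$ satisfies $d_{A_0}^{-1}u\le_L u$ and $d_{A_0}^{-1}u\le w$ by definition of the half-meet, and $|A_0| = l(d_{A_0}) = l(u)-l(w\SLmeet u)\le r$, so $A:=A_0$ works. For $(2)\Rightarrow(3)$, put $v := d_A^{-1}u$; the condition $u\ge_L d_A^{-1}u$ says $l(u) = |A|+l(v)$, hence $\phi_{d_A}(v) = d_A v = u$, and from $v\le w$ and Lemma~\ref{theo:b_i}(2) we get $u = \phi_{d_A}(v)\le\phi_{d_A}(w) = d_A*w$. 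Applying Lemma~\ref{theo:b_xyz} to $(x,y,z) = (d_A,w,d_A*w)$ gives $(d_A*w)w^{-1}\le d_A$, so by the Subword Property (a subword of a cyclically decreasing word is again cyclically decreasing) $(d_A*w)w^{-1} = d_B$ for some $B\subseteq A$, whence $d_A*w = d_B w$ with $l(d_B w) = |B|+l(w)$, i.e.\ $d_B w\ge_L w$; thus $u\le d_B w\ge_L w$ and $|B|\le|A|\le r$, which is $(3)$.

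For $(3)\Rightarrow(4)$ I may assume $|A|<r$. Writing $w = w_\la$, the partition $(k)\cup\la$ is a weak strip of size $k$ over $w$ (see the remark after Proposition~\ref{theo:forbiddenindex}), so $Z'_{w,+}$ contains a set of size $k\ge r$ and hence $|\max Z'_{w,+}|\ge r$ (Corollary~\ref{theo:Zu_max}). By the Chain Property for $Z'_{w,+}$ (Corollary~\ref{theo:CP_Z'+-}) there is a saturated chain $A = C^{(0)}\subsetcov\dots\subsetcov C^{(l)} = \max Z'_{w,+}$ inside $Z'_{w,+}$, along which the size grows by one at each step, so that $|C^{(i)}| = |A|+i$; take $A' := C^{(r-|A|)}$, giving $|A'| = r$ and $A\subseteq A'\in Z'_{w,+}$. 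Since $d_A\le d_{A'}$ and both $d_A w$, $d_{A'}w$ are reduced products over $w$, \cite[Lemma~2.2]{MR946427} yields $u\le d_A w\le d_{A'}w\ge_L w$, which is $(4)$.

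The substance is in $(4)\Rightarrow(1)$. Put $p := \psi^R_{w^{-1}}(u)$; by Proposition~\ref{theo:SL_join}, $u\SLjoin w = pw$ (so in particular $pw\ge_L w$) and $w\SLmeet u = p^{-1}u$. By $(4)$ the element $d_A w$ belongs to $\{z\mid u\le z\ge_L w\}$, so $pw = u\SLjoin w\le d_A w$; since $l(pw) = l(p)+l(w)$ and $l(d_A w) = l(d_A)+l(w)$, \cite[Lemma~2.2]{MR946427} forces $p\le d_A$, and then the Subword Property gives $p = d_B$ with $B\subseteq A$, so $|B|\le r$ and $w\SLmeet u = p^{-1}u = d_B^{-1}u$. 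As $w\SLmeet u\le_L u$, this means $B\in Z'_{u,-}$, i.e.\ $w\SLmeet u\in Z_{u,-}$, and $l(u)-l(w\SLmeet u) = l(p) = |B|\le r$; this is $(1)$. I expect this last step to be the one needing care, as it is the only place where $w\SLmeet u$ is actually identified with a concrete $d_B^{-1}u$ — via Proposition~\ref{theo:SL_join} together with the reduction of ``$p\le d_A$'' to ``$p = d_B$ with $B\subseteq A$''; the only other external ingredient, used solely in $(3)\Rightarrow(4)$, is the existence of a size-$k$ weak strip over $w$.
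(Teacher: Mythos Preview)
Your proof is correct. The paper organizes the argument as $(1)\Leftrightarrow(2)$, $(2)\Rightarrow(3)$, $(3)\Rightarrow(2)$, $(3)\Leftrightarrow(4)$; your $(2)\Rightarrow(3)$ and $(3)\Rightarrow(4)$ match the paper's (the latter made more explicit by naming the size-$k$ weak strip $(k)\cup\lambda$ that forces $|\max Z'_{w,+}|\ge r$), while you close the loop via $(4)\Rightarrow(1)$ by exploiting that $u\SLjoin w$ and $w\SLmeet u$ share the same factor $p=\psi^R_{w^{-1}}(u)$ from Proposition~\ref{theo:SL_join}, whereas the paper instead proves $(3)\Rightarrow(2)$ directly via the anti-Demazure action $\psi_{d_A^{-1}}$ and Lemma~\ref{theo:psi_xyz}. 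Both routes rest on the same mechanism---reducing an inequality $p\le d_A$ (between elements $\ge_L w$, or dually $\le_L u$) to $p=d_B$ with $B\subseteq A$ via the Subword Property---so the difference is organizational rather than substantive; your $(4)\Rightarrow(1)$ has the mild advantage of identifying $w\SLmeet u$ with a concrete $d_B^{-1}u$ in one stroke rather than first landing in $(2)$ and then arguing $(2)\Rightarrow(1)$.
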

\begin{proof}
	\noindent (1)$\iff$(2):
	Clear.

	\noindent (3)$\iff$(4):
	$(4) \implies (3)$ is obvious.
	$(3) \implies (4)$ follows from the fact $Z'_{u,+}$ has the Chain Property and the maximum element of size $k$, which corresponds to the maximum element of $Z_{u,+}$.
	
	\noindent (2) $\implies$ (3):
	Assume $u\ge_L d_A^{-1} u \le w$.
	Then $u=\phi_{d_A}(d_A^{-1} u) \le \phi_{d_A}(w)$ by Lemma \ref{theo:b_i}(2).
	Besides, we have $\phi_{d_A}(w) = d_{B}w \ge_L w$ for some $B\subset A$ by Lemma \ref{theo:b_xyz},
	and $|B|\le |A| \le r$.
	
	\noindent (3) $\implies$ (2):
	Proved similarly to (2) $\implies$ (3), with Lemma \ref{theo:psi_xyz} instead of Lemma \ref{theo:b_xyz}.
\end{proof}

Now we finished, from Lemma \ref{theo:A0_r} (1)$\iff$(4),
the proof of Theorem \ref{theo:StrongSumPieri}:
$$
	\kkss{w} \wth_{r} = \sum_{u} \kks{u},
$$
summed over $u\in \Wo$ such that $u \le d_Aw$
for some $A\subsetneq I$ with $|A|=r$ and $d_Aw\ge_L w$.

Theorem \ref{theo:StrongSumPieri'} follows from
Theorem \ref{theo:StrongSumPieri},
Corollary \ref{theo:meetSemiLattice}, and
the Inclusion-Exclusion Principle.

\section{Proof of the $k$-rectangle factorization formula}\label{sect:StrongSumFactorization}

This section is devoted for the proof of Theorem \ref{theo:StrongSumFactorization}.

The idea of the proof is similar to that of Proposition \ref{prop:kS_fac};
	we consider a linear map $\Theta: \La^{(k)} \longrightarrow \La^{(k)}$ extending 
	$\kkss{\la} \mapsto \kkss{\Rt\cup\la}$,
	having that $\{\kkss{\la}\}_{\la\in\Pk}$ forms a basis of $\Lk$.
	It suffices to show $\Theta$ is a $\Lk$-homomorphism,
	since it implies $\kkss{\Rt\cup\la} = \Theta(\kkss{\la}) = \kkss{\la} \Theta(1) = \kkss{\la} \Theta(\kkss{\emptyset}) = \kkss{\la} \kkss{\Rt}$.
	Since $\{\wth_i\}_{1\le i \le k}$ generate $\Lk$,
	we only need to show 
	\begin{equation}\label{eq:Theta_hr}
		\Theta(\wth_r \kkss{\la}) = \wth_r \Theta(\kkss{\la}).
	\end{equation}
    Let $d_{A_1}\la, d_{A_2}\la, \dots$ 
    be the list of all weak strips over $\la$ of size $r$.
	Applying Theorem \ref{theo:StrongSumPieri'} to both sides of (\ref{eq:Theta_hr}), 
	we have
	\begin{align}
		\text{(LHS)}
    		&= \Theta\left(
				\sum_{a} \kkss{d_{A_a}\la}
    			- \sum_{a<b} \kkss{d_{A_{a}\cap A_b}\la}	
				+ \dots
				\right) \notag \\
    		&= \sum_{a} \kkss{R_t\cup (d_{A_a}\la)}
   				- \sum_{a<b} \kkss{R_t\cup (d_{A_{a}\cap A_b}\la)}
 				+ \dots, 
				\label{eq:FactorizationLHS}
\intertext{and by Lemma \ref{theo:Rt} (3) we have}
		\text{(RHS)}
			&= \wth_r \kkss{R_t\cup \la} \notag \\
			&= \sum_{a} \kkss{d_{A_a+t}(R_t\cup \la)}
				- \sum_{a<b} \kkss{d_{(A_{a}+t)\cap (A_b+t)}(R_t\cup \la)}
				+ \dots. 
				\label{eq:FactorizationRHS}
	\end{align}
Since $(A_{a}+t)\cap (A_b+t) \cap\dots = (A_{a}\cap A_b\cap\cdots)+t$,
by Lemma \ref{theo:Rt} (1) we have
$(\ref{eq:FactorizationLHS})=(\ref{eq:FactorizationRHS})$.

Now Theorem \ref{theo:StrongSumFactorization} is proved.

\begin{bibdiv}
\begin{biblist}

\bib{1711.08414}{unpublished}{
      author={Anderson, David},
      author={Chen, Linda},
      author={Tseng, Hsian-Hua},
       title={On the quantum {$K$}-ring of the flag manifold},
        note={arXiv:1711.08414},
}

\bib{MR2133266}{book}{
      author={Bj\"orner, Anders},
      author={Brenti, Francesco},
       title={Combinatorics of {C}oxeter groups},
      series={Graduate Texts in Mathematics},
   publisher={Springer, New York},
        date={2005},
      volume={231},
        ISBN={978-3540-442387; 3-540-44238-3},
      review={\MR{2133266}},
}

\bib{buch2015}{article}{
      author={Buch, Anders~S.},
      author={Mihalcea, Leonardo~C.},
       title={Curve neighborhoods of schubert varieties},
        date={2015},
     journal={J. Differential Geom.},
      volume={99},
      number={2},
       pages={255\ndash 283},
         url={https://doi.org/10.4310/jdg/1421415563},
}

\bib{MR644668}{article}{
      author={Bj\"orner, Anders},
      author={Wachs, Michelle},
       title={Bruhat order of {C}oxeter groups and shellability},
        date={1982},
        ISSN={0001-8708},
     journal={Adv. in Math.},
      volume={43},
      number={1},
       pages={87\ndash 100},
         url={https://doi.org/10.1016/0001-8708(82)90029-9},
      review={\MR{644668}},
}

\bib{MR946427}{article}{
      author={Bj\"orner, Anders},
      author={Wachs, Michelle~L.},
       title={Generalized quotients in {C}oxeter groups},
        date={1988},
        ISSN={0002-9947},
     journal={Trans. Amer. Math. Soc.},
      volume={308},
      number={1},
       pages={1\ndash 37},
         url={https://doi.org/10.2307/2000946},
      review={\MR{946427}},
}

\bib{MR3001656}{article}{
      author={Denton, Tom},
       title={Canonical decompositions of affine permutations, affine codes,
  and split {$k$}-{S}chur functions},
        date={2012},
        ISSN={1077-8926},
     journal={Electron. J. Combin.},
      volume={19},
      number={4},
       pages={Paper 19, 41},
      review={\MR{3001656}},
}

\bib{Deodhar87}{article}{
      author={Deodhar, Vinay~V.},
       title={A splitting criterion for the bruhat orderings on coxeter
  groups},
        date={1987},
     journal={Communications in Algebra},
      volume={15},
      number={9},
       pages={1889\ndash 1894},
}

\bib{MR1066460}{book}{
      author={Humphreys, James~E.},
       title={Reflection groups and {C}oxeter groups},
      series={Cambridge Studies in Advanced Mathematics},
   publisher={Cambridge University Press, Cambridge},
        date={1990},
      volume={29},
        ISBN={0-521-37510-X},
         url={https://doi.org/10.1017/CBO9780511623646},
      review={\MR{1066460}},
}

\bib{doi:10.1093/imrn/rny051}{article}{
      author={Ikeda, Takeshi},
      author={Iwao, Shinsuke},
      author={Maeno, Toshiaki},
       title={Peterson isomorphism in {$K$}-theory and relativistic {T}oda
  lattice},
        date={2018},
     journal={International Mathematics Research Notices},
       pages={rny051},
  eprint={/oup/backfile/content_public/journal/imrn/pap/10.1093_imrn_rny051/2/rny051.pdf},
         url={http://dx.doi.org/10.1093/imrn/rny051},
}

\bib{KNUTSON2004161}{article}{
      author={Knutson, Allen},
      author={Miller, Ezra},
       title={Subword complexes in coxeter groups},
        date={2004},
        ISSN={0001-8708},
     journal={Advances in Mathematics},
      volume={184},
      number={1},
       pages={161 \ndash  176},
  url={http://www.sciencedirect.com/science/article/pii/S0001870803001427},
}

\bib{Lam08}{article}{
      author={Lam, Thomas},
       title={Schubert polynomials for the affine {G}rassmannian},
        date={2008},
     journal={J. Amer. Math. Soc.},
      volume={21},
      number={1},
       pages={259\ndash 281},
}

\bib{MR1950481}{article}{
      author={Lapointe, Luc},
      author={Lascoux, Alain},
      author={Morse, Jennifer},
       title={Tableau atoms and a new {M}acdonald positivity conjecture},
        date={2003},
     journal={Duke Math. J.},
      volume={116},
      number={1},
       pages={103\ndash 146},
}

\bib{1705.03435}{unpublished}{
      author={Lam, Thomas},
      author={Li, Changzheng},
      author={Mihalcea, Leonardo~C.},
      author={Shimozono, Mark},
       title={A conjectural peterson isomorphism in {$K$}-theory},
        note={arXiv:1705.03435},
}

\bib{MR3379711}{book}{
      author={Lam, Thomas},
      author={Lapointe, Luc},
      author={Morse, Jennifer},
      author={Schilling, Anne},
      author={Shimozono, Mark},
      author={Zabrocki, Mike},
       title={{$k$}-{S}chur functions and affine {S}chubert calculus},
      series={Fields Institute Monographs},
        date={2014},
      volume={33},
}

\bib{MR2079931}{article}{
      author={Lapointe, L.},
      author={Morse, J.},
       title={Order ideals in weak subposets of {Y}oung's lattice and
  associated unimodality conjectures},
        date={2004},
     journal={Ann. Comb.},
      volume={8},
      number={2},
       pages={197\ndash 219},
}

\bib{MR2167475}{article}{
      author={Lapointe, Luc},
      author={Morse, Jennifer},
       title={Tableaux on {$k+1$}-cores, reduced words for affine permutations,
  and {$k$}-{S}chur expansions},
        date={2005},
     journal={J. Combin. Theory Ser. A},
      volume={112},
      number={1},
       pages={44\ndash 81},
}

\bib{MR2331242}{article}{
      author={Lapointe, Luc},
      author={Morse, Jennifer},
       title={A {$k$}-tableau characterization of {$k$}-{S}chur functions},
        date={2007},
     journal={Adv. Math.},
      volume={213},
      number={1},
       pages={183\ndash 204},
}

\bib{MR2600433}{article}{
      author={Lam, Thomas},
      author={Shimozono, Mark},
       title={Quantum cohomology of {$G/P$} and homology of affine
  {G}rassmannian},
        date={2010},
        ISSN={0001-5962},
     journal={Acta Math.},
      volume={204},
      number={1},
       pages={49\ndash 90},
         url={https://doi.org/10.1007/s11511-010-0045-8},
      review={\MR{2600433}},
}

\bib{MR2923177}{article}{
      author={Lam, Thomas},
      author={Shimozono, Mark},
       title={From quantum {S}chubert polynomials to {$k$}-{S}chur functions
  via the {T}oda lattice},
        date={2012},
     journal={Math. Res. Lett.},
      volume={19},
      number={1},
       pages={81\ndash 93},
}

\bib{MR2660675}{article}{
      author={Lam, Thomas},
      author={Schilling, Anne},
      author={Shimozono, Mark},
       title={{$K$}-theory {S}chubert calculus of the affine {G}rassmannian},
        date={2010},
     journal={Compos. Math.},
      volume={146},
      number={4},
       pages={811\ndash 852},
}

\bib{MR1354144}{book}{
      author={Macdonald, Ian~G.},
       title={Symmetric functions and {H}all polynomials},
     edition={Second},
      series={Oxford Mathematical Monographs},
   publisher={The Clarendon Press, Oxford University Press, New York},
        date={1995},
        note={With contributions by A. Zelevinsky, Oxford Science
  Publications},
}

\bib{Morse12}{article}{
      author={Morse, Jennifer},
       title={Combinatorics of the {$K$}-theory of affine {G}rassmannians},
        date={2012},
     journal={Adv. Math.},
      volume={229},
      number={5},
       pages={2950\ndash 2984},
}

\bib{PC_Shimozono}{unpublished}{
      author={Shimozono, Mark},
       title={private communication},
        note={2018},
}

\bib{MR2310418}{article}{
      author={Stembridge, John~R.},
       title={A short derivation of the {M}\"obius function for the {B}ruhat
  order},
        date={2007},
        ISSN={0925-9899},
     journal={J. Algebraic Combin.},
      volume={25},
      number={2},
       pages={141\ndash 148},
         url={https://doi.org/10.1007/s10801-006-0027-2},
      review={\MR{2310418}},
}

\bib{Takigiku1}{unpublished}{
      author={Takigiku, Motoki},
       title={Factorization formulas of {$K$}-$k$-{S}chur functions {I}},
        note={arXiv:1704.08643},
}

\bib{Takigiku2}{unpublished}{
      author={Takigiku, Motoki},
       title={Factorization formulas of {$K$}-$k$-{S}chur functions {II}},
        note={arXiv:1704.08660},
}

\bib{MR1740744}{article}{
      author={Waugh, Debra~J.},
       title={Upper bounds in affine {W}eyl groups under the weak order},
        date={1999},
        ISSN={0167-8094},
     journal={Order},
      volume={16},
      number={1},
       pages={77\ndash 87 (2000)},
         url={https://doi.org/10.1023/A:1006352408688},
      review={\MR{1740744}},
}

\end{biblist}
\end{bibdiv}

\end{document}